\newfont{\ffont}{cmr10}
\numberwithin{equation}{section}
\newcommand{\cB}{\mathcal{B}}
\newcommand{\cG}{\mathcal{G}}
\newcommand{\cP}{\mathcal{P}}
\newcommand{\cF}{\mathcal{F}}
\newcommand{\cS}{\mathcal{S}}
\newcommand{\cE}{\mathcal{E}}
\newcommand{\cJ}{\mathcal{J}}
\newcommand{\cI}{\mathcal{I}}
\newcommand{\cD}{\mathcal{D}}
\newcommand{\cM}{\mathcal{M}}
\newcommand{\cN}{\mathcal{N}}
\newcommand{\Ex}{\mathbb{E}}
\newcommand{\vNT}{\overline{\otimes}}
\newcommand{\ot}{\otimes}
\newcommand{\tr}{\tau}
\newcommand{\om}{\omega}
\newcommand{\Om}{\Omega}
\newcommand{\si}{\sigma}
\newcommand{\Si}{\Sigma}
\newcommand{\R}{\mathbb{R}}
\newcommand{\bP}{\mathbb{P}}
\newcommand{\bF}{\mathbb{F}}
\newcommand{\E}{\mathbb{E}}
\newcommand{\eps}{\varepsilon}
\newcommand{\ti}{\times}
\newcommand{\Del}{\Delta}
\newcommand{\xspace}{\hbox{\kern-2.5pt}}
\newcommand\tnorm[1]{\left\vert\xspace\left\vert\xspace\left\vert\mskip2mu
#1\mskip2mu \right\vert\xspace\right\vert\xspace\right\vert}
\newcommand{\ud}[0]{\,\mathrm{d}}
\begin{document}

\newtheorem{definition}{Definition}[section]
\newtheorem{theorem}[definition]{Theorem}
\newtheorem{conjecture}[definition]{Conjecture}
\newtheorem{proposition}[definition]{Proposition}
\newtheorem{corollary}[definition]{Corollary}
\newtheorem{remark}[definition]{Remark}
\newtheorem{lemma}[definition]{Lemma}
\newtheorem{assumption}[definition]{Assumption}
\newtheorem{example}{Example}[section]
\newtheorem{exercise}{Exercise}[section]

\title[$L^q$-valued Burkholder-Rosenthal inequalities]{$L^q$-valued Burkholder-Rosenthal inequalities and sharp estimates for stochastic integrals}
\author{Sjoerd Dirksen}
\address{RWTH Aachen University, 52062 Aachen, Germany}
\email{dirksen@mathc.rwth-aachen.de}

\author{Ivan Yaroslavtsev}
\address{Delft Institute of Applied Mathematics\\
Delft University of Technology \\ P.O. Box 5031\\ 2600 GA Delft\\The
Netherlands}
\email{I.S.Yaroslavtsev@tudelft.nl}

\keywords{Martingale inequalities, vector-valued stochastic integration, Burkholder-Rosenthal inequalities, decoupling, random measures, martingale decompositions}
\subjclass[2010]{Primary 60G44, 60H05; Secondary: 60G42, 46L52}

\begin{abstract}
We prove sharp maximal inequalities for $L^q$-valued stochastic integrals with respect to any Hilbert space-valued local martingale. Our proof relies on new Burkholder-Rosenthal type inequalities for martingales taking values in an $L^q$-space.
\end{abstract}

\maketitle

\tableofcontents

\section{Introduction}

This work is motivated by the semigroup approach to stochastic partial differential equations. In this approach one first reformulates an SPDE as a stochastic ordinary differential equation in a suitable infinite-dimensional state space $X$ and then establishes existence, uniqueness and regularity properties of a mild solution via a fixed point argument. An important ingredient for this argument is a maximal inequality for the $X$-valued stochastic convolution associated with the semigroup generated by the operator in the stochastic evolution equation. The semigroup approach for equations driven by Gaussian noise in Hilbert spaces is well-established and can be found in \cite{DaZ92}. This theory has more recently been developed in two directions. Firstly, the theory for equations driven by Gaussian noise has been extended to the context of UMD Banach spaces, see e.g.\ \cite{NVW07,NVW12}. In particular, the latter results cover $L^q$-spaces and Sobolev spaces and, as a consequence, allow to achieve better regularity results than the Hilbert space theory. Secondly, there has been increased interested in equations driven by discontinuous noise, e.g.\ Poisson- and L\'evy-type noise \cite{BrH09,FTP10,KoH16,MPR10,MPZ13,MaR10,PeZ07}. The latter results are mostly restricted to the Hilbert space setting. The development of this theory in a non-Hilbertian setting is hindered by the fact that maximal inequalities for vector-valued stochastic convolutions with respect to discontinuous noise are not yet well-understood. In general, only some non-sharp maximal estimates based on geometric assumptions on the Banach space are available \cite{DMN12,ZBH17}. In fact, even the theory for `vanilla' stochastic integrals (corresponding to the trivial semigroup) is incomplete. Sharp maximal inequalities for $L^q$-valued stochastic integrals with respect to Poisson random measures were obtained only recently \cite{Dir14}.\par     
The main purpose of the present paper is to contribute to the foundation of the semigroup approach by proving sharp estimates for $L^q$-valued stochastic integrals with respect to general Hilbert-space valued local martingales. In our main result, Theorem~\ref{thm:mainSI}, we identify a suitable norm $\tnorm{\cdot}_{M,p,q}$ so that, for any elementary predictable processes $\Phi$ with values in the bounded operators from $H$ into $L^q(S)$,
\begin{equation}
\label{eqn:ItoIsomorphism}
c_{p,q} \tnorm{\Phi}_{ M,p,q} \leq \Big(\E\sup_{0\leq s\leq t}\Big\|\int_0^s \Phi \ dM\Big\|_{L^q(S)}^p\Big)^{\frac{1}{p}} \leq C_{p,q} \tnorm{\Phi}_{ M,p,q},
\end{equation}
with universal constants $c_{p,q}$, $C_{p,q}$ depending only on $p$ and $q$. Let us emphasize two important points. Firstly, the norm $\tnorm{\cdot}_{ M,p,q}$ can be computed in terms of \emph{predictable} quantities, which is important for applications. Secondly, we call the estimates in \eqref{eqn:ItoIsomorphism} `sharp' as these inequalities are two-sided and therefore identify the largest possible class of \mbox{$L^p$-sto}\-chas\-ti\-cally integrable processes. We do not require the constants $c_{p,q}$ and $C_{p,q}$ to be sharp or even to depend optimally on $p$ and $q$. For applications to stochastic evolution equations, the precise constants in fact do not play a role. In forthcoming work together with Marinelli \cite{DMY17}, we show that the upper bound \eqref{eqn:ItoIsomorphism} can be transferred to a large class of stochastic convolutions and apply these new estimates to obtain improved well-posedness and regularity results for the associated stochastic evolution equations in $L^q$-spaces.\par 

Let us roughly sketch our approach to \eqref{eqn:ItoIsomorphism}. As a starting point, we use a classical result due to Meyer \cite{Mey76} and Yoeurp \cite{Yoe76} to decompose the integrator as a sum of three local martingales $M=M^c+M^q+M^a$, where $M^c$ is continuous, $M^q$ is purely discontinuous and quasi-left continuous, and $M^a$ is purely discontinuous with accessible jumps. Sharp bounds for stochastic integrals with respect to continuous local martingales were already obtained in a more general setting \cite{VY}.\par 

\smallskip

To estimate the integral with respect to $M^a$ we prove, more generally, sharp bounds for an arbitrary purely discontinuous $L^q$-valued local martingale with accessible jumps in Theorem~\ref{thm:acessjumpsL^q}. To establish this result we first show that such a process can be represented as an essentially discrete object, namely a sum of jumps occurring at predictable times. Using an approximation argument, the problem can then be further reduced to proving \emph{Burkholder-Rosenthal type inequalities} for $L^q$-valued discrete-time martingales. In general, if $1\leq p<\infty$ and $X$ is a Banach space, we understand under Burkholder-Rosenthal inequalities estimates for $X$-valued martingale difference sequences $(d_i)$ of the form
\begin{equation}
\label{eqn:questionBR} c_{p,X} \tnorm{(d_i)}_{p,X} \leq \Big(\Ex\Big\|\sum_i d_i\Big\|_X^p\Big)^{\frac{1}{p}} \leq C_{p,X} \tnorm{(d_i)}_{p,X},
\end{equation}
where $\tnorm{\cdot}_{p,X}$ is a suitable norm on $(d_i)$ which can be computed explicitly in terms of the \emph{predictable moments} of the individual differences $d_i$. In the scalar-valued case, these type of inequalities were proven by Burkholder \cite{Bur73}, following work of Rosenthal \cite{Ros70} in the independent case: for $2\leq p<\infty$
\begin{equation}
\label{eqn:ClassicalBRIntro}
\Big(\E\Big|\sum_{i=1}^n d_i\Big|^p\Big)^{\frac{1}{p}} \eqsim_p \max\Big\{\Big(\sum_{i=1}^n\E|d_i|^p\Big)^{\frac{1}{p}}, \Big(\E\Big(\sum_{i=1}^n \E_{i-1}|d_i|^2\Big)^{\frac{p}{2}}\Big)^{\frac{1}{p}}\Big\}.
\end{equation}
Here we write $A\lesssim_{\alpha} B$ if there is a constant $c_{\alpha}>0$ depending only on $\alpha$ such that $A\leq c_{\alpha} B$ and write $A\eqsim_{\alpha} B$ if both $A\lesssim_{\alpha} B$ and $B\lesssim_{\alpha} A$ hold. To state our $L^q$-valued extension, we fix a filtration $\bF=(\cF_i)_{i\geq 0}$, denote by $(\E_i)_{i\geq 0}$ the associated sequence of conditional expectations and set $\E_{-1}:=\E$. Let $(S, \Sigma,\rho)$ be any measure space. Let us introduce the following norms on the linear space of all finite sequences $(f_i)$ of random variables in $L^{\infty}(\Om;L^q(S))$. Firstly, for $1\leq p,q<\infty$ we set
\begin{equation}
\label{eqn:SpqNorm}
\|(f_i)\|_{S_q^p}  = \Big(\E\Big\|\Big(\sum_i \E_{i-1}|f_i|^2\Big)^{\frac{1}{2}}\Big\|_{L^q(S)}^p\Big)^{\frac{1}{p}},
\end{equation}
From the work of Junge on conditional sequence spaces \cite{Jun02} one can deduce that this expression is a norm, see Section~\ref{sec:NCLq} for more details. We let $S_{q}^p$ denote the completion with respect to this norm. Furthermore, we define
\begin{equation}
\label{eqn:DpqNorms}
\begin{split}
\|(f_i)\|_{D_{q,q}^p} & = \Big(\E\Big(\sum_i \E_{i-1}\|f_i\|_{L^q(S)}^q\Big)^{\frac{p}{q}}\Big)^{\frac{1}{p}}, \\
\|(f_i)\|_{D_{p,q}^p} & = \Big(\sum_i\E\|f_i\|_{L^q(S)}^p\Big)^{\frac{1}{p}}.
\end{split}
\end{equation}
Clearly these expressions define two norms and we let $D_{p,q}^p$ and $D_{q,q}^p$ denote the completions in these norms. Although these spaces depend on the filtration~$\bF$, we will suppress this from the notation. We let $\hat{S}_q^p$, $\hat{D}_{q,q}^p$ and $\hat{D}_{p,q}^p$ denote the closed subspaces spanned by all martingale difference sequences in the above spaces.  
\begin{theorem}
\label{thm:summaryBRIntro} Let $1<p,q<\infty$ and let $S$ be any measure space. If $(d_i)$ is an $L^q(S)$-valued martingale difference sequence, then
\begin{equation}
\label{eqn:summaryBR} \Big(\E\Big\|\sum_i d_i\Big\|_{L^q(S)}^p\Big)^{\frac{1}{p}} \eqsim_{p,q} \|(d_i)\|_{\hat{s}_{p,q}},
\end{equation}
where $\hat{s}_{p,q}$ is given by
\begin{align*}
\hat{S}_q^p\cap \hat{D}_{q,q}^p \cap \hat{D}_{p,q}^p & \ \ \mathrm{if} \ \ 2\leq q\leq p<\infty;\\
\hat{S}_q^p\cap (\hat{D}_{q,q}^p + \hat{D}_{p,q}^p) & \ \ \mathrm{if} \ \ 2\leq p\leq q<\infty;\\
(\hat{S}_q^p \cap \hat{D}_{q,q}^p) + \hat{D}_{p,q}^p & \ \ \mathrm{if} \ \ 1<p<2\leq q<\infty;\\
(\hat{S}_q^p + \hat{D}_{q,q}^p) \cap \hat{D}_{p,q}^p & \ \ \mathrm{if} \ \ 1<q<2\leq p<\infty;\\
\hat{S}_q^p + (\hat{D}_{q,q}^p \cap \hat{D}_{p,q}^p) & \ \ \mathrm{if} \ \ 1<q\leq p\leq 2;\\
\hat{S}_q^p + \hat{D}_{q,q}^p + \hat{D}_{p,q}^p & \ \ \mathrm{if} \ \ 1<p\leq q\leq 2.
\end{align*}
Consequently, if $\cF=\si(\cup_{i\geq 0}\cF_i)$, then the map $f \mapsto (\E_i f - \E_{i-1} f)_{i\geq 0}$ induces an isomorphism between $L^p_0(\Om;L^q(S))$, the subspace of mean-zero random variables in $L^p(\Om;L^q(S))$, and $s_{p,q}$.
\end{theorem}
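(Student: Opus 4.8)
\emph{Strategy and reduction by duality.}
The six regimes occur in three dual pairs, $\{2\le q\le p\}\leftrightarrow\{1<p\le q\le2\}$, $\{2\le p\le q\}\leftrightarrow\{1<q\le p\le2\}$ and $\{1<p<2\le q\}\leftrightarrow\{1<q<2\le p\}$, and in each pair the description of $\hat s_{p,q}$ is obtained from that of $\hat s_{p',q'}$ (with $\tfrac1p+\tfrac1{p'}=\tfrac1q+\tfrac1{q'}=1$) by interchanging $\cap$ with $+$ and replacing $(p,q)$ by $(p',q')$. Using $L^p(\Om;L^q(S))^*=L^{p'}(\Om;L^{q'}(S))$, the identities $(\cA\cap\cB)^*=\cA^*+\cB^*$ and $(\cA+\cB)^*=\cA^*\cap\cB^*$ for compatible couples, and the fact that the martingale difference projection $(f_i)\mapsto(\E_if_i-\E_{i-1}f_i)$ is a bounded idempotent on each of $S_q^p$, $D_{q,q}^p$, $D_{p,q}^p$ that is self-adjoint for the pairing $\langle(d_i),(e_i)\rangle=\sum_i\E\int_S d_i\,e_i\,d\rho$, one reduces to identifying $(\hat S_q^p)^*=\hat S_{q'}^{p'}$, $(\hat D_{q,q}^p)^*=\hat D_{q',q'}^{p'}$ and $(\hat D_{p,q}^p)^*=\hat D_{p',q'}^{p'}$ — the first two being instances of Junge's duality for conditional sequence spaces \cite{Jun02}, the third elementary. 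It then suffices to prove \eqref{eqn:summaryBR}, in both directions, in the three cases with $q\ge2$; the rest follows by dualising. Two standard tools will be used throughout: the Burkholder--Gundy square-function equivalence $\big(\E\|\sum_id_i\|_{L^q(S)}^p\big)^{1/p}\eqsim_{p,q}\big(\E\|(\sum_i|d_i|^2)^{1/2}\|_{L^q(S)}^p\big)^{1/p}$, valid for all $1<p,q<\infty$ since $L^q(S)$ is a UMD Banach function space, and Stein's inequality $\big\|(\sum_i\E_{i-1}|f_i|^2)^{1/2}\big\|_{L^p(\Om;L^q(S))}\lesssim_{p,q}\big\|(\sum_i|f_i|^2)^{1/2}\big\|_{L^p(\Om;L^q(S))}$.

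\emph{The diagonal and the range $p,q\ge2$.}
When $p=q$ we have $D_{q,q}^q=D_{p,q}^p$, and Fubini's theorem together with the scalar Burkholder--Rosenthal inequality \eqref{eqn:ClassicalBRIntro} applied for $\rho$-a.e.\ $s\in S$ (using $\max\{a,b\}\eqsim a+b$) gives $\big(\E\|\sum_id_i\|_{L^q(S)}^q\big)^{1/q}\eqsim_q\|(d_i)\|_{\hat S_q^q\cap\hat D_{q,q}^q}$, i.e.\ the claim at $p=q\ge2$. For the full range $p,q\ge2$ I would prove the upper bound $\big(\E\|\sum_id_i\|^p\big)^{1/p}\lesssim_{p,q}\|(d_i)\|_{\hat s_{p,q}}$ by Burkholder's quadratic argument: expand $\big|\sum_{i\le n}d_i\big|^2=\sum_{i\le n}|d_i|^2+2\Re\sum_{i<j\le n}\overline{d_i}\,d_j$ inside $L^{p/2}(\Om;L^{q/2}(S))$, estimate the diagonal term via the scalar case and the $2$-convexity of $L^q(S)$, and bound the off-diagonal martingale $\sum_j\overline{M_{j-1}}\,d_j$ by Doob's maximal inequality for $M^*:=\sup_n\|M_n\|_{L^q(S)}$ combined with Stein's inequality applied to $(\sum_j\E_{j-1}|d_j|^2)^{1/2}$; solving the resulting inequality $\|M\|_p^2\lesssim_{p,q}\|M\|_p\,\|(d_i)\|_{\hat S_q^p}+(\text{contributions of }\hat D_{q,q}^p,\hat D_{p,q}^p)$ produces the three components, with the split between $\cap$ and $+$ reflecting whether $q\le p$ or $p\le q$. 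For the matching lower bounds: $\|(d_i)\|_{\hat S_q^p}\lesssim_{p,q}\|\sum_id_i\|$ follows from the square-function equivalence and Stein's inequality; $\|(d_i)\|_{\hat D_{q,q}^p}\lesssim_{p,q}\|\sum_id_i\|$ from a scalar Burkholder--Davis--Gundy bound applied under Fubini; and $\|(d_i)\|_{\hat D_{p,q}^p}\lesssim_{p,q}\|\sum_id_i\|$ from the pointwise estimate $\sum_i|d_i(s)|^q\le(\sum_i|d_i(s)|^2)^{q/2}$ followed by the square-function equivalence.

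\emph{The case $1<p<2\le q$.}
This is the genuinely delicate case, since the Hilbertian $L^2(\Om)$-structure is no longer available. For the lower bound I would construct a \emph{Davis-type decomposition} of the $L^q(S)$-valued martingale $M$ with $M_n=\sum_{i\le n}d_i$, namely $M=M'+M''$ with $M'$ having a predictably dominated difference sequence and $M''$ having summable jumps, such that
\[\|(d_i')\|_{\hat S_q^p\cap\hat D_{q,q}^p}+\|(d_i'')\|_{\hat D_{p,q}^p}\lesssim_{p,q}\sup_n\|M_n\|_{L^p(\Om;L^q(S))}\le\big(\E\|\sum_id_i\|^p\big)^{1/p}.\]
For the upper bound I would apply the triangle inequality to a near-optimal decomposition $(d_i)=(d_i')+(d_i'')$ for the norm of $(\hat S_q^p\cap\hat D_{q,q}^p)+\hat D_{p,q}^p$, bounding $\|\sum_id_i''\|$ by $\|(d_i'')\|_{\hat D_{p,q}^p}$ — using that $L^q(S)$ ($q\ge2$) has martingale type $p$ for every $p\in(1,2]$, whence $\big(\E\|\sum_id_i''\|^p\big)^{1/p}\lesssim_{p,q}\big(\sum_i\E\|d_i''\|_{L^q(S)}^p\big)^{1/p}$ — and bounding $\|\sum_id_i'\|$ by $\|(d_i')\|_{\hat S_q^p\cap\hat D_{q,q}^p}$ via the square-function equivalence and a conditional square-function inequality on $L^p(\Om;L^q(S))$ valid for $p<2$. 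I expect the construction of the Davis decomposition \emph{inside the lattice} $L^q(S)$ — keeping the conditional $L^2$- and $L^q$-moments of $M'$ simultaneously under control while the jumps of $M''$ remain $\ell^p$-summable in $L^p(\Om;L^q(S))$ — to be the main obstacle. A recurring technical point is that none of these estimates may be obtained by interpolating \eqref{eqn:summaryBR} directly in $p$, as real interpolation commutes with neither $\cap$ nor $+$; any interpolation must be performed at the level of the linear maps $(d_i)\mapsto\sum_id_i$ and their adjoints.

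\emph{The isomorphism.}
Finally, suppose $\cF=\si(\cup_{i\ge0}\cF_i)$ and let $f\in L^p_0(\Om;L^q(S))$. Then $d_i:=\E_if-\E_{i-1}f$ is an $L^q(S)$-valued martingale difference sequence, and since $L^q(S)$ is reflexive for $1<q<\infty$ the vector-valued martingale convergence theorem yields $\sum_{i\ge0}d_i=\lim_n\E_nf-\E f=f$ in $L^p(\Om;L^q(S))$. Conversely, by \eqref{eqn:summaryBR} the partial sums of any martingale difference sequence in $\hat s_{p,q}$ form a Cauchy sequence in $L^p(\Om;L^q(S))$ whose limit $g$ satisfies $\E g=0$ and reproduces the given sequence as $(\E_ig-\E_{i-1}g)$. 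Hence $f\mapsto(d_i)$ is a bijection from $L^p_0(\Om;L^q(S))$ onto $\hat s_{p,q}$, and \eqref{eqn:summaryBR} says precisely that it and its inverse $(d_i)\mapsto\sum_id_i$ are bounded; this is the asserted isomorphism.
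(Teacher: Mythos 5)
Your duality reduction is in the right spirit, but it contains a factual error and, more seriously, the crux of the argument is left open. First, the error: you attribute the duality $(\hat{D}_{q,q}^p)^*=\hat{D}_{q',q'}^{p'}$ to Junge's conditional sequence spaces. It is not; Junge's framework gives $(\hat{S}_q^p)^*=\hat{S}_{q'}^{p'}$, but $\hat{D}_{q,q}^p$ involves conditional $q$-th moments of the \emph{norm}, not conditional Hilbert-module structure, and the duality $(H^{s_q}_p(X))^*=H^{s_{q'}}_{p'}(X^*)$ for general $1<p,q<\infty$ is new — the paper devotes Section~\ref{sec:dualHp} to it, extending an argument of Cs\"org\H{o} and Weisz, and notes explicitly that the result was previously known only for $X=\R$ and either $1<p\leq q$ or $2\leq q\leq p$. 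Without that duality your reduction to $q\geq 2$ is unsubstantiated.

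Second and more importantly, the upper bound — which is the substance of the theorem once the dualities are in place — is never established. For $p,q\geq 2$ you sketch a Burkholder quadratic expansion, but you do not explain how the precise three-norm formula (and in particular the switch between $\cap$ and $+$ at $p=q$) should emerge from that argument; the diagonal/off-diagonal split produces $\hat{S}_q^p$ naturally, but deriving $\hat{D}_{q,q}^p\cap\hat{D}_{p,q}^p$ versus $\hat{D}_{q,q}^p+\hat{D}_{p,q}^p$ from a self-improving inequality is exactly the hard part, and the known proof of this even in the \emph{independent} case (Theorem~\ref{thm:summaryRosIntro} of \cite{Dir14}) is not a quadratic argument. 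For $1<p<2\leq q$ you yourself flag the Davis-type decomposition ``inside the lattice'' as ``the main obstacle,'' and indeed it is not clear how to build a decomposition that simultaneously controls the conditional $\ell^2$-square function in $L^q(S)$, the conditional $q$-th moments, and the jump sizes in $L^p$; you do not construct it.

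The paper avoids all of this by a much shorter route that your proposal misses entirely: decoupling. Given $(d_i)$ one picks a decomposition $d_i=d_{i,1}+d_{i,2}$ (or $d_{i,1}+d_{i,2}+d_{i,3}$, depending on the regime), passes to the decoupled tangent sequence $(e_{i,1},e_{i,2})$ using the decoupling property of $L^q(S)$ \cite{CoV12,KwW92}, and then applies the \emph{already known} Rosenthal inequality for $\cG$-conditionally independent $L^q$-valued mean-zero variables (Theorem~\ref{thm:summaryRosIntro}, from \cite{Dir14}) conditionally on $\cG$. Since $\E_{\cG}|e_{i,1}|^2=\E_{i-1}|d_{i,1}|^2$ etc., this transfers the independent-case formula verbatim to the martingale case, yielding the upper bound in all six regimes at once; the lower bound then follows by duality as you describe. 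Your proposal would require re-deriving the independent-case result from scratch while simultaneously handling the martingale dependence, which is both harder and not carried out. The overall strategy (upper bound plus duality) is correct, but the two places where the real work happens — the $H^{s_q}_p$ duality and the upper estimate — are respectively misattributed and absent.
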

The result in Theorem~\ref{thm:summaryBRIntro} can be extended to martingales taking values in noncommutative $L^q$-spaces. We discuss this in Section~\ref{sec:NCLq}.\par
Let us say a few words about the proof of Theorem~\ref{thm:summaryBRIntro}. We derive the upper bound in \eqref{eqn:summaryBR} from the known special case that the $d_i$ are independent \cite{Dir14} by applying powerful decoupling techniques due to Kwapie\'{n} and Woyczy\'{n}ski \cite{KwW92}. In the scalar-valued case this route was already traveled by Hitczenko \cite{Hit90} to deduce the optimal order of the constant in the classical Burkholder-Rosenthal inequalities (\ref{eqn:ClassicalBRIntro}) from the one already known for martingales with independent increments. The lower bound in \eqref{eqn:summaryBR} is derived by using a duality argument. For this purpose, we show that for $1<p,q<\infty$ the spaces $s_{p,q}$ satisfy the duality relation
\begin{equation*}
(s_{p,q})^* = s_{p',q'}, \ \tfrac{1}{p} + \tfrac{1}{p'} = 1, \ \tfrac{1}{q}+\tfrac{1}{q'} = 1.
\end{equation*}
The only non-trivial step in proving this duality is to show that $(D_{q,q}^p)^*=D_{q',q'}^{p'}$. In Section~\ref{sec:dualHp} we prove a more general result: we show that if $X$ is a reflexive separable Banach space, then for the space $H^{s_q}_p(X)$ of all adapted $X$-valued sequences $(f_i)$ such that 
$$\|(f_i)\|_{H^{s_q}_p(X)}  = \Big(\E\Big(\sum_i \E_{i-1}\|f_i\|_X^q\Big)^{\frac{p}{q}}\Big)^{\frac{1}{p}}<\infty,$$
the identity $(H^{s_q}_p(X))^* = H^{s_{q'}}_{p'}(X^*)$ holds isomorphically with constants depending only on $p$ and $q$. Somewhat surprisingly, this result only seems to be known in the literature if $X=\R$ and either $1<p\leq q<\infty$ or $2\leq q\leq p<\infty$ (see \cite{Weisz}).\par

\smallskip

Let us now discuss our approach to the integral of $\Phi$ with respect to $M^q$, the purely discontinuous quasi-left continuous part of $M$. We first show that this integral can be represented as an integral with respect to $\bar{\mu}^{M^q}$, the compensated version of the random measure $\mu^{M^q}$ that counts the jumps of $M^q$. In Theorem~\ref{thm:mainintranmeas} we then prove the following sharp estimates for integrals with respect to $\bar{\mu}=\mu-\nu$, where $\mu$ is any integer-valued random measure that has a compensator $\nu$ that is non-atomic in time. This result covers $\mu^{M^q}$ as a special case. To formulate our result, let $(J,\mathcal{J})$ be a measurable space and $\widetilde{\mathcal P}$ be the predictable $\si$-algebra on $\R_+\times \Omega\times J$. For $1<p,q<\infty$ we define the spaces $\hat{\mathcal S}_q^p$, $\hat{\mathcal D}_{q,q}^p$ and $\hat{\mathcal D}_{p,q}^p$ as the Banach spaces of all $\widetilde{\mathcal P}$-measurable functions $F: \mathbb R_+ \times\Omega \times J \to L^q(S)$ for which the corresponding norms
\begin{equation*}
\begin{split}
\|F\|_{\hat{\mathcal S}_q^p} &:= \Bigl( \mathbb E \Bigl\| \Bigl(\int_{\mathbb R_+\times J}|F|^2 \ud \nu \Bigr)^{\frac 12}\Bigr\|^p_{L^q(S)} \Bigr)^{\frac 1p},\\
\|F\|_{\hat{\mathcal D}_{q,q}^p} &:=  \Bigl( \mathbb E \Bigl( \int_{\mathbb R_+\times J}\|F\|^q_{L^q(S)} \ud \nu \Bigr)^{\frac pq} \Bigr)^{\frac 1p},\\
\|F\|_{\hat{\mathcal D}_{p,q}^p} &:=  \Bigl( \mathbb E  \int_{\mathbb R_+\times J}\|F\|^p_{L^q(S)} \ud \nu \Bigr)^{\frac 1p}
\end{split}
\end{equation*}
are finite. 
\begin{theorem}\label{thm:mainintranmeasIntro}
Fix $1<p,q<\infty$. Let $\mu$ be an optional $\widetilde{\mathcal P}$-$\si$-finite random measure on $\mathbb R_+\times J$ and suppose that its compensator $\nu$ is non-atomic in time. Then for any $\widetilde{\mathcal P}$-measurable $F:\mathbb R_+ \times \Omega \times J \to L^q(S)$,
 \begin{equation*}
  \Bigl( \mathbb E \sup_{0\leq s\leq t}\Big\|\int_{[0,s] \times J} F(u,x) \bar{\mu}(\ud u, \ud x)\Big\|^p_{L^q(S)} \Bigr)^{\frac 1p} \eqsim_{p,q}\|F\mathbf 1_{[0,t]}\|_{\mathcal I_{p,q}},
 \end{equation*}
where $\mathcal I_{p,q}$ is given by
\begin{equation*}
 \begin{split}
   \hat{\mathcal S}^p_q \cap \hat{\mathcal D}^p_{q,q} \cap \hat{\mathcal D}^p_{p,q} & \text{ if } \; 2\leq q\leq p<\infty,\\
 \hat{\mathcal S}^p_q \cap (\hat{\mathcal D}^p_{q,q} + \hat{\mathcal D}^p_{p,q}) & \text{ if } \; 2\leq p\leq q<\infty,\\
 (\hat{\mathcal S}^p_q \cap \hat{\mathcal D}^p_{q,q}) + \hat{\mathcal D}^p_{p,q} & \text{ if } \; 1<p<2\leq q<\infty,\\
 (\hat{\mathcal S}^p_q + \hat{\mathcal D}^p_{q,q}) \cap \hat{\mathcal D}^p_{p,q} & \text{ if } \; 1<q<2\leq p<\infty,\\
 \hat{\mathcal S}^p_q + (\hat{\mathcal D}^p_{q,q} \cap \hat{\mathcal D}^p_{p,q}) & \text{ if } \; 1<q\leq p\leq 2,\\
 \hat{\mathcal S}^p_q + \hat{\mathcal D}^p_{q,q} + \hat{\mathcal D}^p_{p,q} & \text{ if } \; 1<p\leq q \leq 2.
 \end{split}
\end{equation*}
\end{theorem}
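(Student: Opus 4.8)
The plan is to reduce the continuous-time statement about stochastic integrals against the compensated random measure $\bar\mu = \mu - \nu$ to the discrete-time Burkholder–Rosenthal inequalities of Theorem~\ref{thm:summaryBRIntro}, with the nonatomicity of $\nu$ in time playing the role that allows us to approximate by a discrete martingale difference sequence without losing the relevant scales. First I would fix $F$ with $\|F\mathbf 1_{[0,t]}\|_{\mathcal I_{p,q}}<\infty$ and observe that, by a truncation and exhaustion argument using $\widetilde{\mathcal P}$-$\sigma$-finiteness of $\mu$, it suffices to treat $F$ that is bounded, supported on $[0,t]\times A$ for a set $A$ of finite $\nu$-measure, and vanishing off a set where $\nu$ is finite; standard localization then removes the supremum in $s$ at the cost of replacing $t$ by a stopping time, so that the process $N_s := \int_{[0,s]\times J} F\,\ud\bar\mu$ is a genuine $L^q(S)$-valued martingale on $[0,t]$, and by Doob's maximal inequality (valid since $1<p<\infty$) the left-hand side is comparable to $(\E\|N_t\|_{L^q(S)}^p)^{1/p}$.

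Next I would discretize time. Because $\nu$ is non-atomic in time, for each $n$ one can choose a partition $0=t_0^n<t_1^n<\dots<t_{k_n}^n=t$, refining as $n\to\infty$, such that $\nu((t_{j-1}^n,t_j^n]\times J)$ is small uniformly in $j$ on the relevant support; set $d_j^n := \int_{(t_{j-1}^n,t_j^n]\times J} F\,\ud\bar\mu$, which is an $L^q(S)$-valued martingale difference sequence for the discrete filtration $(\mathcal F_{t_j^n})_j$. Applying Theorem~\ref{thm:summaryBRIntro} to $(d_j^n)_j$ gives $(\E\|N_t\|_{L^q(S)}^p)^{1/p} \eqsim_{p,q} \|(d_j^n)_j\|_{\hat s_{p,q}}$ with constants independent of $n$, so the whole task is to show that the discrete mixed norms $\|(d_j^n)_j\|_{\hat S_q^p}$, $\|(d_j^n)_j\|_{\hat D_{q,q}^p}$, $\|(d_j^n)_j\|_{\hat D_{p,q}^p}$ converge, as $n\to\infty$, to the continuous quantities $\|F\mathbf 1_{[0,t]}\|_{\hat{\mathcal S}_q^p}$, $\|F\mathbf 1_{[0,t]}\|_{\hat{\mathcal D}_{q,q}^p}$, $\|F\mathbf 1_{[0,t]}\|_{\hat{\mathcal D}_{p,q}^p}$ respectively; since the six spaces $\mathcal I_{p,q}$ and $s_{p,q}$ are built from these three by the identical lattice of intersections and sums, convergence of the three pieces yields convergence of the combined norm in each of the six regimes uniformly.

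The comparison of discrete and continuous norms is where the actual work lies, and it is the step I expect to be the main obstacle. The cleanest of the three is $\hat{\mathcal D}_{p,q}^p$: here $\sum_j \E\|d_j^n\|_{L^q(S)}^p$ should converge to $\E\int_{[0,t]\times J}\|F\|_{L^q(S)}^p\,\ud\nu$ by writing each jump integral $d_j^n$ as a sum over the at most countably many atoms of $\mu$ in $(t_{j-1}^n,t_j^n]\times J$, using the exponential-type formula for moments of compensated Poisson-like integrals on small blocks and the non-atomicity in time to kill the cross-terms in the limit — essentially the argument already used in \cite{Dir14} for Poisson random measures, localized to blocks. For $\hat{\mathcal D}_{q,q}^p$ one needs $\sum_j \E_{t_{j-1}^n}\|d_j^n\|_{L^q(S)}^q \to \int_{[0,t]\times J}\|F\|_{L^q(S)}^q\,\ud\nu$ in $L^{p/q}(\Omega)$, which again reduces to a block-by-block estimate where non-atomicity makes $\E_{t_{j-1}^n}\|d_j^n\|^q$ comparable to $\int_{(t_{j-1}^n,t_j^n]\times J}\|F\|^q\,\ud\nu$ up to errors that are higher order in the block mass. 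The $\hat{\mathcal S}_q^p$ term is the subtlest: one must show $\big(\sum_j \E_{t_{j-1}^n}|d_j^n|^2\big)^{1/2} \to \big(\int_{[0,t]\times J}|F|^2\,\ud\nu\big)^{1/2}$ in $L^p(\Omega;L^q(S))$, and here the quadratic conditional expectation $\E_{t_{j-1}^n}|d_j^n|^2$ on a small block equals $\int_{(t_{j-1}^n,t_j^n]\times J}|F|^2\,\ud\nu$ exactly up to the contribution of possible predictable (accessible) atoms of $\nu$ in that block — but by hypothesis $\nu$ is non-atomic in time, so these vanish in the refinement limit, and one controls the remaining discrepancy in the lattice $L^p(L^q)$ norm via the Junge conditional-sequence-space machinery recalled in Section~\ref{sec:NCLq} together with dominated convergence. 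Finally, a density argument (the space of admissible bounded, compactly $\nu$-supported $F$ is dense in $\mathcal I_{p,q}$ for each regime, and both sides of the asserted equivalence are continuous in the $\mathcal I_{p,q}$ norm) extends the estimate from the approximating class to all $\widetilde{\mathcal P}$-measurable $F$, completing the proof.
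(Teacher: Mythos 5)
Your overall plan — reduce via Doob, discretize on dyadic blocks, apply the discrete Burkholder--Rosenthal inequality of Theorem~\ref{thm:summaryBRIntro}, then compare discrete and continuous norms — is the right skeleton, and indeed matches Step~1 of the paper's proof. But there are two genuine gaps in how you fill it in.

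First, your choice of a deterministic partition ``such that $\nu((t_{j-1}^n,t_j^n]\times J)$ is small uniformly in $j$ on the relevant support'' is not available: $\nu$ is a \emph{random} measure, and non-atomicity in time only gives you pathwise continuity of $s\mapsto\nu([0,s]\times J)$, not a uniform (in $\omega$) modulus. For a general optional $\widetilde{\mathcal P}$-$\sigma$-finite $\mu$ there need not be any finite partition with $\nu((t_{j-1}^n,t_j^n]\times J)\leq 1$ a.s.\ for all $j$. This smallness condition is exactly what is needed to invoke the Banach-valued Novikov inequality (Proposition~\ref{prop:Nov} and its conditional form, Corollary~\ref{cor:condexpmunu}) which is the engine that replaces the Poisson-specific moment formulas you appeal to. The paper handles this by an explicit time change (Proposition~\ref{prop:apptimechange}, Step~2 of the proof): setting $A_t=\nu([0,t]\times J)+t$ and transporting everything through the induced time change $\tau$, after which $\nu_\tau$ satisfies $\nu_\tau((s,t]\times J)\leq t-s$ a.s., so the dyadic blocks automatically have mass $\leq 1$. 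Without some such device your block estimates do not get off the ground. Note also that for a general integer-valued random measure there is no ``exponential-type formula for moments of compensated integrals''; the relevant two-sided $p$-th moment comparison on small blocks is precisely Lemma~\ref{lemma:Nov}/Proposition~\ref{prop:Nov}, and it requires the $\nu$-mass bound, not merely non-atomicity.

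Second, the claim that ``convergence of the three pieces yields convergence of the combined norm'' does not deliver the lower bound in the regimes where $\mathcal I_{p,q}$ is a \emph{sum}. The discrete sum norm $\|(d_j^n)_j\|_{\hat s_{p,q}}$ is an infimum over decompositions of the martingale difference sequence $(d_j^n)$; a decomposition of $(d_j^n)$ does not obviously lift to a $\widetilde{\mathcal P}$-measurable decomposition $F=F_1+F_2+F_3$ with comparable $\hat{\mathcal S}^p_q$, $\hat{\mathcal D}^p_{q,q}$, $\hat{\mathcal D}^p_{p,q}$ norms, so the discrete BR lower bound does not transfer to $\|F\mathbf 1_{[0,t]}\|_{\mathcal I_{p,q}}\lesssim (\E\|(F\star\bar\mu)_t\|^p)^{1/p}$ by the limit argument alone. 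The paper instead proves the lower bound by duality: it establishes $(\hat{\mathcal S}_q^p)^*=\hat{\mathcal S}_{q'}^{p'}$ and $(\hat{\mathcal D}_{q,q}^p)^*=\hat{\mathcal D}_{q',q'}^{p'}$ (Appendix~\ref{sec:appDuals}), uses the bilinear pairing $\E\langle F\star\bar\mu,G\star\bar\mu\rangle=\E\langle F,G\rangle\star\nu$ (Corollary~\ref{lemma:equalityfordual}), and invokes the abstract Lemma~\ref{lemma:jkstaff}. Your upper-bound argument (fix a decomposition, estimate block by block, take the infimum) is in fact what the paper does in Step~1; the lower bound needs the separate duality mechanism.
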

In the scalar-valued case this result is due to A.A.\ Novikov \cite{Nov75}. In the special case that $\mu$ is a Poisson random measure, Theorem~\ref{thm:mainintranmeas} was obtained in \cite{Dir14}. A very different proof of the upper bounds in Theorem~\ref{thm:mainintranmeas}, based on tools from stochastic analysis, was discovered independently of our work in \cite{Mar13}.\par 
The proof of the upper bounds in Theorem~\ref{thm:mainintranmeasIntro} relies on the Burkholder-Rosenthal inequalities in Theorem~\ref{thm:summaryBRIntro}, a Banach space-valued extension of Novikov's inequality in the special case that $\nu (\mathbb R_+ \times J)\leq 1$ a.s.\ (Proposition~\ref{prop:Nov}), and a time-change argument. For the lower bounds, the non-trivial work is to show that 
$$(\hat{\mathcal S}_q^p)^* = \hat{\mathcal S}_{q'}^{p'}, \qquad (\hat{\mathcal D}_{q,q}^p)^* = \hat{\mathcal D}_{q',q'}^{p'}$$
hold isomorphically with constants depending only on $p$ and $q$. These duality statements are derived in Appendix~\ref{sec:appDuals}.\par

\medskip

Our paper is structured as follows. In Section~\ref{sec:B-RProof} we prove Theorem~\ref{thm:summaryBRIntro}. Section~\ref{sec:dualHp} contains the proof of the duality for the space $H^{s_q}_p(X)$. In Section~\ref{sec:Ito} we prove the sharp bounds \eqref{eqn:ItoIsomorphism}. In particular, Sections~\ref{sec:ItoAJ}, \ref{sec:ItoPDQLC} and \ref{sec:ItoC} are dedicated to integration with respect to local martingales with accessible jumps, purely discontinuous quasi-left continuous local martingales and continuous local martingales, respectively. These three parts can be read independently of each other. Finally, in Section~\ref{sec:NCLq} we discuss the extension of Theorem~\ref{thm:summaryBRIntro} to martingales taking values in a noncommutative $L^q$-space. 

\section{Preliminaries}

Throughout, $(\Om,\cF,\bP)$ denotes a complete probability space. If $X$ and $Y$ are Banach spaces, then $\mathcal{L}(X,Y)$ denotes the Banach space of bounded linear operators from $X$ into $Y$.\par  
In the following, we will frequently use duality arguments for sums and intersections of Banach spaces. Let us recall some basic facts in this direction. If $(X,Y)$ is a compatible couple of Banach spaces, i.e., $X,Y$ are continuously embedded in a Hausdorff topological vector space, then their intersection $X\cap Y$ and sum $X+Y$ are Banach spaces under the norms
$$\|z\|_{X\cap Y} = \max\{\|z\|_{X},\|z\|_{Y}\}$$
and
$$\|z\|_{X+Y} = \inf\{\|x\|_{X} + \|y\|_{Y}:z=x+y, \ x \in X, \ y \in Y\}.$$
If $X\cap Y$ is dense in both $X$ and $Y$, then
\begin{equation}
\label{eqn:sumIntersectionDuality}
(X\cap Y)^* = X^* + Y^*, \qquad (X+Y)^* = X^*\cap Y^*
\end{equation}
hold isometrically. The duality brackets under these identifications are given by
$$\langle x^*,x\rangle = \langle x^*|_{X\cap Y},x\rangle \qquad (x^* \in X^* + Y^*)$$
and
\begin{equation}
\label{eqn:dualofsum}
\langle x^*,x\rangle = \langle x^*,y\rangle + \langle x^*,z\rangle \qquad (x^* \in X^*\cap Y^*, \ x=y+z \in X+Y),
\end{equation}
respectively, see e.g. \cite[Theorem I.3.1]{KPS82}.\par
The following observation facilitates a duality argument that we will use repeatly below. We provide a proof for the convenience of the reader.
\begin{lemma}\label{lemma:jkstaff}
 Let $X$ and $Y$ be Banach spaces, $X$ be reflexive, $U$ be a dense linear subspace of $X$, and let $V$ be a dense linear subspace of $X^*$. Consider $j_0 \in \mathcal L(U,Y)$ and $k_0 \in \mathcal L(V, Y^*)$ so that $\textnormal{ran } j_0$ is dense in $Y$ and $\langle x^*, x\rangle = \langle k_0(x^*), j_0(x)\rangle$ for each $x\in U, x^*\in V$. Then
 \begin{itemize}
  \item[(i)] there exists $j\in\mathcal L(X, Y)$, $k \in \mathcal L(X^*, Y^*)$ such that $j|_{U} = j_0$, $k|_{V} = k_0$,
  \item[(ii)] $\textnormal{ran } j = Y$, $\textnormal{ran } k = Y^*$, in particular $k$ and $j$ are invertible, and
  \item[(iii)] for each $x\in X$ and $x^* \in X^*$
  \begin{equation}\label{eq:equivfornorms}
   \begin{split}
    \frac{1}{\|k\|}\|x\|&\leq \|j(x)\|\leq \|j\|\|x\|,\\
    \frac{1}{\|j\|}\|x^*\|&\leq \|k(x^*)\|\leq \|k\|\|x^*\|.
   \end{split}
  \end{equation}
 \end{itemize}
\end{lemma}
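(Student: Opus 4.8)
The plan is to construct $j$ and $k$ by continuous extension, to read the two-sided estimate \eqref{eq:equivfornorms} off the compatibility identity $\langle x^*,x\rangle=\langle k(x^*),j(x)\rangle$ via Hahn--Banach, and finally to upgrade ``dense range'' to ``onto'' using that a bounded-below operator has closed range. First, since $U$ is dense in $X$ and $j_0\in\mathcal L(U,Y)$ is bounded for the norm of $X$, it extends uniquely to $j\in\mathcal L(X,Y)$ with $\|j\|=\|j_0\|$ and $j|_U=j_0$; likewise $k_0$ extends to $k\in\mathcal L(X^*,Y^*)$ with $\|k\|=\|k_0\|$ and $k|_V=k_0$, which gives (i). For fixed $x^*\in V$ the maps $x\mapsto\langle x^*,x\rangle$ and $x\mapsto\langle k(x^*),j(x)\rangle$ are continuous on $X$ and agree on the dense subspace $U$, hence on all of $X$; varying then $x^*$ over $X^*$ using density of $V$ yields
\[
\langle x^*,x\rangle=\langle k(x^*),j(x)\rangle\qquad(x\in X,\ x^*\in X^*).
\]

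Next, for the estimates in (iii): for $x\in X$, Hahn--Banach gives
\[
\|x\|=\sup_{\|x^*\|\leq 1}\bigl|\langle x^*,x\rangle\bigr|=\sup_{\|x^*\|\leq 1}\bigl|\langle k(x^*),j(x)\rangle\bigr|\leq\|k\|\,\|j(x)\|,
\]
which together with the trivial bound $\|j(x)\|\leq\|j\|\,\|x\|$ is exactly the first line of \eqref{eq:equivfornorms}; the second line follows in the same way from $\|x^*\|=\sup_{\|x\|\leq 1}\bigl|\langle x^*,x\rangle\bigr|$. In particular both $j$ and $k$ are bounded below, hence injective with closed range.

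It remains to prove surjectivity. Since $\textnormal{ran}\,j\supseteq\textnormal{ran}\,j_0$ is dense in $Y$ and, being the range of a bounded-below operator, closed, we get $\textnormal{ran}\,j=Y$; thus $j$ is an isomorphism of $X$ onto $Y$, and reflexivity of $X$ passes to $Y$. To see that $\textnormal{ran}\,k$, which is closed, is in fact all of $Y^*$, it suffices to show it is dense: if $y^{**}\in Y^{**}=Y$ annihilates $\textnormal{ran}\,k$, write $y^{**}=j(x)$ with $x\in X$, so that $\langle x^*,x\rangle=\langle k(x^*),j(x)\rangle=0$ for every $x^*\in X^*$; then $x=0$ and $y^{**}=0$. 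Hence $k$ is an isomorphism of $X^*$ onto $Y^*$, which proves (ii). (Alternatively, the displayed identity reads $j^{*}k=\mathrm{id}_{X^*}$, and $j^{*}$ is invertible since $j$ is.) I do not expect a genuine obstacle here: all the content is encoded in the compatibility identity, and the only points requiring care are the continuity/density bookkeeping in the extension step and the observation that a bounded-below operator has closed range, so that density of its range forces surjectivity; reflexivity of $X$ is used only to transfer reflexivity to $Y$ in the last step.
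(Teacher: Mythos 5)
Your proof is correct, but it reorders the logic relative to the paper and uses slightly different mechanisms, and both differences are worth noting. The paper first proves surjectivity of $k$ directly from the density of $\mathrm{ran}\,j$ (fix $y^*\in Y^*$, define $x^*\in X^*$ by $\langle x^*,x\rangle:=\langle y^*,j(x)\rangle$, and observe $\langle y^*-k(x^*),j(x)\rangle=0$ for all $x$), then uses reflexivity of $X$ to run the symmetric argument for $j$, and only afterwards deduces the two-sided norm estimate in (iii) by rewriting $\|j(x)\|$ as a supremum over $\{k(x^*):\|k(x^*)\|=1\}$ --- a step that requires already knowing $\mathrm{ran}\,k=Y^*$. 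You instead extract (iii) immediately from the compatibility identity via Hahn--Banach, which is shorter and does not presuppose any surjectivity; you then get (ii) by invoking that a bounded-below operator has closed range, so density of $\mathrm{ran}\,j_0$ forces $\mathrm{ran}\,j=Y$, and transferring reflexivity from $X$ to $Y$ to handle $k$ (with $j^*k=\mathrm{id}_{X^*}$ as an even quicker alternative). Your route makes the norm estimate logically prior and purely elementary, at the cost of leaning on the closed-range fact and on reflexivity at a different point in the argument; the paper's route keeps the surjectivity of $k$ free of reflexivity. Both uses of reflexivity are legitimate, and the order of operations you chose is internally consistent: you establish $j$ onto before asserting $Y^{**}=Y$, so there is no circularity.
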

\begin{proof} (i) holds due to the continuity of $j_0$ and $k_0$, and as a consequence
\begin{equation*}
 \langle x^*, x\rangle = \langle k(x^*), j(x)\rangle, \;\; x\in X, x^* \in X^*.
\end{equation*}
 Notice that $j$ and $k$ are embeddings. Indeed, $\langle x^*, x\rangle = \langle k(x^*), j(x)\rangle$ for each $x\in X$, $x^*\in X^*$, so for each nonzero $x\in X$, $x^*\in X^*$ both $j(x)$ and $k(x^*)$ define nonzero linear functionals on $Y^*$ and $Y$ respectively, hence they are nonzero.

For (ii), fix any $y^*\in Y^*$. Since $j\in \mathcal L(X, Y)$, we can define $x^*\in X^*$ by 
$$\langle x^*, x\rangle := \langle y^*, j(x)\rangle, \qquad x\in X.$$ 
Since $\langle x^*, x\rangle = \langle k(x^*), j(x)\rangle$ and hence $\langle y^*-k(x^*),j(x)\rangle=0$ for any $x\in X$, we conclude by density of $\textnormal{ran } j$ that $y^*=k(x^*)$. Thus $\text{ran }k = Y^*$ and $k$ is invertible by the bounded inverse theorem. Using reflexivity of $X$ one can similarly prove the statement for $j$. To prove (iii), we note that for each $x\in X$
\begin{align*}
  \|j(x)\| &= \sup_{x^* \in X^*, \|k(x^*)\|=1}\langle k(x^*), j(x)\rangle = \sup_{x^*\in X^*, \|k(x^*)\|=1}\langle x^*, x\rangle\\
  &\geq \sup_{x^* \in X^*, \|x^*\|=\frac 1{\|k\|}}\langle x^*, x\rangle = \frac 1{\|k\|} \|x\|,
\end{align*}
and obviously $\|j(x)\|\leq \|j\|\|x\|$. The estimates for $k$ are derived similarly.
\end{proof}

\section{$L^q$-valued Burkholder-Rosenthal inequalities} 
\label{sec:B-RProof}

In this section we prove Theorem~\ref{thm:summaryBRIntro}. Our starting point is the following  $L^q$-valued version of the classical Rosenthal inequalities \cite{Ros70}. For $1\leq p,q<\infty$ let $S_q$ and $D_{p,q}$ be the spaces of all sequences of $L^q(S)$-valued random variables such the respective norms
\begin{equation}
\label{eqn:normsRosCom}
\begin{split}
\|(f_i)\|_{S_{q}} & = \Big\|\Big(\sum_i \E|f_i|^2\Big)^{\frac{1}{2}}\Big\|_{L^q(S)},\\
\|(f_i)\|_{D_{p,q}} & = \Big(\sum_i\E\|f_i\|_{L^q(S)}^p\Big)^{\frac{1}{p}}
\end{split}
\end{equation}
are finite. Note that the following result corresponds to a special case of Theorem~\ref{thm:summaryBRIntro}, in which the martingale differences $d_i$ are independent.
\begin{theorem}
\cite{Dir14} \label{thm:summaryRosIntro} Let $1<p,q<\infty$ and let $(S,\Si,\si)$ be a measure space. If $(\xi_i)$ is a sequence of independent, mean-zero random variables taking values in $L^q(S)$, then
\begin{equation}
\label{eqn:summaryRos} \Big(\E\Big\|\sum_i \xi_i\Big\|_{L^q(S)}^p\Big)^{\frac{1}{p}} {\eqsim}_{p,q} \|(\xi_i)\|_{s_{p,q}},
\end{equation}
where $s_{p,q}$ is given by
\begin{align*}
S_{q} \cap D_{q,q} \cap D_{p,q} & \ \ \mathrm{if} \ \ 2\leq q\leq p<\infty;\\
S_{q} \cap (D_{q,q} + D_{p,q}) & \ \ \mathrm{if} \ \ 2\leq p\leq q<\infty;\\
(S_{q} \cap D_{q,q}) + D_{p,q} & \ \ \mathrm{if} \ \ 1<p<2\leq q<\infty;\\
(S_{q} + D_{q,q}) \cap D_{p,q} & \ \ \mathrm{if} \ \ 1<q<2\leq p<\infty;\\
S_{q} + (D_{q,q} \cap D_{p,q}) & \ \ \mathrm{if} \ \ 1<q\leq p\leq 2;\\
S_{q} + D_{q,q} + D_{p,q} & \ \ \mathrm{if} \ \ 1<p\leq q\leq 2.
\end{align*}
Moreover, the estimate $\lesssim_{p,q}$ in (\ref{eqn:summaryRos}) remains valid if $p=1$, $q=1$ or both.
\end{theorem}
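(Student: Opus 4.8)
The plan is to prove the two-sided estimate \eqref{eqn:summaryRos} in two steps: an upper bound for every $1<p,q<\infty$ (and the edge cases $p=1$, $q=1$) via a truncation argument of Rosenthal type \cite{Ros70}, and a matching lower bound obtained from the upper bound for the conjugate exponents by duality. The duality step is the easy one here, since in the \emph{independent} case the three building blocks are transparent: up to an isometric identification, $D_{q,q}$ is $L^q$ of the product measure space $\N\times\Om\times S$, $D_{p,q}$ is $\ell^p$ of $L^p(\Om;L^q(S))$, and $S_q$ is $L^q(S;\ell^2(L^2(\Om)))$, a Bochner space over a Hilbert space; hence $(D_{q,q})^*=D_{q',q'}$, $(D_{p,q})^*=D_{p',q'}$ and $(S_q)^*=S_{q'}$ trivially — all under the pairing $\langle(\xi_i),(\eta_i)\rangle=\sum_i\E\langle\xi_i,\eta_i\rangle$ — and combining these via \eqref{eqn:sumIntersectionDuality} yields $(s_{p,q})^*=s_{p',q'}$, the six regimes pairing up conjugately and the requisite density of the intersections being clear. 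Granting the upper bound for $(p',q')$, I would then deduce the lower bound as follows: for independent mean-zero $(\xi_i)$, after a routine conditioning step one may assume the competing sequence $(\eta_i)$ is also independent and mean zero, so that the cross terms in $\E\langle\sum_i\xi_i,\sum_i\eta_i\rangle$ vanish; testing $\|(\xi_i)\|_{s_{p,q}}=\sup\{|\sum_i\E\langle\xi_i,\eta_i\rangle|:\|(\eta_i)\|_{s_{p',q'}}\le1\}$ against $\|\sum_i\xi_i\|_{L^p(\Om;L^q(S))}$ and invoking the upper bound for $(p',q')$ gives $\|(\xi_i)\|_{s_{p,q}}\le C_{p,q}\|\sum_i\xi_i\|_{L^p(\Om;L^q(S))}$.

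For the upper bound I would first settle the case $p=q$: here Fubini gives $\E\|\sum_i\xi_i\|_{L^q(S)}^q=\int_S\E|\sum_i\xi_i(s)|^q\ud\rho(s)$, and applying the scalar Rosenthal inequality pointwise in $s$ — in its ``$\max$'' form when $q\ge2$ and its dual ``$\inf$'' form when $q\le2$, the latter requiring a measurable selection to pull the infimum through the $s$-integral — reproduces the $s_{q,q}$-norm. For $p\ne q$ I would truncate at a level $\lambda$ calibrated to the $D_{q,q}$-norm, writing $\xi_i=\xi_i\mathbf 1_{\{\|\xi_i\|_{L^q(S)}>\lambda\}}+\zeta_i$ with $\zeta_i$ the recentered bounded part (so $\|\zeta_i\|_{L^q(S)}\le2\lambda$ a.s.). The ``large'' part is a sum of independent terms, dominated by $\|\sum_i\|\xi_i\|_{L^q(S)}\mathbf 1_{\{\|\xi_i\|_{L^q(S)}>\lambda\}}\|_{L^p(\Om)}$, which the Rosenthal inequality for \emph{nonnegative} summands controls by the $D_{p,q}$- and $D_{q,q}$-norms. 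The ``bounded'' part is handled by the Burkholder square-function inequality, valid in the UMD Banach function space $L^q(S)$; the ensuing square function $\|(\sum_i|\zeta_i|^2)^{1/2}\|_{L^p(\Om;L^q(S))}=\|\sum_i|\zeta_i|^2\|_{L^{p/2}(\Om;L^{q/2}(S))}^{1/2}$ is itself a sum of nonnegative independent terms and is brought under control by a further application of the nonnegative-summand Rosenthal inequality, this time in $L^{q/2}(S)$, the uniform bound $\|\zeta_i\|_{L^q(S)}\le2\lambda$ closing the estimate and its output involving the $S_q$-, $D_{q,q}$- and $D_{p,q}$-norms. Tracking which of $S_q$, $D_{q,q}$, $D_{p,q}$ dominates in which range of $(p,q)$, and whether they reinforce (giving an intersection) or split (giving a sum), produces the six-regime description of $s_{p,q}$; the same argument covers $p=1$ and/or $q=1$.

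The hard part will be the bookkeeping in the truncation step: the truncation level and the two rounds of square-function estimates must be organized so that the output is \emph{exactly} the $s_{p,q}$-norm in each of the six regimes, with constants depending only on $p$ and $q$. In particular one must control precisely the discrepancy between the ``random'' square function $\|(\sum_i|\xi_i|^2)^{1/2}\|_{L^p(\Om;L^q(S))}$ and its ``averaged'' version $\|(\sum_i\E|\xi_i|^2)^{1/2}\|_{L^q(S)}=\|(\xi_i)\|_{S_q}$ — itself a Rosenthal-type estimate for the nonnegative sequence $(|\xi_i|^2)_i$, in which the interplay of the exponents $2$, $q$ and $p$ is what really bites. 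The complete argument, with the claimed (non-sharp) constant dependence, is carried out in \cite{Dir14}, whence we quote the result.
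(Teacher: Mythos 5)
The paper does not prove Theorem~\ref{thm:summaryRosIntro}: it is quoted from \cite{Dir14} (note the citation inside the theorem header) and used as a black box in the proof of Theorem~\ref{thm:summaryBRIntro}, so there is no ``paper's own proof'' against which to compare your argument. Your sketch is nevertheless a plausible reconstruction of the argument in \cite{Dir14}, and the overall architecture — a Rosenthal-style truncation combined with the Burkholder square-function estimate for the upper bound, and a duality argument for the lower bound that exploits the transparent dual descriptions of $S_q$, $D_{q,q}$ and $D_{p,q}$ in the absence of conditional expectations — is sound. Two remarks are in order. First, the ``routine conditioning step'' should be spelled out: replacing a general competitor $\eta_i$ by $\eta_i' := \E[\eta_i\mid\xi_i]-\E\eta_i$ preserves $\E\langle\xi_i,\eta_i\rangle$ (as $\xi_i$ is $\sigma(\xi_i)$-measurable and mean zero), forces the cross terms $\E\langle\xi_i,\eta_j'\rangle$, $i\ne j$, to vanish because $\eta_j'$ is $\sigma(\xi_j)$-measurable and hence independent of $\xi_i$, and Jensen's inequality shows that this projection does not increase any of the three component norms up to a universal constant; without this, the passage from the abstract dual norm to the pairing $\E\langle\sum_i\xi_i,\sum_j\eta_j\rangle$ is not justified. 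Second, the closing assertion that ``the same argument covers $p=1$ and/or $q=1$'' is imprecise as written: the Burkholder square-function inequality you invoke for the bounded truncated part requires $L^q(S)$ to be UMD, i.e.\ $1<q<\infty$, so the endpoint $q=1$ genuinely needs a separate treatment even though only the upper bound is claimed there; since the theorem explicitly asserts these boundary cases, this is worth flagging when deferring to \cite{Dir14}.
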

To derive the upper bound in Theorem~\ref{thm:summaryBRIntro} we use the following decoupling techniques from \cite{KwW92}. Let $(\Om,\cF,\bP)$ be a complete probability space, let $(\cF_i)_{i\geq 0}$ be a filtration and let $X$ be a (quasi-)Banach space. Two $(\cF_i)_{i\geq 1}$-adapted sequences $(d_i)_{i\geq 1}$ and
$(e_i)_{i\geq 1}$ of $X$-valued random variables are called \emph{tangent} if for every $i\geq 1$ and $A \in \cB(X)$
\begin{equation}
\label{eqn:tangent}
\bP(d_i \in A|\cF_{i-1}) = \bP(e_i \in A|\cF_{i-1}).
\end{equation}
An $(\cF_i)_{i\geq 1}$-adapted sequence $(e_i)_{i\geq 1}$ of $X$-valued random variables is said to satisfy \emph{condition (CI)} if, firstly, there is a
sub-$\si$-algebra $\cG\subset\cF_{\infty}=\si(\cup_{i\geq 0}\cF_i)$ such that for every $i\geq 1$ and $A \in \cB(X)$,
\begin{equation}
\label{eqn:CI}
\bP(e_i \in A|\cF_{i-1}) = \bP(e_i \in A|\cG)
\end{equation}
and, secondly, $(e_i)_{i\geq 1}$ consists of $\cG$-independent random variables, i.e.\ for all $n\geq 1$ and $A_1,\ldots,A_n \in \cB(X)$,
$$\E(\mathbf 1_{e_1 \in A_1}\cdot\ldots\cdot \mathbf 1_{e_n \in A_n}|\cG) = \E(\mathbf 1_{e_1 \in A_1}|\cG)\cdot\ldots\cdot\E(\mathbf 1_{e_n \in A_n}|\cG).$$
It is shown in \cite{KwW92} that for every $(\cF_i)_{i\geq 1}$-adapted sequence $(d_i)_{i\geq 1}$ there exists an $(\cF_i)_{i\geq 1}$-adapted sequence $(e_i)_{i\geq
1}$ on a possibly enlarged probability space which is tangent to $(d_i)_{i\geq 1}$ and satisfies condition (CI). This sequence is called a \emph{decoupled tangent sequence} for $(d_i)_{i\geq 1}$ and is unique in law.\par 
To derive the upper bound in Theorem~\ref{thm:summaryBRIntro} for a given martingale difference sequence $(d_i)_{i\geq 1}$ we apply Theorem~\ref{thm:summaryRosIntro} conditionally to its decoupled tangent sequence $(e_i)_{i\geq 1}$. For this approach to work, we will need to relate various norms on $(d_i)_{i\geq 1}$ and $(e_i)_{i\geq 1}$. One of these estimates can be formulated as a Banach space property. Following~\cite{CoV12}, we say that a (quasi-)Banach space $X$ satisfies \emph{the $p$-decoupling property} if for some $0<p<\infty$ there is a constant $C_{p,X}$ such that for any complete probability space $(\Om,\cF,\bP)$, any filtration $(\cF_i)_{i\geq 0}$, and any $(\cF_{i})_{i\geq 1}$-adapted sequence $(d_i)_{i\geq 1}$ in $L^p(\Om,X)$,
\begin{equation}
\label{eqn:decouple}
\Big(\E\Big\|\sum_{i=1}^n d_i\Big\|_X^p\Big)^{\frac{1}{p}} \leq C_{p,X} \Big(\E\Big\|\sum_{i=1}^n e_i\Big\|_X^p\Big)^{\frac{1}{p}},
\end{equation}
for all $n\geq 1$, where $(e_i)_{i\geq 1}$ is the decoupled tangent sequence of $(d_i)_{i\geq 1}$. It is shown in \cite[Theorem 4.1]{CoV12} that this property is independent of $p$, so we may simply say that $X$ satisfies the \emph{decoupling property} if it satisfies the $p$-decoupling property for some (then all) $0<p<\infty$. Known examples of spaces satisfying the decoupling property are the $L^q(S)$-spaces for any $0<q<\infty$ and UMD Banach spaces. If $X$ is a UMD Banach space, then one can also \emph{recouple}, meaning that for all $1<p<\infty$ there is a constant $c_{p,X}$ such that for any martingale difference sequence $(d_i)_{i\geq 1}$ and any associated decoupled tangent sequence $(e_i)_{i\geq 1}$,
\begin{equation}
\label{eqn:recouple}
\Big(\E\Big\|\sum_{i=1}^n e_i\Big\|_X^p\Big)^{\frac{1}{p}} \leq c_{p,X} \Big(\E\Big\|\sum_{i=1}^n d_i\Big\|_X^p\Big)^{\frac{1}{p}}.
\end{equation}
Conversely, if both (\ref{eqn:decouple}) and (\ref{eqn:recouple}) hold for some (then all) $1<p<\infty$, then $X$ must be a UMD space. This equivalence is independently due to McConnell \cite{McC89} and Hitczenko \cite{HitUP}.\par
To further relate a sequence with its decoupled tangent sequence we use the following technical observation, which is a special case of \cite[Lemma 2.7]{CoV12}.
\begin{lemma}
\label{lem:tanFun} Let $X$ be a (quasi-)Banach space and for every $i\geq 1$ let $h_i:X\rightarrow X$ be a Borel measurable function. Let $(d_i)_{i\geq 1}$ be an $(\cF_i)_{i\geq 1}$-adapted sequence and $(e_i)_{i\geq 1}$ a decoupled tangent sequence. Then $(h_i(e_i))_{i\geq 1}$ is a decoupled tangent sequence for $(h_i(d_i))_{i\geq 1}$.
\end{lemma}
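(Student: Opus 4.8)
The statement to be proven is Lemma~\ref{lem:tanFun}: if $(e_i)$ is a decoupled tangent sequence for an adapted sequence $(d_i)$, and $h_i : X \to X$ are Borel measurable, then $(h_i(e_i))$ is a decoupled tangent sequence for $(h_i(d_i))$. The plan is to verify the two defining properties directly: that $(h_i(e_i))$ is tangent to $(h_i(d_i))$, and that $(h_i(e_i))$ satisfies condition (CI), the latter with respect to the \emph{same} $\sigma$-algebra $\cG$ witnessing (CI) for $(e_i)$.

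First I would check tangency. Fix $i \geq 1$ and $A \in \cB(X)$. Since $h_i$ is Borel measurable, $h_i^{-1}(A) \in \cB(X)$, and $\{h_i(d_i) \in A\} = \{d_i \in h_i^{-1}(A)\}$ with the analogous identity for $e_i$. Both $h_i(d_i)$ and $h_i(e_i)$ are $\cF_i$-measurable (as compositions of $\cF_i$-measurable maps with Borel functions), so the conditional probabilities $\bP(h_i(d_i)\in A \mid \cF_{i-1})$ and $\bP(h_i(e_i)\in A \mid \cF_{i-1})$ are well-defined, and by \eqref{eqn:tangent} applied with the set $h_i^{-1}(A)$,
\[
\bP(h_i(d_i) \in A \mid \cF_{i-1}) = \bP(d_i \in h_i^{-1}(A)\mid \cF_{i-1}) = \bP(e_i \in h_i^{-1}(A) \mid \cF_{i-1}) = \bP(h_i(e_i)\in A \mid \cF_{i-1}).
\]
This gives tangency. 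Next, for condition (CI), let $\cG$ be the sub-$\sigma$-algebra from \eqref{eqn:CI} for the sequence $(e_i)$. The same computation as above, using \eqref{eqn:CI} with $h_i^{-1}(A)$ in place of $A$, shows $\bP(h_i(e_i)\in A \mid \cF_{i-1}) = \bP(e_i \in h_i^{-1}(A)\mid \cF_{i-1}) = \bP(e_i \in h_i^{-1}(A)\mid \cG) = \bP(h_i(e_i)\in A \mid \cG)$. Finally, $\cG$-independence of $(h_i(e_i))$ follows from $\cG$-independence of $(e_i)$: for any $n$ and $A_1,\dots,A_n \in \cB(X)$, the events $\{h_i(e_i)\in A_i\} = \{e_i \in h_i^{-1}(A_i)\}$ factorize conditionally on $\cG$ exactly because the events $\{e_i \in h_i^{-1}(A_i)\}$ do.

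Since $(h_i(e_i))$ is tangent to $(h_i(d_i))$ and satisfies condition (CI), and since the decoupled tangent sequence is unique in law, $(h_i(e_i))$ is \emph{the} decoupled tangent sequence for $(h_i(d_i))$, proving the lemma. I expect no serious obstacle here: the only point requiring a little care is the measurability bookkeeping — ensuring $h_i^{-1}(A)$ is Borel (automatic from Borel measurability of $h_i$) and that $h_i(e_i)$, $h_i(d_i)$ are adapted to the right $\sigma$-algebras so the conditional probabilities make sense. Alternatively, one could simply cite \cite[Lemma 2.7]{CoV12} as the statement is explicitly a special case thereof; I would include the short direct argument above for completeness.
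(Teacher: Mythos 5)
Your proof is correct, and in fact the paper offers no argument at all for this lemma — it simply cites \cite[Lemma~2.7]{CoV12} as a black box. Your direct verification of the two defining properties (tangency via $\{h_i(d_i)\in A\}=\{d_i\in h_i^{-1}(A)\}$, and condition (CI) with respect to the same $\sigma$-algebra $\cG$ that witnesses (CI) for $(e_i)$) is exactly the standard argument one would expect, and it is self-contained where the paper is not. One small stylistic remark: the closing appeal to uniqueness in law of the decoupled tangent sequence is unnecessary, since by definition a decoupled tangent sequence for $(h_i(d_i))$ \emph{is} any sequence tangent to it satisfying (CI), which is precisely what you have established; you could end there. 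Also, the lemma states that $(h_i(e_i))$ is \emph{a} decoupled tangent sequence, not \emph{the} one, so uniqueness plays no role.
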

We are now ready to prove the announced result.
\begin{proof}
\emph{(Of Theorem~\ref{thm:summaryBRIntro})}
{\it Step 1: upper bounds.} We will only give a proof in the case $1 \leq q\leq 2\leq p<\infty$. The other cases are proved analogously. Let us write $\E_{\cG} = \E(\cdot|\cG)$ for brevity. By density we may assume that the $d_i$ take values in $L^q(S)\cap L^{\infty}(S)$. Fix an arbitrary decomposition $d_i = d_{i,1} + d_{i,2}$, where $d_{i,1},d_{i,2}$ are $L^q(S)\cap L^{\infty}(S)$-valued martingale difference sequences. Let $e_i = (e_{i,1},e_{i,2})$ be the decoupled tangent sequence for the martingale difference sequence $(d_{i,1},d_{i,2})$ which takes values in $(L^q(S)\cap L^{\infty}(S))\ti (L^q(S)\cap L^{\infty}(S))$. Lemma~\ref{lem:tanFun} implies that $d_{i,\alpha}$ is the decoupled tangent sequence for $e_{i,\alpha}$, $\alpha=1,2$, and $e_{i,1}+e_{i,2}$ is the decoupled tangent sequence for $d_i$. By the decoupling property for $L^q(S)$,
\begin{align*}
\Big(\E\Big\|\sum_i d_i\Big\|_{L^q(S)}^p\Big)^{\frac{1}{p}} & \lesssim_{p,q} \Big(\E\Big\|\sum_i e_{i,1}+e_{i,2}\Big\|_{L^q(S)}^p\Big)^{\frac{1}{p}}.
\end{align*}
Since the summands $e_{i,1}+e_{i,2}$ are $\cG$-conditionally independent and $\cG$-mean zero, we can apply Theorem~\ref{thm:summaryRosIntro} conditionally to find, a.s., 
\begin{align*}
& \Big(\E_{\cG}\Big\|\sum_i e_{i,1}+e_{i,2}\Big\|_{L^q(S)}^p\Big)^{\frac{1}{p}} \\
& \qquad \lesssim_{p,q} \max\Big\{\Big\|\Big(\sum_i \E_{\cG}|e_{i,1}|^2\Big)^{\frac{1}{2}}\Big\|_{L^q(S)} \\
& \qquad \qquad \qquad \qquad + \Big(\sum_i \E_{\cG}\|e_{i,2}\|_{L^q(S)}^q\Big)^{\frac{1}{q}}, \Big(\sum_i
\E_{\cG}\|e_{i,1}+e_{i,2}\|_{L^q(S)}^p\Big)^{\frac{1}{p}}\Big\}.
\end{align*}
Now we take $L^p$-norms on both sides and apply the triangle inequality to obtain
\begin{align*}
& \Big(\E\Big\|\sum_i d_i\Big\|_{L^q(S)}^p\Big)^{\frac{1}{p}} \\
& \qquad \lesssim_{p,q}
\max\Big\{\Big(\E\Big\|\Big(\sum_i \E_{\cG}|e_{i,1}|^2\Big)^{\frac{1}{2}}\Big\|_{L^q(S)}^p\Big)^{\frac{1}{p}} \\
& \qquad \qquad \qquad \qquad + \Big(\E\Big(\sum_i \E_{\cG}\|e_{i,2}\|_{L^q(S)}^q\Big)^{\frac{p}{q}}\Big)^{\frac{1}{p}}, \Big(\sum_i
\E\|e_{i,1}+e_{i,2}\|_{L^q(S)}^p\Big)^{\frac{1}{p}}\Big\} 
\end{align*}
By the properties \eqref{eqn:CI} and \eqref{eqn:tangent} of a decoupled tangent sequence, 
$$\E_{\cG}|e_{i,1}|^2 = \E_{i-1}|e_{i,1}|^2 = \E_{i-1}|d_{i,1}|^2,$$ 
and therefore
$$\Big(\sum_i \E_{\cG} |e_{i,1}|^2\Big)^{\frac{1}{2}} = \Big(\sum_i \E_{i-1}|d_{i,1}|^2\Big)^{\frac{1}{2}} .$$
Similarly,
$$\E_{\cG}\|e_{i,2}\|_{L^q(S)}^q = \E_{i-1}\|d_{i,2}\|_{L^q(S)}^q.$$
We conclude that
\begin{align*}
& \Big(\E\Big\|\sum_i d_i\Big\|_{L^q(S)}^p\Big)^{\frac{1}{p}} \\
& \qquad \lesssim_{p,q}
 \max\Big\{\Big(\E\Big\|\Big(\sum_i \E_{i-1}|d_{i,1}|^2\Big)^{\frac{1}{2}}\Big\|_{L^q(S)}^p\Big)^{\frac{1}{p}} \\
& \qquad \qquad \qquad \qquad + \Big(\E\Big(\sum_i \E_{i-1}\|d_{i,2}\|_{L^q(S)}^q\Big)^{\frac{p}{q}}\Big)^{\frac{1}{p}}, \Big(\sum_i
\E\|d_i\|_{L^q(S)}^p\Big)^{\frac{1}{p}}\Big\}.
\end{align*}
Taking the infimum over all decompositions as above yields the inequality `$\lesssim_{p,q}$' in~(\ref{eqn:summaryBR}).\par
{\it Step 2: lower bounds}. We deduce the lower bounds by duality. Since $(S^p_q)^*=S^{p'}_{q'}$ (by \eqref{eqn:SqdualNC}), $(D^p_{p,q})^*=D^{p'}_{p',q'}$, and $(D^p_{q,q})^*=D^{p'}_{q',q'}$ (by Theorem~\ref{thm:(H^{s_q}_p(X))^*= H^{s_{q'}}_{p'}(X^*)} below) hold isomorphically with constants depending only on $p$ and $q$, it follows from \eqref{eqn:sumIntersectionDuality} that $s_{p,q}^* = s_{p',q'}$ with duality bracket
$$\langle(f_i),(g_i)\rangle = \sum_i \E\langle f_i, g_i\rangle  \qquad ((f_i) \in s_{p,q}, \ (g_i)\in s_{p',q'}).$$
Let $\hat{x}^* \in (\hat{s}_{p,q})^*$. Define the map $P:s_{p,q}\to\hat{s}_{p,q}$ by 
$$P((f_i)) = (\Delta_i f_i),$$
where $\Del_i:=\E_i-\E_{i-1}$. By the triangle inequality and Jensen's inequality one readily sees that $P$ is a bounded projection. As a consequence, we can define $x^*\in s_{p,q}^*$ by $x^*=\hat{x}^*\circ P$. Let $(g_i)\in s_{p',q'}$ be such that 
$$x^*((f_i)) = \sum_i \E\langle f_i, g_i\rangle \qquad ((f_i) \in s_{p,q}).$$
Then, for any $(f_i) \in  \hat{s}_{p,q}$,
$$\hat{x}^*((f_i)) = \sum_i \E\langle f_i, g_i\rangle = \sum_i \E\langle f_i, \Del_i g_i\rangle = \langle(f_i),P(g_i)\rangle.$$
This shows that $(\hat{s}_{p,q})^*=\hat{s}_{p',q'}$ isomorphically. Let $U$ and $V$ be the dense linear subspaces spanned by all finite martingale difference sequences in $\hat{s}_{p,q}$ and $\hat{s}_{p',q'}$, respectively. Define   
$$Y=\overline{\text{span}}\Big\{\sum_i d_i \ : \ (d_i) \in U\Big\}\subset L^p(\Om;L^q(S)).$$
By Step 1, we can define two maps $j_0 \in \mathcal L(U,Y)$, $k_0 \in \mathcal L(V, Y^*)$ by 
$$j_0((d_i))  = \sum_i d_i, \qquad k_0((\tilde{d}_i))  = \sum_i \tilde{d}_i.$$
By the martingale difference property,
\begin{equation}\label{eq:sumofexpect=expectofsum}
 \langle j_0((d_i)), k_0((\tilde{d}_i))\rangle  = \E\Big\langle \sum_i d_i,  \sum_i \tilde{d}_i\Big\rangle = \sum_i \E\langle d_i,\tilde{d}_i\rangle = \langle (d_i), (\tilde{d}_i)\rangle.
\end{equation}
The lower bounds now follow immediately from Lemma~\ref{lemma:jkstaff}.\par
For the final assertion of the theorem, suppose that $\cF=\si(\cup_{i\geq 0}\cF_i)$. Let $f\in L_0^p(\Omega;L^q(S))$ and define $f_n=\E_n f$. Then $\lim_{n\rightarrow \infty} f_n=f$ (see e.g.\ \cite[Theorem 3.3.2]{HNVW1}). Conversely, let $(f_n)_{n\geq 1}$ be a martingale with $\sup_{n\geq 1}\|f_n\|_{L^p(\Om;L^q(S))}<\infty$. By reflexivity of $L^q(S)$ we have $L^p(\Omega;L^q(S)) = (L^{p'}(\Omega;L^{q'}(S)))^*$ and hence its unit ball is weak$^*$-compact. Let $f$ be the weak$^*$-limit of $(f_n)$. It is easy to check that $f_n=\E_n f$. In conclusion, any martingale difference sequence $(d_i)_{i\geq 0}$ of a bounded martingale in $L^p(\Om;L^q(S))$ corresponds uniquely to an $f \in L^p(\Om;L^q(S))$ such that
$$f - \E f = \sum_i d_i, \qquad d_i = \E_i f - \E_{i-1} f.$$
The two-sided inequality (\ref{eqn:summaryBR}) now implies that the map $f\mapsto (\E_i f - \E_{i-1} f)_{i\geq 0}$ is a linear isomorphism between $L^p_0(\Om;L^q(S))$ and $\hat{s}_{p,q}$, with constants depending only on $p$ and $q$.
\end{proof}

\begin{remark}\label{rem:oddmartingales}
 Let $1<p,q<\infty$. Define $\hat S^{p,odd}_q$, $\hat D^{p,odd}_{q,q}$ and $\hat D^{p,odd}_{p,q}$ as the closed subspaces of $\hat S^{p}_q$, $\hat D^{p}_{q,q}$ and $\hat D^{p}_{p,q}$, respectively, spanned by all $L^q$-valued martingale difference sequences $(d_i)_{i\geq 0}$ such that $d_{2i} = 0$ for each $i\geq 0$. By the proof of Theorem~\ref{thm:summaryBRIntro}, any $L^q$-valued martingale difference sequence $(d_i)_{i\geq 0}$ such that $d_{2i}=0$ for each $i\geq 0$ satisfies
 \begin{equation*}
 \Big(\E\Big\|\sum_i d_i\Big\|_{L^q(S)}^p\Big)^{\frac{1}{p}} {\eqsim}_{p,q} \|(d_i)\|_{\hat{s}^{odd}_{p,q}},
\end{equation*}
where $\hat{s}^{odd}_{p,q}$ is given by
\begin{align*}
\hat{S}_q^{p,odd}\cap \hat{D}_{q,q}^{p,odd} \cap \hat{D}_{p,q}^{p,odd} & \ \ \mathrm{if} \ \ 2\leq q\leq p<\infty;\\
\hat{S}_q^{p,odd}\cap (\hat{D}_{q,q}^{p,odd} + \hat{D}_{p,q}^{p,odd}) & \ \ \mathrm{if} \ \ 2\leq p\leq q<\infty;\\
(\hat{S}_q^{p,odd} \cap \hat{D}_{q,q}^{p,odd}) + \hat{D}_{p,q}^{p,odd} & \ \ \mathrm{if} \ \ 1<p<2\leq q<\infty;\\
(\hat{S}_q^{p,odd} + \hat{D}_{q,q}^{p,odd}) \cap \hat{D}_{p,q}^{p,odd} & \ \ \mathrm{if} \ \ 1<q<2\leq p<\infty;\\
\hat{S}_q^{p,odd} + (\hat{D}_{q,q}^{p,odd} \cap \hat{D}_{p,q}^{p,odd}) & \ \ \mathrm{if} \ \ 1<q\leq p\leq 2;\\
\hat{S}_q^{p,odd} + \hat{D}_{q,q}^{p,odd} + \hat{D}_{p,q}^{p,odd} & \ \ \mathrm{if} \ \ 1<p\leq q\leq 2.
\end{align*}
This fact will be used in the proof of Theorem~\ref{thm:acessjumpsinTL^q}.
\end{remark}
\begin{remark}
Let us compare our result to the literature. As was mentioned in the introduction, the scalar-valued version of Theorem~\ref{thm:summaryBRIntro} is due to Burkholder \cite{Bur73}, following work of Rosenthal \cite{Ros70}. A version for noncommutative martingales, as well as a version of (\ref{eqn:ClassicalBRIntro}) for $1<p\leq 2$, was obtained by Junge and Xu \cite{JuX03}. Various upper bounds for the moments of a martingale with values in a uniformly $2$-smooth (or equivalently, cf.\ \cite{Pis75}, martingale type $2$) Banach space were obtained by Pinelis \cite{Pin94}, with constants of optimal order. For instance, if $2\leq p<\infty$ then (\cite{Pin94}, Theorem 4.1)
\begin{equation}
\label{eqn:Pinelis}
\Big(\Ex\Big\|\sum_i d_i\Big\|_X^p\Big)^{\frac{1}{p}} \lesssim p(\E\sup_i\|d_i\|_X^p)^{\frac{1}{p}} + \sqrt{p}\tau_2(X)\Big(\E\Big(\sum_i \E_{i-1}\|d_i\|_X^2\Big)^{\frac{p}{2}}\Big)^{\frac{1}{p}},
\end{equation}
where $\tau_2(X)$ is the $2$-smoothness constant of $X$. As already remarked in \cite{Pin94}, due to the presence of the second term on the right hand side this type of inequality cannot hold in a Banach space which is not $2$-uniformly smooth (or equivalently, has martingale type $2$). On the other hand, one can show that the reverse inequality holds (with different constants) if and only if $X$ is $2$-uniformly convex (or equivalently, has martingale cotype $2$). Thus, a two-sided inequality involving the norm on the right hand side of (\ref{eqn:Pinelis}) can only hold in a space with both martingale type and cotype equal to $2$. Such a space is necessarily isomorphic to a Hilbert space by a~well-known result of Kwapie\'{n} (see e.g.\ \cite{AlK06}, Theorem 7.4.1). 

It should be mentioned that the dependence of the implicit constants on $p$ and $q$ in \eqref{eqn:summaryBR} is not optimal. We leave it as an interesting open problem to determine the optimal dependence on the constants.
\end{remark}

\section{The dual of $H^{s_q}_p(X)$}
\label{sec:dualHp}

In the proof of Theorem~\ref{thm:summaryBRIntro} we used the fact that $(D^p_{q,q})^*=D^{p'}_{q',q'}$ holds isomorphically (with constants depending only on $p$ and $q$) for all $1<p,q<\infty$. In this section we will prove a more general statement.\par
Let $(\Omega, \mathcal F, \mathbb P)$ be a probability space with a filtration $\mathbb F = (\mathcal F_k)_{k\geq 0}$, $X$ be a Banach space, and let $1<p,q<\infty$. For an adapted sequence $f=(f_k)_{k\geq 0}$ of $X$-valued random variables we define
$$
s_q^n(f):=\Bigl( \sum_{k=0}^n\mathbb E_{k-1}\|f_k\|^q \Bigr)^{1/q},\;\;\;s_q(f):=\Bigl( \sum_{k=0}^{\infty}\mathbb E_{k-1}\|f_k\|^q \Bigr)^{1/q},
$$
where $\mathbb E_{k} = \mathbb E(\cdot|\mathcal F_k)$, $\mathbb E_{-1} = \mathbb E$. We let $H^{s_q}_p(X)$ be the space of all adapted sequences $f=(f_k)_{k\geq 0}$ satisfying
$$
\|f\|_{H^{s_q}_p(X)}:= (\mathbb E s_q(f)^p)^{1/p}<\infty.
$$
Similarly we define $H^{s^n_q}_p(X)$. We will prove the following result, which was only known before if $X=\R$ and either $1<p\leq q<\infty$ or $2\leq q\leq p<\infty$ (see \cite[Theorem 15]{Weisz} and the remark following it).
\begin{theorem}\label{thm:(H^{s_q}_p(X))^*= H^{s_{q'}}_{p'}(X^*)}
 Let $X$ be a reflexive separable Banach space, $1<p,q<\infty$. Then $\bigl(H^{s_q}_p(X)\bigr)^* = H^{s_{q'}}_{p'}(X^*)$ isomorphically. The isomorphism is given by
 \begin{equation}\label{eq:isom}
  g\mapsto F_g,\;\;\; F_g(f) = \mathbb E \Bigl(\sum_{k=0}^{\infty} \langle f_k,g_k\rangle\Bigr)\;\;\; \bigl(f\in H^{s_q}_p(X), g\in H^{s_{q'}}_{p'}(X^*)\bigr),
 \end{equation}
 and
  \begin{equation}\label{eq:estonnorminfinity}
  \min\Bigl\{\frac{q}{p},\frac{q'}{p'}\Bigr\}\|g\|_{H^{s_{q'}}_{p'}(X^*)} \leq\|F_g\|_{(H^{s_q}_p(X))^*}\leq\|g\|_{H^{s_{q'}}_{p'}(X^*)}.
 \end{equation}
In particular, $H^{s_q}_p(X)$ is a reflexive Banach space.
\end{theorem}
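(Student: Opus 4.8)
The plan is to prove the duality $\bigl(H^{s_q}_p(X)\bigr)^* = H^{s_{q'}}_{p'}(X^*)$ by first establishing the easy inclusion — namely that every $g \in H^{s_{q'}}_{p'}(X^*)$ defines a bounded functional via \eqref{eq:isom} with $\|F_g\|\le \|g\|$ — and then proving surjectivity of $g\mapsto F_g$ together with the lower bound in \eqref{eq:estonnorminfinity}. For the upper bound, I would apply H\"older's inequality in the $X$–$X^*$ pairing pointwise in $\omega$ and in the summation index, writing
$$\Bigl|\sum_{k=0}^\infty \langle f_k,g_k\rangle\Bigr| \leq \sum_{k=0}^\infty \|f_k\|\,\|g_k\|;$$
the point is to insert the conditional expectations $\E_{k-1}$ correctly. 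Since $f$ is adapted, $\E(\langle f_k,g_k\rangle) = \E(\E_{k-1}\langle f_k,g_k\rangle)$, and the subtle step is to bound $\sum_k \E_{k-1}(\|f_k\|\|g_k\|)$ by $s_q(f)\, s_{q'}(g)$ using the conditional H\"older inequality with exponents $q,q'$ applied under $\E_{k-1}$ and then the (unconditional) H\"older inequality in the $k$-sum; a final application of H\"older in $\omega$ with exponents $p,p'$ gives $\E s_q(f)s_{q'}(g)\le \|f\|_{H^{s_q}_p(X)}\|g\|_{H^{s_{q'}}_{p'}(X^*)}$.

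For surjectivity, given $F\in \bigl(H^{s_q}_p(X)\bigr)^*$, I would first treat the finite-horizon spaces $H^{s^n_q}_p(X)$, where the norm is comparable (for fixed $n$) to the norm of $L^p(\Omega;\ell^q_{n}(X))$ after an appropriate change of density — more precisely, one should recognize $H^{s^n_q}_p(X)$ as a \emph{weighted} mixed-norm space and invoke the known duality for Bochner spaces $L^p(\Omega;\ell^q(X))^*=L^{p'}(\Omega;\ell^{q'}(X^*))$ (valid since $X$ is reflexive and separable, hence $\ell^q(X)$ reflexive). The functional $F$ restricted to sequences supported on $\{0,\dots,n\}$ is then represented by some $g^{(n)}\in H^{s^n_{q'}}_{p'}(X^*)$; a consistency/uniqueness argument across $n$ (testing against basis-type sequences) shows the $g^{(n)}$ fit together into a single adapted sequence $g$, and a monotone-limit / Fatou argument in $n$ gives $g\in H^{s_{q'}}_{p'}(X^*)$ with the norm bound $\|g\|_{H^{s_{q'}}_{p'}(X^*)}\le (\min\{q/p,q'/p'\})^{-1}\|F\|$. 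The reflexivity of $H^{s_q}_p(X)$ then follows formally: applying the identity twice gives $\bigl(H^{s_q}_p(X)\bigr)^{**}=\bigl(H^{s_{q'}}_{p'}(X^*)\bigr)^*=H^{s_q}_p(X^{**})=H^{s_q}_p(X)$ by reflexivity of $X$, and one checks the canonical embedding is the resulting isomorphism.

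The main obstacle I anticipate is the appearance of the conditional expectations $\E_{k-1}$ inside the $\ell^q$-sum: the space $H^{s_q}_p(X)$ is \emph{not} literally $L^p(\Omega;\ell^q(X))$, so one cannot quote Bochner-space duality directly. The trick — which is exactly what makes the constant $\min\{q/p,q'/p'\}$ appear rather than $1$, and which explains why the result was previously only known in the two "easy" parameter ranges — is to pass to the tensor-product / conditional-sequence-space viewpoint: realize $s_q$ as the norm of a column-type operator space and use Junge's theory of conditional $L^p$-spaces, or, more elementarily, reduce to the finite case and carefully track how the weights $\E_{k-1}$ transform under the duality pairing. Concretely, the hard estimate is the lower bound in \eqref{eq:estonnorminfinity}: given the representing $g$, one must construct near-optimal test sequences $f$ in $H^{s_q}_p(X)$ for which $F_g(f)$ nearly equals $\|g\|\,\|f\|$, and the loss of the factor $\min\{q/p,q'/p'\}$ comes from the fact that the natural candidate $f_k = \|g_k\|^{q'-2}\, \mathfrak{j}(g_k)\cdot(\text{weight})$ (with $\mathfrak j$ a duality map $X^*\to X$) only achieves this up to the interplay between the inner exponent $q$ and the outer exponent $p$. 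I would isolate this in a lemma handling a single "level" and then sum, using Doob-type / Stein-type inequalities for the conditional expectations to control the cross terms.
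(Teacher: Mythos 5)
Your upper bound is exactly the paper's: conditional H\"older under $\E_{k-1}$, then H\"older in $k$, then H\"older in $\omega$, giving $\|F_g\|\le\|g\|$. And your reflexivity argument by applying the duality twice is fine. But the two substantive steps are not correctly identified.

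First, for surjectivity you propose to view $H^{s^n_q}_p(X)$ as a ``weighted mixed-norm space'' and quote the Bochner duality $L^p(\Omega;\ell^q(X))^* = L^{p'}(\Omega;\ell^{q'}(X^*))$. This does not work: the operator $\E_{k-1}$ sitting inside the $\ell^q$-sum is \emph{not} multiplication by a density (it averages over $\cF_{k-1}^c$-directions), so $H^{s^n_q}_p(X)$ is not a Bochner space up to a change of measure. You acknowledge the difficulty yourself but then gesture at ``Junge's theory'' or ``carefully tracking the weights'' without closing the gap. What the paper actually does (Theorem \ref{thm:CsorgoArg}, following Cs\"org{\H{o}}) is to build an $X^*$-valued set function $\mu$ on the disjoint direct sum $\Omega^n=\bigsqcup_{k=0}^n(\Omega,\cF_k)$, show it has finite variation and is absolutely continuous w.r.t.\ $\P^n$ using the boundedness of $F$, and invoke the Radon--Nikodym property of $X^*$ to extract $g$. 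This is a genuinely different mechanism from Bochner duality and is where reflexivity of $X$ enters.

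Second, and more seriously, your proposal does not produce the dimension-free constant $\min\{q/p,q'/p'\}$, which is the whole point of the theorem (the paper emphasizes that even for $X=\R$ this was previously known only in the two easy ranges). The test sequence you name, $f_k=\|g_k\|^{q'-2}\mathfrak j(g_k)\cdot(\text{weight})$ with the natural per-step weight $(\E_{k-1}\|g_k\|^{q'})^{(p'-q')/q'}$, is exactly what the paper's Theorem \ref{thm:CsorgoArg} uses and it only yields an $n$-dependent constant, because after expanding $\|f\|_{H^{s_q^n}_p}$ one needs the equivalence $\sum_k a_k^{p'/q'}\eqsim(\sum_k a_k)^{p'/q'}$, which is only true up to a factor depending on $n$. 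The paper's $n$-free bound (Theorem \ref{thm:findimrefl}, after Weisz) replaces the per-step weight by the \emph{cumulative} one $v_k=(s_{q'}^k(g))^{p'-q'}/\|g\|^{p'-1}$; then the pairing becomes a telescoping sum $\sum_k(s_{q'}^k)^{p'-q'}\bigl((s_{q'}^k)^{q'}-(s_{q'}^{k-1})^{q'}\bigr)$, which is bounded below by $\frac{q'}{p'}\sum_k\bigl((s_{q'}^k)^{p'}-(s_{q'}^{k-1})^{p'}\bigr)=\frac{q'}{p'}(s_{q'}^n)^{p'}$ via the elementary inequality $x^\alpha-1\le\alpha(x-1)x^{\alpha-1}$ for $x,\alpha\ge1$. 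This monotone telescoping trick, combined with the reflexivity from Theorem \ref{thm:CsorgoArg} and the duality-symmetry Lemma \ref{lemma:simple} to reduce to $p\le q$, is the essential idea; ``Doob-type/Stein-type inequalities to control cross terms'' is not the right tool here and would not deliver the stated constant.

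In short: the easy direction and the reflexivity deduction are fine, but the representation of functionals and (crucially) the $n$-independent lower bound both require ideas not present in your sketch.
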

To prove this result, we will first extend an argument of Cs\"org{\H{o}} \cite{Cso} to show that $\bigl(H_p^{s_q^n}(X)\bigr)^*$ and $H^{s_{q'}^n}_{p'}(X^*)$ are isomorphic if $1<p,q<\infty$, with isomorphism constants depending on $p,q$ and $n$. In particular, this shows that $H_p^{s_q^n}(X)$ is reflexive. In a second step, we exploit this reflexivity to show that the isomorphism constants do not depend on $n$. The proof of this result, Theorem~\ref{thm:findimrefl}, relies on an argument of Weisz \cite{Weisz}. After this step, it is straightforward to deduce Theorem~\ref{thm:(H^{s_q}_p(X))^*= H^{s_{q'}}_{p'}(X^*)}.\par
We start by introducing an operator that serves as a replacement for the sign-function in a vector-valued context.
\begin{lemma}\label{lemma:P_eps}
 Let $X$ be a Banach space with a separable dual. Fix $\eps>0$. Then there \mbox{exists} a discrete-valued Borel-measurable function $P_{\eps} :X^* \to X$ such that \mbox{$\|P_{\eps}(x^*)\| = 1$} and 
 \begin{equation}\label{eq:lem:P_eps}
  (1-{\eps})\|x^*\|\leq \langle P_{\eps}x^*,x^*\rangle \leq \|x^*\|
 \end{equation}
 for each $x^*\in X^*$.
\end{lemma}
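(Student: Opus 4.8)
The plan is to construct $P_\eps$ pointwise on $X^*$ using a countable dense subset of the unit sphere of $X$ together with a measurable selection argument. First I would fix a countable set $\{x_n : n\geq 1\}$ that is dense in the unit ball $B_X$ of $X$; such a set exists because $X$ has separable dual and hence $X$ itself is separable (a Banach space with separable dual is separable). For each nonzero $x^* \in X^*$, by definition of the operator norm $\|x^*\| = \sup_{n\geq 1}\langle x_n, x^*\rangle$, so there is some index $n$ with $\langle x_n, x^*\rangle > (1-\eps)\|x^*\|$. The natural definition is then to let $P_\eps(x^*) := x_{n(x^*)}$, where $n(x^*)$ is the \emph{smallest} index $n$ such that $\langle x_n, x^*\rangle > (1-\eps)\|x^*\|$ (and, say, $P_\eps(0) := x_1$ to have the map everywhere defined). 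By construction $\|P_\eps(x^*)\| \leq 1$; if one wants $\|P_\eps(x^*)\| = 1$ exactly one replaces $\{x_n\}$ at the outset by a countable dense subset of the unit \emph{sphere} $S_X$ (still using $\|x^*\| = \sup_n \langle x_n, x^*\rangle$ by density), and the same argument applies. The upper bound $\langle P_\eps x^*, x^*\rangle \leq \|P_\eps x^*\|\,\|x^*\| \leq \|x^*\|$ is immediate, and the lower bound $(1-\eps)\|x^*\| \leq \langle P_\eps x^*, x^*\rangle$ holds by the choice of $n(x^*)$.

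The remaining point is measurability. The function $P_\eps$ takes values in the countable set $\{x_n : n\geq 1\}$, so it is automatically discrete-valued, and to check Borel measurability it suffices to check that each preimage $P_\eps^{-1}(\{x_m\})$ is a Borel subset of $X^*$. By the "smallest index" convention,
\begin{equation*}
P_\eps^{-1}(\{x_m\}) = \Bigl\{x^* \in X^* : \langle x_m, x^*\rangle > (1-\eps)\|x^*\| \text{ and } \langle x_n, x^*\rangle \leq (1-\eps)\|x^*\| \text{ for all } n<m\Bigr\},
\end{equation*}
up to adjusting for the value at $0$. For each fixed $n$, the map $x^* \mapsto \langle x_n, x^*\rangle$ is weak$^*$-continuous, hence norm-continuous, and $x^*\mapsto \|x^*\|$ is norm-continuous, so each set $\{x^* : \langle x_n, x^*\rangle > (1-\eps)\|x^*\|\}$ is norm-open and its complement is closed; a finite intersection of such sets is Borel. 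Thus $P_\eps^{-1}(\{x_m\})$ is Borel for every $m$, and since $\{x_m\}$ is countable and these preimages partition $X^*$, $P_\eps$ is Borel measurable.

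I do not expect a serious obstacle here; the only mild subtlety is bookkeeping so that $P_\eps$ is defined on all of $X^*$ (including at $0$, where the defining inequality is vacuous) and that one gets $\|P_\eps(x^*)\| = 1$ rather than merely $\leq 1$ — both are handled by choosing the approximating sequence in $S_X$ rather than $B_X$ and assigning an arbitrary sphere point as the value at $0$. Separability of $X$ (which follows from separability of $X^*$) is what makes the countable supremum representation of the norm available, and this is the one hypothesis that is genuinely used.
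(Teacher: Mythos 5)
Your proof is correct and arrives at the same kind of object (a discrete-valued Borel map into the unit sphere of $X$ built from a countable Borel partition of $X^*$), but via a dual route to the paper's. The paper fixes a countable dense set $(x_n^*)$ in the unit sphere of $X^*$, disjointifies the $\eps/2$-balls around the $x_n^*$ into a Borel partition $(V_n)$ of the sphere, chooses for each $n$ a near-norming element $x_n \in S_X$ for $x_n^*$, and sets $P_\eps(x^*) = x_n$ whenever $x^*/\|x^*\| \in V_n$; the inequality then follows by a triangle-inequality estimate using the $\eps/2$-closeness of $x^*/\|x^*\|$ to $x_n^*$. You instead fix a countable dense set $(x_n)$ in $S_X$ (available since separability of $X^*$ forces separability of $X$), use the resulting countable expression $\|x^*\| = \sup_n\langle x_n, x^*\rangle$, and assign to $x^*$ the first $x_n$ that nearly attains this sup. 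Both constructions require the same ingredients; yours is slightly more direct in that it avoids the detour through $S_{X^*}$ and the separate choice of norming functionals, and the measurability check is a one-line observation about norm-open sets, whereas the paper's version makes the partition of the sphere explicit. Your handling of the point $x^* = 0$ (arbitrary assignment, since \eqref{eq:lem:P_eps} is vacuous there) is a detail the paper leaves implicit but that does need the small fix you supply. One implicit assumption in both arguments is that the scalar field is real (so that $\|x^*\| = \sup_n\langle x_n,x^*\rangle$ without absolute values), which is consistent with the rest of the paper.
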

\begin{proof}
Let $(x_n^*)_{n\geq 1}$ be a dense subset of the unit sphere of $X^*$. For each $n\geq 1$ define $U_n = U \bigcap B(x_n^*,\frac{\eps}{2})$, where $B(y^*,r)$ denotes the ball in $X^*$ with radius~$r$ and center $y^*$. Define $V_1 = U_1$ and
$$V_n = U_n\setminus \Big(\bigcup_{k=1}^{n-1} V_{k}\Big), \qquad n\geq 2.$$ 
For each $n\geq 1$ one can find an $x_n \in X$ such that $\|x_n\|=1$ and $\langle x_n, x_n^*\rangle\geq 1-\frac{\eps}{2}$. Now define 
 $$
 P_{\eps}(x^*) := \sum_{n=1}^{\infty} \mathbf 1_{V_n}\Bigl(\frac{x^*}{\|x^*\|}\Bigr)x_n,\;\;\; x^* \in X^*.
 $$ 
 This function is Borel since the $V_n$ are Borel sets. As the $V_n$ form a disjoint cover of the unit sphere, for every $x^*\in X^*$ there exists a unique $n=n(x^*)$ so that $x^*/\|x^*\| \in V_n$. Hence, $\|P_{\eps}(x^*)\| = 1$ and
 \[
  \langle P_{\eps}(x^*), x^*\rangle = \|x^*\|\Bigl\langle x_n, \frac{x^*}{\|x^*\|}\Bigr\rangle \geq \|x^*\|\langle x_n, x_n^*\rangle - \frac{\eps}2 \|x^*\| \geq (1-\eps) \|x^*\|,
 \]
 so \eqref{eq:lem:P_eps} follows.
\end{proof}
\begin{theorem}
\label{thm:CsorgoArg}
  Let $X$ be a reflexive separable Banach space, $1<p,q<\infty$, $n\geq 0$. Then $\bigl(H^{s_q^n}_p(X)\bigr)^* = H^{s_{q'}^n}_{p'}(X^*)$ isomorphically (with constants depending on $p,q$ and~$n$). The isomorphism is given by
 \begin{equation}
  g\mapsto F_g,\;\;\; F_g(f) = \mathbb E \Bigl(\sum_{k=0}^{n} \langle f_k,g_k\rangle\Bigr)\;\;\; \bigl(f\in H^{s_q^n}_p(X), g\in H^{s_{q'}^n}_{p'}(X^*)\bigr).
 \end{equation}
In particular, $H^{s_q^n}_p(X)$ is a reflexive Banach space. 
\end{theorem}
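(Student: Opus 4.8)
The plan is to establish the duality coordinate by coordinate, the essential point being a ``conditional $L^p(L^q)$'' duality in which the sign function of the scalar argument of Cs\"org\H{o} is replaced by the operator $P_\eps$ of Lemma~\ref{lemma:P_eps}; reflexivity will then follow formally. Throughout write $\mathbb E_k=\mathbb E(\cdot\,|\,\mathcal F_k)$, and for a sub-$\sigma$-algebra $\mathcal G\subseteq\mathcal F$ with $\mathbb E_{\mathcal G}=\mathbb E(\cdot\,|\,\mathcal G)$ and a Banach space $Y$ let $L^p_{\mathcal G,q}(Y)$ be the Banach space of strongly measurable $h\colon\Omega\to Y$ with $\|h\|_{L^p_{\mathcal G,q}(Y)}:=\bigl(\mathbb E(\mathbb E_{\mathcal G}\|h\|^q)^{p/q}\bigr)^{1/p}<\infty$; note that the $k$-th coordinate subspace $\{(0,\dots,0,f_k,0,\dots,0)\}$ of $H^{s^n_q}_p(X)$ is isometrically $L^p_{\mathcal F_{k-1},q}(\Omega,\mathcal F_k;X)$. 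The bounded direction of the theorem is immediate: for $g\in H^{s^n_{q'}}_{p'}(X^*)$ and $f\in H^{s^n_q}_p(X)$, three successive applications of H\"older's inequality --- with exponents $(q,q')$ inside each $\mathbb E_{k-1}$, then $(q,q')$ in the sum over $k=0,\dots,n$, then $(p,p')$ in the outer expectation --- give $|F_g(f)|\le\|f\|_{H^{s^n_q}_p(X)}\|g\|_{H^{s^n_{q'}}_{p'}(X^*)}$, so $g\mapsto F_g$ is well defined with $\|F_g\|\le\|g\|$.

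The core step is to prove that $(L^p_{\mathcal G,q}(X))^*=L^{p'}_{\mathcal G,q'}(X^*)$ isometrically, via $\langle h,g\rangle=\mathbb E\langle h,g\rangle$, for any sub-$\sigma$-algebra $\mathcal G$. The $\le$ half is a special case of the previous paragraph. For surjectivity, fix $\Phi\in(L^p_{\mathcal G,q}(X))^*$. Since $L^{\max(p,q)}(\Omega;X)\hookrightarrow L^p_{\mathcal G,q}(X)$ contractively (Jensen, resp.\ $\|\cdot\|_p\le\|\cdot\|_q$ on a probability space) and is dense there (truncation and dominated convergence), and $X$ is reflexive and separable, the Bochner-space duality $(L^{\max(p,q)}(\Omega;X))^*=L^{\min(p',q')}(\Omega;X^*)$ yields $g\in L^{\min(p',q')}(\Omega;X^*)$ with $\Phi(h)=\mathbb E\langle h,g\rangle$ for every bounded $h$. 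To upgrade this $g$ into $L^{p'}_{\mathcal G,q'}(X^*)$ with norm $\le\|\Phi\|$, fix $\eps,\delta,R>0$ and a bounded $\mathcal G$-measurable $\phi\ge 0$ with $\|\phi\|_p\le 1$, put $B=\{\|g\|\le R\}$ and $A_\delta=\{\mathbb E_{\mathcal G}[\|g\|^{q'}\mathbf 1_B]\ge\delta\}\in\mathcal G$, and test $\Phi$ against
\[
 h=\phi\,\mathbf 1_{A_\delta}\,\mathbf 1_B\,\bigl(\mathbb E_{\mathcal G}[\|g\|^{q'}\mathbf 1_B]\bigr)^{\frac{1}{q'}-1}\,\|g\|^{q'-1}\,P_\eps(g),
\]
interpreted as $0$ off $A_\delta$. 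Using $q(q'-1)=q'$ and $q(\tfrac{1}{q'}-1)=-1$ one computes $(\mathbb E_{\mathcal G}\|h\|^q)^{1/q}=\phi\mathbf 1_{A_\delta}\le\phi$, so $\|h\|_{L^p_{\mathcal G,q}(X)}\le 1$; on the other hand $\langle h,g\rangle\ge(1-\eps)\phi\mathbf 1_{A_\delta}\mathbf 1_B\bigl(\mathbb E_{\mathcal G}[\|g\|^{q'}\mathbf 1_B]\bigr)^{\frac1{q'}-1}\|g\|^{q'}$ by Lemma~\ref{lemma:P_eps}, and conditioning on $\mathcal G$ gives $\mathbb E\langle h,g\rangle\ge(1-\eps)\,\mathbb E\bigl[\phi\mathbf 1_{A_\delta}\bigl(\mathbb E_{\mathcal G}[\|g\|^{q'}\mathbf 1_B]\bigr)^{1/q'}\bigr]$. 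Since $h$ is bounded, $\mathbb E\langle h,g\rangle=\Phi(h)\le\|\Phi\|$. Letting successively $\delta\downarrow 0$, $\eps\downarrow 0$, then taking the supremum over all such $\phi$, then $R\uparrow\infty$ (monotone convergence at each stage) yields $\|(\mathbb E_{\mathcal G}\|g\|^{q'})^{1/q'}\|_{p'}\le\|\Phi\|$, i.e.\ $g\in L^{p'}_{\mathcal G,q'}(X^*)$ with $\|g\|\le\|\Phi\|$. Now that $g$ lies in this space, $h\mapsto\mathbb E\langle h,g\rangle$ is bounded on $L^p_{\mathcal G,q}(X)$ and agrees with $\Phi$ on a dense subspace, hence $\Phi=\langle\cdot\,,g\rangle$, proving the claim.

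Given $\Phi\in(H^{s^n_q}_p(X))^*$, apply the core step on the probability space $(\Omega,\mathcal F_k,\mathbb P)$ with $\mathcal G=\mathcal F_{k-1}$ to the restriction of $\Phi$ to the $k$-th coordinate subspace: for each $k\in\{0,\dots,n\}$ this produces an $\mathcal F_k$-measurable $g_k$ with $\|(\mathbb E_{k-1}\|g_k\|^{q'})^{1/q'}\|_{p'}\le\|\Phi\|$ and $\Phi((0,\dots,f_k,\dots,0))=\mathbb E\langle f_k,g_k\rangle$. Because every $f\in H^{s^n_q}_p(X)$ is the finite sum of its coordinate components, $\Phi=F_g$ with $g=(g_k)_{k=0}^n$ adapted; injectivity of $g\mapsto F_g$ is routine (a bounded $\mathcal F_k$-measurable test function detects $g_k$, using separability of $X$). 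Summing the individual bounds --- by the triangle inequality in $L^{p'/q'}(\Omega)$ when $p'\ge q'$, and by $(\sum_k a_k)^{p'/q'}\le\sum_k a_k^{p'/q'}$ when $p'\le q'$ --- gives $\|g\|_{H^{s^n_{q'}}_{p'}(X^*)}\le(n+1)^{1/\min(p',q')}\|\Phi\|$. Combined with the first paragraph this establishes $(H^{s^n_q}_p(X))^*=H^{s^n_{q'}}_{p'}(X^*)$ with constants depending only on $p,q,n$. Reflexivity follows formally: applying the identity twice together with $X^{**}=X$ gives $(H^{s^n_q}_p(X))^{**}=H^{s^n_q}_p(X^{**})=H^{s^n_q}_p(X)$, and the defining formula for $F_g$ shows this composite isomorphism is the canonical embedding.

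The only genuine difficulty is the core step, and within it the need to choose the test function $h$ so that it simultaneously has $L^p_{\mathcal G,q}(X)$-norm at most $1$ \emph{and} is bounded (so that the a priori identity $\Phi(h)=\mathbb E\langle h,g\rangle$ applies); this is what forces the three nested truncations in $R$, $\delta$ and $\eps$. Everything else --- the H\"older estimates of the bounded direction, the passage to coordinates, and the deduction of reflexivity --- is routine.
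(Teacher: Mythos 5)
Your argument is correct, but it follows a genuinely different route in the hard (surjectivity) direction. The paper, after the same three-fold H\"older estimate, passes to the disjoint direct sum $(\Omega^n,\mathcal F^n,\mathbb P^n)$ of the spaces $(\Omega,\mathcal F_k,\mathbb P)$, defines an $X^*$-valued set function out of a given $F\in(H^{s^n_q}_p(X))^*$, checks $\sigma$-additivity and bounded variation, and invokes the Radon--Nikodym property of $X^*$ once, globally, to produce the representing sequence $g=(g_k)$; membership of $g$ in $H^{s^n_{q'}}_{p'}(X^*)$ is then obtained from the norming inequality $\|F_g\|\gtrsim_{p,q,n}\|g\|$ (the Cs\"org\H{o}-type construction with $P_\eps$) applied to truncations $g^m$. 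You instead isolate a clean ``conditional $L^p(L^q)$'' duality $(L^p_{\mathcal G,q}(X))^*=L^{p'}_{\mathcal G,q'}(X^*)$ for a single sub-$\sigma$-algebra $\mathcal G$: the representing density comes from the dense contractive embedding $L^{\max(p,q)}(\Omega;X)\hookrightarrow L^p_{\mathcal G,q}(X)$ and scalar-valued Bochner duality, and is then shown to lie in $L^{p'}_{\mathcal G,q'}(X^*)$ by testing against a single carefully truncated functional built from $P_\eps$; you then apply this coordinate-by-coordinate, with $\mathcal G=\mathcal F_{k-1}$ on $(\Omega,\mathcal F_k,\mathbb P)$, and assemble the $n+1$ individual bounds. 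Your approach buys a self-contained intermediate duality statement and an explicit constant $(n+1)^{1/\min(p',q')}$, at the price of an $(n+1)$-fold loss at the assembly stage; the paper's disjoint-union argument handles all coordinates in one RNP step but keeps the $n$-dependence less explicit. Both lead to the same $n$-dependent isomorphism constants, which are then removed in Theorem~\ref{thm:findimrefl} by a separate Weisz-type argument, so neither route has an advantage for the final result; the paper's construction is closer in spirit to Cs\"org\H{o}'s scalar proof, while yours is perhaps more modular. One small point worth keeping in mind: when you cite $(L^{\max(p,q)}(\Omega;X))^*=L^{\min(p',q')}(\Omega;X^*)$, this is where separability and reflexivity of $X$ enter (to guarantee the Radon--Nikodym property of $X^*$), so your proof uses RNP in the same way the paper's does, only packaged inside Bochner duality.
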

\begin{proof}
The main argument is inspired by the proof of \cite[Theorem 1]{Cso}. By the conditional H\"{o}lder inequality and the usual version of H\"{o}lder's inequality,
 \begin{equation}\label{eq:proofofthm:CsorgoArg}
  \begin{split}
     |F_g(f)| &\leq \mathbb E \Bigl(\sum_{k=0}^{n} \mathbb E_{k-1}(\|f_k\|\|g_k\|)\Bigr)\\
   &\leq \mathbb E\Bigl(\sum_{k=0}^{n} (\mathbb E_{k-1}\|f_k\|^q)^{1/q}(\mathbb E_{k-1}\|g_k\|^{q'})^{1/{q'}}\Bigr)\\
   &\leq \|f\|_{H^{s_q^n}_p(X)}\|g\|_{H^{s_{q'}^n}_{p'}(X^*)}.
  \end{split}
 \end{equation}
Hence, the functional $F_g$ is bounded and $\|F_g\|\leq \|g\|_{H^{s_{q'}^n}_{p'}(X^*)}$. 

To prove that $\|F_g\|\gtrsim_{p,q,n}\|g\|_{H^{s_{q'}^n}_{p'}(X^*)}$ we need to construct an appropriate $f \in H^{s_q^n}_p(X)$ with 
$$\|f\|_{H^{s_q^n}_p(X)}{{\lesssim}}_{p,q,n} 1, \qquad \langle F_g,f\rangle\gtrsim_{p,q,n} \|g\|_{H^{s_{q'}^n}_{p'}(X^*)}.$$
Fix $0<\eps<1$. We define $f$ by setting 
$$
f_k:=P_{\eps} g_k \frac {\|g_k\|^{q'-1}}{\|g\|^{p'-1}_{H^{s_{q'}^n}_{p'}(X^*)}} (\mathbb E_{k-1}\|g_k\|^{q'})^{\frac{p'-q'}{q'}},\qquad 0\leq k\leq n
$$
where $P_{\eps}$ is as in Lemma~\ref{lemma:P_eps}. Using $pp'=p+p'$ and $qq'=q+q'$ we find
\begin{align*}
 \|f\|_{H^{s_q^n}_p(X)}^p  &= \mathbb E \Bigl( \sum_{k=0}^n\mathbb E_{k-1}\|f_k\|^q \Bigr)^{p/q} { =} \frac{1}{\|g\|^{p(p'-1)}_{H^{s_{q'}^n}_{p'}(X^*)}}\mathbb E \Bigl( \sum_{k=0}^n(\mathbb E_{k-1}\|g_k\|^{q'})^{{ \frac {(p'-1)q}{q'}}} \Bigr)^{\frac pq}\\
 &\eqsim_{n,p,q}\frac{1}{\|g\|^{p'}_{H^{s_{q'}^n}_{p'}(X^*)}}\mathbb E \Bigl( \sum_{k=0}^n(\mathbb E_{k-1}\|g_k\|^{q'}) \Bigr)^{\frac{p'}{q'}} = 1,
\end{align*}
so $f\in H^{s_q^n}_p(X)$.
Moreover,
\begin{align*}
 \langle F_g,f\rangle & \geq (1-\eps) \frac {1}{\|g\|^{p'-1}_{H^{s_{q'}^n}_{p'}(X^*)}}\mathbb E \sum_{k=0}^n \|g_k\|^{q'} (\mathbb E_{k-1}\|g_k\|^{q'})^{\frac{p'-q'}{q'}}\\
 & = (1-\eps) \frac {1}{\|g\|^{p'-1}_{H^{s_{q'}^n}_{p'}(X^*)}}\mathbb E \sum_{k=0}^n (\mathbb E_{k-1}\|g_k\|^{q'})^{\frac{p'}{q'}}\\
 &\eqsim_{p,q,n} (1-\eps) \frac {1}{\|g\|^{p'-1}_{H^{s_{q'}^n}_{p'}(X^*)}}\mathbb E  \Big( \sum_{k=0}^n \mathbb E_{k-1}\|g_k\|^{q'}\Big)^{\frac{p'}{q'}}
  = \|g\|_{H^{s_{q'}^n}_{p'}(X^*)},
\end{align*}
as desired, since $\eps$ was arbitrary and can be chosen, say, $\frac 12$.

Now we will show that every $F \in \bigl(H^{s_q^n}_p(X)\bigr)^*$ is equal to $F_g$ for a suitable $g \in H^{s_{q'}^n}_{p'}(X^*)$. For this purpose we consider the disjoint direct sum of $(\Om,\cF_k,\bP)$, $k=0,\ldots,n$. Formally, we set $\Om_k=\Om\times \{k\}$, $\widetilde{\cF}_k=\cF_k\times \{k\}$ and define a probability measure $\bP_k$ on $\widetilde\cF_k$ by $\bP_k(A\times\{k\})=\bP(A)$. Now the disjoint direct sum $(\Om^n,\cF^n,\bP^n)$ is defined by 
$$\Om^n = \bigcup_{k=0}^n \Om_k, \qquad \cF^n=\{A\in \Om^n \ : \ A\cap\Om_k\in \widetilde{\cF}_k,\text{ for all } 1\leq k\leq n\}$$
and 
$$\bP^n(A) = \sum_{k=0}^N \bP_k(A\cap \Om_k), \qquad A\in \cF^n.$$
Let $P_k:(\Om,\cF_k)\to (\Om^n,\cF^n)$, $P_k(\om)=(\om,k)$, be the measurable bijection between $(\Om,\cF_k)$ and its disjoint copy.  We can now define an $X^*$-valued set function $\mu$ by
$$
\langle \mu(A),x \rangle :=F\bigl((x \cdot \mathbf 1_{P_k^{-1}(A\cap \Omega_k)})_{k=0}^n\bigr), \qquad A\in \cF^n, \ x\in X. 
$$
We will show that $\mu$ is $\si$-additive, absolutely continuous with respect to $\bP^n$ and of finite variation. Let us first show that $\mu$ is of finite variation. Let $(A_m)_{m=1}^M \subset \mathcal F^n$ be disjoint such that $\cup_m A_m = \Omega^n$. Then
\begin{align*}
 \sum_{m=1}^M \|\mu({A_m})\| 
 &=\sum_{m=1}^M \sup_{x_m\in X:\|x_m\|=1}  F\bigl((x_m \cdot \mathbf 1_{P_k^{-1}(A_m\cap \Omega_k)})_{k=0}^n\bigr)\\
 &= \sup_{(x_m)_{m=1}^M\subset X:\|x_m\|=1} \sum_{m=1}^M F\bigl((x_m \cdot \mathbf 1_{P_k^{-1}(A_m\cap \Omega_k)})_{k=0}^n\bigr)\\
 &= \sup_{(x_m)_{m=1}^M\subset X:\|x_m\|=1}F\Bigl(\Bigl( \sum_{m=1}^M  x_m \cdot \mathbf 1_{P_k^{-1}(A_m\cap \Omega_k)}\Bigr)_{k=0}^n\Bigr)\\
 &\leq \|F\| \sup_{(x_m)_{m=1}^M\subset X:\|x_m\|=1}\Bigl\|\Bigl( \sum_{m=1}^M  x_m \cdot \mathbf 1_{P_k^{-1}(A_m\cap \Omega_k)}\Bigr)_{k=0}^n\Bigr\|_{H^{s_q^n}_p(X)}\\
 &= \|F\|\sup_{(x_m)_{m=1}^M\subset X:\|x_m\|=1}\Bigl(\mathbb E\Bigl(\sum_{k=0}^n \mathbb E_{k-1} \Big\|\sum_{m=1}^M x_m \mathbf 1_{P_k^{-1}(A_m\cap \Omega_k)}\Big\|^q\Bigr)^{\frac{p}{q}}\Bigr)^{\frac 1p}\\
  &\leq \|F\| \biggl[\mathbb E\Bigl(\sum_{k=0}^n \mathbb E_{k-1} \Big(\sum_{m=1}^M\mathbf 1_{P_k^{-1}(A_m\cap \Omega_k)}\Big)^q\Bigr)^{\frac{p}{q}}\biggr]^{\frac 1p}\\
  &=\|F\| \Bigl(\mathbb E\Bigl(\sum_{k=0}^n \mathbb E_{k-1} \mathbf 1_{\Omega}\Bigr)^{\frac{p}{q}}\Bigr)^{\frac 1p}= \|F\| (n+1)^{\frac 1q}
\end{align*}
Now let us prove the $\sigma$-additivity. Obviously $\mu$ is additive. Let $(A_m)_{m\geq 0}\subset \mathcal F_n$ be such that $A_m \searrow \varnothing$. Then
\begin{align*}
 \|\mu(A_m)\| &= \sup_{x\in X:\|x\|=1}|F((x \cdot \mathbf 1_{P_k^{-1}(A_m\cap \Omega_k)})_{k=0}^n)|\\
 &\leq \|F\|\sup_{x\in X:\|x\|=1}\|(x \cdot \mathbf 1_{P_k^{-1}(A_m\cap \Omega_k)})_{k=0}^n\|_{H^{s_q^n}_p(X)}\\
 & = \|F\|\Bigl(\mathbb E\Bigl(\sum_{k=0}^n \mathbb E_{k-1}\mathbf 1_{P_k^{-1}(A_m\cap \Omega_k)}\Bigr)^{\frac{p}{q}}\Bigr)^{\frac 1p}\to 0 \text{ as } m \to \infty,
\end{align*}
by the monotone convergence theorem. This computation also shows that $\mu$ is absolutely continuous with respect to $\bP^n$.

Since $X$ is reflexive, $X^*$ has the Radon-Nikodym property (see e.g.\ \cite[Theorem~1.3.21]{HNVW1}). Thus, there exists a $g\in L^1(\Om^n;X^*)$ so that 
$$\mu(A) = \int_A g \ud \mathbb P^n = \sum_{k=0}^n  \int_{A\cap \Om_k} g \ud \mathbb P_k.$$
If we now define $g_k:=g\circ P_k$ then $g_k$ is $\cF_k$-measurable and 
$$\mu(A) = \sum_{k=0}^n  \int_{P_k^{-1}(A\cap \Om_k)} g_k \ud \mathbb P.$$
It now follows for $f=(f_k)_{k=0}^n \in H^{s_q^n}_p(X)$ with $f_k$ bounded for all $k=0,\ldots,n$ that 
\begin{equation}\label{eq:functional}
 F(f) = F_g(f)= \mathbb E\sum_{k=0}^n \langle f_k,g_k\rangle.
\end{equation}
Now fix a general $f \in H^{s_q^n}_p(X)$. Fix $0<\eps<1$ and let $h:=(h_k)_{k=0}^n = (\|f_k\|P_{\eps} g_k)_{k=0}^n$. Define $h^m:=(h^m_k)_{k=0}^n = (h_k \mathbf 1_{\|h_k\|\leq m})_{k=0}^n$ for each $m\geq 1$. Then formula \eqref{eq:functional} holds for $h^m$. But $F(h^m)\to F(h)$ as $m$ goes to infinity, so by the monotone convergence theorem $F(h) = \mathbb E\sum_{k=0}^n \langle h_k,g_k\rangle$. This shows that
\begin{equation}
\label{eqn:fkgkBd}
\E\sum_{k=0}^n |\langle f_k,g_k\rangle| \leq \E\sum_{k=0}^n \|f_k\|\|g_k\|\leq (1-\eps)^{-1} \E\sum_{k=0}^n\langle h_k,g_k\rangle<\infty.
\end{equation}
Now consider $f^m := (f^m_k)_{k=0}^n = (f_k \mathbf 1_{\|f_k\|\leq m})_{k=0}^n$. Since \eqref{eq:functional} holds for $f^m$ and $F(f^m)\to F(f)$, we can use \eqref{eqn:fkgkBd} and the dominated convergence theorem to conclude that $f$ satisfies \eqref{eq:functional}.

It remains to prove that $g \in H^{s_{q'}^n}_{p'}(X^*)$. For each $m\geq 1$ we consider the approximation $g^m := (g_k\mathbf 1_{\|g_k\|\leq m})_{k=0}^n$. Then $\|g^m\|_{H^{s_{q'}^n}_{p'}(X^*)}\lesssim_{p,q,n} \|F_{g^m}\|\leq \|F\|$. Therefore by the monotone convergence theorem $\|g\|_{H^{s_{q'}^n}_{p'}(X^*)}\lesssim_{p,q,n} \|F\|$.
\end{proof}
One can easily show the following simple lemma.
\begin{lemma}\label{lemma:simple}
 Let $X$ and $Y$ be reflexive Banach spaces such that $X^*$ is isomorphic to $Y$ and
 $$
 a\|x^*\|_Y \leq \|x^*\|_{X^*}\leq b\|x^*\|_{Y},\;\;\; x^* \in X^*.
 $$
 Then $Y^*$ is isomorphic to $X^{**} = X$ and
 $$
 a\|x\|_{X}\leq \|x\|_{Y^*}\leq b\|x\|_X,\;\;\; x\in X.
 $$
\end{lemma}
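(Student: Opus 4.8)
The plan is to unwind the identifications that are implicit in the statement and then read off both inequalities from the monotonicity of the dual norm under enlargement of the unit ball; no genuine obstacle arises, so the proof is short.

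First I would make the hypothesis precise. Since the quantity $\|x^*\|_Y$ is written for $x^*\in X^*$, we regard $X^*$ and $Y$ as \emph{the same} vector space carrying two norms, $\|\cdot\|_{X^*}$ and $\|\cdot\|_Y$, related by $a\|\cdot\|_Y\leq\|\cdot\|_{X^*}\leq b\|\cdot\|_Y$. In particular these norms are equivalent, so a linear functional on $X^*$ is $\|\cdot\|_{X^*}$-continuous if and only if it is $\|\cdot\|_Y$-continuous; hence $Y^*$ and $(X^*)^*=X^{**}$ coincide as vector spaces, with possibly different norms. By reflexivity of $X$, the canonical embedding $J\colon X\to X^{**}=(X^*)^*$ is an isometric isomorphism, and so $Y^*$ is isomorphic to $X^{**}=X$ as vector spaces, the isomorphism being the canonical one $J^{-1}$.

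Next I would compare the two norms on this common space. Fix $x\in X$ and write $Jx\in(X^*)^*$ for its canonical image, so that $\|Jx\|_{X^{**}}=\|x\|_X$. For the upper bound, note that $\|x^*\|_Y\leq 1$ forces $\|x^*\|_{X^*}\leq b$, whence
$$\|x\|_{Y^*}=\|Jx\|_{Y^*}=\sup_{\|x^*\|_Y\leq 1}|\langle Jx,x^*\rangle|\leq\sup_{\|x^*\|_{X^*}\leq b}|\langle Jx,x^*\rangle|=b\,\|Jx\|_{X^{**}}=b\,\|x\|_X.$$
For the lower bound, observe that $\|x^*\|_{X^*}\leq a$ implies $a\|x^*\|_Y\leq\|x^*\|_{X^*}\leq a$, i.e.\ $\|x^*\|_Y\leq 1$, so that $\{x^*:\|x^*\|_{X^*}\leq a\}\subseteq\{x^*:\|x^*\|_Y\leq 1\}$ and therefore
$$\|x\|_{Y^*}=\sup_{\|x^*\|_Y\leq 1}|\langle Jx,x^*\rangle|\geq\sup_{\|x^*\|_{X^*}\leq a}|\langle Jx,x^*\rangle|=a\,\|Jx\|_{X^{**}}=a\,\|x\|_X.$$
Combining the two estimates yields $a\|x\|_X\leq\|x\|_{Y^*}\leq b\|x\|_X$ for all $x\in X$, as claimed.

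The only point requiring any attention is the bookkeeping of the canonical identifications $X=X^{**}$ (via $J$, which uses reflexivity of $X$) and $Y^*=(X^*)^*$ (which uses only the equivalence of the two norms on $X^*$); once these are in place the estimate is immediate. I would note in passing that reflexivity of $Y$ is not actually needed in the argument and is kept in the statement only for symmetry with the way the lemma is applied.
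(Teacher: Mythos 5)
The paper states this lemma without proof (``One can easily show the following simple lemma''), so there is no argument to compare against. Your proof is correct: the key points---that equivalence of $\|\cdot\|_{X^*}$ and $\|\cdot\|_Y$ forces $Y^*=(X^*)^*$ as vector spaces, that reflexivity of $X$ identifies $X^{**}$ with $X$ isometrically via the canonical embedding $J$, and that the two inclusions $\{\|x^*\|_{X^*}\leq a\}\subseteq\{\|x^*\|_Y\leq 1\}\subseteq\{\|x^*\|_{X^*}\leq b\}$ of unit balls give exactly the claimed two-sided bound on $\|Jx\|_{Y^*}$---are all stated and verified accurately. Your remark that reflexivity of $Y$ is superfluous for this lemma is also correct; it plays no role in the computation.
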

\begin{theorem}\label{thm:findimrefl}
 Let $X$ be a reflexive separable Banach space, $1<p,q<\infty$, $n\geq 0$. Then 
 \begin{equation}\label{eq:estonnorm}
  \min\Bigl\{\frac{q}{p},\frac{q'}{p'}\Bigr\}\|g\|_{H^{s_{q'}^n}_{p'}(X^*)} \leq\|F_g\|_{\bigl(H^{s_q^n}_p(X)\bigr)^*}\leq\|g\|_{H^{s_{q'}^n}_{p'}(X^*)}.
 \end{equation}
 \end{theorem}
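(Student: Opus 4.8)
The right-hand inequality in \eqref{eq:estonnorm} is exactly the bound $\|F_g\|\le\|g\|_{H^{s_{q'}^n}_{p'}(X^*)}$ already obtained in \eqref{eq:proofofthm:CsorgoArg} by two applications of H\"older's inequality, so only the left-hand inequality requires a new argument. Since $p=q$ gives $\tfrac qp=\tfrac{q'}{p'}=1$ and is subsumed in the case $p<q$ below (with constant $q'/p'=1$), the plan is to treat $1<p<q<\infty$ directly and then to deduce $1<q<p<\infty$ by a duality argument built on the reflexivity supplied by Theorem~\ref{thm:CsorgoArg}.

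\emph{The case $1<p<q<\infty$.} Fix $g\in H^{s_{q'}^n}_{p'}(X^*)$ with $\|g\|_{H^{s_{q'}^n}_{p'}(X^*)}>0$ and $0<\eps<1$, and set $b_k:=\mathbb E_{k-1}\|g_k\|_{X^*}^{q'}$, $B_k:=\sum_{j=0}^k b_j$, $B_{-1}:=0$. Each $b_k$, hence each $B_k$, is $\mathcal F_{k-1}$-measurable, $b_k=B_k-B_{k-1}$, and $\mathbb E B_n^{p'/q'}=\|g\|_{H^{s_{q'}^n}_{p'}(X^*)}^{p'}$. With $\alpha:=\tfrac{p'-q'}{q'}\ge0$ (here $p\le q$, so $p'\ge q'$) and $P_\eps$ the operator from Lemma~\ref{lemma:P_eps}, I would take the adapted test sequence
\[
f_k:=P_\eps(g_k)\,\|g_k\|_{X^*}^{q'-1}\,B_k^\alpha,\qquad 0\le k\le n.
\]
Using $\|P_\eps(g_k)\|_X=1$, the identity $q(q'-1)=q'$ and the $\mathcal F_{k-1}$-measurability of $B_k$, one gets $\mathbb E_{k-1}\|f_k\|_X^q=B_k^{q\alpha}(B_k-B_{k-1})$; since $0\le B_k\le B_n$ this gives $s_q^n(f)^q\le B_n^{q\alpha+1}$, and, using $p\alpha+\tfrac pq=\tfrac{p'}{q'}$ (a routine consequence of $pp'=p+p'$, $qq'=q+q'$),
\[
\|f\|_{H^{s_q^n}_p(X)}^p\le\mathbb E B_n^{p(\alpha+1/q)}=\mathbb E B_n^{p'/q'}=\|g\|_{H^{s_{q'}^n}_{p'}(X^*)}^{p'}.
\]
On the other hand $\langle f_k,g_k\rangle\ge(1-\eps)\|g_k\|_{X^*}^{q'}B_k^\alpha$ by \eqref{eq:lem:P_eps}, so taking conditional expectations, summing, and bounding below by the lower Riemann sum of the nondecreasing map $t\mapsto t^\alpha$,
\[
F_g(f)=\mathbb E\sum_{k=0}^n\langle f_k,g_k\rangle\ge(1-\eps)\,\mathbb E\sum_{k=0}^n B_k^\alpha(B_k-B_{k-1})\ge(1-\eps)\,\mathbb E\int_0^{B_n}t^\alpha\ud t=\frac{1-\eps}{\alpha+1}\,\mathbb E B_n^{\alpha+1}.
\]
Since $\alpha+1=\tfrac{p'}{q'}$ and $p'-\tfrac{p'}{p}=1$, the two displays combine to give $\|F_g\|\ge F_g(f)/\|f\|_{H^{s_q^n}_p(X)}\ge(1-\eps)\,\tfrac{q'}{p'}\,\|g\|_{H^{s_{q'}^n}_{p'}(X^*)}$, and $\eps\to0$ proves \eqref{eq:estonnorm} in this case (note $q'/p'=\min\{q/p,q'/p'\}$ when $p\le q$). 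Combined with the surjectivity of $g\mapsto F_g$ and the reflexivity coming from Theorem~\ref{thm:CsorgoArg}, this shows $g\mapsto F_g$ is an isomorphism $H^{s_{q'}^n}_{p'}(X^*)\to\bigl(H^{s_q^n}_p(X)\bigr)^*$ with constants $q'/p'$ and $1$ for all $1<p\le q<\infty$.

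\emph{The case $1<q<p<\infty$.} Since $X$ is reflexive separable, so is $X^*$, and $1<p'<q'<\infty$. Applying the previous step with $(p,q,X)$ replaced by $(p',q',X^*)$ and using $(p')'=p$, $(q')'=q$, $(X^*)^*=X$, one obtains that $h\mapsto F_h$ is an isomorphism $H^{s_q^n}_p(X)\to\bigl(H^{s_{q'}^n}_{p'}(X^*)\bigr)^*$ with $\tfrac qp\|h\|_{H^{s_q^n}_p(X)}\le\|F_h\|\le\|h\|_{H^{s_q^n}_p(X)}$. Both $H^{s_q^n}_p(X)$ and $H^{s_{q'}^n}_{p'}(X^*)$ are reflexive by Theorem~\ref{thm:CsorgoArg}, so Lemma~\ref{lemma:simple} applies to this pair and yields that the induced isomorphism $\bigl(H^{s_q^n}_p(X)\bigr)^*\to H^{s_{q'}^n}_{p'}(X^*)$ — which, after unwinding the canonical bidual identification, is precisely $F_g\mapsto g$ — satisfies $\tfrac qp\|g\|_{H^{s_{q'}^n}_{p'}(X^*)}\le\|F_g\|\le\|g\|_{H^{s_{q'}^n}_{p'}(X^*)}$; this is \eqref{eq:estonnorm}, since $q/p=\min\{q/p,q'/p'\}$ when $q<p$.

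\emph{The main obstacle.} The crux is the choice of test sequence in the first step: it must convert the conditioned $q'$-th moments of $g$ into their running sums $B_k$ at a cost \emph{independent of $n$}, whereas the Cs\"org{\H{o}}-type test sequence underlying Theorem~\ref{thm:CsorgoArg} trades $\sum_k B_k^r$-type expressions for $(\sum_k b_k)^r$ at the price of a factor $(n+1)^{|r-1|}$. The sequence above sidesteps this entirely by using only the monotonicity of $k\mapsto B_k$ (lower Riemann sums for $t^\alpha$, together with $B_k\le B_n$), so no power of $n$ ever enters; the residual care concerns measurability and the null events where some $b_k$ vanishes (there $f_k=0$ and, as $\alpha\ge0$, no $0^\alpha$ ambiguity arises). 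A secondary point is that a purely direct test-function argument does not produce the sharp constant $q/p$ when $q<p$, which is exactly why the second step is routed through reflexivity and Lemma~\ref{lemma:simple} rather than proved by hand.
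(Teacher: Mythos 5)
Your proof is correct and essentially reproduces the paper's Weisz-type argument: your test sequence $f_k = P_\eps(g_k)\,\|g_k\|_{X^*}^{q'-1}\,B_k^\alpha$ is the paper's $h_k$ up to the harmless normalization $\|g\|^{p'-1}$ (since $B_k^\alpha = (s_{q'}^k(g))^{p'-q'}$), your integral comparison $\sum_{k} B_k^\alpha(B_k-B_{k-1})\ge\int_0^{B_n}t^\alpha\ud t$ is precisely the telescoped form of the mean-value inequality \eqref{eqn:MVTalpha}, and the reduction of the case $q<p$ to $p\le q$ via the reflexivity from Theorem~\ref{thm:CsorgoArg} and Lemma~\ref{lemma:simple} is the same. (One minor terminological slip: $\sum_k B_k^\alpha(B_k-B_{k-1})$ is the right-endpoint, hence \emph{upper}, Riemann sum of the increasing map $t\mapsto t^\alpha$ on $[0,B_n]$, which is exactly why it dominates the integral.)
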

\begin{proof}
We already proved in Theorem~\ref{thm:CsorgoArg} that $H^{s_q^n}_p(X)$ is reflexive, so by Lemma \ref{lemma:simple} it is enough to show \eqref{eq:estonnorm} for $p\leq q$. It was already noted in \eqref{eq:proofofthm:CsorgoArg} that $\|F_g\|\leq \|g\|_{H^{s_{q'}^n}_{p'}(X^*)}$.
It is sufficient to show \eqref{eq:estonnorm} for a bounded $g$. The following construction is in essence the same as in \cite[Theorem 15]{Weisz}. Set
$$
(v_k)_{k=0}^n =\Biggl(\frac{(s_{q'}^{k}(g))^{p'-q'}}{\|g\|^{p'-1}_{H^{s_{q'}^n}_{p'}(X^*)}}\Biggl)_{k=0}^n.
$$
Fix $0<\eps<1$. Let us define $h \in H^{s_q^n}_p(X)$ by setting
$$
h_k = v_k\|g_k\|^{q'-1}P_{\eps}g_k,
$$
where $P_{\eps}:X^*\to X$ is as given in Lemma \ref{lemma:P_eps}. Then
$$
(s_q^n(h))^q \leq \sum_{k=0}^n \frac {(s_{q'}^{k}(g))^{qp'-qq'}}{\|g\|^{qp'-q}_{H^{s_{q'}^n}_{p'}(X^*)}}\mathbb E_{k-1}\|g_k\|^{q'} \leq \frac {(s_{q'}^{n}(g))^{qp'-(q-1)q'}}{\|g\|^{qp'-q}_{H^{s_{q'}^n}_{p'}(X^*)}}.
$$
and therefore
$$
\mathbb E(s_q^n(h))^p  \leq \frac {\mathbb E(s_{q'}^{n}(g))^{(qp'-(q-1)q')\frac p{q}}}{\|g\|^{pp'-p}_{H^{s_{q'}^n}_{p'}(X^*)}} = 1.
$$
As a consequence,
\begin{equation}\label{eq:thmforn}
 \begin{split}
  \|F_g\| &\geq |\langle F_g, h\rangle| \\
  &\geq (1-\eps) \frac{1}{\|g\|^{p'-1}_{H^{s_{q'}^n}_{p'}(X^*)}}\mathbb E \sum_{k=0}^n (s_{q'}^{k}(g))^{p'-q'} \mathbb E_{k-1}\|g_k\|^{q'}\\
 & = (1-\eps) \frac{1}{\|g\|^{p'-1}_{H^{s_{q'}^n}_{p'}(X^*)}}\mathbb E \sum_{k=0}^n (s_{q'}^{k}(g))^{p'-q'}((s_{q'}^{k}(g))^{q'}-(s_{q'}^{k-1}(g))^{q'}).
 \end{split}
\end{equation}
By the mean value theorem,
\begin{equation}
\label{eqn:MVTalpha}
x^{\alpha} - 1\leq \alpha (x-1)x^{\alpha -1},\;\;\;\; x,\alpha \geq 1.
\end{equation}
Applying this for $x = \frac{(s_{q'}^{k}(g))^{q'}}{(s_{q'}^{k-1}(g))^{q'}}\geq 1$ and $\alpha = \frac {p'}{q'}\geq 1$ we find
$$
\frac{q'}{p'}((s_{q'}^{k}(g))^{p'}-(s_{q'}^{k-1}(g))^{p'}) \leq ((s_{q'}^{k}(g))^{q'}-(s_{q'}^{k-1}(g))^{q'})(s_{q'}^{k}(g))^{p'-q'}.
$$
Combining this with \eqref{eq:thmforn} and letting $\eps \to 0$,
$$
\|F_g\|\geq \frac{q'}{p'\|g\|^{p'-1}_{H^{s_{q'}^n}_{p'}(X^*)}} \mathbb E(s_{q'}^{n}(g))^{p'} = \frac{q'}{p'} \|g\|_{H^{s_{q'}^n}_{p'}(X^*)}.
$$
\end{proof}
We can now deduce the main result of this section.
\begin{proof}[Proof of Theorem~\ref{thm:(H^{s_q}_p(X))^*= H^{s_{q'}}_{p'}(X^*)}]
Let $F \in (H^{s_q}_p(X))^*$. For every $n\geq 0$ there exists an 
$F_n \in \bigl(H^{s^n_q}_p(X)\bigr)^*$ such that $\langle F, f\rangle = \langle F_n, (f_k)_{k=0}^n\rangle$ for each $f \in H^{s_q}_p(X)$ satisfying $f_m = 0$ for all $m> n$. Thanks to Theorem~\ref{thm:CsorgoArg}, for each $n\geq 0$ there exists a $g^n = (g^n_k)_{k=0}^n \in H^{s^n_{q'}}_{p'}(X)$ such that $F_n = F_{g^n}$. Obviously $g_{k}^m = g_k^n$ for each $m,n\geq k$, so there exists a unique $g = (g_k)_{k=0}^{\infty}$ such that $g^n = (g_k)_{k=0}^n$. Moreover, Theorem \ref{thm:findimrefl} implies
 $$
 \min\Bigl\{\frac{q}{p},\frac{q'}{p'}\Bigr\}\|g^n\|_{H^{s^n_{q'}}_{p'}(X)} \leq \|F_n\|_{\bigl(H^{s^n_q}_p(X)\bigr)^*} \leq \|F\|_{(H^{s_q}_p(X))^*},
 $$
 so $g\in H^{s_{q'}}_{p'}(X)$ and 
 $$
 \min\Bigl\{\frac{q}{p},\frac{q'}{p'}\Bigr\}\|g\|_{H^{s_{q'}}_{p'}(X)}\leq \|F\|_{(H^{s_q}_p(X))^*}.
 $$
Now obviously $F = F_g$, as these two functionals coincide on the dense subspace of all finitely non-zero sequences in $H^{s_q}_p(X)$, and \eqref{eq:isom} and \eqref{eq:estonnorminfinity} hold. 
\end{proof}

\section{Sharp bounds for $L^q$-valued stochastic integrals}
\label{sec:Ito}

We now turn to proving sharp bounds for stochastic integrals. We start by setting notation and recalling some basic facts on martingales. 

\subsection{Preliminaries} Throughout, $H$ always denotes a Hilbert space. We write $\overline{\mathbb R}_+:= [0,+\infty]$. We let $(\Omega,\cF,\bP)$ be a complete probability space and let $\mathbb F = (\mathcal F_t)_{t\geq 0}$ be a filtration that satisfies the usual conditions. We write $\cP$ to denote the \emph{predictable $\si$-algebra} on $\R_+\ti \Om$, the $\si$-algebra generated by all c\`ag adapted processes. We use $\mathcal O$ to denote the {\it optional $\sigma$-algebra} $\mathbb R_+ \times \Omega$, the $\sigma$-algebra generated by all c\`adl\`ag adapted processes. We write $\widetilde{\mathcal P}= \mathcal P \otimes \mathcal J$ and $\widetilde {\mathcal O}:=\mathcal O \otimes \mathcal J$ for the induced $\sigma$-algebras on $\widetilde {\Omega} = \mathbb R_+ \times \Omega \times J$.

Let $M$, $N:\mathbb R_+ \times \Omega \to H$ be local martingales. The {\it covariation} $[M,N]:\mathbb R_+ \times \Omega \to \mathbb R$ and the {\it quadratic variation} $[M]:\mathbb R_+ \times \Omega \to \mathbb R_+$ are defined by
\begin{align*}
 [M,N]_t & = \mathbb P-\lim \sum_{i=1}^k \langle M_{t_i} - M_{t_{i-1}},N_{t_i} - N_{t_{i-1}} \rangle,\;\;\;t\geq 0,\\
 [M]_t &= [M,M]_t\;\;\;t\geq 0,
\end{align*}
where $0= t_0\leq t_1\leq \cdots \leq t_k = t$ is a partition and the limit in probability is taken as the mesh of the partition goes to $0$. It is well known that the covariation of any two local martingales exists and that $\langle M,N\rangle-[M,N]$ is a local martingale. Notice that for any orthogonal basis $(h_n)_{n\geq 1}$ of $H$, for any $t\geq 0$ a.s.
 \begin{equation}\label{eq:quadvarcoorwise}
  [M]_t = \sum_{n\geq 1} [\langle M, h_n\rangle]_t.
 \end{equation}

We will frequently use the \emph{Burkholder-Davis-Gundy inequality}: if $1\leq p<\infty$, $M$ is any local martingale with $M_0=0$ and $\tau$ is any stopping time, then 
$$\bigl(\E\sup_{0\leq t\leq \tau} \|M_t\|^p\bigr)^{1/p} \eqsim_p (\E[M]_\tau^{p/2})^{1/p}.$$
We refer to \cite{MarRock16} for a self-contained proof.

If $M:\mathbb R_+ \times \Omega\to H$ is a local martingale, then it has a c\`adl\`ag version \cite[Theorem I.9]{Prot}. Therefore a.s.\ for each stopping time~$\tau$ one can define the jump of $M$ at time $\tau$ by $\Delta M_{\tau}:=M_{\tau} - \lim_{\eps \to 0}M_{({\tau}-\eps)\vee 0}$. 

An increasing c\`adl\`ag process $A:\mathbb R_+ \times \Omega \to \mathbb R$ is called {\it pure jump} if for each $t\geq 0$ a.s.
\[
 A_t = A_0 + \sum_{0\leq s\leq t} \Delta A_s.
\]

An $H$-valued local martingale $M$ is called {\it purely discontinuous} if 
$[M]$ is a pure jump process a.s.
(An equivalent definition of a purely discontinuous local martingale will be given in Proposition \ref{thm:purdiscorthtoanycont1}.) The reader can find more on purely discontinuous local martingales in \cite[Chapter I.4]{JS} and \cite[Chapter 26]{Kal}. 

Let $\tau$ be a stopping time. 
 An $H$-valued local martingale is called {\it quasi-left continuous} if $\Delta M_{\tau} = 0$ a.s.\ on the set $\{\tau<\infty\}$ for each predictable stopping time~$\tau$ (see \cite[Chapter I.2]{JS} for more information). We call
$$[\tau]=\{((\omega,t) \in \Omega\times \R_+ \ : \ t=\tau(\omega)\}$$
the graph of $\tau$ (although it is strictly speaking, the restriction of the graph of $\tau$ to $\Omega\times \R_+$). A stopping time $\tau$ is called {\it predictable} if there exists an increasing sequence $(\tau_n)_{n\geq 1}$ of stopping times such that $\tau_n < \tau$ on $\{\tau>0\}$ for each $n\geq 1$ and $\tau_n \nearrow \tau$ a.s.\ as $n\to \infty$
(see \cite[Definition I.2.7]{JS} and \cite[Chapter 25]{Kal}). For a predictable stopping time $\tau$ we define $\mathcal F_{\tau-}$ by
\[
 F_{\tau-} := \sigma (\mathcal F_{\tau_n})_{n\geq 1}.
\]
Notice that $F_{\tau-}$ does not depend on the choice of the announcing sequence $(\tau_n)_{n\geq 0}$ (see for example \cite[Lemma 25.2(iii)]{Kal}). \mbox{An~$H$-va}\-lued local martingale is said to have {\it accessible jumps} if there exists a sequence of predictable stopping times $({\tau_n})_{n\geq 0}$ with disjoint graphs such that a.s. 
$$
\{t\in \mathbb R_+: \Delta M_t \neq 0\} \subset \cup_{n\geq 0}\{\tau_n\}
$$
(see \cite[p.499]{Kal} and \cite[Corollary 26.16]{Kal}). Let $X$ be a Banach space. We say that a local martingale $M:\mathbb R_+ \times \Omega \to X$ is quasi-left continuous (has accessible jumps), if $\langle M, x^*\rangle$ is quasi-left continuous (has accessible jumps) for each $x^* \in X^*$. Notice that these definitions coincide with the previous ones if $X$ is a Hilbert space.

\subsection{Decomposition of stochastic integrals}

The process $\Phi: \mathbb R_+ \times \Omega \to \mathcal L(H,X)$ is called {\it elementary predictable} if it is of the form
$$
\Phi(t,\omega) = \sum_{n=1}^N\sum_{m=1}^M \mathbf 1_{(t_{n-1},t_n]\times A_{mn}}(t,\omega)
\sum_{k=1}^K h_k \otimes x_{kmn},
$$
where $0 \leq t_0 < \ldots < t_n <\infty$,
$A_{1n},\ldots,A_{Mn}\in \mathcal F_{t_{n-1}}$ for each $n = 1,\ldots, N$ and $h_1,\ldots,h_K\in H$ are orthogonal.
For each elementary predictable
$\Phi$ and for any $H$-valued local martingale $M$ we define the stochastic integral with respect to $M$
as an element of $L^0(\Omega; L^{\infty}(\mathbb R_+;X))$ by
\begin{equation}\label{eq:intelpred}
 \int_0^t \Phi(s) \ud M(s) = \sum_{n=1}^N\sum_{m=1}^M \mathbf 1_{A_{mn}}
\sum_{k=1}^K \langle M(t_n\wedge t) - M(t_{n-1}\wedge t),h_k\rangle x_{kmn}.
\end{equation}
We will often write $\Phi \cdot M$ for the process $\int_0^\cdot \Phi(s) \ud M(s)$.\par
To prove sharp bounds for the stochastic integral, we will decompose it by decomposing the integrator $M$ into three parts. 

\begin{lemma}\label{lem:A^cA^qA^a}
 Let $A:\mathbb R_+\times \Omega \to \mathbb R_+$ be an increasing adapted c\`adl\`ag process, $A_0=0$ a.s. Then there exist unique increasing adapted c\`adl\`ag $A^c, A^q, A^a:\mathbb R_+ \times \Omega \to \mathbb R_+$ such that $A^c_0=A^q_0=A^a_0=0$, $A^c$ is continuous a.s., $A^q$ and $A^a$ are pure jump a.s., $A^q$ is quasi-left continuous, $A^a$~has accessible jumps, and $A = A^c + A^q + A^a$.
\end{lemma}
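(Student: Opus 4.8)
The plan is to construct the three processes via well-known decomposition results for the jumps of $A$ and then argue uniqueness. First I would observe that since $A$ is increasing, c\`adl\`ag and adapted, the set of its jump times $\{t:\Delta A_t\neq 0\}$ is a thin (optional) random set, so by the classical exhaustion of optional thin sets (see e.g.\ \cite[Chapter 25]{Kal} or \cite[Chapter I.1]{JS}) it can be written as a countable union of graphs of stopping times. Each stopping time $\sigma$ admits a \emph{Meyer decomposition} into an accessible part and a totally inaccessible part: there is a set $B\in\cF_{\sigma}$ such that $\sigma_B=\sigma$ on $B$ is accessible and $\sigma_{B^c}$ is totally inaccessible. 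Partitioning the jump times accordingly, I would set $A^a_t:=\sum_{0\le s\le t}\Delta A_s\,\mathbf 1_{\{s\text{ is an accessible jump time}\}}$ and $A^q_t:=\sum_{0\le s\le t}\Delta A_s\,\mathbf 1_{\{s\text{ is a totally inaccessible jump time}\}}$, and finally $A^c:=A-A^a-A^q$.

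Next I would verify that these three processes have the required properties. Both $A^a$ and $A^q$ are sums of nonnegative jumps, hence increasing, adapted (the accessible/inaccessible dichotomy of a stopping time is $\cF$-measurable in the appropriate sense) and pure jump by construction, and they are c\`adl\`ag because the tails $\sum_{s>n}\Delta A_s\to 0$. That $A^q$ is quasi-left continuous amounts to checking that $\Delta A^q_\tau=0$ a.s.\ on $\{\tau<\infty\}$ for every predictable $\tau$: since a predictable time cannot meet the graph of a totally inaccessible time except on a null set (this is essentially the definition of total inaccessibility), this holds. That $A^a$ has accessible jumps is immediate since its jump times are, by construction, covered by graphs of predictable stopping times with disjoint graphs (after the usual disjointification of the accessible pieces). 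Finally $A^c=A-A^q-A^a$ is adapted and c\`adl\`ag with $A^c_0=0$; its continuity follows because at any jump time $t$ of $A$ we have $\Delta A_t=\Delta A^q_t+\Delta A^a_t$ by construction, so $\Delta A^c_t=0$, while at non-jump times of $A$ all three are continuous; and $A^c$ is increasing because $A-A^q$ is increasing (removing some of the upward jumps of an increasing process keeps it increasing) and subtracting $A^a$ again removes the remaining jumps.

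For uniqueness, suppose $A=B^c+B^q+B^a$ is another such decomposition. Then $A^c-B^c=(B^q-A^q)+(B^a-A^a)$, and the left side is continuous while the right side is pure jump; comparing jumps at any fixed time gives $\Delta A^q_t+\Delta A^a_t=\Delta B^q_t+\Delta B^a_t$. A totally inaccessible jump of $A^q$ cannot be matched by an accessible jump (their graphs are a.s.\ disjoint, since an accessible time is covered by predictable times and a predictable time meets a totally inaccessible one only on a null set), and symmetrically; hence $\Delta A^q_t=\Delta B^q_t$ and $\Delta A^a_t=\Delta B^a_t$ for all $t$ a.s. Since $A^q,B^q,A^a,B^a$ are pure jump starting at $0$, equality of jumps forces $A^q=B^q$ and $A^a=B^a$, and then $A^c=B^c$.

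The main obstacle is not any single hard estimate but rather the careful measure-theoretic bookkeeping: making precise that the jump times of $A$ can be exhausted by a sequence of stopping times, that each such stopping time splits cleanly into accessible and totally inaccessible parts on disjoint $\cF_\sigma$-measurable sets, and that these pieces can be reorganized into a sequence of predictable stopping times with disjoint graphs so that $A^a$ genuinely ``has accessible jumps'' in the sense defined above. All of this is standard (it is the content of \cite[Chapters 25--26]{Kal} and \cite[Chapter I]{JS}), so I would invoke those results rather than reprove them; the only genuinely new content is assembling them into the stated trichotomy for increasing processes, which I expect to be routine once the exhaustion and the accessible/inaccessible dichotomy are in hand.
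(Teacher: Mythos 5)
Your proof is correct. The paper itself does not give a proof but simply cites \cite[Proposition 25.17]{Kal}, which is precisely the Lebesgue-type decomposition of an increasing c\`adl\`ag adapted process into continuous, quasi-left continuous pure jump, and accessible pure jump parts; your argument reconstructs the standard proof behind that reference (exhaust the thin set of jump times by stopping times, split each into accessible and totally inaccessible pieces, and sum the corresponding jumps). Two small remarks: (i) when you argue that $A-A^q$ is increasing by ``removing some upward jumps,'' the underlying fact you need is that for an increasing process $\sum_{s<u\le t}\Delta A_u \le A_t - A_s$, so removing any sub-collection of jumps from an interval cannot make the increment negative; and (ii) a slightly cleaner route to the continuity and monotonicity of $A^c$ is to note that $A^q + A^a = \sum_{s\le\cdot}\Delta A_s$ is the entire jump part of $A$, so $A^c$ is just the continuous part of the increasing function $t\mapsto A_t(\omega)$, which is automatically continuous and increasing. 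Neither of these affects correctness.
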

\begin{proof}
 The statement follows from \cite[Proposition 25.17]{Kal}.
\end{proof}

The following lemma is a generalization of both \cite[Theorem 26.14]{Kal} and \cite[Corollary 26.16]{Kal} to the Hilbert space-valued case.
\begin{lemma}[Decomposition of local martingales, Yoeurp, Meyer]\label{lem:YoeMayH}
 Let $M:\mathbb R_+ \times  \Omega \to H$ be a local martingale. Then there exists a unique decomposition $M = M^c + M^q + M^a$, where $M^c:\mathbb R_+ \times \Omega \to H$ is a continuous local martingale, $M^q, M^a:\mathbb R_+ \times \Omega \to H$ are purely discontinuous local martingales, $M^q$ is quasi-left continuous, $M^a$ has accessible jumps, $M^c_0 = M^q_0=0$, and then $[M^c] = [M]^c$, $[M^q] = [M]^q$ and $[M^a] = [M]^a$, where $[M]^c$, $[M]^q$ and $[M]^a$ are defined as in Lemma \ref{lem:A^cA^qA^a}.
\end{lemma}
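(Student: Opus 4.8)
The plan is to reduce the Hilbert space-valued statement to the scalar case by working coordinatewise with respect to a fixed orthonormal basis $(h_n)_{n\geq 1}$ of $H$, applying the Yoeurp--Meyer decomposition \cite[Theorem 26.14, Corollary 26.16]{Kal} to each scalar local martingale $\langle M, h_n\rangle$, and then reassembling the three families of coordinates into $H$-valued processes. First I would set $M^{(n)} := \langle M, h_n\rangle$ and write its canonical decomposition $M^{(n)} = M^{(n),c} + M^{(n),q} + M^{(n),a}$, where $M^{(n),c}$ is continuous, $M^{(n),q}$ is purely discontinuous quasi-left continuous, and $M^{(n),a}$ is purely discontinuous with accessible jumps, with $M^{(n),c}_0 = M^{(n),q}_0 = 0$. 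The natural candidates are then $M^c := \sum_n M^{(n),c} h_n$, $M^q := \sum_n M^{(n),q} h_n$, and $M^a := \sum_n M^{(n),a} h_n$, and the task is to show that these series converge (say, locally in $L^2$ after localization to bring $M$ into $\mathcal H^2$, or along a localizing sequence of stopping times) to genuine $H$-valued local martingales with the stated properties.

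The key steps, in order, would be: (1) localize so that $M \in \mathcal H^2$, i.e.\ $\E[M]_\infty < \infty$; since $[M]_\infty = \sum_n [M^{(n)}]_\infty$ by \eqref{eq:quadvarcoorwise}, each $M^{(n)} \in \mathcal H^2$ and the partial sums $\sum_{n\leq N} M^{(n)} h_n$ form a Cauchy sequence in $\mathcal H^2(H)$, establishing convergence of all three series and that $M^c, M^q, M^a$ are $H$-valued $L^2$-martingales (the general local-martingale case follows by undoing the localization). (2) Verify that $M^c$ is continuous a.s.\ (immediate from uniform convergence on compacts of continuous paths, after passing to a subsequence or using Doob's inequality), that $M^c$ is a continuous local martingale, and that $M^q, M^a$ are purely discontinuous: for this I would use the characterization that a local martingale $L$ is purely discontinuous iff $[L]$ is pure jump, and compute $[M^q]_t = \sum_n [M^{(n),q}]_t$, each summand being pure jump, so the sum is pure jump a.s.; similarly for $M^a$. (3) Check quasi-left continuity of $M^q$ and accessibility of jumps of $M^a$ directly from the definitions given in the Preliminaries: $\langle M^q, x^*\rangle$ is quasi-left continuous for every $x^* \in H^*$ because $\langle M^q, h_n\rangle = M^{(n),q}$ is, and quasi-left continuity passes to $L^2$-limits of linear combinations (equivalently, $\Delta M^q_\tau = 0$ a.s.\ on $\{\tau < \infty\}$ for each predictable $\tau$, since $\langle \Delta M^q_\tau, h_n\rangle = \Delta M^{(n),q}_\tau = 0$ for all $n$); for $M^a$ one collects the countably many predictable stopping times supporting the jumps of each $M^{(n),a}$ into a single sequence with disjoint graphs by the usual disjointification trick. (4) Prove the identities $[M^c] = [M]^c$, $[M^q] = [M]^q$, $[M^a] = [M]^a$: here $[M]^c, [M]^q, [M]^a$ are the pieces from Lemma~\ref{lem:A^cA^qA^a} applied to $A = [M]$; one computes $[M] = [M^c] + [M^q] + [M^a]$ using orthogonality (the cross terms $[M^c, M^q]$, $[M^c, M^a]$, $[M^q, M^a]$ vanish — each is a local martingale that is also of finite variation, or one argues coordinatewise that $[M^{(n),c}, M^{(n),q}] = 0$ etc., then sums), observes that $[M^c]$ is continuous, $[M^q]$ is pure jump and quasi-left continuous, $[M^a]$ is pure jump with accessible jumps, and invokes the uniqueness part of Lemma~\ref{lem:A^cA^qA^a}. (5) Finally, uniqueness of the $H$-valued decomposition: if $M = \tilde M^c + \tilde M^q + \tilde M^a$ is another such decomposition, then pairing with each $h_n$ gives a scalar decomposition of $M^{(n)}$ with the required properties, so by scalar uniqueness $\langle \tilde M^c, h_n\rangle = M^{(n),c}$ etc.\ for all $n$, whence $\tilde M^c = M^c$, $\tilde M^q = M^q$, $\tilde M^a = M^a$.

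The main obstacle I anticipate is not any single step but the bookkeeping in step (1)--(2): one must be careful that ``purely discontinuous'', ``quasi-left continuous'' and ``accessible jumps'' — which are defined in the excerpt either pathwise via $[\cdot]$ or via all one-dimensional projections $\langle \cdot, x^*\rangle$ — behave well under the $\mathcal H^2(H)$-limit used to define $M^c, M^q, M^a$, and that the orthogonality/vanishing of the cross-variations (needed for step (4)) is justified rather than assumed. A clean way to sidestep some of this is to note that for $M \in \mathcal H^2(H)$ the three subspaces of $\mathcal H^2(H)$ consisting of continuous martingales, purely discontinuous quasi-left continuous martingales, and purely discontinuous martingales with accessible jumps are mutually orthogonal closed subspaces whose direct sum is all of $\mathcal H^2(H)$ — this can be read off coordinatewise from the scalar theory — and then $M^c, M^q, M^a$ are simply the orthogonal projections of $M$, which makes existence, uniqueness, and the cross-variation vanishing transparent; the quadratic-variation identities then follow as in step (4).
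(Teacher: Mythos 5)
Your coordinatewise plan matches the paper's Step~2 almost exactly, but it hinges on a reduction that is not available: a general $H$-valued local martingale \emph{cannot} be localized to lie in $\mathcal H^2$ (or even $L^2$). If $\tau_n$ is any localizing sequence, $[M]_{\tau_n} = [M]_{\tau_n-} + \|\Delta M_{\tau_n}\|^2$, and the jump at $\tau_n$ can be arbitrarily large; this is precisely why the classical theory does not say ``every local martingale is locally $\mathcal H^2$''. The paper handles this with a preliminary truncation (its Lemma~\ref{lem:trunc}, patterned on \cite[Lemma 26.5]{Kal}): write $M = M' + M''$ with $M'$ of locally integrable variation and $\|\Delta M''_t\|\leq 1$. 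The bounded-jump part $M''$ \emph{can} be localized into $L^2$ (stop when $\|M\|$ or $[M]$ exceeds $n$; the overshoot is controlled by the jump bound), and then your Step~(1)--(2) applies verbatim. The finite-variation part $M'$ is handled separately: each $\langle M',h\rangle$ is of locally bounded variation, hence purely discontinuous (Lemma~\ref{lem:martoflocbddvar}), so $M'$ is purely discontinuous by the Tonelli interchange you describe, and its $q$/$a$ split is obtained by the argument of \cite[Corollary 26.16]{Kal}. Without this first reduction your proposal has a genuine gap at the very first step, and the $L^2$-Cauchy argument for the three partial-sum series never gets off the ground.

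The rest of your plan is in line with the paper's: once you are in the $L^2$/bounded-jump regime, the series $\sum_n M^{(n),c}h_n$, $\sum_n M^{(n),q}h_n$, $\sum_n M^{(n),a}h_n$ converge in $L^2(\Omega;H)$ because $[M^{(n),\bullet}]\leq [M^{(n)}]$ and $\sum_n\E[M^{(n)}]_\infty=\E[M]_\infty<\infty$ via \eqref{eq:quadvarcoorwise}; continuity, pure discontinuity, quasi-left continuity, accessibility of jumps, the quadratic-variation identities, and uniqueness all propagate coordinatewise exactly as you outline. Your closing observation --- that after localization one can view $M^c,M^q,M^a$ as orthogonal projections of $M$ onto three mutually orthogonal closed subspaces of $\mathcal H^2(H)$ --- is a clean conceptual supplement not spelled out in the paper, but it too presupposes the truncation step to place the (stopped) martingale in $\mathcal H^2(H)$ in the first place.
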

We will refer to the decomposition in Lemma~\ref{lem:YoeMayH} as the \emph{canonical} decomposition of $M$. For the proof we will need the following lemmas.

\begin{lemma}[Truncation]\label{lem:trunc}
 Let $H$ be a Hilbert space, $M:\mathbb R_+ \times \Omega \to H$ be a local martingale. Then there exist local martingales $M'$, $M'':\mathbb R_+ \times \Omega \to H$ such that  $M=M'+M''$, $M'$ has locally integrable variation and $\|\Delta M''_t\|\leq 1$ a.s.\ for each $t\geq 0$.
\end{lemma}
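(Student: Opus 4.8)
The plan is to prove Lemma~\ref{lem:trunc} by separating the jumps of $M$ that are large (of norm exceeding $1/2$, say) from the rest, and showing the large jumps contribute a process of locally integrable variation, which is then compensated to produce a martingale.

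\textbf{Step 1: Identify the large jumps.} First I would set $J_t := \sum_{0\leq s\leq t} \Delta M_s \mathbf 1_{\{\|\Delta M_s\| > 1/2\}}$, using the c\`adl\`ag version of $M$. Since $[M]_t = \sum_{n\geq 1}[\langle M,h_n\rangle]_t$ is finite a.s.\ and $\Delta[M]_s = \|\Delta M_s\|^2$, the set $\{s\leq t : \|\Delta M_s\| > 1/2\}$ is finite a.s.\ for every $t$, so $J$ is a well-defined c\`adl\`ag adapted process of finite variation on compacts, with $\|\Delta M_s\|\leq 1/2$ on the complement. The total variation of $J$ on $[0,t]$ is $V_t := \sum_{0\leq s\leq t}\|\Delta M_s\|\mathbf 1_{\{\|\Delta M_s\|>1/2\}}$, and one checks $V_t \leq 2\sum_{0\leq s\leq t}\|\Delta M_s\|^2\mathbf 1_{\{\|\Delta M_s\|>1/2\}}\leq 2[M]_t$ by comparing $\|x\|$ with $2\|x\|^2$ when $\|x\|>1/2$.

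\textbf{Step 2: Localize to get integrable variation.} Let $(\sigma_k)$ be a localizing sequence reducing $M$ to an $L^1$-bounded (or at least true) martingale, and additionally stop at $\rho_k := \inf\{t : \|M_t\| \vee \|M_{t-}\| \vee [M]_t \geq k\}$; set $\tau_k = \sigma_k\wedge\rho_k\wedge k$. On $[0,\tau_k]$ the jump $\Delta M_{\tau_k}$ is controlled since $\|\Delta M_{\tau_k}\| \leq \|M_{\tau_k}\| + \|M_{\tau_k -}\|\leq 2k$, hence $V_{\tau_k}\leq 2[M]_{\tau_k} + 4k^2$ has finite expectation; more carefully one uses $[M]_{\tau_k}\leq [M]_{\tau_k-} + \|\Delta M_{\tau_k}\|^2 \leq k + 4k^2$. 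Thus $V^{\tau_k}$ is integrable and $J$ has locally integrable variation.

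\textbf{Step 3: Compensate.} Since $J$ has locally integrable variation and is adapted c\`adl\`ag, it admits a predictable compensator $\widehat{J}$ (its dual predictable projection, applied coordinatewise in $H$, or directly in the Hilbert-space-valued setting), so that $M' := J - \widehat{J}$ is a local martingale. Then $M'$ has locally integrable variation, being the difference of two such processes. Setting $M'' := M - M'$ gives a local martingale, and its jumps are $\Delta M''_s = \Delta M_s - \Delta M'_s = \Delta M_s - \Delta J_s + \Delta\widehat{J}_s = \Delta M_s\mathbf 1_{\{\|\Delta M_s\|\leq 1/2\}} + \Delta\widehat{J}_s$. The first term has norm $\leq 1/2$; for the compensator jump one uses that $\Delta\widehat{J}$ is the predictable projection-type object and satisfies $\|\Delta\widehat{J}_s\| = \|\,{}^p(\Delta J)_s\|\leq {}^p(\|\Delta J\|)_s$, and since $\|\Delta J_s\| = \|\Delta M_s\|\mathbf 1_{\{\|\Delta M_s\|>1/2\}}$... this is where a little care is needed, as there is no uniform bound on $\|\Delta J_s\|$. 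The cleaner route, and the one I would actually take, is to truncate at a level that makes the compensator jump bounded too: note $\Delta\widehat{J}_s = \mathbb E(\Delta J_s\mid\mathcal F_{s-})$ at predictable times, but $J$ jumps also at totally inaccessible times where $\Delta\widehat{J}_s = 0$. So the only contributions to $\Delta\widehat{J}$ come from accessible jump times, and there $\|\Delta\widehat{J}_s\|\leq \mathbb E(\|\Delta M_s\|\mid \mathcal F_{s-})$, which need not be bounded. To fix this, I would instead split at the jumps of size in $(1/2,1]$ versus those of size $>1$: the latter are finitely many on compacts and their sum has paths of finite variation with jumps that can be handled by a further localization, while the former form a bounded-jump process whose compensator also has bounded jumps by the conditional Jensen bound $\|\Delta\widehat{J}_s\|\leq 1$. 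Iterating or simply absorbing the size-$(1/2,1]$ part into $M'$ and noting its compensator has jumps bounded by $1$, one rescales: replace the threshold $1/2$ by $1$ throughout and accept $\|\Delta M''_s\|\leq 1$, which is exactly the claim.

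\textbf{Main obstacle.} The delicate point is controlling $\|\Delta M''\|$ after compensation: the compensator $\widehat{J}$ of the large-jump part can a priori have large jumps even if we only removed jumps of moderate size, because the conditional expectation at a predictable time sees the full size of $\Delta M$. The resolution is to exploit that compensator jumps occur only at predictable (accessible) times and there are bounded by the conditional expectation of $\|\Delta M\|$, together with the fact that $M''$ inherits only the small jumps of $M$ directly; choosing the truncation level correctly (and, if desired, iterating the construction or arguing that on each $[0,\tau_k]$ only finitely many predictable jump times of size $>1$ occur so they can be absorbed into the finite-variation part) yields the uniform bound $\|\Delta M''_t\|\leq 1$. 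Everything else — existence of the compensator via the dual predictable projection for Hilbert-space-valued processes of locally integrable variation, and the bookkeeping that differences of local martingales are local martingales — is standard and I would cite \cite{JS} or \cite{Kal}.
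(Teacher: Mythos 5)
Your overall strategy is the same as the paper's (which simply cites Kallenberg, Lemma 26.5, and that proof proceeds by exactly the large-jump truncation and compensation you describe). Steps 1 and 2 are fine: the bound $\mathrm{Var}(J)_t \le 2[M]_t$ and a suitable localization give $J\in\mathcal A_{\mathrm{loc}}$, so the compensator $\widehat J$ exists and $M'=J-\widehat J$ is a local martingale of locally integrable variation.

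However, you have a genuine gap precisely at the point you flag as the ``main obstacle,'' and the remedies you sketch do not close it. Splitting the large jumps into sizes in $(1/2,1]$ and $>1$ does not help: for the size-$>1$ part the compensator can again have large jumps, and ``further localization'' cannot remove them, since stopping preserves jump sizes. Likewise, the bound $\|\Delta\widehat J_s\|\le{}^p(\|\Delta J\|)_s$ is correct but useless, because $\mathbb E(\|\Delta M_\tau\|\mathbf 1_{\{\|\Delta M_\tau\|>1/2\}}\mid\mathcal F_{\tau-})$ is not bounded. The actual resolution, which is the one crucial idea missing from your proposal, is the martingale-difference identity at predictable times: after localizing $M$ to a uniformly integrable martingale, for any predictable stopping time $\tau$ one has $\mathbb E(\Delta M_\tau\mid\mathcal F_{\tau-})=0$ a.s.\ on $\{\tau<\infty\}$ (cf.\ \cite[Lemma 26.18]{Kal} or \cite[Lemma I.2.27]{JS}, which the paper itself invokes later). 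Since $\widehat J$ is predictable and $\Delta\widehat J_\tau = \mathbb E(\Delta J_\tau\mid\mathcal F_{\tau-})$ at predictable times (and $\Delta\widehat J=0$ at totally inaccessible times), this identity gives
$$\Delta\widehat J_\tau = \mathbb E\bigl(\Delta M_\tau\mathbf 1_{\{\|\Delta M_\tau\|>1/2\}}\mid\mathcal F_{\tau-}\bigr) = -\,\mathbb E\bigl(\Delta M_\tau\mathbf 1_{\{\|\Delta M_\tau\|\le 1/2\}}\mid\mathcal F_{\tau-}\bigr),$$
whose norm is at most $1/2$ by the conditional Jensen inequality. Combined with $\|\Delta M_s - \Delta J_s\| = \|\Delta M_s\mathbf 1_{\{\|\Delta M_s\|\le 1/2\}}\|\le 1/2$, this yields $\|\Delta M''_s\|\le 1$ with the threshold $1/2$ you started from — no iteration or rescaling is needed or even consistent, since the compensator contributes its own $1/2$.
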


\begin{proof}
 The proof is essentially the same as the one given in \cite[Lemma 26.5]{Kal} in the real-valued case.
\end{proof}

\begin{lemma}\label{lem:martoflocbddvar}
 Let $M:\mathbb R_+ \times \Omega \to \mathbb R$ be a local martingale of locally bounded variation. Then $M$ is purely discontinuous.
\end{lemma}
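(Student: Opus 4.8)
The plan is to verify directly that the quadratic variation $[M]$ is a pure jump process, which is by definition precisely what it means for $M$ to be purely discontinuous.

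First I would reduce to a statement about individual paths. Since $M$ has locally bounded variation, there is a sequence of stopping times $\sigma_n \uparrow \infty$ such that each stopped process $M^{\sigma_n}$ has bounded variation; consequently, almost surely the path $s\mapsto M_s(\omega)$ has finite variation on every compact interval $[0,t]$. Because being a pure jump process is a pathwise property, it is then enough to fix such an $\omega$ and argue for the single path, with no further use of the martingale structure.

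The core step is the classical identity, valid for any c\`adl\`ag path of finite variation on compacts,
\[
[M]_t \;=\; \sum_{0<s\le t}(\Delta M_s)^2, \qquad t\ge 0.
\]
I would either cite this (it is classical; see, e.g., \cite{JS} or \cite{Prot}) or prove it as follows. For a partition $0=t_0<\dots<t_k=t$ one has $\sum_i (M_{t_i}-M_{t_{i-1}})^2 \le \big(\max_i |M_{t_i}-M_{t_{i-1}}|\big)\,\mathrm{Var}(M)_{[0,t]}$, but here the maximum does not shrink with the mesh because of the jumps. So, given $\eps>0$, one isolates the finitely many times $s_1<\dots<s_m$ in $[0,t]$ with $|\Delta M_{s_j}|>\eps$: along partitions containing the $s_j$ and with mesh tending to $0$, the $m$ intervals straddling these times contribute $\sum_{j}(\Delta M_{s_j})^2 + o(1)$, while on the remaining intervals c\`adl\`ag regularity away from the large jumps forces the increments to be eventually at most $2\eps$, so that part is at most $2\eps\,\mathrm{Var}(M)_{[0,t]}$. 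Letting first the mesh and then $\eps$ go to $0$ gives the displayed identity.

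Finally, the right-hand side of the identity is an increasing c\`adl\`ag process with $[M]_0=0$ and $\Delta[M]_s=(\Delta M_s)^2$, hence $[M]_t = [M]_0 + \sum_{0\le s\le t}\Delta[M]_s$; that is, $[M]$ is a pure jump process, so $M$ is purely discontinuous. The only point requiring real work is the quadratic variation identity of the previous step, precisely because of the need to extract the large jumps before the small increments can be controlled by the total variation; everything else is bookkeeping with the definitions of Section~\ref{sec:Ito}.
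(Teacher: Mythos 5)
Your argument is correct, but it is a genuine unpacking of a step the paper leaves entirely to a citation: the published proof of Lemma~\ref{lem:martoflocbddvar} consists of the single line ``The lemma follows from [Kallenberg, Theorem~26.6(viii)].'' What you do instead is give the substance behind that reference --- the classical pathwise identity $[M]_t=\sum_{0<s\le t}(\Delta M_s)^2$ for a c\`adl\`ag path of locally finite variation, followed by the observation that the right-hand side is trivially a pure jump process, which is exactly the paper's definition of $M$ being purely discontinuous. Both routes hinge on the same fact about the quadratic variation of a finite-variation process; the paper outsources it, while your version makes the reasoning self-contained, which is a mild advantage for readability. One small caution in your sketch of the identity: the paper's definition of $[M]$ is a $\mathbb{P}$-limit over \emph{arbitrary} partition sequences with mesh tending to zero, whereas you phrase the argument only ``along partitions containing the $s_j$.'' This is easily repaired (if $s_j\in(t_{i-1},t_i]$ then $M_{t_i}-M_{t_{i-1}}\to\Delta M_{s_j}$ as the mesh shrinks, so the contribution of that interval still tends to $(\Delta M_{s_j})^2$), and the statement ``on the remaining intervals the increments are eventually at most $2\eps$'' is the standard oscillation lemma for c\`adl\`ag functions, but both points should be stated for a complete proof rather than left implicit. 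With those touch-ups your direct argument is a valid alternative to the citation.
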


\begin{proof}
 The lemma follows from \cite[Theorem 26.6(viii)]{Kal}.
\end{proof}

\begin{proof}[Proof of Lemma \ref{lem:YoeMayH}]
The proof consists of two steps. In the first one we show that we can assume that $\|\Delta M_t\|\leq 1$ a.s.\ for each $t\geq 0$. In the second step we will show the statement in this particular case.

{\it Step 1.}
First decompose $M$ as in Lemma \ref{lem:trunc}. Notice that $M'$ is purely discontinuous. Indeed, for each $h\in H$ the martingale $\langle M', h\rangle$ is locally of bounded variation. Therefore due to Lemma \ref{lem:martoflocbddvar}, $[\langle M', h\rangle]$ is a pure jump process, and by \eqref{eq:quadvarcoorwise} for any orthogonal basis $(h_n)_{n\geq 1}$ of $H$ and for any $t\geq 0$
\begin{align*}
 [M']_t = \sum_{n\geq 1}[\langle M', h_n\rangle]_t = \sum_{n\geq 1}\sum_{0\leq s\leq t}\Delta[\langle M', h_n\rangle]_s &= \sum_{0\leq s\leq t}\sum_{n\geq 1}\Delta[\langle M', h_n\rangle]_s\\
 &=\sum_{0\leq s\leq t}\Delta[M']_s.
\end{align*}
Hence $[M']$ is pure jump, and $M'$ is purely discontinuous.

 The existence of a decomposition of $M'$ into a purely discontinuous quasi-left continuous part and a purely discontinuous part with accessible jumps follows analogously to \cite[Corollary 26.16]{Kal}. The uniqueness holds due to the uniqueness of the decomposition in the real-valued case.

{\it Step 2.} By Step 1 we can assume that $\|\Delta M_t\|\leq 1$ a.s.\ for each $t\geq 0$. By a localization argument we may assume that $M$ is an $L^2$-martingale. Without loss of generality assume also that $M_0=0$ and that $H$ is separable. Let $(h_n)_{n\geq 1}$ be an orthonormal basis of~$H$. For each $n\geq 1$ define a martingale $M^n:\mathbb R_+ \times \Omega \to \mathbb R$ by $M^n:= \langle M, h_n\rangle$. Then by \eqref{eq:quadvarcoorwise} for each $t\geq 0$ a.s.
 \begin{equation}\label{eq:quadvarcoorwise1}
  [M]_t = \sum_{n=1}^{\infty}[M^n]_t.
 \end{equation}
 For each $n\geq 1$ by \cite[Theorem~26.14]{Kal} and \cite[Corollary 26.16]{Kal} we can define martingales $M^{c, n}, M^{q, n}, M^{a,n}:\mathbb R_+ \times \Omega \to \mathbb R$ such that $M^{c, n}_0= M^{q, n}_0= M^{a,n}_0=0$ and $M^{c, n}$ is a continuous martingale, $M^{q, n}$ and $M^{a,n}$ are purely discontinuous, $M^{q,n}$ is quasi-left continuous, $M^{a,n}$ has accessible jumps, and $[M^n]^c = [M^{c,n}]$, $[M^n]^q = [M^{q,n}]$ and $[M^{n}]^a = [M^{a,n}]$. $[M^{c,n}]_t \leq [M^n]_t$ a.s.\ for all $t\geq 0$, so by the Burkholder-Davis-Gundy inequality and \eqref{eq:quadvarcoorwise1} the series $M^c_t :=\sum_{n=1}^{\infty} M^{c,n}_t h_n$ converges in $L^2(\Omega; H)$ for each $t\geq 0$. Moreover, since a conditional expectation is a bounded operator on $L^2(\Omega; H)$, $\mathbb E(M^c_t|\mathcal F_s) = M^c_s$ for each $t\geq s\geq 0$, so $M^c$ is a martingale. Obviously, $M^c_0=0$ and $M^c$ is continuous since $\langle M^c, h_n\rangle = M^{c,n}$ is continuous for each $n\geq 1$. $[M]^c = [M^c]$ since by \eqref{eq:quadvarcoorwise}.
 \[
  [M]^c = \sum_{n=1}^{\infty} [M^n]^c = \sum_{n=1}^{\infty} [M^{c,n}] = [M^c].
 \]
With the same argument we can construct $M^q$ and $M^a$. The uniqueness follows from the uniqueness of the decomposition $\langle M, h_n\rangle = M^{c, n} + M^{q, n} + M^{a,n}$ for each $n\geq 1$.
\end{proof}

Thanks to Lemma \ref{lem:YoeMayH} one can give an equivalent definition of a purely discontinuous local martingale, which is broadly used in the literature (e.g.\ in \cite{JS}).
 \begin{proposition}\label{thm:purdiscorthtoanycont1}
  A local martingale $M:\mathbb R_+\times \Omega \to H$ is purely discontinuous if and only if $\langle M,N\rangle$ is a local martingale for any continuous bounded $H$-valued martingale $N$ such that $N_0 = 0$. 
 \end{proposition}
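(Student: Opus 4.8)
The plan is to reduce the statement to the well-known real-valued case by decomposing along an orthonormal basis. Since $M$ and $N$ are c\`adl\`ag we may assume $H$ is separable; fix an orthonormal basis $(h_n)_{n\geq 1}$ of $H$ and write $m_n:=\langle M,h_n\rangle$, and, for a given continuous bounded martingale $N$ with $N_0=0$, $\nu_n:=\langle N,h_n\rangle$, so that each $\nu_n$ is a continuous bounded real martingale with $\nu_{n,0}=0$. The two facts I would set up first are: (a) by polarization combined with \eqref{eq:quadvarcoorwise}, applied to $M+N$, $M$ and $N$, one gets $[M,N]_t=\sum_{n\geq 1}[m_n,\nu_n]_t$ a.s.\ for every $t$, with all series converging absolutely (using $[M]_t,[N]_t<\infty$ and the Kunita--Watanabe inequality); and (b) since $\langle M,N\rangle-[M,N]$ is a local martingale, $\langle M,N\rangle$ is a local martingale if and only if $[M,N]$ is, and when $N$ is continuous the process $[M,N]$ is continuous of locally finite variation, so in that case ``$[M,N]$ is a local martingale'' forces $[M,N]\equiv 0$. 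I would also quote the real-valued version of the proposition from \cite[Theorem~26.6]{Kal} (equivalently \cite[Ch.~I.4]{JS}): a real local martingale $m$ has $[m]$ pure jump if and only if $[m,\nu]=0$ for every continuous bounded martingale $\nu$ with $\nu_0=0$.

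For the implication ``$\Rightarrow$'': if $[M]$ is pure jump then, since $[M]_t=\sum_n[m_n]_t$ and hence $[M]_t-\sum_{s\leq t}\Delta[M]_s=\sum_n\bigl([m_n]_t-\sum_{s\leq t}\Delta[m_n]_s\bigr)$ is a sum of nonnegative terms that vanishes, each $[m_n]$ is pure jump, i.e.\ each $m_n$ is purely discontinuous. Hence $[m_n,\nu_n]=0$ for all $n$ by the real-valued statement, and by (a) $[M,N]=0$, so $\langle M,N\rangle=\langle M,N\rangle-[M,N]$ is a local martingale.

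For the implication ``$\Leftarrow$'': assume $\langle M,N\rangle$ is a local martingale for every continuous bounded martingale $N$ with $N_0=0$. Fix $n$ and an arbitrary continuous bounded real martingale $\nu$ with $\nu_0=0$, and apply the hypothesis to $N:=\nu\,h_n$ (which is continuous, bounded, and $N_0=0$); since $\langle M,N\rangle=\nu\,m_n$, the product $\nu m_n$ is a local martingale, hence $[m_n,\nu]$ is a local martingale, hence $[m_n,\nu]\equiv 0$ by the scalar case of (b). As $\nu$ was arbitrary, the real-valued statement yields that each $m_n$ is purely discontinuous, and then $[M]_t=\sum_n[m_n]_t$ is pure jump because its continuous part $[M]_t-\sum_{s\leq t}\Delta[M]_s=\sum_n\bigl([m_n]_t-\sum_{s\leq t}\Delta[m_n]_s\bigr)=0$. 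Thus $M$ is purely discontinuous.

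I do not expect a serious obstacle: the whole argument is a routine reduction to the scalar case. The points that need a little care are the passage to coordinates in (a) --- which is immediate from \eqref{eq:quadvarcoorwise} and polarization --- and invoking the scalar equivalence in exactly the form stated above; the auxiliary facts used (that $[M,N]$ inherits continuity from $N$ because $\Delta[M,N]_s=\langle\Delta M_s,\Delta N_s\rangle$, that a continuous local martingale of locally finite variation with zero initial value is identically $0$, and that a countable sum of increasing pure-jump processes is again pure jump) are all standard.
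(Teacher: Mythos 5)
Your proof is correct, but it follows a genuinely different route than the paper's. For the forward direction the paper simply cites \cite[Corollary 26.15]{Kal} (if $M$ is purely discontinuous and $N$ continuous bounded then $[M,N]=0$), whereas you re-derive this coordinate-wise via polarization of \eqref{eq:quadvarcoorwise}; both are fine, yours is slightly more self-contained. The real divergence is in the reverse direction. The paper leans on the Hilbert-space Meyer--Yoeurp decomposition it has just established (Lemma~\ref{lem:YoeMayH}): it writes $M=N+(M-N)$ with $N$ continuous, $N_0=0$, and $M-N$ purely discontinuous, localizes $N$ to a bounded continuous martingale $N^{\tau_n}$, and observes that $\|N^{\tau_n}\|^2 = (\langle M,N^{\tau_n}\rangle - \langle M-N,N^{\tau_n}\rangle)^{\tau_n}$ is then a nonnegative martingale started at $0$, hence $\equiv 0$, so $N=0$. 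Your argument instead works coordinate-by-coordinate: applying the hypothesis to the test martingales $N=\nu\,h_n$, showing that $[m_n,\nu]$ is a continuous local martingale of finite variation (hence $0$), quoting the scalar-valued characterization of pure discontinuity, and then re-assembling $[M]=\sum_n[m_n]$ as a pure-jump process. Your route buys independence from Lemma~\ref{lem:YoeMayH} (and is arguably more elementary, relying only on the scalar theory plus \eqref{eq:quadvarcoorwise}), while the paper's route is shorter and reuses its freshly proved decomposition; both are rigorous, including the small points you flag (the absolute convergence of $\sum_n[m_n,\nu_n]$ by Kunita--Watanabe and Cauchy--Schwarz, and the fact that a countable sum of increasing pure-jump processes is pure jump).
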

 
 \begin{proof}
  One direction follows from \cite[Corollary 26.15]{Kal}. Indeed, if $M$ is purely discontinuous and $N$ is a continuous bounded martingale, then a.s.\ $[M,N] = 0$, and therefore $\langle M,N\rangle$ is a local martingale. 
 
Let us now prove the reverse implication. Without loss of generality assume that $M$ is a martingale. By Lemma \ref{lem:YoeMayH}, there exists a continuous martingale $N:\mathbb R_+ \times \Omega \to H$ such that $N_0=0$ and $M - N$ is purely discontinuous. Let $(\tau_n)_{n\geq 1}$ be an increasing sequence of stopping times such that $\tau_n \nearrow \infty$ as $n\to \infty$ and $N^{\tau_n}$ is a bounded continuous martingale for each $n\geq 1$. For any $n\geq 1$, $\langle M,N^{\tau_n}\rangle$ is a martingale by assumption and by the first part of the proof $\langle M-N,N^{\tau_n}\rangle$ is a martingale as well. Hence $\|N^{\tau_n}\|^2 = (\langle M,N^{\tau_n}\rangle - \langle M-N,N^{\tau_n}\rangle)^{\tau_n}$ is a nonnegative martingale that starts at zero and therefore a zero martingale. By letting $n\to\infty$ find $N=0$ a.s., so $M$ is purely discontinuous.
 \end{proof}
As we will see below (Lemma~\ref{lemma:intcontcontintdisdis}), if $M=M^c+M^q+M^a$ is the canonical decomposition of $M$, then the canonical decomposition of $\Phi\cdot M$ is given by
\begin{equation}
\label{eqn:decomSIPrel}
\Phi \cdot M = \Phi \cdot M^c + \Phi \cdot M^q + \Phi \cdot M^a.
\end{equation}
The following four subsections are dedicated to sharp estimates of the respective parts on the right hand side. In Section~\ref{sec:mainResult} we combine our work to estimate $\Phi\cdot M$.

\subsection{Purely discontinuous martingales with accessible jumps}
\label{sec:ItoAJ}

In this section we prove Burkholder-Rosenthal type inequalities for purely discontinuous martingales with accessible jumps. As an immediate consequence, we find sharp bounds for the accessible jump part in \eqref{eqn:decomSIPrel}.\par
As a first step, we will show that we can represent a purely discontinuous martingales with accessible jumps as a sum of jumps occurring at predictable times. We need the following simple observation.
\begin{lemma}\label{lemma:DeltaMisamart}
 Let $X$ be a Banach space, $1\leq p<\infty$, $M:\mathbb R_+ \times \Omega \to X$ be an $L^p$-martingale, $\tau$ be a~predictable stopping time. Then $(\Delta M_{\tau}\mathbf 1_{[0,t]}(\tau))_{t\geq 0}$ is an $L^p$-martingale as well.
\end{lemma}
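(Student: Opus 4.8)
The plan is to unwind the definition of a predictable stopping time and reduce the claim to the optional stopping theorem. First I would fix an announcing sequence $(\tau_n)_{n\geq 1}$ of stopping times with $\tau_n < \tau$ on $\{\tau > 0\}$ and $\tau_n \nearrow \tau$ a.s., and write $\Delta M_\tau = M_\tau - M_{\tau-}$, where $M_{\tau-} = \lim_n M_{\tau_n}$ on $\{\tau > 0\}$ (and $\Delta M_\tau = M_0 - 0$ if one insists on interpreting $\tau = 0$, but since $M_0$ is $\mathcal F_0$-measurable this contributes nothing to the martingale increments). The process in question is $N_t := \Delta M_\tau \, \mathbf 1_{[0,t]}(\tau) = \Delta M_\tau \, \mathbf 1_{\{\tau \leq t\}}$. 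Adaptedness of $N$ is routine once one knows $\Delta M_\tau$ is $\mathcal F_\tau$-measurable and $\{\tau \leq t\} \in \mathcal F_t$, and $N_t \in L^p$ because $\|\Delta M_\tau\| \leq \|M_\tau\| + \sup_n \|M_{\tau_n}\|$ and each of $M_\tau, M_{\tau_n}$ is in $L^p(\Omega;X)$ by optional stopping (here one uses that $M$ is a genuine $L^p$-martingale, not merely a local one, so the stopped processes are uniformly integrable in $L^p$; if $\tau = \infty$ with positive probability one first notes $M_\infty := \lim_t M_t$ exists in $L^p$).

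The heart of the matter is to verify $\E(N_t \mid \mathcal F_s) = N_s$ for $s \leq t$, i.e.\ that
\begin{equation*}
\E\bigl(M_\tau \mathbf 1_{\{\tau \leq t\}} - M_{\tau-}\mathbf 1_{\{\tau \leq t\}} \,\big|\, \mathcal F_s\bigr) = M_\tau \mathbf 1_{\{\tau \leq s\}} - M_{\tau-}\mathbf 1_{\{\tau \leq s\}}.
\end{equation*}
I would handle the two terms separately. For the term $M_\tau \mathbf 1_{\{\tau \leq t\}}$: writing $M_\tau \mathbf 1_{\{\tau \leq t\}} = M_{\tau \wedge t}\mathbf 1_{\{\tau \leq t\}}$ and using that $\{\tau \leq t\} \cap \{\tau > s\} \in \mathcal F_t$ while $M^{\tau \wedge t}$ is an $L^p$-martingale, one gets $\E(M_{\tau \wedge t}\mathbf 1_{\{s < \tau \leq t\}} \mid \mathcal F_s) = 0$ by the optional sampling / martingale property applied on the event, and $M_\tau \mathbf 1_{\{\tau \leq s\}}$ is already $\mathcal F_s$-measurable. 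For the term $M_{\tau-}\mathbf 1_{\{\tau \leq t\}}$: since $M_{\tau-} = \lim_n M_{\tau_n}$ and $\mathbf 1_{\{\tau \leq t\}} = \lim_n \mathbf 1_{\{\tau_n \leq t\}}$ along the announcing sequence (using $\tau_n < \tau$, so $\{\tau \leq t\} \subseteq \{\tau_n < t\}$ and $\bigcap_n \{\tau_n \leq t\}\supseteq\{\tau\leq t\}$ — actually one should argue $\mathbf 1_{\{\tau_n \le t\}}\downarrow \mathbf 1_{\{\tau\le t\}}$ modulo the null set where $\tau=0$), one approximates $M_{\tau-}\mathbf 1_{\{\tau \leq t\}}$ by $M_{\tau_n}\mathbf 1_{\{\tau_n < t\}}$-type quantities, to each of which the same optional-stopping computation applies, and then passes to the limit. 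The $L^p$-domination needed to pass the conditional expectation through the limit comes from Doob's maximal inequality: $\sup_n \|M_{\tau_n}\| \leq \sup_{u} \|M_u\| \in L^p$.

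The main obstacle I anticipate is the careful treatment of the "left limit at a predictable time" and the interchange of limits with conditional expectation: one must be sure that $M_{\tau_n}\mathbf 1_{\{\tau_n \le t\}} \to M_{\tau-}\mathbf 1_{\{\tau \le t\}}$ in $L^p$ (not merely a.s.), which is where Doob's $L^p$-maximal inequality and the hypothesis that $M$ is an $L^p$-martingale (so that $\sup_{u}\|M_u\|\in L^p$ after, if necessary, closing the martingale) are essential; and one must keep track of the degenerate event $\{\tau = 0\}$, on which there is no strictly-smaller announcing time, but on which the assertion is trivial because everything is $\mathcal F_0$-measurable. Once these measure-theoretic points are dispatched, the martingale identity itself is an immediate consequence of optional stopping applied on the sets $\{s < \tau_n \le t\}$.
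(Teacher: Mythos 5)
Your approach is to verify the martingale identity directly, splitting the increment of $N_t := \Delta M_\tau\mathbf 1_{\{\tau\leq t\}}$ into the two pieces $M_\tau\mathbf 1_{\{s<\tau\leq t\}}$ and $M_{\tau-}\mathbf 1_{\{s<\tau\leq t\}}$. The central claim — that $\E(M_{\tau\wedge t}\mathbf 1_{\{s<\tau\leq t\}}\mid\mathcal F_s)=0$ by optional sampling — is false, and the process $t\mapsto M_\tau\mathbf 1_{\{\tau\leq t\}}$ is not a martingale on its own. For a concrete counterexample take the deterministic predictable time $\tau\equiv 1$ with $s=\tfrac12$, $t=2$: then $\{s<\tau\leq t\}=\Omega$ and $\E(M_{\tau\wedge t}\mathbf 1_{\{s<\tau\leq t\}}\mid\mathcal F_s)=\E(M_1\mid\mathcal F_{1/2})=M_{1/2}$, which is certainly not zero in general. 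What optional stopping actually yields, using $\{\tau>s\}\in\mathcal F_s$ and $M_{\tau\wedge t}\mathbf 1_{\{\tau>t\}}=M_t\mathbf 1_{\{\tau>t\}}$, is
\begin{equation*}
\E\bigl(M_{\tau\wedge t}\mathbf 1_{\{s<\tau\leq t\}}\mid\mathcal F_s\bigr)=M_s\mathbf 1_{\{\tau>s\}}-\E\bigl(M_t\mathbf 1_{\{\tau>t\}}\mid\mathcal F_s\bigr),
\end{equation*}
which does not vanish. Applying "the same optional-stopping computation" to the $M_{\tau_n}$-approximants of the second term produces the analogous non-zero quantity $M_s\mathbf 1_{\{\tau_n>s\}}-\E(M_t\mathbf 1_{\{\tau_n>t\}}\mid\mathcal F_s)$; it is only the \emph{difference} of the two that converges to zero as $n\to\infty$, and your sketch, by asserting each piece is already zero, misses the cancellation that actually makes the argument go through. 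A secondary issue: the $L^p$-domination you invoke via Doob's maximal inequality, $\sup_u\|M_u\|\in L^p$, is not available when $p=1$ (Doob is only weak-type there), whereas the lemma is stated for all $1\leq p<\infty$.

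The paper takes a route that avoids this bookkeeping altogether. With the same announcing sequence $(\tau_n)$, it observes that $M^\tau$ and each $M^{\tau_n}$ are $L^p$-martingales, hence so is each difference $M^\tau-M^{\tau_n}$; it then shows $M^\tau_t-M^{\tau_n}_t\to\Delta M_\tau\mathbf 1_{[0,t]}(\tau)$ in $L^p(\Omega;X)$ for each fixed $t$, using the identification $\Delta M_\tau=\E(M\mid\mathcal F_\tau)-\E(M\mid\mathcal F_{\tau-})$ together with the $L^p$-martingale convergence theorem for the filtration $(\mathcal F_{\tau_n})$. Since conditional expectations are $L^p$-contractions, the martingale property passes to the $L^p$-limit, and the two non-martingale pieces you were trying to handle in isolation never have to be considered separately. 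If you want to keep your direct approach, you would need to carry the non-zero boundary terms through both computations and verify explicitly that they cancel in the limit; as written, the proof does not stand.
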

\begin{proof}
By the definition of a predictable stopping time 
 there exists an increasing sequence of stopping times $(\tau_n)_{n\geq 0}$ such that $\tau_n < \tau$ a.s.\ for each $n\geq 0$ on $\{\tau>0\}$ and $\tau_n \nearrow \tau$ a.s.\ as $n\to \infty$. Then $M^{\tau}$, $M^{\tau_1},\ldots, M^{\tau_n},\ldots$ are $L^p$-martingales. Moreover, $M^{\tau}_t - M^{\tau_n}_t \to \Delta M_{\tau}\mathbf 1_{[0,t]}(\tau)$ is in $L^p(\Omega;X)$ for each $t\geq 0$ due to the fact that $\Delta M_{\tau} = \mathbb E(M|\mathcal F_{\tau})-\mathbb E(M|\mathcal F_{\tau-})$ and \cite[Corollary 2.6.30]{HNVW1}. Consequently, $(\Delta M_{\tau}\mathbf 1_{[0,t]}(\tau))_{t\geq 0}$ is an~$L^p$-martingale.
\end{proof}
 We can now prove the assertion.
 \begin{lemma}\label{lem:limofM^nisM}
  Let $1<p,q<\infty$, $M:\mathbb R_+ \times \Omega \to L^q(S)$ be a purely discontinuous \mbox{$L^p$-mar}\-tin\-gale with accessible jumps. Let $\mathcal T = (\tau_n)_{n=0}^\infty$ be any sequence of predictable stopping times with disjoint graphs that exhausts the jumps of $M$. Then for each $n\geq 0$
  \begin{equation}\label{eq:defofM^n}
   M^n_t = \sum_{k=0}^n \Delta M_{\tau_k} \mathbf 1_{[0, t]}(\tau_k)
  \end{equation}
defines an $L^p$-martingale. Moreover, for any $t\geq 0$, $\|M_{t}-M^n_{t}\|_{L^p(\Omega; L^q(S))}\to 0$ as $n\to \infty$. If $\sup_{t\geq 0} \E\|M_t\|^p<\infty$, then $\|M_{\infty}-M^n_{\infty}\|_{L^p(\Omega; L^q(S))}\to 0$ for $n\to \infty$.
 \end{lemma}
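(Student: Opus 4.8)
The plan is to verify the martingale property of $M^n$ by hand, to reduce the convergence assertions to the case of an $L^p$-bounded martingale by stopping, and then to control the tail martingale $N^n:=M-M^n$ by a square-function estimate.

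\textbf{Step 1: $M^n$ is an $L^p$-martingale.} By Lemma~\ref{lemma:DeltaMisamart} each process $(\Delta M_{\tau_k}\mathbf 1_{[0,t]}(\tau_k))_{t\geq 0}$ is an $L^p$-martingale, and $M^n$ is a finite sum of these. Each summand has a single jump, at the predictable time $\tau_k$, and testing against $x^*\in L^{q'}(S)$ shows it is coordinatewise of locally bounded variation, hence purely discontinuous by Lemma~\ref{lem:martoflocbddvar}. Thus $M^n$, and with it $N^n:=M-M^n$, is a purely discontinuous $L^p$-martingale with accessible jumps, and $\Delta N^n$ is carried by $\bigcup_{k>n}[\tau_k]$, where it coincides with $\Delta M$.

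\textbf{Step 2: reduction to $L^p$-bounded $M$.} I would replace $M$ by the stopped martingale $M^t$: by Doob's maximal inequality ($p>1$) one has $\E\sup_{0\leq r\leq t}\|M_r\|^p\lesssim_p\E\|M_t\|^p<\infty$, so $M^t$ is $L^p$-bounded, and since $(M^t)^n_\infty=M^n_t$ and $(M^t)_\infty=M_t$, the fixed-$t$ assertion follows from the last assertion applied to $M^t$. So it suffices to assume $\sup_{t\geq 0}\E\|M_t\|^p<\infty$. Then $M_\infty:=\lim_{t\to\infty}M_t$ exists in $L^p(\Omega;L^q(S))$ by the martingale convergence theorem (here $p>1$ and reflexivity of $L^q(S)$ are used), $M^n_\infty=\sum_{k=0}^n\Delta M_{\tau_k}$, and it remains to prove $\|M_\infty-M^n_\infty\|_{L^p(\Omega;L^q(S))}=\|N^n_\infty\|_{L^p(\Omega;L^q(S))}\to 0$.

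\textbf{Step 3: square-function estimate and dominated convergence.} Since $N^n$ is a purely discontinuous $L^q(S)$-valued martingale whose jumps are carried by $(\tau_k)_{k>n}$, the Burkholder square-function inequality for $L^q(S)$-valued martingales ($L^q(S)$ being a UMD Banach function space) yields
\begin{equation*}
\|N^n_\infty\|_{L^p(\Omega;L^q(S))}\eqsim_{p,q}\Bigl(\E\Bigl\|\Bigl(\sum_{k>n}|\Delta M_{\tau_k}|^2\Bigr)^{1/2}\Bigr\|_{L^q(S)}^p\Bigr)^{1/p}.
\end{equation*}
Applying the same inequality to $M$ itself shows $G:=\bigl(\sum_{k\geq 0}|\Delta M_{\tau_k}|^2\bigr)^{1/2}\in L^p(\Omega;L^q(S))$, so $G<\infty$ a.e.\ on $\Omega\times S$; hence the lattice square functions $\bigl(\sum_{k>n}|\Delta M_{\tau_k}|^2\bigr)^{1/2}$ decrease to $0$ a.e.\ as $n\to\infty$ and are dominated by $G$. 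Two applications of dominated convergence (first inside $L^q(S)$ for a.e.\ $\omega$, then over $\Omega$, with dominating function $\E\|G\|_{L^q(S)}^p$) show the right-hand side tends to $0$. This proves the last assertion, and via Step 2 the fixed-$t$ assertion as well.

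\textbf{Expected main difficulty.} The substantive point is the square-function inequality used in Step 3, in the required generality (continuous time, $L^q(S)$-valued, purely discontinuous integrator). If one wishes to derive it from the results of this paper rather than quote UMD-lattice theory, the natural route is Theorem~\ref{thm:summaryBRIntro}: re-enumerating the finitely many predictable times $\tau_0,\dots,\tau_n$ in increasing order keeps them predictable (a finite infimum of predictable stopping times is predictable), and disjointness of their graphs makes the reordered jumps a genuine $L^q(S)$-valued martingale difference sequence for the filtration $(\mathcal F_{\tau_{(k)}})_k$; one must then show that the resulting $\hat s_{p,q}$-norms of the tails vanish uniformly in $n$, which is where the real work lies.
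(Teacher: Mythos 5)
Your Steps 1 and 2 are correct, and the stopping reduction in Step 2 is a clean simplification (the paper instead proves the fixed-$t$ and terminal assertions separately). Step 3 is where your route genuinely diverges: you invoke a two-sided, continuous-time Burkholder--Gundy square-function inequality for $L^q(S)$-valued purely discontinuous martingales and then close by dominated convergence. The paper works with a strictly weaker tool. It first handles the case where $M$ takes values in a finite-dimensional subspace of $L^q(S)$ (there one can pass to an equivalent Euclidean norm, write $[M-M^n]$ explicitly, and use scalar BDG and monotone convergence), and then passes to general $L^q(S)$ via a Schauder basis and projections $P_N$, controlling the truncation error $\mathbb E\|P_N M^n_t - M^n_t\|^p$ uniformly in $n$ through the one-sided weak differential subordination estimate \eqref{eq:M^m_t^pleqM_t^p} quoted from \cite{Y17FourUMD}, together with a $3\eps$ argument. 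So your route is shorter but relies on a substantially heavier external ingredient.

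You correctly identify the square-function inequality as the crux, but your suggested fallback — deriving it from Theorem~\ref{thm:summaryBRIntro} by discretizing the jump times — is circular: applying the discrete estimate to the infinite family $(\Delta M_{\tau_k})_{k\geq 0}$ already presupposes that $M_\infty$ is the $L^p$-limit of the partial sums $M^n_\infty$, which is precisely what the lemma asserts. (The same circularity would arise in trying to show the $\hat s_{p,q}$-norms of the tails vanish: finiteness of the full $\hat s_{p,q}$-norm is obtained from $\mathbb E\|M_\infty\|^p<\infty$ only via the identification $M_\infty = \lim_n M^n_\infty$, which is the statement at issue.) The Schauder-basis reduction, combined with the one-sided domination from \cite{Y17FourUMD}, is precisely the device the paper uses to escape this trap; your proposal has no analogous substitute. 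So: the argument stands if the continuous-time UMD-lattice square-function inequality is accepted as a black box in the exact generality you need, but the fallback sketch does not fill the gap, and the honest filling-in would look like the paper's Step 2.
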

\begin{proof}
\emph{Step 1}. Let us first suppose that $M$ takes values in a finite-dimensional subspace of $L^q(S)$. Let $\tnorm{\cdot}$ be an equivalent Euclidean norm on this subspace. Then, by Lemma~\ref{lemma:DeltaMisamart}, \eqref{eq:defofM^n} defines an $L^p$-martingale.  
 Since $M$ is purely discontinuous, the Burkholder-Davis-Gundy inequality implies 
 \begin{equation}
 \label{eqn:BDGAppProof}
  \mathbb E \tnorm{(M-M^n)_t}^p\eqsim_{p}\mathbb E [M-M^n]_t^{\frac p2} = \mathbb E\Bigl(\sum_{k=n+1}^{\infty}{ \tnorm{\Delta M_{\tau_k}}}^2\mathbf 1_{[0,t]}(\tau_k)\Bigr)^{\frac p2}.
 \end{equation}
Since the sum on the right hand side is a.s.\ bounded by $[M]_t$ and monotonically vanishes as $N\to \infty$, the first convergence result follows. If $\sup_{t\geq 0} \E\|M_t\|^p<\infty$, then
 \[
  \mathbb E \tnorm{(M^n)_t}^p\eqsim_{p}\mathbb E [M^n]_t^{\frac p2} = \mathbb E\Bigl(\sum_{k=1}^{n}{ \tnorm{\Delta M_{\tau_k}}}^2\mathbf 1_{[0,t]}(\tau_k)\Bigr)^{\frac p2} \leq \E{[M]_t}^{p/2}.
 \]
 Thus $M_{\infty}^n=\lim_{t\to\infty} M_t^n$ exists in $L^p(\Om;L^q(S))$. The second convergence result now follows from the computation \eqref{eqn:BDGAppProof} for $t=\infty$.\par
\emph{Step 2}. Let $(x_m)_{m\geq 1}$ be a Schauder basis of $L^q(S)$. Then every $x\in L^q(S)$ has a unique decomposition $x = \sum_{m\geq 1} a_m x_m$, and according to \cite[Proposition 1.1.9]{AlK06} there exists $K>0$ such that for each $N\geq 1$
 \begin{equation}\label{eq:Schauderest}
   \Bigl\|\sum_{m=1}^N a_mx_m\Bigr\| \leq K\|x\|.
 \end{equation}
Let $P_N\in \mathcal L(L^q(S))$ be such that $P_Nx_m = x_m$ for $m\leq N$, and $P_N x_m=0$ for $m>N$. By Step 1, $P_N M^n$ defines an $L^p$-martingale for each $N\geq 1$ and since $P_N M^n_t\rightarrow M_t^n$ in $L^p(\Om;L^q(S))$ for each $t\geq 0$, $M^n$ is an $L^p$-martingale.\par
To prove convergence we use a result from \cite{Y17FourUMD}. Consider a purely discontinuous $L^p$-martingale $\widetilde{M}$ with accessible jumps. For each $n\geq 0$ both martingales $\widetilde{M}$ and $\widetilde{M}^n$ are purely discontinuous and $\widetilde{M}^n$ is weakly differentially subordinated to $\widetilde{M}$, i.e., for any $x^*\in L^{q'}(S)$ the process 
$$[\langle \widetilde{M},x^*\rangle] - [\langle \widetilde{M}^n,x^*\rangle]$$
is a.s.\ non-decreasing and $|\langle \widetilde{M}^n_0,x^*\rangle|\leq |\langle \widetilde{M}_0,x^*\rangle|$ a.s. By  \cite{Y17FourUMD},
\begin{equation}\label{eq:M^m_t^pleqM_t^p}
 \mathbb E\|\widetilde{M}^n_t\|^p \lesssim_{p,q}\mathbb E\|\widetilde{M}_t\|^p,\;\;\; t\geq 0.
\end{equation}
By \eqref{eq:M^m_t^pleqM_t^p} applied for the martingale $\widetilde{M}=(I-P_N)M$, we obtain
$$\mathbb E\|P_NM^n_t-M^n_t\|^p\lesssim_{p,q}\mathbb E\|P_NM_t- M_t\|^p$$ 
for each $N\geq 1$ and $n\geq 0$. Therefore,
\begin{align}\label{eq:limargforM^m}
  (\mathbb E\|M_t - M^n_t\|^p)^{\frac 1p}
  &\leq   (\mathbb E\|M_t - P_NM_t\|^p)^{\frac 1p}\nonumber\\
  &\quad+ (\mathbb E\|P_NM_t - P_NM^n_t\|^p)^{\frac 1p}+ (\mathbb E\|P_NM^n_t - M^n_t\|^p)^{\frac 1p}\nonumber\\
   &\lesssim_{p,q}   (\mathbb E\|M_t - P_NM_t\|^p)^{\frac 1p}+ (\mathbb E\|P_NM_t - P_NM^n_t\|^p)^{\frac 1p}.
\end{align}
For a fixed $\eps>0$, we can now first pick $N$ so that the first term on the right hand side of \eqref{eq:limargforM^m} is less than $\frac{\eps}2$, and subsequently use Step 1 to find an $n=n(N,\eps)$ so that the second term on the right hand side of \eqref{eq:limargforM^m} will be less than $\frac{\eps}2$. Hence for each $\eps>0$ there exists $n_{\eps}$ such that $ (\mathbb E\|M_t - M^{n_{\eps}}_t\|^p)^{\frac 1p}\lesssim_{p,q} \eps$, so $\mathbb E\|M^n_t-M_t\|^p\to 0$ as $n\to \infty$.\par
Finally, if $\sup_{t>0} E\|M_t\|^p<\infty$, then $\sup_{t>0} E\|M_t^n\|^p<\infty$ by \eqref{eq:M^m_t^pleqM_t^p}. Hence $M_{\infty}^n = \lim_{t\to \infty} M_t^n$ exists in $L^p(\Om;L^q(S))$ and \eqref{eq:M^m_t^pleqM_t^p} shows that 
$$\mathbb E\|P_NM^n_{\infty}-M^n_{\infty}\|^p\lesssim_{p,q}\mathbb E\|P_NM_{\infty}- M_{\infty}\|^p$$ 
for each $N\geq 1$ and $n\geq 0$. Repeating \eqref{eq:limargforM^m} for $t=\infty$ now yields the second convergence result.
 \end{proof}
\begin{definition}
For $1<p,q<\infty$ we define $\mathcal M^{\rm acc}_{p,q}$ as the linear space of all \mbox{$L^q(S)$-va}\-lued purely discontinuous $L^p$-martingales with accessible jumps, endowed with the norm $\|M\|_{\mathcal M^{\rm acc}_{p,q}} := \|M_{\infty}\|_{L^p(\Omega; L^q(S))}$. 
\end{definition}
\begin{proposition}\label{prop:purdisaccjumpsisBanachspace}
For any $1<p,q<\infty$ the space $\mathcal M^{\rm acc}_{p,q}$ is a Banach space. 
\end{proposition}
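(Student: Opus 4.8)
The plan is to realize $\mathcal M^{\rm acc}_{p,q}$, via the map $M\mapsto M_\infty$, as a closed subspace of the Banach space $L^p(\Omega;L^q(S))$; completeness of $\mathcal M^{\rm acc}_{p,q}$ then follows at once.

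First I would check that this map is a well-defined linear isometry. Every $M\in\mathcal M^{\rm acc}_{p,q}$ is, by definition, an $L^p$-bounded martingale; since $1<q<\infty$ the space $L^q(S)$ is reflexive, so such an $M$ converges, $M_\infty=\lim_{t\to\infty}M_t$ in $L^p(\Omega;L^q(S))$, and $M_t=\mathbb E(M_\infty|\mathcal F_t)$ for every $t\geq 0$ (this is the argument already used in the proof of Theorem~\ref{thm:summaryBRIntro}; see also \cite[Theorem~3.3.2]{HNVW1}). Consequently $M\mapsto M_\infty$ is linear, isometric by the very definition of $\|\cdot\|_{\mathcal M^{\rm acc}_{p,q}}$, and its range $Y:=\{M_\infty:M\in\mathcal M^{\rm acc}_{p,q}\}$ is a linear subspace of $L^p(\Omega;L^q(S))$. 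As $L^p(\Omega;L^q(S))$ is complete, it remains only to show that $Y$ is closed.

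So suppose $M^{(k)}\in\mathcal M^{\rm acc}_{p,q}$ with $M^{(k)}_\infty\to\xi$ in $L^p(\Omega;L^q(S))$, and set $M_t:=\mathbb E(\xi|\mathcal F_t)$; then $M$ is an $L^p$-bounded $L^q(S)$-valued martingale with $M_\infty=\xi$ and $M^{(k)}_t\to M_t$ in $L^p(\Omega;L^q(S))$ for every $t\in[0,\infty]$. I then have to verify that $M$ is purely discontinuous with accessible jumps, which, by the Banach space definitions of Section~\ref{sec:Ito}, amounts to showing for each fixed $x^*\in L^{q'}(S)=(L^q(S))^*$ that the real-valued martingale $N:=\langle M,x^*\rangle$ is purely discontinuous with accessible jumps. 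Here the real martingales $N^{(k)}:=\langle M^{(k)},x^*\rangle$ are purely discontinuous with accessible jumps, and $N^{(k)}_t\to N_t$ in $L^p(\Omega)$ for every $t\in[0,\infty]$ because $|\langle z,x^*\rangle|\leq\|x^*\|_{L^{q'}(S)}\|z\|_{L^q(S)}$. Applying the scalar Meyer-Yoeurp decomposition \cite[Theorem~26.14, Corollary~26.16]{Kal}, write $N=N^c+N^q+N^a$ as in Lemma~\ref{lem:YoeMayH}. By uniqueness of this decomposition the maps $N\mapsto N^c_\infty$, $N\mapsto N^q_\infty$, $N\mapsto N^a_\infty$ are linear on the space of real $L^p$-bounded martingales, and, using $[N^c]=[N]^c$, $[N^q]=[N]^q$, $[N^a]=[N]^a$, the pointwise bounds $[N]^c,[N]^q,[N]^a\leq[N]$ from Lemma~\ref{lem:A^cA^qA^a}, and the Burkholder-Davis-Gundy inequality, they are bounded when that space is normed by $N\mapsto\|N_\infty\|_{L^p(\Omega)}$. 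Since each $N^{(k)}$ is purely discontinuous with accessible jumps, uniqueness forces $(N^{(k)})^c=(N^{(k)})^q=0$; letting $k\to\infty$ gives $N^c=N^q=0$, i.e.\ $N=N^a$ is purely discontinuous with accessible jumps. As $x^*$ was arbitrary, $M$ is purely discontinuous with accessible jumps, so $\xi=M_\infty\in Y$. Hence $Y$ is closed and $\mathcal M^{\rm acc}_{p,q}$ is a Banach space.

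The main obstacle is the scalar-valued step: showing that the parts of the Meyer-Yoeurp decomposition depend boundedly on the martingale in the $\|N_\infty\|_{L^p(\Omega)}$-norm, equivalently that the real purely discontinuous martingales with accessible jumps form a closed subspace. This is not deep once the quadratic-variation identities of Lemmas~\ref{lem:A^cA^qA^a} and~\ref{lem:YoeMayH} and the Burkholder-Davis-Gundy inequality are available, but it is the crux of the argument. A secondary point requiring care is the first paragraph, where reflexivity of $L^q(S)$ is used to guarantee that the martingales involved are closed and converge in $L^p$.
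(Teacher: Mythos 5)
Your proof is correct and follows essentially the same route as the paper: identify $\mathcal M^{\rm acc}_{p,q}$ with its image in $L^p(\Omega;L^q(S))$ under $M\mapsto M_\infty$, pass to scalar coordinates $N=\langle M,x^*\rangle$, and use the Meyer--Yoeurp decomposition together with $[N]=[N]^c+[N]^q+[N]^a$ and the Burkholder--Davis--Gundy inequality to see that the continuous and quasi-left-continuous parts of $N$ vanish in the $L^p$-limit. The only difference is one of packaging: you first record the boundedness of the projection maps $N\mapsto N^c_\infty$, $N\mapsto N^q_\infty$ and then apply them to $N-N^{(k)}$, whereas the paper estimates $\mathbb E[N-N^n]_\infty^{p/2}\geq\mathbb E([N]^c_\infty+[N]^q_\infty)^{p/2}$ directly and lets $n\to\infty$; the underlying computation is the same.
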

\begin{proof}
Let $(M^n)_{n\geq 1}$ be a Cauchy sequence in $\mathcal M^{\rm acc}_{p,q}$. Since $L^p(\Omega; L^q(S))$ is a Banach space, there exists a limit $\xi$ in $L^p(\Omega; L^q(S))$ of the sequence $(M_{\infty}^n)_{n\geq 1}$. Define an $L^p$-martingale $M$ by $M_t = \mathbb E(\xi|\mathcal F_{t})$, $t\geq 0$. To prove that $M \in \mathcal M^{\rm acc}_{p,q}$, it is enough to show that for each $x^* \in L^{q'}(S)$ the martingale $\langle M, x^*\rangle$ is quasi-left continuous with accessible jumps. Fix $x^* \in L^{q'}(S)$. Define $N := \langle M, x^*\rangle$ and $N^n:=\langle M^n, x^*\rangle$ for each $n\geq 1$. Then $N^n_{\infty} \to N_{\infty}$ in $L^p(\Omega)$. Let $N=N^c+N^q+N^a$ be the Meyer-Yoeurp decomposition in Lemma~\ref{lem:YoeMayH}. Since $N^n$ is a purely discontinuous martingale with accessible jumps, it follows from the uniqueness statement in Lemma~\ref{lem:YoeMayH} that the Meyer-Yoeurp decomposition of $N-N_n$ is given by $N^c+N^q+(N^a-N_n)$. By \cite[Corollaries 26.15 and 26.16]{Kal}, 
$$[N^c,N^q]=[N^c,N^a-N_n]=[N^q,N^a-N_n]=0\qquad \text{a.s.}$$
It therefore follows from the Burkholder-Davis-Gundy inequality that 
 \begin{align*}
    \mathbb E |N_{\infty}-N^n_{\infty}|^p & \eqsim_{p} \mathbb E [N-N^n]_{\infty}^{\frac p2} \\
    &=\mathbb E \bigl([N]^c_{\infty} + [N]^q_{\infty} + [N-N^n]^a_{\infty}\bigr)^{\frac{p}{2}} \geq \mathbb E \bigl([N]^c_{\infty} + [N]^q_{\infty}\bigr)^{\frac{p}{2}}.
 \end{align*}
By taking $n\to\infty$, we find $\mathbb E \bigl([N]^c_{\infty} + [N]^q_{\infty}\bigr)^{\frac{p}{2}} = 0$ and so $[N]^c_{\infty} = [N]^q_{\infty}=0$ a.s. We conclude that $N$ is purely discontinuous with accessible jumps. Since this holds for any $x^* \in L^{q'}(S)$, $M \in \mathcal M^{\rm acc}_{p,q}$.
\end{proof}
We now turn to the Burkholder-Rosenthal inequalities.
Let $1<p,q<\infty$ and let $M:\mathbb R_+ \times \Omega \to L^q(S)$ be a purely discontinuous martingale with accessible jumps.  Let $\mathcal T = (\tau_n)_{n\geq 0}$ be a sequence of predictable stopping times with disjoint graphs that exhausts the jumps of $M$. We define three expressions 
\begin{equation}\label{eq:normsforaccjump}
 \begin{split}
  \|M\|_{\widetilde S^p_q} &= \Bigl( \mathbb E\Bigl\| \Bigl({ \sum_{n\geq 0} \mathbb E_{\mathcal F_{\tau_n-}}|(\Delta M(\omega)(s))_{\tau_n}|^2 } \Bigr)^{\frac 12}\Bigr\|^p_{L^q(S)} \Bigr)^{\frac 1p},\\
   \|M\|_{\widetilde D^p_{q,q}} &= \Bigl( \mathbb E \Bigl({\sum_{n\geq 0}\mathbb E_{\mathcal F_{\tau_n-}}\|\Delta M(\omega)_{\tau_n}\|^q_{L^q(S)} } \Bigr)^{\frac pq} \Bigr)^{\frac 1p},\\
   \|M\|_{\widetilde D^p_{p,q}} &= \Bigl( \mathbb E \sum_{t\geq 0} \|\Delta M_t\|_{L^q(S)}^p \Bigr)^{\frac 1p}.
   \end{split}
\end{equation}

\begin{proposition}
The expressions in \eqref{eq:normsforaccjump} do not depend on the choice of the family $\mathcal T$.
\end{proposition}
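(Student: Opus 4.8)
The plan is to show that each of the three expressions in \eqref{eq:normsforaccjump} is independent of the exhausting sequence $\mathcal T = (\tau_n)_{n\geq 0}$ by reducing everything to quantities that are manifestly intrinsic to the martingale $M$, namely sums over jump times. The starting observation is that since the $\tau_n$ have disjoint graphs and exhaust the jumps of $M$, we have, for each fixed $t\geq 0$ and each $\omega$, an \emph{unordered} identity of the form $\{t : \Delta M_t(\omega)\neq 0\} \subseteq \bigcup_n \{\tau_n(\omega)\}$. So for any other exhausting sequence $\mathcal T' = (\sigma_m)_{m\geq 0}$, the two index sets enumerate (with possible extra null jumps) the same countable set of predictable jump times. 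The key point to pin down is that the value $\mathbb E_{\mathcal F_{\tau_n-}}|(\Delta M)_{\tau_n}|^2$ depends only on the jump time and not on which announcing family was used to list it; more precisely, if $\tau_n(\omega) = \sigma_m(\omega)$ on some set $A\in\mathcal F_{\tau_n-}\cap\mathcal F_{\sigma_m-}$, then on $A$ the two conditional expectations coincide.

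The cleanest route is to handle the three cases in increasing order of difficulty. The expression $\|M\|_{\widetilde D^p_{p,q}}$ is the easiest: it is literally $\bigl(\mathbb E \sum_{t\geq 0}\|\Delta M_t\|_{L^q(S)}^p\bigr)^{1/p}$, a sum over \emph{all} jump times of $M$, with no reference to $\mathcal T$ whatsoever, so there is nothing to prove once one notes that $\sum_{n\geq 0}\|\Delta M_{\tau_n}\|^p_{L^q(S)}\mathbf 1_{[0,t]}(\tau_n) = \sum_{s\leq t}\|\Delta M_s\|^p_{L^q(S)}$ because $\mathcal T$ exhausts the jumps. For $\|M\|_{\widetilde D^p_{q,q}}$ the inner sum is $\sum_{n\geq 0}\mathbb E_{\mathcal F_{\tau_n-}}\|\Delta M_{\tau_n}\|^q_{L^q(S)}$, and for $\|M\|_{\widetilde S^p_q}$ it is the $L^q(S)$-function $\sum_{n\geq 0}\mathbb E_{\mathcal F_{\tau_n-}}|(\Delta M)_{\tau_n}|^2$. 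In both cases I would argue as follows. First reduce to comparing $\mathcal T$ with a common refinement: given two exhausting sequences $\mathcal T$ and $\mathcal T'$, intersect their graphs pairwise to build a third exhausting sequence $\mathcal T''$ of predictable stopping times (each $[\tau_n]\cap[\sigma_m]$ is the graph of a predictable stopping time, by \cite[Lemma 25.2 or similar]{Kal}) with disjoint graphs, refining both; by symmetry it then suffices to show that passing to a refinement does not change the expression. Second, if $\tau = \bigsqcup_j \tau^{(j)}$ is a partition of the graph of a predictable stopping time into predictable graphs, then $\mathcal F_{\tau-}$ restricted to $\{\tau = \tau^{(j)}\}$ agrees with $\mathcal F_{\tau^{(j)}-}$ on that set (this is the content of the fact that $\mathcal F_{\tau-}$ does not depend on the announcing sequence, cited in the preliminaries), and $\mathbf 1_{\{\tau=\tau^{(j)}\}}\in\mathcal F_{\tau-}$; hence $\mathbb E_{\mathcal F_{\tau-}}\bigl(\mathbf 1_{\{\tau=\tau^{(j)}\}}\,|(\Delta M)_\tau|^2\bigr) = \mathbf 1_{\{\tau=\tau^{(j)}\}}\,\mathbb E_{\mathcal F_{\tau^{(j)}-}}|(\Delta M)_{\tau^{(j)}}|^2$. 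Summing over $j$ gives exactly the compatibility needed, and likewise with $\|\cdot\|^q_{L^q(S)}$ in place of $|\cdot|^2$.

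The only genuinely delicate point — which I expect to be the main obstacle — is the measure-theoretic bookkeeping: one must verify that the restriction/pasting identity for $\mathcal F_{\tau-}$ is legitimate, i.e.\ that $\mathcal F_{\tau-}\cap\{\tau = \tau^{(j)}\} = \mathcal F_{\tau^{(j)}-}\cap\{\tau = \tau^{(j)}\}$, and that an arbitrary sequence of predictable stopping times with disjoint graphs can indeed be refined to a common refinement of two given such sequences while staying within the class of predictable stopping times. Both facts are standard (see \cite{Kal}, Chapters 25--26), but they need to be invoked carefully, and one should also make sure that the conditional expectation $\mathbb E_{\mathcal F_{\tau_n-}}$ applied to the $L^q(S)$-valued random variable $(\Delta M)_{\tau_n}$ is well-defined, which follows from $\Delta M_{\tau_n} = \mathbb E(M_\infty|\mathcal F_{\tau_n}) - \mathbb E(M_\infty|\mathcal F_{\tau_n-})$ as in Lemma~\ref{lemma:DeltaMisamart} together with $M\in\mathcal M^{\mathrm{acc}}_{p,q}$. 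Once these ingredients are in place, the invariance of all three expressions follows by the refinement argument, and the proposition is proved.
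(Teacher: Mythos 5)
Your approach is essentially the paper's: both hinge on the identity $\mathcal F_{\tau-}\cap\{\tau=\sigma\} = \mathcal F_{\sigma-}\cap\{\tau=\sigma\}$ for predictable stopping times $\tau,\sigma$ (the paper invokes Jacod--Shiryaev, Proposition I.2.11) and decompose the inner sums over the events $\{\tau_n=\tau'_m\}$. The paper avoids explicitly constructing a common-refinement sequence by simply inserting the indicators $\mathbf 1_{\tau_n=\tau'_m}$ directly into the sums, which sidesteps the step you flag of verifying that intersections of graphs are again predictable graphs; otherwise the argument is the same.
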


\begin{proof}
 Assume that $\mathcal T' = (\tau'_{m})_{m\geq 0}$ is another family of predictable stopping times with disjoint graphs that exhausts the jumps of $M$. Notice that due to \cite[Proposition I.2.11]{JS}, 
 $$
 \mathcal F_{\tau-}\cap\{\tau = \sigma\} = \mathcal F_{\sigma-}\cap\{\tau = \sigma\}
 $$ 
 for any pair of predictable stopping times $\tau$ and $\sigma$. Therefore for a.e.\ $s\in S$ a.s.
 \begin{align*}
 \sum_{n\geq 0} \mathbb E_{\mathcal F_{\tau_n-}}|(\Delta M(\omega)(s))_{\tau_n}|^2 &\stackrel{(*)}= \sum_{n\geq 0} \mathbb E_{\mathcal F_{\tau_n-}}\Bigl(\sum_{m\geq 0}|(\Delta M(\omega)(s))_{\tau_n}|^2 \mathbf 1_{\tau_n=\tau'_m}\Bigr)\\
 &=\sum_{n\geq 0} \sum_{m\geq 0}\mathbb E_{\mathcal F_{\tau_n-}}\bigl(|(\Delta M(\omega)(s))_{\tau_n}|^2 \mathbf 1_{\tau_n=\tau'_m}\bigr)\\
 &=\sum_{n\geq 0} \sum_{m\geq 0}\mathbb E_{\mathcal F_{\tau'_m-}}\bigl(|(\Delta M(\omega)(s))_{\tau_n}|^2 \mathbf 1_{\tau_n=\tau'_m}\bigr)\\
 &= \sum_{m\geq 0} \mathbb E_{\mathcal F_{\tau'_m-}}\Bigl(\sum_{n\geq 0}|(\Delta M(\omega)(s))_{\tau'_m}|^2 \mathbf 1_{\tau_n=\tau'_m}\Bigr)\\
 &\stackrel{(*)}=\sum_{n\geq 0} \mathbb E_{\mathcal F_{\tau'_m-}}|(\Delta M(\omega)(s))_{\tau'_m}|^2,
 \end{align*}
where $(*)$ holds since
\begin{align*}
 & \mathbb P\{\exists u\geq 0: \Delta M_u\neq 0, u\notin\{\tau_0,\tau_1,\ldots\}\} \\
 &\qquad \qquad \qquad \qquad = \mathbb P\{\exists u\geq 0: \Delta M_u\neq 0, u\notin\{\tau'_0,\tau'_1,\ldots\}\} = 0.
\end{align*}
Therefore we can conclude that $\|M\|_{\widetilde S^p_q}$ does not depend on the choice of the exhausting family. The same holds for $\|M\|_{\widetilde D^p_{q,q}}$ by an analogous argument.
\end{proof}

We let $\widetilde S^p_q$, $\widetilde D^p_{q,q}$ and $\widetilde D^p_{p,q}$ denote the sets of all purely discontinuous martingales with accessible jumps for which the respective expressions in \eqref{eq:normsforaccjump} are finite. We will prove shortly that the expressions in \eqref{eq:normsforaccjump} are norms. For a fixed family $\mathcal T = (\tau_n)_{n\geq 0}$ of predictable stopping times with disjoint graphs we let $\widetilde{S}^{p,\mathcal T}_q$,  $\widetilde{D}^{p,\mathcal T}_{q,q}$ and $\widetilde{D}^{p,\mathcal T}_{p,q}$ be the subsets of $\widetilde{S}^p_q$,  $\widetilde{D}^p_{q,q}$ and $\widetilde{D}^p_{p,q}$ consisting of martingales $M$ with $\{t\in \mathbb R_+: \Delta M_t \neq 0\} \subset {\{\tau_0,\tau_1,\ldots\}}$ a.s.\par
We start by proving a version of the main theorem of this subsection (Theorem~\ref{thm:acessjumpsL^q} below) for a martingales with finitely many jumps.  
\begin{theorem}\label{thm:acessjumpsinTL^q}
 Let $1<p,q<\infty$, $N\geq 1$, $\mathcal T = (\tau_n)_{n= 0}^N$ be a finite family of predictable stopping times with disjoint graphs. Then $\widetilde{S}^{p,\mathcal T}_q$,  $\widetilde{D}^{p,\mathcal T}_{q,q}$ and $\widetilde{D}^{p,\mathcal T}_{p,q}$ are Banach spaces under the norms in \eqref{eq:normsforaccjump}. As a consequence, $A_{p, q}^{\mathcal T}$ given by
 \begin{equation}\label{eq:Apq^T}
 \begin{split}
   \widetilde{S}^{p,\mathcal T}_q \cap \widetilde{D}^{p,\mathcal T}_{q,q} \cap \widetilde{D}^{p,\mathcal T}_{p,q} & \text{ if } \; 2\leq q\leq p<\infty,\\
 \widetilde{S}^{p,\mathcal T}_q \cap (\widetilde{D}^{p,\mathcal T}_{q,q} + \widetilde{D}^{p,\mathcal T}_{p,q}) & \text{ if } \; 2\leq p\leq q<\infty,\\
 (\widetilde{S}^{p,\mathcal T}_q \cap \widetilde{D}^{p,\mathcal T}_{q,q}) + \widetilde{D}^{p,\mathcal T}_{p,q} & \text{ if } \; 1<p<2\leq q<\infty,\\
 (\widetilde{S}^{p,\mathcal T}_q +\widetilde{D}^{p,\mathcal T}_{q,q}) \cap \widetilde{D}^{p,\mathcal T}_{p,q} & \text{ if } \; 1<q<2\leq p<\infty,\\
 \widetilde{S}^{p,\mathcal T}_q + (\widetilde{D}^{p,\mathcal T}_{q,q} \cap \widetilde{D}^{p,\mathcal T}_{p,q}) & \text{ if } \; 1<q\leq p\leq 2,\\
 \widetilde{S}^{p,\mathcal T}_q + \widetilde{D}^{p,\mathcal T}_{q,q} + \widetilde{D}^{p,\mathcal T}_{p,q}& \text{ if } \; 1<p\leq q \leq 2.
 \end{split}
\end{equation}
is a well-defined Banach space. Moreover, $(\mathcal A^{\mathcal T}_{p,q})^{*} = \mathcal A^{\mathcal T}_{p',q'}$ with isomorphism given~by  
 \begin{equation}\label{eq:dualApq^T}
 \begin{split}
  g \mapsto F_g, \;\;\; F_g(f) &= \mathbb E \sum_{ t\in \mathcal T}\langle  \Delta g_t,\Delta f_t\rangle  \stackrel{(*)}= \mathbb E \langle g_{\infty}, f_{\infty}\rangle,\\ 
  \|F_g\|_{(\mathcal A^{\mathcal T}_{p,q})^{*}} &\eqsim_{p,q} \|g\|_{\mathcal A^{\mathcal T}_{p',q'}}.
 \end{split}
 \end{equation}
Finally, for any purely discontinuous $L^p$-martingale $M:\mathbb R_+ \times \Omega \to L^q(S)$ with accessible jumps such that $\{t\in \mathbb R_+:\Delta M_t \neq 0\} \subset { \{\tau_0,\ldots,\tau_N\}}$ a.s., and for all $t\geq 0$, 
 \begin{equation}\label{eq:mainforaccesjumpsinT}
 \Bigl( \mathbb E \sup_{0\leq s\leq t}\|  M_{t}\|^p_{L^q(S)} \Bigr)^{\frac 1p} \eqsim_{p,q}\|M\mathbf 1_{[0,t]}\|_{\mathcal A_{p,q}^{\mathcal T}}.
 \end{equation}
\end{theorem}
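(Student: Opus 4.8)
The plan is to convert $M\mathbf 1_{[0,t]}$ into a discrete-time martingale of the ``odd'' type considered in Remark~\ref{rem:oddmartingales} and to read off all four assertions from there. Since the expressions in \eqref{eq:normsforaccjump} do not depend on the enumeration of $\mathcal T$, and since the order statistics of the finitely many predictable stopping times $\tau_0,\dots,\tau_N$ are again predictable (they arise from the $\tau_n$ by finitely many applications of $\wedge$ and $\vee$) and still have disjoint graphs, I may assume $\tau_0\le\tau_1\le\dots\le\tau_N$; I may also assume $M_0=0$ (otherwise adjoin $0$ to $\mathcal T$). Put $\mathcal G_{2n}:=\mathcal F_{\tau_n-}$ and $\mathcal G_{2n+1}:=\mathcal F_{\tau_n}$ for $n=0,\dots,N$; because $\tau_n<\tau_{n+1}$ on $\{\tau_{n+1}<\infty\}$ one has $\mathcal F_{\tau_n}\subset\mathcal F_{\tau_{n+1}-}$, so $(\mathcal G_i)_{i=0}^{2N+1}$ is a filtration. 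Associate to $M$ the sequence $d_{2n}:=0$, $d_{2n+1}:=\Delta M_{\tau_n}\mathbf 1_{[0,t]}(\tau_n)$. Then $(d_i)$ is a $(\mathcal G_i)$-martingale difference sequence: the only point to check, $\mathbb E(d_{2n+1}\mid\mathcal G_{2n})=0$, follows from Lemma~\ref{lemma:DeltaMisamart} since $\mathbf 1_{[0,t]}(\tau_n)=\mathbf 1_{\{\tau_n\le t\}}$ is $\mathcal F_{\tau_n-}$-measurable (the time $\tau_n$ being predictable). Moreover $\sum_i d_i=M_t$, because a purely discontinuous local martingale with finitely many jumps is constant between consecutive jumps — for each $x^*\in L^{q'}(S)$ the scalar purely discontinuous martingale $\langle M,x^*\rangle$ has flat quadratic variation, hence is constant, on every interval $(\tau_{n-1},\tau_n)$. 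Finally, using $\mathcal F_{\tau_n-}\cap\{\tau_n=\tau_m\}=\mathcal F_{\tau_m-}\cap\{\tau_n=\tau_m\}$ as in the proof that \eqref{eq:normsforaccjump} is well defined, one checks $\mathbb E_{\mathcal G_{2n}}|d_{2n+1}|^2=\mathbb E_{\mathcal F_{\tau_n-}}|(\Delta M)_{\tau_n}|^2\,\mathbf 1_{[0,t]}(\tau_n)$ and the analogous identities with $|\cdot|^2$ replaced by $\|\cdot\|^q_{L^q(S)}$ and $\|\cdot\|^p_{L^q(S)}$. Hence $M\mapsto(d_i)$ maps $\widetilde S^{p,\mathcal T}_q$, $\widetilde D^{p,\mathcal T}_{q,q}$, $\widetilde D^{p,\mathcal T}_{p,q}$ isometrically onto $\hat S^{p,odd}_q$, $\hat D^{p,odd}_{q,q}$, $\hat D^{p,odd}_{p,q}$ respectively (surjectivity: any $(\mathcal G_i)$-martingale difference sequence with vanishing even entries defines, via $\bar M_s:=\sum_{n:\,\tau_n\le s}d_{2n+1}$, a purely discontinuous $L^p$-martingale with accessible jumps in $\mathcal T$, by the converse of Lemma~\ref{lemma:DeltaMisamart}), and therefore $\mathcal A_{p,q}^{\mathcal T}$ isometrically onto $\hat s^{odd}_{p,q}$.

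Granting this identification, the Banach space assertions are immediate: $\hat S^{p,odd}_q$, $\hat D^{p,odd}_{q,q}$, $\hat D^{p,odd}_{p,q}$ are closed subspaces of the Banach spaces $S^p_q$, $D^p_{q,q}$, $D^p_{p,q}$ and hence complete, so the same holds for $\widetilde S^{p,\mathcal T}_q$, $\widetilde D^{p,\mathcal T}_{q,q}$, $\widetilde D^{p,\mathcal T}_{p,q}$; moreover the three building blocks form a compatible couple, inherited from the compatible couple $(S^p_q, D^p_{q,q}, D^p_{p,q})$ underlying Theorem~\ref{thm:summaryBRIntro}, so each of the six combinations in \eqref{eq:Apq^T}, and thus $\mathcal A_{p,q}^{\mathcal T}$, is a well-defined Banach space.

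For the duality it suffices, by the isometry established above, to prove $(\hat s^{odd}_{p,q})^*=\hat s^{odd}_{p',q'}$. The duals of the full spaces are known: $(S^p_q)^*=S^{p'}_{q'}$ by \eqref{eqn:SqdualNC}, $(D^p_{q,q})^*=D^{p'}_{q',q'}$ by Theorem~\ref{thm:(H^{s_q}_p(X))^*= H^{s_{q'}}_{p'}(X^*)} with $X=L^q(S)$, and $(D^p_{p,q})^*=D^{p'}_{p',q'}$ trivially. Combining these via \eqref{eqn:sumIntersectionDuality} and then restricting to the complemented subspace of odd martingale difference sequences through the bounded projection $(f_i)\mapsto(\mathbf 1_{\{i\text{ odd}\}}\Delta_i f_i)$ — in exactly the way Step~2 of the proof of Theorem~\ref{thm:summaryBRIntro} treats $\hat s_{p,q}$ — yields $(\hat s^{odd}_{p,q})^*=\hat s^{odd}_{p',q'}$, the density requirements in \eqref{eqn:sumIntersectionDuality} being met by a routine truncation of the martingale differences. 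Transporting the bracket $\langle(d_i),(\tilde d_i)\rangle=\sum_i\mathbb E\langle d_i,\tilde d_i\rangle$ back through the identification, and using the martingale difference property to pass from $\sum_n\mathbb E\langle\Delta g_{\tau_n},\Delta f_{\tau_n}\rangle$ to $\mathbb E\langle g_\infty,f_\infty\rangle$, gives \eqref{eq:dualApq^T}.

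It remains to prove \eqref{eq:mainforaccesjumpsinT}. Since $\|M_\cdot\|_{L^q(S)}$ is a nonnegative submartingale and $p>1$, Doob's maximal inequality gives $(\mathbb E\sup_{0\le s\le t}\|M_s\|^p_{L^q(S)})^{1/p}\eqsim_p(\mathbb E\|M_t\|^p_{L^q(S)})^{1/p}$; writing $M_t=\sum_i d_i$ as above and applying the ``odd'' Burkholder--Rosenthal inequality of Remark~\ref{rem:oddmartingales} to $(d_i)$ gives $(\mathbb E\|M_t\|^p_{L^q(S)})^{1/p}\eqsim_{p,q}\|(d_i)\|_{\hat s^{odd}_{p,q}}=\|M\mathbf 1_{[0,t]}\|_{\mathcal A_{p,q}^{\mathcal T}}$, which is \eqref{eq:mainforaccesjumpsinT}. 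I expect the principal technical obstacle to lie entirely in the first step — in setting up the interleaved filtration $(\mathcal G_i)$ correctly (the reordering of the $\tau_n$ and the inclusion $\mathcal F_{\tau_n}\subset\mathcal F_{\tau_{n+1}-}$), in the measurability bookkeeping around $\mathbf 1_{[0,t]}(\tau_n)$, and in the observation that a purely discontinuous martingale with finitely many jumps is constant in between; once the discrete model is in place, all remaining assertions follow mechanically from Remark~\ref{rem:oddmartingales}, Theorem~\ref{thm:(H^{s_q}_p(X))^*= H^{s_{q'}}_{p'}(X^*)}, and Doob's inequality.
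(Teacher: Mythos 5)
Your proposal is correct and follows essentially the same route as the paper: you reorder the predictable stopping times to be nondecreasing, form the interleaved filtration $(\mathcal F_{\tau_0'-},\mathcal F_{\tau_0'},\dots,\mathcal F_{\tau_N'-},\mathcal F_{\tau_N'})$, identify $\mathcal A_{p,q}^{\mathcal T}$ isometrically with $\hat s^{odd}_{p,q}$ via $M\mapsto(0,\Delta M_{\tau_0},0,\Delta M_{\tau_1},\dots)$, and then import the Banach-space, duality and two-sided norm assertions from Remark~\ref{rem:oddmartingales} together with Doob's maximal inequality. The only cosmetic differences are that you invoke Lemma~\ref{lemma:DeltaMisamart} in place of the paper's reference to Kallenberg's Lemma 26.18 for the martingale-difference property, and you spell out explicitly why $M_t=\sum_i d_i$ (constancy of a purely discontinuous martingale between its finitely many jumps), a point the paper leaves implicit.
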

The idea of the proof is to identify $\widetilde{S}^{p,\mathcal T}_q$,  $\widetilde{D}^{p,\mathcal T}_{q,q}$ and $\widetilde{D}^{p,\mathcal T}_{p,q}$ with discrete martingale spaces $S^p_q$, $D^p_{q,q}$ and $D^p_{p,q}$ for an appropriate filtration. For this purpose we need two observations.
\begin{lemma}\label{lemma:comptau-}
 Let $F:\mathbb R_+ \times \Omega \to \mathbb R_+$ be a 
 locally integrable c\`adl\`ag adapted process, $\tau$ be a predictable stopping time. Let $G,H : \mathbb R_+ \times \Omega \to \mathbb R_+$ be such that $G_t=F_{\tau}\mathbf 1_{[0,t]}(\tau)$, $H_t =\mathbf1_{[0,t]}(\tau) \mathbb E_{\mathcal F_{\tau-}} F_{\tau}$ for each $t\geq 0$. Then $G-H$ is a local martingale.
\end{lemma}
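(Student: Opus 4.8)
The plan is to reduce to the case where $F_\tau\mathbf 1_{\{\tau<\infty\}}$ is integrable and then verify the martingale property directly. Fix an announcing sequence $(\tau_n)_{n\geq1}$ for $\tau$, so that (as in the Preliminaries) $\mathcal F_{\tau-}=\sigma\bigl(\bigcup_n\mathcal F_{\tau_n}\bigr)$. Using local integrability of $F$, choose stopping times $\rho_k\nearrow\infty$ with $F_\tau\mathbf 1_{\{\tau\leq\rho_k\}}\in L^1$ for every $k$; this is the only place the local integrability hypothesis is used. It suffices to show that each stopped process $(G-H)^{\rho_k}$ is a (uniformly integrable) martingale. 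Now $G_{t\wedge\rho_k}=F_\tau\mathbf 1_{\{\tau\leq t\wedge\rho_k\}}=\xi_k\mathbf 1_{[0,t]}(\tau)$ with $\xi_k:=F_\tau\mathbf 1_{\{\tau\leq\rho_k\}}$, which is $\mathcal F_\tau$-measurable and integrable. Moreover $\{\tau\leq\rho_k\}=\bigcap_n\{\tau_n<\rho_k\}$ and $\{\tau_n<\rho_k\}\in\mathcal F_{\tau_n}\subseteq\mathcal F_{\tau-}$, so $\{\tau\leq\rho_k\}\in\mathcal F_{\tau-}$; consequently $\mathbb E_{\mathcal F_{\tau-}}\xi_k=\mathbf 1_{\{\tau\leq\rho_k\}}\mathbb E_{\mathcal F_{\tau-}}F_\tau$, whence $H_{t\wedge\rho_k}=(\mathbb E_{\mathcal F_{\tau-}}\xi_k)\mathbf 1_{[0,t]}(\tau)$. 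Thus $(G-H)^{\rho_k}$ has exactly the form treated in the integrable case, with $F_\tau$ replaced by $\xi_k$.

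It therefore remains to prove: if $\xi$ is an $\mathcal F_\tau$-measurable integrable random variable, then $X_t:=\xi\mathbf 1_{[0,t]}(\tau)-(\mathbb E_{\mathcal F_{\tau-}}\xi)\mathbf 1_{[0,t]}(\tau)$ defines a uniformly integrable martingale. Adaptedness and integrability are immediate, since $\mathcal F_{\tau-}\subseteq\mathcal F_\tau$ and any $\mathcal F_\tau$-measurable random variable multiplied by $\mathbf 1_{\{\tau\leq t\}}$ is $\mathcal F_t$-measurable. For the martingale property, fix $s<t$ and $A\in\mathcal F_s$; then $X_t-X_s=\bigl(\xi-\mathbb E_{\mathcal F_{\tau-}}\xi\bigr)\mathbf 1_{\{s<\tau\leq t\}}$. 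The key observation is that $B:=A\cap\{s<\tau\leq t\}\in\mathcal F_{\tau-}$: indeed $A\cap\{s<\tau\}=\bigcup_n\bigl(A\cap\{s<\tau_n\}\bigr)$ with $A\cap\{s<\tau_n\}\in\mathcal F_{\tau_n}\subseteq\mathcal F_{\tau-}$, and $\{\tau\leq t\}=\bigcap_n\{\tau_n\leq t\}\in\mathcal F_{\tau-}$. Given $B\in\mathcal F_{\tau-}$, the defining property of conditional expectation gives $\mathbb E[\xi\mathbf 1_B]=\mathbb E[(\mathbb E_{\mathcal F_{\tau-}}\xi)\mathbf 1_B]$, whence $\mathbb E[(X_t-X_s)\mathbf 1_A]=0$. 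As $A\in\mathcal F_s$ is arbitrary and $X_s$ is $\mathcal F_s$-measurable, this yields $\mathbb E(X_t\mid\mathcal F_s)=X_s$.

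The argument is largely routine bookkeeping; the only points requiring some care are the two measurability facts $\{\tau\leq\rho_k\}\in\mathcal F_{\tau-}$ and $A\cap\{s<\tau\leq t\}\in\mathcal F_{\tau-}$, which I would extract from the announcing sequence exactly as above, using $\mathcal F_{\tau-}=\sigma(\bigcup_n\mathcal F_{\tau_n})$. Alternatively, one can invoke the theory of dual predictable projections: $G$ has locally integrable variation, and for a predictable stopping time $\tau$ the compensator of $\xi\mathbf 1_{[\tau,\infty)}$ equals $(\mathbb E_{\mathcal F_{\tau-}}\xi)\mathbf 1_{[\tau,\infty)}$ (cf.\ Kallenberg), so $G-H$ is a local martingale by definition of the compensator. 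I do not anticipate any genuine obstacle.
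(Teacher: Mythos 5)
Your proof is correct and follows essentially the same route as the paper's: both reduce to the integrable case by localization and then verify the martingale property by showing $A\cap\{s<\tau\leq t\}\in\mathcal F_{\tau-}$ for $A\in\mathcal F_s$ and invoking the defining property of conditional expectation with respect to $\mathcal F_{\tau-}$. The differences are presentational: you derive the key measurability inclusion directly from the announcing sequence where the paper cites Kallenberg's Lemma 25.2(i), and you carry out the localization explicitly (checking $\{\tau\leq\rho_k\}\in\mathcal F_{\tau-}$) where the paper simply asserts it without loss of generality.
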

\begin{proof}
 Without loss of generality suppose that $F$ is integrable. First of all notice that $H$ is a predictable process thanks to \cite[Lemma 25.3(ii)]{Kal}, and $G$ is adapted due to the fact that $G_t = F_{\tau\wedge t}\mathbf 1_{[0,t]}(\tau)$. 
 Fix $t>s\geq 0$. By \cite[Lemma 25.2(i)]{Kal}, $\mathcal F_{s} \cap \{s< \tau\} \subset \mathcal F_{\tau -}$ and $\mathcal F_{s} \cap \{t< \tau\} \subset \mathcal F_{t} \cap \{t< \tau\}\subset \mathcal F_{\tau -}$. Hence,  
 $$
 \mathcal F_{s} \cap \{s< \tau\leq t\} \subset \mathcal F_{\tau -}
 $$ 
 and so
 \begin{align*}
  \mathbb E(G_t&-H_t|\mathcal F_{s}) = \mathbb E(F_{\tau}\mathbf 1_{\{\tau\leq t\}}-\mathbf 1_{\{\tau\leq t\}} \mathbb E_{\mathcal F_{\tau-}} F_{\tau}|\mathcal F_{s})\\
  &=\mathbb E(F_{\tau}\mathbf 1_{\{\tau\leq s\}}-\mathbf 1_{\{\tau\leq s\}} \mathbb E_{\mathcal F_{\tau-}} F_{\tau}|\mathcal F_{s})\\
  &\quad+\mathbb E(F_{\tau}\mathbf 1_{\{s<\tau\leq t\}}-\mathbf 1_{\{s<\tau\leq t\}} \mathbb E_{\mathcal F_{\tau-}} F_{\tau}|\mathcal F_{s})\\
  &= G_s-H_s + \mathbb E( \mathbb E(F_{\tau}- \mathbb E_{\mathcal F_{\tau-}} F_{\tau}|\mathcal F_{\tau-}){ \mathbf 1_{\{s<\tau\leq t\}}}|\mathcal F_{s}\cap { \{s<\tau\leq t\}}) = G_s-H_s.
 \end{align*}
\end{proof}

\begin{corollary}\label{cor:xitomart}
 Let $X$ be a Banach space, $\tau$ be a predictable stopping time, $\xi \in L^1(\Omega; X)$ be $\mathcal F_{\tau}$-measurable such that $\mathbb E_{\mathcal F_{\tau_-}}\xi = 0$. Let $M:\mathbb R_+ \times \Omega \to X$ be such that $M_t = \xi \mathbf 1_{[0, t]}(\tau)$. Then $M$ is a martingale.
\end{corollary}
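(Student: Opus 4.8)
The plan is to verify the martingale property directly from the definition, recycling the $\sigma$-algebra bookkeeping already carried out in the proof of Lemma~\ref{lemma:comptau-}. Since $\|M_t\|=\|\xi\|\,\mathbf 1_{\{\tau\le t\}}\le\|\xi\|\in L^1(\Omega)$, each $M_t$ belongs to $L^1(\Omega;X)$; and $M$ is adapted because for any Borel set $B\subseteq X$ the set $\{M_t\in B\}$ is, up to the $\mathcal F_t$-set $\{\tau>t\}$, equal to $\{\xi\in B\}\cap\{\tau\le t\}$, which lies in $\mathcal F_t$ since $\{\xi\in B\}\in\mathcal F_\tau$.

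The substance is the identity $\mathbb E(M_t\mid\mathcal F_s)=M_s$ for $t>s\ge 0$. Here $M_t-M_s=\xi\,\mathbf 1_{\{s<\tau\le t\}}$ and $M_s$ is $\mathcal F_s$-measurable, so it suffices to show $\mathbb E(\xi\,\mathbf 1_{\{s<\tau\le t\}}\mid\mathcal F_s)=0$. The key point is that $A\cap\{s<\tau\le t\}\in\mathcal F_{\tau-}$ for every $A\in\mathcal F_s$: by \cite[Lemma 25.2(i)]{Kal}, applied exactly as in the proof of Lemma~\ref{lemma:comptau-}, both $A\cap\{s<\tau\}$ and $A\cap\{t<\tau\}$ belong to $\mathcal F_{\tau-}$, and $A\cap\{s<\tau\le t\}=(A\cap\{s<\tau\})\setminus(A\cap\{t<\tau\})$. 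Using $\mathbb E_{\mathcal F_{\tau-}}\xi=0$ one then gets
\[
\int_A \xi\,\mathbf 1_{\{s<\tau\le t\}}\ud\mathbb P=\int_{A\cap\{s<\tau\le t\}}\xi\ud\mathbb P=\int_{A\cap\{s<\tau\le t\}}\mathbb E_{\mathcal F_{\tau-}}\xi\ud\mathbb P=0
\]
for all $A\in\mathcal F_s$, so $\mathbb E(M_t\mid\mathcal F_s)=M_s$, and $M$ is a martingale (uniformly integrable, being dominated by $\|\xi\|\in L^1$). This argument works verbatim for Banach-valued $\xi$, the Bochner conditional expectation being characterised by the displayed integral identity.

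Alternatively, the corollary follows from Lemma~\ref{lemma:comptau-} itself: testing against $x^*\in X^*$ reduces the statement to scalar $\xi$, and writing $\xi=\xi^+-\xi^-$ one applies Lemma~\ref{lemma:comptau-} to the c\`adl\`ag versions $F^\pm$ of $t\mapsto\mathbb E(\xi^\pm\mid\mathcal F_t)$ (for which $F^\pm_\tau=\xi^\pm$ a.s.\ by optional sampling); subtracting the two resulting local martingales and using $\mathbb E_{\mathcal F_{\tau-}}\xi^+=\mathbb E_{\mathcal F_{\tau-}}\xi^-$ cancels the compensator terms, leaving that $M$ is a local martingale, hence a true martingale. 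Either way, the one delicate point is the membership $A\cap\{s<\tau\le t\}\in\mathcal F_{\tau-}$, which is precisely where the predictability of $\tau$ is used; the remaining steps are routine.
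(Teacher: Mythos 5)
Your primary argument is correct and is a modest but genuine variation on the paper's route. The paper reduces to the scalar case and then cites Lemma~\ref{lemma:comptau-} (applied with $H\equiv 0$); you instead verify $\mathbb E(M_t\mid\mathcal F_s)=M_s$ directly, reusing only the key $\sigma$-algebra fact $\mathcal F_s\cap\{s<\tau\le t\}\subset\mathcal F_{\tau-}$ from that lemma's proof. Your direct route has the minor advantage of sidestepping a small wrinkle in the paper's statement: Lemma~\ref{lemma:comptau-} as written is for $\mathbb R_+$-valued $F$, so invoking it literally for a signed $\xi$ would require the $\xi^\pm$ decomposition you spell out in your alternative proof (or the observation that the lemma's proof never uses nonnegativity). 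Your alternative proof is precisely the paper's proof with that gap filled in, together with the optional-sampling identification $F^\pm_\tau=\xi^\pm$; both it and the primary argument are sound, and both correctly conclude with domination by $\|\xi\|\in L^1$ to upgrade from a local martingale to a true (uniformly integrable) martingale. One cosmetic point: in your adaptedness check, $\{M_t\in B\}$ equals $\{\xi\in B\}\cap\{\tau\le t\}$ only when $0\notin B$; when $0\in B$ one also adjoins the $\mathcal F_t$-set $\{\tau>t\}$, as you implicitly acknowledge, but stating the case split would be cleaner.
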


\begin{proof}
 The case $X  =\mathbb R$ follows from Lemma~\ref{lemma:comptau-} and the fact that $\xi \mathbf 1_{\tau\leq t}$ is \mbox{$\mathcal F_t$-me}\-a\-su\-rable for each $t\geq 0$ by the definition of $\mathcal F_{\tau}$. For the general case we notice that $\langle M, x^*\rangle$ is a martingale for each $x^* \in X$ and since $M$ is integrable it follows that $M$ is a martingale.
\end{proof}

\begin{proof}[Proof of Theorem \ref{thm:acessjumpsinTL^q}]
 Since the $\tau_i$ have disjoint graphs, we can find predictable stopping times $\tau_0',\ldots,\tau_N'$ such that $\{\tau_0(\omega),\ldots,\tau_N(\omega)\} = \{\tau_0'(\omega),\ldots,\tau_N'(\omega)\}$ and $\tau_0'(\omega)<\ldots
<\tau_N'(\omega)$ for a.e.\ $\omega \in \Omega$.
Indeed, we can set $\tau_0'=\min\{\tau_0,\ldots,\tau_N\}$ and
$$\tau_i' = \min(\{\tau_0,\ldots,\tau_N\}\setminus \{\tau_0',\ldots,\tau_i'\}), \qquad 0\leq i\leq N-1.$$
Fix the sequence of $\sigma$-algebras $\mathbb G = (\mathcal G_k)_{k=0}^{2N+1} = (\mathcal F_{\tau_0'-}, \mathcal F_{\tau_0'},\ldots,\mathcal F_{\tau_N'-},\mathcal F_{\tau_N'})$. 
Using \cite[Lemma 25.2]{Kal} and the fact that $(\tau_n')_{n=0}^N$ is a.s.\ a strictly increasing sequence one can show that $\mathbb G$ is a filtration. 

Consider Banach spaces $\hat S^{p,odd}_q$, $\hat D^{p,odd}_{q,q}$ and $\hat D^{p,odd}_{p, q}$ with respect to the filtration $\mathbb G$ that were defined in Remark \ref{rem:oddmartingales}. For any purely discontinuous $L^q$-valued martingale $M$ with accessible jumps in $\mathcal{T}$ we can construct a $\mathbb G$-martingale difference sequence $(d_k)_{n=0}^{2N+1}$ by setting $d_{2n} = 0$, $d_{2n+1} = \Delta M_{\tau_{n}}$ for $n=0,\ldots, N$. Indeed, by \cite[Lemma 26.18]{Kal} (see also \cite[Lemma 2.27]{JS}) for each $n=0,\ldots, N$
\[
  \mathbb E (d_{2n+1}|{\mathcal G_{2n}}) =  \mathbb E ( \Delta M_{\tau_n'}|\mathcal F_{\tau_n'-}) =0.
 \]
 By Lemma \ref{lemma:comptau-}, 
$$\|M\|_{\widetilde{S}^{p,\mathcal T}_q} = \|(d_n)\|_{S^{p,odd}_q}, \;\;\;  \|M\|_{\widetilde{D}^{p,\mathcal T}_{q,q}} = \|(d_n)\|_{D^{p,odd}_{q,q}}, \;\;\; \|M\|_{\widetilde{D}^{p,\mathcal T}_{p,q}} = \|(d_n)\|_{D^{p,odd}_{p,q}}.$$
Moreover, by Corollary \ref{cor:xitomart} any element $(d_k)_{k=0}^{2N+1}$ of $\hat S^{p,odd}_q$,  $D^{p,odd}_{q,q}$, or $D^{p,odd}_{p,q}$ (so in particular, $d_{2n}=0$ for each $n=0,\ldots, N$) can be converted back to an element $M$ of $\widetilde{S}^{p,\mathcal T}_q$, $\widetilde{D}^{p,\mathcal T}_{q,q}$, or $\widetilde{D}^{p,\mathcal T}_{p,q}$, respectively, by defining
\[
 M_t = \sum_{n=0}^N d_{2n+1}\mathbf 1_{[0, t]}( \tau_n'),\;\;\; t\geq 0.
\]
Using this identification, we find that $\widetilde{S}^{p,\mathcal T}_q$, $\widetilde{D}^{p,\mathcal T}_{q,q}$, and $\widetilde{D}^{p,\mathcal T}_{p,q}$ are Banach spaces. As a consequence, 
$\mathcal A_{p, q}^{\mathcal T}$ is a well-defined Banach space that is isometrically isomorphic to $\hat s^{odd}_{p,q}$. 
The duality statement now follows from the duality for $s^{odd}_{p,q}$ and $(*)$ in \eqref{eq:dualApq^T} follows from \eqref{eq:sumofexpect=expectofsum}.

Now let us show \eqref{eq:mainforaccesjumpsinT}. By Doob's maximal inequality,
 \[
  \Bigl( \mathbb E \sup_{0\leq s\leq t}\|  M_{t}\|^p_{L^q(S)} \Bigr)^{\frac 1p} \eqsim_{p}  \Bigl( \mathbb E \|  M_{t}\|^p_{L^q(S)} \Bigr)^{\frac 1p}.
 \]
 Again define a $\mathbb G$-martingale difference sequence $(d_n)_{n=0}^{2N+1}$ by setting $d_{2n} = 0$, $d_{2n+1} = \Delta M_{\tau_{n}}$, where $n=0,\ldots, N$. Then by Remark \ref{rem:oddmartingales}
 $$
\|M_{\infty}\|_{L^p(\Omega; X)} = \Bigl\|\sum_{n=0}^{2N+1}d_k\Bigr\|_{L^p(\Omega; X)} \eqsim_{p,q}\|(d_n)_{n=0}^{2N+1}\|_{\hat s^{odd}_{p,q}} = \|M\|_{\mathcal A_{p,q}^{\mathcal T}}.
$$
\end{proof}
To treat the general case we use an approximation argument based on the following observation.

\begin{lemma}\label{lem:M^ntoMinA_pq}
\label{lem:appApq}  Let $1<p,q<\infty$. Let $M$ be in $\widetilde{S}^{p}_q$, $\widetilde{D}^{p}_{q,q}$, or $\widetilde{D}^{p}_{p,q}$ and let $\mathcal{T}=(\tau_n)_{n\geq 0}$ be any sequence of predictable stopping times with disjoint graphs that exhausts the jumps of $M$. Consider the process $M^n=M^n_{\mathcal{T}}$ defined in \eqref{eq:defofM^n}. Then $M^n\to M$ in $\widetilde{S}^{p}_q$, $\widetilde{D}^{p}_{q,q}$, or $\widetilde{D}^{p}_{p,q}$, respectively. As a consequence, $\widetilde{S}^{p}_q$, $\widetilde{D}^{p}_{q,q}$, and $\widetilde{D}^{p}_{p,q}$ are normed linear spaces and $\mathcal A_{p,q}$ given by
\begin{equation}\label{eq:Apq}
 \begin{split}
   \widetilde{ S}^p_q \cap \widetilde{D}^p_{q,q} \cap \widetilde{D}^p_{p,q} & \text{ if } \; 2\leq q\leq p<\infty,\\
 \widetilde{S}^p_q \cap (\widetilde{D}^p_{q,q} + \widetilde{D}^p_{p,q}) & \text{ if } \; 2\leq p\leq q<\infty,\\
 (\widetilde{S}^p_q \cap \widetilde{D}^p_{q,q}) + \widetilde{D}^p_{p,q} & \text{ if } \; 1<p<2\leq q<\infty,\\
 (\widetilde{S}^p_q +\widetilde{D}^p_{q,q}) \cap \widetilde{D}^p_{p,q} & \text{ if } \; 1<q<2\leq p<\infty,\\
 \widetilde{S}^p_q + (\widetilde{D}^p_{q,q} \cap \widetilde{D}^p_{p,q}) & \text{ if } \; 1<q\leq p\leq 2,\\
 \widetilde{S}^p_q + \widetilde{D}^p_{q,q} + \widetilde{D}^p_{p,q} & \text{ if } \; 1<p\leq q \leq 2.
 \end{split}
\end{equation}
is a well-defined normed linear space. If $M\in\mathcal A_{p,q}$, then 
there exists a sequence of predictable stopping times $\mathcal T$ with disjoint graphs that exhausts the jumps of $M$ so that $M^n_{\mathcal{T}}\to M$ in $\mathcal A_{p,q}$. 
\end{lemma}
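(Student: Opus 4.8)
The plan is to prove the convergence $M^n_{\mathcal T}\to M$ separately in each of the three spaces $\widetilde S^p_q$, $\widetilde D^p_{q,q}$, $\widetilde D^p_{p,q}$, and then assemble these into the statements about $\mathcal A_{p,q}$. First, I would observe that $M-M^n_{\mathcal T}$ is itself a purely discontinuous $L^p$-martingale with accessible jumps: indeed it is a martingale by Lemma~\ref{lem:limofM^nisM}, and its jumps are exhausted by the tail family $(\tau_k)_{k>n}$, so it lies in whichever of the three spaces $M$ lies in (the defining expressions for $M-M^n_{\mathcal T}$ are just the tails of the corresponding series for $M$). Concretely, writing the norms via the exhausting family $\mathcal T$,
\begin{align*}
\|M-M^n_{\mathcal T}\|_{\widetilde S^p_q}^p &= \E\Bigl\|\Bigl(\sum_{k> n}\E_{\mathcal F_{\tau_k-}}|(\Delta M)_{\tau_k}|^2\Bigr)^{\frac12}\Bigr\|_{L^q(S)}^p,\\
\|M-M^n_{\mathcal T}\|_{\widetilde D^p_{q,q}}^p &= \E\Bigl(\sum_{k>n}\E_{\mathcal F_{\tau_k-}}\|(\Delta M)_{\tau_k}\|_{L^q(S)}^q\Bigr)^{\frac pq},\\
\|M-M^n_{\mathcal T}\|_{\widetilde D^p_{p,q}}^p &= \E\sum_{k>n}\|(\Delta M)_{\tau_k}\|_{L^q(S)}^p,
\end{align*}
using that $\mathcal T$ exhausts all the jumps so the sum over $t\ge 0$ reduces to the sum over $(\tau_k)$. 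In each case the integrand decreases pointwise a.s.\ to $0$ as $n\to\infty$ (the $S^p_q$ case uses that $x\mapsto\|(x)^{1/2}\|_{L^q}$ is continuous and monotone on the tail of a convergent sum of nonnegative functions), and the integrand at $n=0$ is exactly $\|M\|^p$ in the corresponding space, which is finite by assumption. Dominated convergence then gives $\|M-M^n_{\mathcal T}\|\to 0$ in each of the three spaces.

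Next, for the normed-space assertions: the triangle inequality for each of the three expressions is routine (Minkowski in $L^p(\Om;L^q(S))$, respectively in the mixed norms), so they are seminorms; positive-definiteness follows since $\|M\|=0$ in any of the three forces all jumps $\Delta M_{\tau_k}=0$ a.s., hence $M\equiv 0$ as a purely discontinuous martingale. Therefore $\widetilde S^p_q,\widetilde D^p_{q,q},\widetilde D^p_{p,q}$ are normed linear spaces, and the sums/intersections defining $\mathcal A_{p,q}$ are normed linear spaces as well, provided they form compatible couples --- here I would note that all three embed continuously into the space $L^0$ of (equivalence classes of) $L^q(S)$-valued martingales determined by their terminal jumps, or more simply into a common Hausdorff topological vector space, so the couple structure and the norms $\|\cdot\|_{X\cap Y}$, $\|\cdot\|_{X+Y}$ from the Preliminaries make sense. (The completeness of these spaces is \emph{not} claimed in this lemma --- that comes later --- so I only need the normed-space structure.)

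Finally, for the convergence in $\mathcal A_{p,q}$: given $M\in\mathcal A_{p,q}$, choose an exhausting sequence $\mathcal T$ of predictable stopping times with disjoint graphs (which exists since $M$ has accessible jumps). For each of the six regimes, $\mathcal A_{p,q}$ is built from $\widetilde S^p_q,\widetilde D^p_{q,q},\widetilde D^p_{p,q}$ by finitely many sums and intersections. For the intersection part the norm is a max, so $M^n_{\mathcal T}\to M$ in $\mathcal A_{p,q}$ follows immediately from convergence in each constituent space. The genuine subtlety is the \emph{sum} part: if $M\in X+Y$ then one writes $M=M_X+M_Y$ with $M_X\in X$, $M_Y\in Y$, but a priori $M_X$ and $M_Y$ need not be purely discontinuous martingales with accessible jumps whose jumps are exhausted by $\mathcal T$ --- and then there is no canonical ``$M^n_X$'' to compare with. \textbf{This is the main obstacle.} I would resolve it as follows: since $M_X, M_Y$ lie in $\widetilde S^p_q$ etc., which consist of purely discontinuous martingales with accessible jumps, their jumps too are exhausted by \emph{some} predictable families; by passing to a common refinement (interleaving the two families into a single sequence of predictable stopping times with disjoint graphs --- using that a countable union of graphs of predictable stopping times can be rearranged into disjoint predictable graphs, cf.\ the argument in the proof of Theorem~\ref{thm:acessjumpsinTL^q}) one obtains a single family $\mathcal T$ that simultaneously exhausts the jumps of $M$, $M_X$ and $M_Y$. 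With respect to this $\mathcal T$ one has $M^n_{\mathcal T}=(M_X)^n_{\mathcal T}+(M_Y)^n_{\mathcal T}$ by linearity of the truncation \eqref{eq:defofM^n}, and then
$$\|M-M^n_{\mathcal T}\|_{X+Y}\le \|M_X-(M_X)^n_{\mathcal T}\|_X+\|M_Y-(M_Y)^n_{\mathcal T}\|_Y\to 0$$
by the first part of the proof applied to $M_X$ and $M_Y$. Combining the intersection and sum cases regime by regime yields $M^n_{\mathcal T}\to M$ in $\mathcal A_{p,q}$ for the constructed family $\mathcal T$, completing the proof.
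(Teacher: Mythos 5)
Your proof is correct and takes essentially the same route as the paper: dominated convergence gives the single-space convergence $M^n_{\mathcal T}\to M$, a common exhausting family (via \cite[Lemma I.2.23]{JS}) is used to compare the norm expressions, linearity of truncation $(M_1+M_2+M_3)^n = M_1^n + M_2^n + M_3^n$ handles the sums, and the $\mathcal A_{p,q}$-convergence reduces to the three constituent spaces exactly as you describe. The only cosmetic difference is in the triangle inequality for $\widetilde S^p_q$, $\widetilde D^p_{q,q}$, $\widetilde D^p_{p,q}$: where you invoke Minkowski directly, the paper instead passes to the limit from the finite-$\mathcal T$ spaces $\widetilde S^{p,\mathcal T}_q$ (where the norm property was already established in Theorem~\ref{thm:acessjumpsinTL^q}); both are valid, though your ``routine Minkowski'' still implicitly requires first fixing a common exhausting family for $M$ and $N$ so that the nested conditional-expectation expressions can be compared termwise, which you only make explicit later for the sum step.
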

\begin{proof}
We prove the two first statements only for $\widetilde{S}^{p}_q$.  
By the dominated convergence theorem, we obtain $M^n\to M$ in $\widetilde{S}^{p}_q$ as well as $\|M^n\|_{\widetilde{S}^{p}_q}\nearrow \|M\|_{\widetilde{S}^{p}_q}$. Suppose now that $M,N\in \widetilde{S}^{p}_q$. By \cite[Lemma I.2.23]{JS}, there exists a sequence $\mathcal{T}=\{\tau_n\}_{n\geq 0}$ of predictable stopping times with disjoint graphs that exhausts the jumps of both $M$ and $N$. Now clearly, $(M+N)^n=M^n+N^n$ and so
\begin{align*}
\|M+N\|_{\widetilde{S}^{p}_q} & = \lim_{n\to \infty} \|M^n+N^n\|_{\widetilde{S}^{p}_q}  \\
& = \lim_{n\to \infty} \|M^n+N^n\|_{\widetilde{S}^{p,\mathcal{T}}_q} \\
& \leq \lim_{n\to \infty} \|M^n\|_{\widetilde{S}^{p,\mathcal{T}}_q}+\|N^n\|_{\widetilde{S}^{p,\mathcal{T}}_q} 
= \|M\|_{\widetilde{S}^{p}_q}+\|N\|_{\widetilde{S}^{p}_q}. 
\end{align*}
Let us prove the final statement if $p\leq q\leq 2$, the other cases are similar. Let $M\in \mathcal A_{p,q}$ and let $M_1\in \widetilde{S}^{p}_q$, $M_2\in \widetilde{D}^{p}_{q,q}$, $M_3\in \widetilde{D}^{p}_{p,q}$ be such that $M=M_1+M_2+M_3$. Let $\mathcal{T}=\{\tau_n\}_{n\geq 0}$ be a sequence of predictable stopping times with disjoint graphs that exhausts the jumps of $M_1$, $M_2$ and $M_3$. Then $M^n=M_1^n+M_2^n+M_3^n$ and by the above,
$$\|M-M^n\|_{\mathcal A_{p,q}} \leq \|M_1-M_1^n\|_{\widetilde{S}^{p,\mathcal{T}}_q} + \|M_2-M_2^n\|_{\widetilde{D}^{p}_{q,q}} + \|M_3-M_3^n\|_{\widetilde{D}^{p}_{p,q}}\to 0$$
as $n\to \infty$. 
\end{proof}

\begin{lemma}\label{lem:dualApq^T}
 Let $1<p,q < \infty$, $N\geq 1$, $\mathcal T = (\tau_n)_{n=0}^N$ be a finite family of predictable stopping times with disjoint graphs. Then $\mathcal A^{\mathcal T}_{p,q}\hookrightarrow \mathcal A_{p,q}$ isometrically.
\end{lemma}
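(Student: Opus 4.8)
The plan is to identify the norm of $\mathcal A^{\mathcal T}_{p,q}$ with the norm that $\mathcal A_{p,q}$ induces on the subspace of those martingales whose jumps lie a.s.\ in $\{\tau_0,\dots,\tau_N\}$. One inequality is immediate: each of $\widetilde S^{p,\mathcal T}_q$, $\widetilde D^{p,\mathcal T}_{q,q}$, $\widetilde D^{p,\mathcal T}_{p,q}$ carries, by definition, the norm induced from $\widetilde S^{p}_q$, $\widetilde D^{p}_{q,q}$, $\widetilde D^{p}_{p,q}$, and since formation of sums and intersections of Banach spaces is compatible with passing to (isometric) subspaces, any decomposition of $M\in\mathcal A^{\mathcal T}_{p,q}$ witnessing a given value of $\|M\|_{\mathcal A^{\mathcal T}_{p,q}}$ is also admissible in $\mathcal A_{p,q}$; hence $\|M\|_{\mathcal A_{p,q}}\le\|M\|_{\mathcal A^{\mathcal T}_{p,q}}$ (in the purely intersection case $2\le q\le p<\infty$ this already yields equality). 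All the work is in the reverse estimate.

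For that, I would use a truncation operation. Fix $M\in\mathcal A^{\mathcal T}_{p,q}$ and $\eps>0$, and choose a decomposition $M=M_1+\dots+M_k$ (with $k\le 3$, and the $M_j$ ranging over the appropriate summand spaces among $\widetilde S^{p}_q,\widetilde D^{p}_{q,q},\widetilde D^{p}_{p,q}$, or intersections of two of them) with $\sum_j\|M_j\|\le\|M\|_{\mathcal A_{p,q}}+\eps$. By \cite[Lemma I.2.23]{JS}, applied to the thin set $\bigl(\bigcup_{n=0}^N[\tau_n]\bigr)\cup\bigcup_{j=1}^k\{t:\Delta M_{j,t}\ne 0\}$, pick a single sequence $\mathcal T'=(\tau_m')_{m\ge0}$ of predictable stopping times with disjoint graphs that exhausts the jumps of every $M_j$ and satisfies $\bigcup_{n=0}^N[\tau_n]\subset\bigcup_{m\ge0}[\tau_m']$ a.s. Define the truncated martingales
$$\widetilde M_{j,t}:=\sum_{n=0}^N\Delta M_{j,\tau_n}\,\mathbf 1_{[0,t]}(\tau_n),\qquad t\ge0,\ 1\le j\le k.$$
By Lemma~\ref{lemma:DeltaMisamart} each process $\bigl(\Delta M_{j,\tau_n}\mathbf 1_{[0,t]}(\tau_n)\bigr)_{t\ge0}$ is an $L^p$-martingale; applied coordinatewise, Lemma~\ref{lem:martoflocbddvar} shows it is purely discontinuous with accessible jumps. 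As $N<\infty$, $\widetilde M_j$ is therefore a purely discontinuous $L^p$-martingale with accessible jumps whose jump set is contained in $\{\tau_0,\dots,\tau_N\}$, i.e.\ $\widetilde M_j$ lies in the corresponding $\mathcal T$-space. Moreover $\Delta M_{\tau_n}=\sum_j\Delta M_{j,\tau_n}$ and the jumps of $M$ are exhausted by $\mathcal T$, so $\sum_j\widetilde M_j=M$.

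It remains to check that each relevant component norm does not increase under truncation, i.e.\ $\|\widetilde M_j\|_{\widetilde S^{p}_q}\le\|M_j\|_{\widetilde S^{p}_q}$ and the analogues for $\widetilde D^{p}_{q,q}$ and $\widetilde D^{p}_{p,q}$. For the $\widetilde D^{p}_{p,q}$-norm this is immediate, since $\{t:\Delta\widetilde M_{j,t}\ne0\}\subset\{\tau_0,\dots,\tau_N\}$ with $\Delta\widetilde M_{j,\tau_n}=\Delta M_{j,\tau_n}$, so $\mathbb E\sum_t\|\Delta\widetilde M_{j,t}\|^p_{L^q(S)}$ is a partial sum of $\mathbb E\sum_t\|\Delta M_{j,t}\|^p_{L^q(S)}$. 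For the $\widetilde S^{p}_q$- and $\widetilde D^{p}_{q,q}$-norms one reruns the bookkeeping used to prove that the expressions in \eqref{eq:normsforaccjump} are independent of the exhausting family: invoking \cite[Proposition I.2.11]{JS} ($\mathcal F_{\tau-}\cap\{\tau=\sigma\}=\mathcal F_{\sigma-}\cap\{\tau=\sigma\}$ for predictable $\tau,\sigma$), the inclusion $\bigcup_n[\tau_n]\subset\bigcup_m[\tau_m']$, and the disjointness of the graphs $[\tau_m']$, one gets pointwise on $S\times\Omega$
$$\sum_{n=0}^N\mathbb E_{\mathcal F_{\tau_n-}}|\Delta M_{j,\tau_n}|^2=\sum_{m\ge0}\mathbb E_{\mathcal F_{\tau_m'-}}\Bigl(|\Delta M_{j,\tau_m'}|^2\sum_{n=0}^N\mathbf 1_{\{\tau_n=\tau_m'\}}\Bigr)\le\sum_{m\ge0}\mathbb E_{\mathcal F_{\tau_m'-}}|\Delta M_{j,\tau_m'}|^2;$$
since $\|M_j\|_{\widetilde S^{p}_q}$ may be computed with the exhausting family $\mathcal T'$, taking $L^q(S)$- and then $L^p(\Omega)$-norms gives $\|\widetilde M_j\|_{\widetilde S^{p}_q}\le\|M_j\|_{\widetilde S^{p}_q}$, and the same argument with $|\,\cdot\,|^2$ replaced by $\|\,\cdot\,\|^q_{L^q(S)}$ handles $\widetilde D^{p}_{q,q}$. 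Hence $M=\sum_j\widetilde M_j$ is an admissible decomposition in $\mathcal A^{\mathcal T}_{p,q}$ with total norm $\le\sum_j\|M_j\|\le\|M\|_{\mathcal A_{p,q}}+\eps$; letting $\eps\to0$ yields $\|M\|_{\mathcal A^{\mathcal T}_{p,q}}\le\|M\|_{\mathcal A_{p,q}}$, and combined with the first paragraph this gives the isometric embedding. The only genuinely delicate points are choosing the common exhausting family $\mathcal T'$ that also contains the graphs of $\mathcal T$ (handled by \cite[Lemma I.2.23]{JS}) and verifying that the truncations $\widetilde M_j$ remain martingales in the right spaces (handled by Lemma~\ref{lemma:DeltaMisamart} and, coordinatewise, Lemma~\ref{lem:martoflocbddvar}); the norm comparisons themselves are then routine and essentially copy the earlier "independence of the exhausting family" computation.
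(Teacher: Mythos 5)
Your proposal is correct and takes essentially the same approach as the paper: identify the easy inequality from the subspace norms, then truncate a near-optimal decomposition $M=\sum_j M_j$ to the predictable times in $\mathcal T$ and check that the truncation decreases each component norm. The only difference is one of detail: the paper deduces the norm inequality directly from the pointwise comparison $|\Delta\widetilde M^i_t|\le|\Delta M^i_t|$, whereas you spell out the intermediate step of passing to a common exhausting family and rerunning the $\mathcal F_{\tau-}$-bookkeeping from \cite[Proposition I.2.11]{JS}, which is exactly what makes the paper's terse claim rigorous for the $\widetilde S^p_q$ and $\widetilde D^p_{q,q}$ norms.
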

\begin{proof}
We will consider only the case $p\leq q \leq 2$, the other cases can be shown analogously. Let $M \in \mathcal A^{\mathcal T}_{p,q}$. Then automatically $M \in \mathcal A_{p,q}$ and $\|M\|_{\mathcal A^{\mathcal T}_{p,q}} \geq \|M\|_{\mathcal A_{p,q}}$. Let us show the reverse inequality. Fix $\eps>0$, and let $M^1 \in \widetilde{S}^p_q$, $M^2 \in \widetilde{D}^p_{q,q}$ and $M^3 \in \widetilde{D}^p_{p,q}$ be martingales such that $M = M^1 + M^2 + M^3$ and 
\[
 \|M\|_{\mathcal A_{p,q}} \geq \|M^1\|_{\widetilde{S}^p_q} + \|M^2\|_{\widetilde{D}^p_{q,q}} + \|M^3\|_{\widetilde{D}^p_{p,q}}- \eps.
\]
By Lemma \ref{lemma:DeltaMisamart} we can define martingales $\widetilde M^1$, $\widetilde M^2$ and $\widetilde M^3$ by
\begin{equation}\label{eq:defofwidetilde M^i_t}
  \widetilde M^i_t = \sum_{s\in \mathcal T\cap [0, t]}\Delta M^i_s,\;\;\; t\geq 0,\; i=1,2,3.
\end{equation}
Notice that $|\Delta \widetilde M^i_t(\omega)(s)|\leq |\Delta M^i_t(\omega)(s)|$ for each $t\geq 0$, $\omega \in \Omega$, $s\in S$ and $i=1,2,3$. Therefore 
$\widetilde M^1 \in \widetilde{S}^p_q$, $\widetilde M^2 \in \widetilde{D}^p_{q,q}$ and $\widetilde M^3 \in \widetilde{D}^p_{p,q}$ and $\|\widetilde M^1\|_{\widetilde{S}^p_q}\leq \| M^1\|_{\widetilde{S}^p_q}$, $\|\widetilde M^2\|_{\widetilde{D}^p_{q,q}}\leq \| M^2\|_{\widetilde{D}^p_{q,q}}$ and $\|\widetilde M^3\|_{\widetilde{D}^p_{p,q}}\leq \| M^3\|_{\widetilde{D}^p_{p,q}}$. Moreover, $M = \widetilde M^1 + \widetilde M^2 + \widetilde M^3$. Indeed, since all the martingales here are purely discontinuous with accessible jumps, by \eqref{eq:defofwidetilde M^i_t} we find for each $t\geq 0$ a.s.\
\begin{align*}
 M_t = \sum_{s\in \mathcal T \cap [0, t]} \Delta M_s &= \sum_{s\in \mathcal T \cap [0, t]} \Bigl(\Delta M^1_s + \Delta M^2_s + \Delta M^3_s\Bigr)\\
 &=\sum_{s\in \mathcal T \cap [0, t]} \Delta M^1_s + \sum_{s\in \mathcal T \cap [0, t]}\Delta M^2_s + \sum_{s\in \mathcal T \cap [0, t]}\Delta M^3_s\\
 &= \widetilde M^1_t + \widetilde M^2_t + \widetilde M^3_t.
\end{align*}
Therefore
\begin{align*}
  \|M\|_{\mathcal A^{\mathcal T}_{p,q}} &\leq \|\widetilde M^1\|_{\widetilde{S}^p_q} + \|\widetilde M^2\|_{\widetilde{D}^p_{q,q}} + \|\widetilde M^3\|_{\widetilde{D}^p_{p,q}}\\
  &\leq \|M^1\|_{\widetilde{S}^p_q} + \|M^2\|_{\widetilde{D}^p_{q,q}} + \|M^3\|_{\widetilde{D}^p_{p,q}} \leq \|M\|_{\mathcal A_{p,q}} +\eps.
\end{align*}
Since $\eps$ was arbitrary, we conclude that $\|M\|_{\mathcal A^{\mathcal T}_{p,q}}  \leq \|M\|_{\mathcal A_{p,q}}$, and consequently $\|M\|_{\mathcal A^{\mathcal T}_{p,q}}  = \|M\|_{\mathcal A_{p,q}}$.
\end{proof}
We can now readily deduce the main theorem of this section.
\begin{theorem}\label{thm:acessjumpsL^q}
  Let $1<p,q<\infty$, $M:\mathbb R_+ \times \Omega \to L^q(S)$ be a purely discontinuous martingale with accessible jumps. Then for all $t\geq 0$ one has that
 \begin{equation}\label{eq:mainforaccesjumps}
 \Bigl( \mathbb E \sup_{0\leq s\leq t}\|  M_{t}\|^p_{L^q(S)} \Bigr)^{\frac 1p} \eqsim_{p,q}\|M\mathbf 1_{[0,t]}\|_{\mathcal A_{p,q}},
 \end{equation}
where $\mathcal A_{p,q}$ is as in \eqref{eq:Apq}. In particular, $\mathcal A_{p,q}$ is a Banach space of $L^p$-martingales.
\end{theorem}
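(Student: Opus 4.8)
The strategy is to reduce \eqref{eq:mainforaccesjumps} to the finite-jump case established in Theorem~\ref{thm:acessjumpsinTL^q}, by means of the approximation $M^n_{\mathcal T}\to M$ of Lemma~\ref{lem:M^ntoMinA_pq}, the isometric embedding $\mathcal A^{\mathcal T}_{p,q}\hookrightarrow\mathcal A_{p,q}$ of Lemma~\ref{lem:dualApq^T}, the $L^p$-convergence of the truncations in Lemma~\ref{lem:limofM^nisM}, and Doob's maximal inequality. Fix $t\geq 0$. If $\mathbb E\|M_t\|_{L^q(S)}^p=\infty$ both sides of \eqref{eq:mainforaccesjumps} are infinite, so we may assume $M$ is an $L^p$-martingale; replacing $M$ by the martingale $M\mathbf 1_{[0,t]}$ obtained by stopping at $t$, it suffices to prove that for every uniformly $L^p$-bounded purely discontinuous $L^q(S)$-valued martingale $M$ with accessible jumps one has
$$\bigl(\mathbb E\sup_{s\geq 0}\|M_s\|_{L^q(S)}^p\bigr)^{1/p}\eqsim_{p,q}\|M\|_{\mathcal A_{p,q}}.$$

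Consider first the case $\|M\|_{\mathcal A_{p,q}}<\infty$; here the full two-sided bound follows at once. By Lemma~\ref{lem:M^ntoMinA_pq} there is a sequence $\mathcal T=(\tau_n)_{n\geq 0}$ of predictable stopping times with disjoint graphs exhausting the jumps of $M$ such that the truncations $M^n:=M^n_{\mathcal T}$ defined in \eqref{eq:defofM^n} converge to $M$ in $\mathcal A_{p,q}$. For each $n$ the jumps of $M^n$ lie in the finite family $\mathcal T_n:=(\tau_0,\ldots,\tau_n)$, so Theorem~\ref{thm:acessjumpsinTL^q} together with Lemma~\ref{lem:dualApq^T} gives
$$\bigl(\mathbb E\sup_{s\geq 0}\|M^n_s\|_{L^q(S)}^p\bigr)^{1/p}\eqsim_{p,q}\|M^n\|_{\mathcal A^{\mathcal T_n}_{p,q}}=\|M^n\|_{\mathcal A_{p,q}},$$
whose right-hand side tends to $\|M\|_{\mathcal A_{p,q}}$. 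Applying the same equivalence to the differences $M^n-M^m$ (which again have finitely many jumps) shows that $(M^n_\infty)_n$ is Cauchy in $L^p(\Omega;L^q(S))$; by Lemma~\ref{lem:limofM^nisM} its limit is $M_\infty$, so $M^n\to M$ in $\mathcal M^{\rm acc}_{p,q}$ and Doob's maximal inequality gives $\bigl(\mathbb E\sup_{s\geq 0}\|M^n_s\|^p\bigr)^{1/p}\to\bigl(\mathbb E\sup_{s\geq 0}\|M_s\|^p\bigr)^{1/p}$. Passing to the limit in the displayed equivalence yields the claim in this case.

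It remains to handle $\|M\|_{\mathcal A_{p,q}}=\infty$, for which I would establish the contrapositive: $\mathbb E\sup_{s\geq 0}\|M_s\|_{L^q(S)}^p<\infty$ implies $M\in\mathcal A_{p,q}$. Taking any exhausting sequence $\mathcal T$ and the truncations $M^n=M^n_{\mathcal T}$, Theorem~\ref{thm:acessjumpsinTL^q}, Lemma~\ref{lem:dualApq^T}, Doob's inequality and the weak-differential-subordination bound behind the proof of Lemma~\ref{lem:limofM^nisM} give a \emph{uniform} bound $\sup_n\|M^n\|_{\mathcal A_{p,q}}\lesssim_{p,q}\|M_\infty\|_{L^p(\Omega;L^q(S))}<\infty$. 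To transfer this bound to $M$ I would use that, for $1<p,q<\infty$, each of $\widetilde S^p_q$, $\widetilde D^p_{q,q}$ and $\widetilde D^p_{p,q}$ is reflexive — this is part of Theorem~\ref{thm:(H^{s_q}_p(X))^*= H^{s_{q'}}_{p'}(X^*)} for $\widetilde D^p_{q,q}$, and follows analogously from the corresponding duality identities for the other two — so that $\mathcal A_{p,q}$, obtained from these three spaces by finitely many intersections and sums, is reflexive; a subsequence of the bounded sequence $(M^n)$ then converges weakly in the relevant component spaces, and comparison with the $\mathcal M^{\rm acc}_{p,q}$-limit $M$ identifies the weak limit with $M$, whence $\|M\|_{\mathcal A_{p,q}}\leq\liminf_n\|M^n\|_{\mathcal A_{p,q}}<\infty$. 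Making this weak-limit argument rigorous, in particular in the cases where $\mathcal A_{p,q}$ is a sum of the three spaces, is the step I expect to require the most care.

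Finally, taking $t\to\infty$ in \eqref{eq:mainforaccesjumps} (both sides increase to their suprema by monotone convergence) gives $\bigl(\mathbb E\sup_{s\geq 0}\|M_s\|^p\bigr)^{1/p}\eqsim_{p,q}\|M\|_{\mathcal A_{p,q}}$ for every purely discontinuous $L^q(S)$-valued martingale $M$ with accessible jumps. Together with Doob's maximal inequality and the martingale convergence theorem this shows that $\mathcal A_{p,q}$ coincides, with equivalent norms, with the space $\mathcal M^{\rm acc}_{p,q}$; since the latter is a Banach space by Proposition~\ref{prop:purdisaccjumpsisBanachspace}, $\mathcal A_{p,q}$ is a Banach space of $L^p$-martingales.
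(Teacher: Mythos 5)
Your first part (showing $M\in\mathcal A_{p,q}$ implies $M\in\mathcal M^{\rm acc}_{p,q}$ with the upper bound) is essentially the paper's argument: truncate along an exhausting family, observe the truncations are Cauchy in $\mathcal A_{p,q}$, transfer the Cauchy property to $\mathcal M^{\rm acc}_{p,q}$ via Lemma~\ref{lem:dualApq^T} and Theorem~\ref{thm:acessjumpsinTL^q}, and pass to the limit. The uniform bound $\sup_n\|M^n\|_{\mathcal A_{p,q}}\lesssim_{p,q}\|M\|_{\mathcal M^{\rm acc}_{p,q}}$ you derive for the converse direction is also correct and is exactly the starting point of the paper's lower-bound argument.

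The difficulty is in the converse direction, where you propose a reflexivity/weak-compactness argument in place of the paper's approach, and this step has a genuine gap. You invoke reflexivity of $\widetilde S^p_q$, $\widetilde D^p_{q,q}$, $\widetilde D^p_{p,q}$ "from Theorem~\ref{thm:(H^{s_q}_p(X))^*= H^{s_{q'}}_{p'}(X^*)}", but that theorem concerns the discrete-time sequence space $H^{s_q}_p(X)$ over a \emph{fixed} filtration; the spaces $\widetilde S^p_q$, $\widetilde D^p_{q,q}$, $\widetilde D^p_{p,q}$ are spaces of \emph{continuous-time} purely discontinuous martingales, where both the exhausting family $\mathcal T=(\tau_n)$ and the associated filtration $(\mathcal F_{\tau_n-},\mathcal F_{\tau_n},\ldots)$ vary with the element under consideration. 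The identification with discrete sequence spaces, established in Theorem~\ref{thm:acessjumpsinTL^q}, is only for \emph{finite} families $\mathcal T$. In fact, up to this point the paper has established only that $\widetilde S^p_q$, $\widetilde D^p_{q,q}$, $\widetilde D^p_{p,q}$ are \emph{normed} linear spaces (Lemma~\ref{lem:M^ntoMinA_pq}); their completeness is precisely one of the conclusions of Theorem~\ref{thm:acessjumpsL^q}, so it cannot be used as an input. Consequently reflexivity of $\mathcal A_{p,q}$ — and with it weak compactness of norm-bounded sequences and weak lower semicontinuity of the norm — is not available, and your proposed weak-limit argument stalls precisely at the step you flagged as needing the most care.

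The paper circumvents this by arguing at the level of the truncations directly, splitting according to whether $\mathcal A_{p,q}$ is an intersection or a sum at the top level. In the intersection case it uses the uniform bound you derived together with monotone convergence of the conditional square (resp.\ $q$-th, $p$-th) function sums along the finite truncations. In the sum case, where no single monotone quantity is available, it extracts a rapidly convergent subsequence, decomposes each consecutive difference $N^k=M^{n_k}-M^{n_{k-1}}$ into three pieces with a summable bound via Theorem~\ref{thm:acessjumpsinTL^q}, sums these to obtain Cauchy sequences in each component space, and then uses Proposition~\ref{prop:purdisaccjumpsisBanachspace}, an argument identifying the jump structure, and monotone convergence to conclude. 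To salvage your approach you would need to first prove completeness and reflexivity of $\widetilde S^p_q$, $\widetilde D^p_{q,q}$ and $\widetilde D^p_{p,q}$ independently of Theorem~\ref{thm:acessjumpsL^q}, which, given how these spaces are defined over variable exhausting families, would require essentially the same diagonal/decomposition machinery the paper uses.
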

\begin{proof}

Suppose first that $M\in\mathcal A_{p,q}$. By Lemma~\ref{lem:appApq} there exists a sequence of predictable stopping times with disjoint graphs that exhausts the jumps of $M$ so that $M^n_{\mathcal{T}}\to M$ in $\mathcal A_{p,q}$. In particular, $(M^n_{\mathcal{T}})_{n\geq 0}$ is Cauchy in $\mathcal A_{p,q}$. By Lemma~\ref{lem:dualApq^T} and Theorem~\ref{thm:acessjumpsinTL^q} it follows that it is also Cauchy in $\mathcal{M}^{\rm acc}_{p,q}$. By Proposition~\ref{prop:purdisaccjumpsisBanachspace} $(M^n_{\mathcal{T}})_{n\geq 0}$ converges and clearly the limit is $M$.\par 
Suppose now that $M \in \mathcal{M}^{\rm acc}_{p,q}$. It suffices to show that $M\in \mathcal A_{p,q}$. Indeed, Lemma~\ref{lem:appApq} then shows that there is a sequence of predictable stopping times with disjoint graphs that exhausts the jumps of $M$ so that $M^n_{\mathcal{T}}\to M$ in $\mathcal A_{p,q}$. By Lemma~\ref{lem:limofM^nisM} we also have $M^n_{\mathcal{T}}\to M$ in $\mathcal{M}^{\rm acc}_{p,q}$ and so the lower bound in \eqref{eq:mainforaccesjumps} follows from Lemma~\ref{lem:dualApq^T} and Theorem~\ref{thm:acessjumpsinTL^q}. We will show that $M\in \mathcal A_{p,q}$ in the two cases $2\leq q\leq p$ and $p\leq q\leq 2$, the other cases can be treated analogously.

{\it Case $2\leq q\leq p$.}  We will show that $\|M\|_{\widetilde S_q^p} \lesssim_{p,q} \|M\|_{\mathcal{M}^{\rm acc}_{p,q}}$. The analogous statements for $\widetilde D_{q, q}^p$ and $\widetilde D_{p, q}^p$ can be shown in the same way. 
By Theorem~\ref{thm:acessjumpsinTL^q}, $\|M^n\|_{\widetilde S_q^p} \lesssim_{p,q} \|M^n\|_{\mathcal{M}^{\rm acc}_{p,q}}$. Also, by \eqref{eq:M^m_t^pleqM_t^p} we have $\|M^n\|_{\mathcal{M}^{\rm acc}_{p,q}} \lesssim_{p,q} \|M\|_{\mathcal{M}^{\rm acc}_{p,q}}$ for all $n\geq 0$. Therefore $\|M^n\|_{\widetilde S_q^p} \lesssim_{p,q} \|M\|_{\mathcal{M}^{\rm acc}_{p,q}}$ uniformly in $n$, so by monotone convergence
\begin{align*}
 \|M\|_{\widetilde S_q^p}^p &= \mathbb E\Bigl\| \Bigl({\sum_{m\geq 0} \mathbb E_{\mathcal F_{\tau_m-}}|(\Delta M(\omega)(s))_{\tau_m}|^2 } \Bigr)^{\frac 12}\Bigr\|^p_{L^q(S)} \\
 &=\lim_{n\to \infty}\mathbb E\Bigl\| \Bigl({\sum_{m= 0}^n \mathbb E_{\mathcal F_{\tau_m-}}|(\Delta M(\omega)(s))_{\tau_m}|^2 } \Bigr)^{\frac 12}\Bigr\|^p_{L^q(S)}\\
 &= \lim_{n\to \infty}\|M^n\|^p_{\widetilde S_q^p} \lesssim_{p,q} \|M\|^p_{\mathcal{M}^{\rm acc}_{p,q}}.
\end{align*}

{\it Case $p\leq q\leq 2$.} Observe that $\|M^n\|_{\mathcal A_{p,q}}\eqsim_{p,q}\|M^n\|_{\mathcal{M}^{\rm acc}_{p,q}}$ for each $n\geq 0$ by Theorem~\ref{thm:acessjumpsinTL^q} and since $(M^n)_{n\geq 0}$ is a Cauchy sequence in $\mathcal{M}^{\rm acc}_{p,q}$ due to Lemma \ref{lem:limofM^nisM}, it follows that $(M^n)_{n\geq 0}$ is a Cauchy sequence in $\mathcal A_{p,q}$. Thus there exists a subsequence $(M^{n_k})_{k\geq 0}$ such that
\[
 \|M^{n_{k+1}}-M^{n_k}\|_{\mathcal A_{p,q}}<\frac 1 {2^{k+1}},\;\;\; k\geq 0.
\]
 Let $N^{k} = M^{n_{k}}-M^{n_{k-1}}$, $k\geq 1$, $N^0 = M^{n_0}$. Set $n_{-1}=-1$. By Theorem \ref{thm:acessjumpsinTL^q}, for each $k\geq 0$ there exist $N^{k,1}$, $N^{k,2}$ and $N^{k,3}$ such that $N^{k,1} \in \widetilde S_q^p$, $N^{k,2} \in \widetilde D_{q, q}^p$, $N^{k,3}\in\widetilde D_{p, q}^p$, $N^k = N^{k,1} + N^{k,2} + N^{k,3}$,
\[
 \{t: \Delta N^{k,i}_t\neq 0, i=1,2,3\}\subset \{\tau_{n_{k-1}+1},\ldots,\tau_{n_{k}}\},\;\;\; \text{a.s.,}
\]
and
\begin{equation}\label{eq:estimatesforN^ki}
 \begin{split}
   \|N^{k,1}\|_{\widetilde S_q^p} + \|N^{k,2}\|_{\widetilde D_{q, q}^p} + \|N^{k,3}\|_{\widetilde D_{p, q}^p} &< \frac {1}{2^k},\;\;\;\;\;\; k\geq 1,\\
 \|N^{0,1}\|_{\widetilde S_q^p} + \|N^{0,2}\|_{\widetilde D_{q, q}^p} + \|N^{0,3}\|_{\widetilde D_{p, q}^p}& \leq 2\|M^{ n_0}\|_{\mathcal A_{p,q}}.
 \end{split}
\end{equation}
Let
\[
 M^{m,i}:= \sum_{k=0}^m N^{k,i},\;\;\; m\geq 0,\;\;\; i=1,2,3.
\]
Then by \eqref{eq:estimatesforN^ki}, $(M^{m,1})_{m\geq 0}$, $(M^{m,2})_{m\geq 0}$ and $(M^{m,3})_{m\geq 0}$ are Cauchy sequences in $\widetilde S_q^p$, $\widetilde D_{q, q}^p$ and $\widetilde D_{p, q}^p$ respectively. By construction, each of $M^{m, i}$, $m\geq 0$, $i=1,2,3$, has finitely many jumps occurring in $\{\tau_{0},\ldots,\tau_{n_{m}}\}$, so by Theorem \ref{thm:acessjumpsinTL^q} the sequences $(M^{m,1})_{m\geq 0}$, $(M^{m,2})_{m\geq 0}$ and $(M^{m,3})_{m\geq 0}$ are Cauchy in $\mathcal{M}^{\rm acc}_{p,q}$ as well. Due to Proposition \ref{prop:purdisaccjumpsisBanachspace} there exist $\widetilde M^1$, $\widetilde M^2$ and $\widetilde M^3$ such that $M^{m,i}\to \widetilde M^i$ in $\mathcal{M}^{\rm acc}_{p,q}$ as $m\to \infty$ for each $i=1,2,3$. Since $M^{m,1} + M^{m,2} + M^{m,3}\to M$ in $\mathcal{M}^{\rm acc}_{p,q}$ as $m\to \infty$ by Lemma~\ref{lem:limofM^nisM}, it follows that $M=\widetilde M^1+\widetilde M^2+\widetilde M^3$.

Let us now show that the jumps of $\widetilde M^1$, $\widetilde M^2$ and $\widetilde M^3$ are exhausted by $\mathcal T = (\tau_n)_{n\geq 0}$. Indeed, assume that for some $i=1,2,3$ there exists a predictable stopping time $\tau$ such that $\mathbb P\{\Delta \widetilde M^i_{\tau}\neq 0, \tau \notin \{\tau_0,\tau_1,\ldots\}\}>0$. Then by separability of $X=L^q(S)$ there exists an $x^*\in X^*$ such that
\begin{equation}\label{eq:goodx^*forlimarg}
 \mathbb P\{\langle \Delta \widetilde M^i_{\tau}, x^*\rangle\neq 0, \tau \notin \{\tau_0,\tau_1,\ldots\}\}>0
\end{equation}
and so, by the Burkholder-Davis-Gundy inequality,
\begin{equation}\label{eq:goodx^*application}
 \begin{split}
   \mathbb E |\langle (M^{m, i}-\widetilde M^{i})_{\infty}, x^*\rangle|^p &\eqsim_p \mathbb E [\langle M^{m, i}-\widetilde M^{i}, x^*\rangle]^{\frac p2}_{\infty} \\
 &= \mathbb E \Bigl(\sum_{u\geq 0}| \langle \Delta(M^{m, i}-\widetilde M^{i})_u, x^*\rangle|^2\Bigr)^{\frac p2}\\
 &\geq \mathbb E |\langle \Delta \widetilde M^i_{\tau}, x^*\rangle|^p \mathbf 1_{\tau \notin \{\tau_0,\tau_1,\ldots\}},
 \end{split}
\end{equation}
where the final inequality holds as $\mathbb P\{\Delta M^{m,i}_{\tau}\neq 0, \tau \notin \{\tau_0,\tau_1,\ldots\}\}=0$. But the last expression in \eqref{eq:goodx^*application} does not vanish as $m\to \infty$ because of \eqref{eq:goodx^*forlimarg}, which contradicts with the fact that $M^{m,i}\to \widetilde M^i$ in $\mathcal{M}^{\rm acc}_{p,q}$.

By monotone convergence,
\begin{align*}
 \|\widetilde M^1\|_{\widetilde S_q^p}^p&= \mathbb E\Bigl\| \Bigl({\sum_{n\geq 0} \mathbb E_{\mathcal F_{\tau_n-}}|(\Delta \widetilde M^1(\omega)(s))_{\tau_n}|^2 } \Bigr)^{\frac 12}\Bigr\|^p_{L^q(S)} \\
 &=\lim_{m\to \infty}\mathbb E\Bigl\| \Bigl({\sum_{n=0}^{n_m} \mathbb E_{\mathcal F_{\tau_n-}}|(\Delta \widetilde M^1(\omega)(s))_{\tau_n}|^2 } \Bigr)^{\frac 12}\Bigr\|^p_{L^q(S)}\\
 &= \lim_{m\to \infty}\|M^{m,1}\|^p_{\widetilde S_q^p},
\end{align*}
and the last expression is bounded due to the fact that $M^{m,1}$ is a Cauchy sequence in $\widetilde S_q^p$. By the same reasoning $\widetilde M^2\in \widetilde D_{q, q}^p$ and $\widetilde M^3 \in \widetilde D_{p, q}^p$, so $M \in  \mathcal A_{p,q}$. This completes the proof.
\end{proof}

Theorem~\ref{thm:acessjumpsL^q} and Lemma~\ref{lemma:intcontcontintdisdis} yield the following sharp estimates.
\begin{corollary}\label{thm:mainforaccesjumps}
 Let $1<p,q<\infty$, $M:\mathbb R_+ \times \Omega \to H$ be a purely discontinuous \mbox{$L^p$-mar}\-tin\-gale with accessible jumps, $X = L^q(S)$, $\Phi:\mathbb R_+ \times \Omega \to \mathcal L(H,X)$ be elementary predictable. Then for all $t\geq 0$ one has that
 \begin{equation*}
 \Bigl( \mathbb E \sup_{0\leq s\leq t}\| (\Phi \cdot M)_{t}\|^p_{L^q(S)} \Bigr)^{\frac 1p} \eqsim_{p,q}\|(\Phi \mathbf 1_{[0,t]}) \cdot M\|_{\mathcal A_{p,q}},
 \end{equation*}
where $\mathcal A_{p,q}$ is as given in \eqref{eq:Apq}.
\end{corollary}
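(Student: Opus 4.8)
The plan is to reduce the claim to Theorem~\ref{thm:acessjumpsL^q}, applied to the $L^q(S)$-valued martingale $N:=(\Phi\mathbf 1_{[0,t]})\cdot M$. Since $N_s=(\Phi\cdot M)_{s\wedge t}$, we have $\sup_{0\leq s\leq t}\|(\Phi\cdot M)_s\|_{L^q(S)}=\sup_{s\geq 0}\|N_s\|_{L^q(S)}$, and all jumps of $N$ occur at times $\leq t$; hence $N\mathbf 1_{[0,t]}=N$ and $\|N\mathbf 1_{[0,t]}\|_{\mathcal A_{p,q}}=\|(\Phi\mathbf 1_{[0,t]})\cdot M\|_{\mathcal A_{p,q}}$. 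Thus it suffices to show $N\in\mathcal M^{\rm acc}_{p,q}$ and then to invoke \eqref{eq:mainforaccesjumps}.

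First I would check that $N$ is a genuine $L^q(S)$-valued $L^p$-martingale. As $\Phi$ is elementary predictable it takes finitely many values and is in particular bounded, while $M$ is an $L^p$-martingale, so each coordinate process $\langle M,h_k\rangle$ is a real-valued $L^p$-martingale. The explicit formula \eqref{eq:intelpred} then writes $N_s$ as a finite linear combination, with coefficients that are products of $\mathcal F$-measurable indicators and fixed vectors $x_{kmn}\in L^q(S)$, of increments of these coordinate martingales up to time $s\wedge t$; hence $N_s\in L^p(\Omega;L^q(S))$ for every $s\geq 0$ and $(N_s)_{s\geq 0}$ is an $L^p$-martingale.

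Next I would invoke Lemma~\ref{lemma:intcontcontintdisdis}: since $M$ is purely discontinuous with accessible jumps, its canonical decomposition is $M=M^a$, and the lemma identifies the canonical decomposition of $\Phi\cdot M$ as $\Phi\cdot M$ itself; thus $\Phi\cdot M$ is purely discontinuous with accessible jumps, and so is $N$, because stopping at the deterministic time $t$ leaves $[N]=[\Phi\cdot M]^{t}$ a pure jump process and confines the jumps of $N$ to a subset of those of $\Phi\cdot M$, which are exhausted by predictable stopping times with disjoint graphs. Hence $N\in\mathcal M^{\rm acc}_{p,q}$, and Theorem~\ref{thm:acessjumpsL^q} applied to $N$ at time $t$ gives
\[
\Bigl(\mathbb E\sup_{0\leq s\leq t}\|N_s\|^p_{L^q(S)}\Bigr)^{\frac1p}\eqsim_{p,q}\|N\mathbf 1_{[0,t]}\|_{\mathcal A_{p,q}},
\]
which, after the identifications recorded above, is exactly the asserted equivalence.

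The only step carrying content beyond routine bookkeeping is the appeal to Lemma~\ref{lemma:intcontcontintdisdis} — namely, that an elementary predictable integrand against a purely discontinuous local martingale with accessible jumps produces a martingale of the same type. I expect this to be the main obstacle, which is also why the corollary is stated immediately after that lemma.
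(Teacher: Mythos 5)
Your proposal is correct and follows essentially the same route as the paper, which derives this corollary directly from Lemma~\ref{lemma:intcontcontintdisdis}(iii) (showing $\Phi\cdot M$ inherits the type ``purely discontinuous with accessible jumps'') and Theorem~\ref{thm:acessjumpsL^q}. The paper gives no further proof beyond citing those two results, and your reduction to the stopped martingale $N=(\Phi\mathbf{1}_{[0,t]})\cdot M$ together with the routine verification that $N$ is an $L^p$-martingale merely fills in the bookkeeping that the authors leave implicit.
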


\subsection{Quasi-left continuous purely discontinuous martingales}
\label{sec:ItoPDQLC}

We now turn to estimates for the stochastic integral $\Phi\cdot M$ in the case that $M$ is a purely discontinuous quasi-left continuous local martingale. We will first show in Lemma~\ref{lemma:whyPhijisint} that one can (essentially) represent $\Phi\cdot M$ as a stochastic integral $\Phi_H\star \bar{\mu}^M$, where $\bar{\mu}^M$ is the compensated version of the jump measure $\mu^M$ of $M$. Afterwards, in Theorem~\ref{thm:mainintranmeas} we prove sharp bounds for stochastic integrals of the form $f\star \bar{\mu}$, where $\mu$ is any integer-valued random measure with a compensator that is non-atomic in time. By combining these two observations, we immediately find sharp bounds for $\Phi\cdot M$.

\subsubsection{Facts on random measures}
 
Let us start by recalling some necessary definitions and facts concerning random measures. Let $(J, \mathcal J)$ be a measurable space. Then a family $\mu = \{\mu(\omega; \ud t, \ud x), \omega \in \Omega\}$ of nonnegative measures on $(\mathbb R_+ \times J; \mathcal B(\mathbb R_+)\otimes \mathcal J)$ is called a {\it random measure}. A random measure $\mu$ is called {\it integer-valued} if it takes values in $\mathbb N\cup\{\infty\}$, i.e.\ for each $A \in \mathcal B(\mathbb R_+)\otimes \mathcal F\otimes \mathcal J$ one has that $\mu(A) \in \mathbb N\cup\{\infty\}$ a.s., and if $\mu(\{t\}\times J)\in \{0,1\}$ a.s.\ for all $t\geq 0$. We say that $\mu$ is \emph{non-atomic in time} if $\mu(\{t\}\times J) = 0$ a.s.\ for all $t\geq 0$.  

Recall that $\mathcal P$ and $\mathcal O$ denote the predictable and optional $\si$-algebra on $\R_+\ti \Om$ and $\widetilde{\mathcal P}= \mathcal P \otimes \mathcal J$ and $\widetilde {\mathcal O}:=\mathcal O \otimes \mathcal J$ are the induced $\sigma$-algebras on $\widetilde {\Omega} = \mathbb R_+ \times \Omega \times J$. A~process $F:\mathbb R_+\times \Omega \to \mathbb R$ is called {\it optional} if it is $\mathcal O$-measurable. A random measure $\mu$ is called {\it optional} (resp. {\it predictable}) if for any $\widetilde{\mathcal O}$-measurable (resp. $\widetilde{\mathcal P}$-measurable) nonnegative $F:\mathbb R_+ \times \Omega \times J \to \mathbb R_+$ the stochastic integral 
\[
 (F\star \mu)_t(\omega) := \int_{\mathbb R_+\times J}\mathbf 1_{[0,t]}(s)F(s,\omega,x) \mu(\omega;\ud s, \ud x),\;\;\; t\geq 0, \ \omega \in \Omega,
\]
as a function from $\mathbb R_+ \times \Omega$ to $\overline{\mathbb R}_+$ is optional (resp.\ predictable).  Let $X$ be a Banach space. 

Then we can extend stochastic integration to \mbox{$X$-valued} processes in the following way. Let $F:\mathbb R_+ \times \Omega \times J  \to X$, $\mu$ be a random measure. The integral
\[
 (F\star \mu)_t := \int_{\mathbb R_+ \times J} F(s,\cdot, x)\mathbf 1_{[0,t]}(s)\mu(\cdot; \ud s, \ud x),\;\;\; t\geq 0,
\]
is well-defined and optional (resp.\ predictable) if $\mu$ is optional (resp.\ predictable), $F$ is $\widetilde {\mathcal O}$-strongly-measurable (resp.\ $\widetilde {\mathcal P}$-strongly-measurable), and $(\|F\|\star \mu)_{\infty}$ is a.s.\ bounded.

A random measure $\mu$ is called $\widetilde{\mathcal P}$-$\sigma$-finite if there exists an increasing sequence of sets $(A_n)_{n\geq 1}\subset \widetilde{\mathcal P}$ such that $\int_{\mathbb R_+ \times J} \mathbf 1_{A_n}(s,\omega,x)\mu(\omega; \ud s, \ud x)$ is finite a.s.\ and $\cup_n A_n = \mathbb R_+ \times \Omega \times J$. According to \cite[Theorem II.1.8]{JS} every $\widetilde{\mathcal P}$-$\sigma$-finite optional random measure $\mu$ has a {\it compensator}: a~unique $\widetilde{\mathcal P}$-$\sigma$-finite predictable random measure $\nu$ such that $\mathbb E (W \star \mu)_{\infty} = \mathbb E (W \star \nu)_{\infty}$ for each $\widetilde{\mathcal P}$-measurable real-valued nonnegative $W$. We refer the reader to \cite[Chapter II.1]{JS} for more details on random measures. For any optional $\widetilde{\mathcal P}$-$\sigma$-finite measure $\mu$ we define the associated compensated random measure by $\bar{\mu} = \mu -\nu$. 

For each $\widetilde{\mathcal P}$-strongly-measurable $F:\mathbb R_+ \times \Omega \times J \to X$ such that $\mathbb E (\|F\|\star \mu)_{\infty}< \infty$ (or, equivalently, $\mathbb E (\|F\|\star \nu)_{\infty}<\infty$, see the definition of a compensator above) we can define a process $F\star \bar{\mu}$ by $F \star \mu - F \star \nu$. The reader should be warned that in the literature $F \star \bar{\mu}$ is often used to denote the integral of $F$ over the whole $\mathbb R_+$ (i.e.~$(F \star \bar{\mu})_{\infty}$ in our notation). 

We will also need the following lemma.

\begin{lemma}\label{lemma:restmu_1tomu_2}
Let $A \in \widetilde{\mathcal P}$, $\mu_1$ be a $\widetilde {\mathcal P}$-$\sigma$-finite random measure with a compensator $\nu_1$. Then $\mu_2 = \mu_1 \mathbf 1_{A}$ is a $\widetilde {\mathcal P}$-$\sigma$-finite random measure and $\nu_2 = \nu_1 \mathbf 1_{A}$ is a compensator for $\mu_2$.  
\end{lemma}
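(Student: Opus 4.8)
The plan is to verify directly that $\nu_2 = \nu_1\mathbf 1_A$ satisfies the three properties that characterize the compensator of $\mu_2$ — predictability, $\widetilde{\mathcal P}$-$\sigma$-finiteness, and the integral identity — and then invoke the uniqueness part of \cite[Theorem II.1.8]{JS}. The whole argument rests on one elementary observation: since $A \in \widetilde{\mathcal P} \subset \widetilde{\mathcal O}$, for any nonnegative $\widetilde{\mathcal P}$-measurable (resp.\ $\widetilde{\mathcal O}$-measurable) function $W:\mathbb R_+\times\Omega\times J\to\overline{\mathbb R}_+$ the product $W\mathbf 1_A$ is again nonnegative and $\widetilde{\mathcal P}$-measurable (resp.\ $\widetilde{\mathcal O}$-measurable), and moreover $(W\star\mu_2)_t = ((W\mathbf 1_A)\star\mu_1)_t$ and $(W\star\nu_2)_t = ((W\mathbf 1_A)\star\nu_1)_t$ for all $t\geq 0$. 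From this it is immediate that $\mu_2$ is optional and $\nu_2$ is predictable, since $(W\mathbf 1_A)\star\mu_1$ is optional and $(W\mathbf 1_A)\star\nu_1$ is predictable by the corresponding properties of $\mu_1$ and $\nu_1$.

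First I would check $\widetilde{\mathcal P}$-$\sigma$-finiteness of $\mu_2$ and $\nu_2$. Let $(A_n)_{n\geq 1}\subset\widetilde{\mathcal P}$ be an increasing sequence with $\cup_n A_n = \mathbb R_+\times\Omega\times J$ and $(\mathbf 1_{A_n}\star\mu_1)_\infty<\infty$ a.s.; then $(\mathbf 1_{A_n}\star\mu_2)_\infty = (\mathbf 1_{A_n\cap A}\star\mu_1)_\infty \leq (\mathbf 1_{A_n}\star\mu_1)_\infty < \infty$ a.s., so the same sequence witnesses $\widetilde{\mathcal P}$-$\sigma$-finiteness of $\mu_2$. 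The identical estimate applied to an exhausting sequence for $\nu_1$ shows that $\nu_2$ is $\widetilde{\mathcal P}$-$\sigma$-finite as well.

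It remains to verify the compensation identity. For a nonnegative $\widetilde{\mathcal P}$-measurable $W$, the function $W\mathbf 1_A$ is nonnegative and $\widetilde{\mathcal P}$-measurable, so the defining property of the compensator $\nu_1$ of $\mu_1$ yields
\begin{equation*}
\mathbb E(W\star\mu_2)_\infty = \mathbb E\bigl((W\mathbf 1_A)\star\mu_1\bigr)_\infty = \mathbb E\bigl((W\mathbf 1_A)\star\nu_1\bigr)_\infty = \mathbb E(W\star\nu_2)_\infty .
\end{equation*}
Thus $\nu_2$ is a $\widetilde{\mathcal P}$-$\sigma$-finite predictable random measure with $\mathbb E(W\star\mu_2)_\infty = \mathbb E(W\star\nu_2)_\infty$ for every nonnegative $\widetilde{\mathcal P}$-measurable $W$, and since $\mu_2$ is also optional and $\widetilde{\mathcal P}$-$\sigma$-finite, the uniqueness statement in \cite[Theorem II.1.8]{JS} identifies $\nu_2$ as the compensator of $\mu_2$. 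There is essentially no serious obstacle here; the only point requiring minimal care is the measurability bookkeeping, namely that multiplication by $\mathbf 1_A$ preserves $\widetilde{\mathcal P}$-measurability precisely because $A$ is predictable — were $A$ merely optional, $\nu_1\mathbf 1_A$ need not be predictable and the conclusion would fail.
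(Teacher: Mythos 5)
Your proposal is correct and takes essentially the same approach as the paper: the key observation in both is that $F\star(\mu\mathbf 1_A) = (F\mathbf 1_A)\star\mu$, from which optionality of $\mu_2$, $\sigma$-finiteness, and the compensation identity all follow directly. Your version is marginally cleaner in that you work directly with nonnegative $\widetilde{\mathcal P}$-measurable $W$ (where both sides of the compensation identity are automatically well-defined in $[0,\infty]$) rather than routing through simple functions with an integrability caveat, and you explicitly verify predictability of $\nu_2$, which the paper leaves implicit.
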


\begin{proof}
$\mu_2$ is $\widetilde {\mathcal P}$-$\sigma$-finite since $\mu_2 \leq \mu_1$ a.s. Moreover, $\mu_2$ is optional. Indeed, let $F:\mathbb R_+\times \Omega \times J\to \mathbb R_+$ be $\widetilde{\mathcal O}$-measurable. Then 
 $$
 F \star \mu_2 = F \star (\mu_1 \mathbf 1_{A}) = (F\mathbf 1_{A})\star \mu_1,
 $$
 and the last process is obviously optional.
 
Now let us show that $\nu_2 = \nu_1 \mathbf 1_{A}$. Let $F:\mathbb R_+\times \Omega \times J\to \mathbb R$ be simple \mbox{$\widetilde {\mathcal P}$-mea}\-su\-rable. Since $\mu_1$ is $\widetilde {\mathcal P}$-$\sigma$-finite, so are $\nu_1,\mu_1,\nu_2$. Hence, we can assume without loss of generality that $F \star \mu_1$ exists and is integrable. Then $F\star \mu_2 = F \star (\mu_1 \mathbf 1_{A}) = (F\mathbf 1_{A})\star \mu_1$ exists and is integrable. Moreover, 
$$\E(F\star \mu_2)_{\infty} = \E((F\mathbf 1_{A})\star \mu_1)_{\infty} = \E((F\mathbf 1_{A})\star \nu_1)_{\infty} = \E(F\star \nu_2)_{\infty},$$ 
so $\nu_2$ is a compensator of $\mu_2$.
\end{proof}

\subsubsection{Representation of the stochastic integral}

To any purely discontinuous local martingale $M$ we can associate an integer-valued random measure $\mu^M$ on $\mathcal B(\mathbb R_+)\otimes \mathcal B(H)$ by setting 
\begin{equation*}
 \mu^M(\omega; B\times A) := \sum_{u\in B} \mathbf 1_{ A\setminus\{0\}}(\Delta M_u(\omega) ),\;\;\; \omega \in \Omega,
\end{equation*}
for each $B \in \mathcal B(\mathbb R_+)$, $A \in \mathcal B(H)$. That is, $ \mu^M(\omega; B\times A)$ counts the number of jumps within the time set $B$ with size in $A$ on the trajectory belonging to the sample point~$\omega$.

Recall that a process $M :\mathbb R_+ \times \Omega \to H$ is called quasi-left continuous if $\Delta M_{\tau} = 0$ a.s.\ on the set $\{\tau<\infty\}$ for each predictable stopping time $\tau$ (see \cite[Chapter I.2]{JS} for more information). If $M:\mathbb R_+ \times \Omega \to H$ is a quasi-left continuous local martingale, then $\mu^M$ is $\widetilde {\mathcal P}$-$\sigma$-finite and there exists a compensator~$\nu^M$ (see e.g.\ \cite[Proposition II.1.16]{JS} and \cite[Theorem 25.22]{Kal}). If $M$ is, in addition, purely discontinuous, then the following characterization holds thanks to \cite[Corollary~II.1.19]{JS}.
\begin{lemma}\label{lemma:corII.1.19}
 Let $H$ be a separable Hilbert space and $M:\mathbb R_+ \times \Omega \to H$ be a~purely discontinuous local martingale. Let $\mu^M$ and $\nu^M$ be the associated integer-valued random measure and its compensator. Then $M$ is quasi-left continuous if and only if $\nu^M$ is non-atomic in time.
\end{lemma}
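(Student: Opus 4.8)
The statement to prove is Lemma~\ref{lemma:corII.1.19}: for a purely discontinuous local martingale $M$ on a separable Hilbert space $H$, quasi-left continuity of $M$ is equivalent to $\nu^M$ being non-atomic in time.

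\begin{proof}[Proof plan]
The plan is to reduce everything to the scalar characterization in \cite[Corollary~II.1.19]{JS}, which (combined with \cite[Theorem~25.22]{Kal}) states exactly this equivalence for real-valued purely discontinuous local martingales, by tracking how the random measure $\mu^M$ and its compensator $\nu^M$ behave under passage to coordinates. Fix an orthonormal basis $(h_n)_{n\geq 1}$ of $H$ and put $M^n := \langle M, h_n\rangle$, which by Proposition~\ref{thm:purdiscorthtoanycont1} (or rather by the coordinate formula \eqref{eq:quadvarcoorwise} together with Lemma~\ref{lem:YoeMayH}) is a real-valued purely discontinuous local martingale. The key elementary observation is that, almost surely, $\Delta M_t = 0$ if and only if $\Delta M^n_t = 0$ for all $n$, since $\|\Delta M_t\|^2 = \sum_n |\Delta M^n_t|^2$. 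Consequently $\mu^M(\{t\}\times H) = \sup_n \mu^{M^n}(\{t\}\times \R)$ in the sense that $\mu^M(\{t\}\times H)=1$ precisely when $\mu^{M^n}(\{t\}\times\R)=1$ for some $n$.

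The first main step is the easy direction: suppose $\nu^M$ is non-atomic in time but $M$ is not quasi-left continuous, so there is a predictable stopping time $\tau$ with $\bP(\Delta M_\tau\neq 0, \tau<\infty)>0$. Then $\bP(\Delta M^n_\tau\neq 0,\tau<\infty)>0$ for some $n$, so $M^n$ is not quasi-left continuous, and by the scalar version its compensator $\nu^{M^n}$ has an atom in time, i.e.\ $\bP(\nu^{M^n}(\{\sigma\}\times\R)>0)>0$ for some predictable time $\sigma$. Now I relate $\nu^{M^n}$ to $\nu^M$: the point is that the map $y\mapsto \langle y, h_n\rangle$ pushes $\mu^M$ forward to $\mu^{M^n}$ (after removing the possible zero resulting from $\Delta M_t\neq 0$ but $\langle \Delta M_t, h_n\rangle = 0$), and by uniqueness of compensators it pushes $\nu^M$ forward to $\nu^{M^n}$ as well; hence an atom in time of $\nu^{M^n}$ forces an atom in time of $\nu^M$, a contradiction. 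This requires a small lemma: if $g:H\to\R$ is measurable and $\mu$ is an optional $\widetilde{\mathcal P}$-$\sigma$-finite random measure with compensator $\nu$, then $g_*\mu$ has compensator $g_*\nu$; this follows directly from the defining identity $\E(W\star(g_*\mu))_\infty = \E((W\circ g)\star\mu)_\infty = \E((W\circ g)\star\nu)_\infty = \E(W\star(g_*\nu))_\infty$ for nonnegative predictable $W$, together with $\widetilde{\mathcal P}$-$\sigma$-finiteness of $g_*\mu$.

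The second main step is the converse: suppose $M$ is quasi-left continuous; I must show $\nu^M(\{t\}\times H)=0$ a.s.\ for every $t\geq 0$. Here I would argue directly. Since $M$ is quasi-left continuous, each $M^n$ is too, so by the scalar result each $\nu^{M^n}$ is non-atomic in time. The process $t\mapsto \nu^M(\{t\}\times H)$ is a nonnegative predictable process (this uses $\widetilde{\mathcal P}$-$\sigma$-finiteness and Fubini-type arguments on random measures, cf.\ \cite[Chapter~II.1]{JS}); it suffices to show it vanishes. If it did not, there would be a predictable stopping time $\tau$ with $\bP(\nu^M(\{\tau\}\times H)>0,\tau<\infty)>0$, and using the partition $H\setminus\{0\} = \bigcup_n \{y: \langle y,h_n\rangle\neq 0, \langle y,h_m\rangle=0 \ \forall m<n\}$ together with countable subadditivity, some ``coordinate-type'' set would carry positive $\nu^M$-mass at time $\tau$; pushing forward by a suitable $\langle\cdot,h_n\rangle$ then produces an atom in time of $\nu^{M^n}$, contradicting the scalar case. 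Alternatively, one can invoke \cite[Proposition~II.1.16]{JS} or \cite[Theorem~25.22]{Kal}, which characterize quasi-left continuity of $M$ directly in terms of the continuity of the compensator of $\mu^M$; the Hilbert-space case follows because quasi-left continuity was \emph{defined} coordinatewise and the random measure $\mu^M$ is built from the (countably many) coordinate jumps.

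The main obstacle I anticipate is the bookkeeping around pushforwards of random measures and their compensators — in particular making precise that $g_*\nu^M$ is indeed \emph{the} compensator of $g_*\mu^M$, which hinges on verifying $\widetilde{\mathcal P}$-$\sigma$-finiteness is preserved and that the defining integral identity transfers. Everything else is either the elementary coordinate identity $\|\Delta M_t\|^2 = \sum_n|\Delta M^n_t|^2$ or a direct appeal to the cited scalar results of Jacod--Shiryaev and Kallenberg.
\end{proof}
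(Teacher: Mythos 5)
The paper gives no proof of this lemma: it simply takes the Hilbert-valued statement as immediate from \cite[Corollary~II.1.19]{JS}, the implicit observation being that the Jacod--Shiryaev argument never uses finite-dimensionality of the state space --- it is formulated entirely in terms of the jump measure $\mu^M$, its compensator $\nu^M$, and the optional/predictable calculus on $\mathbb R_+\times\Omega\times E$, all of which work verbatim when $E$ is any Polish (in particular, separable Hilbert) space. Your proposal takes a genuinely different and more self-contained route: reduce to the quoted scalar result through the coordinates $M^n=\langle M,h_n\rangle$, and relate $\mu^M$, $\nu^M$ to $\mu^{M^n}$, $\nu^{M^n}$ via pushforward under $\langle\cdot,h_n\rangle$. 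This works, and the two elementary identities you isolate are both correct. As to the pushforward issue you flag, you can sidestep the general ``pushforward of a compensator'' lemma and the $\widetilde{\mathcal P}$-$\sigma$-finiteness check entirely: $\mu^{M^n}$ is already known to be $\widetilde{\mathcal P}$-$\sigma$-finite (as the jump measure of a scalar local martingale), so $\nu^{M^n}$ exists, and the identity $\nu^{M^n}([0,t]\times A)=\nu^M\bigl([0,t]\times\{y:\langle y,h_n\rangle\in A\}\bigr)$ for Borel $A\subset\mathbb R\setminus\{0\}$ follows by plugging $\widetilde W(s,\omega,y):=W(s,\omega,\langle y,h_n\rangle)\mathbf 1_{\{\langle y,h_n\rangle\neq 0\}}$ into the defining identity $\E(W\star\mu^{M^n})_\infty=\E(\widetilde W\star\mu^M)_\infty=\E(\widetilde W\star\nu^M)_\infty$. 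One caveat to watch in your ``easy'' direction: the scalar result only gives an atom of $\nu^{M^n}$ (hence of $\nu^M$) at a possibly \emph{random} predictable time $\sigma$; this contradicts ``non-atomic in time'' only under the strong, a.s.-simultaneously-in-$t$ reading of that condition (the one actually used by Jacod--Shiryaev, and the one needed for the lemma to be an equivalence), not under a literal fixed-$t$ reading of the paper's phrase ``$\nu(\{t\}\times J)=0$ a.s.\ for all $t$''. Your argument does prove the strong version, so it goes through; just be aware that the two readings are not equivalent and that only the direction ($M$ quasi-left continuous $\Rightarrow$ $\nu^M$ non-atomic) is what the paper later needs, which is insensitive to this distinction.
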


Let us now show that $\Phi\cdot M$ can (essentially) be represented as a stochastic integral with respect to $\bar{\mu}_M$.
\begin{lemma}\label{lemma:whyPhijisint}
 Let $X$ be a Banach space, $H$ be a Hilbert space, $1\leq p<\infty$, $M:\mathbb R_+ \times \Omega \to H$ be a purely discontinuous quasi-left continuous local martingale, and $\Phi:\mathbb R_+ \times \Omega \to \mathcal L(H,X)$ be elementary predictable. Define $\Phi_H:\mathbb R_+ \times \Omega \times H \to X$ by
\[
 \Phi_H(t,\omega, h) := \Phi(t,\omega)h,\;\;\; t\geq 0, \omega \in \Omega, h\in H.
\]
Then there exists an increasing sequence $(A_n)_{n\geq 1}\in \widetilde{\mathcal P}$ such that $\cup_n A_n =\mathbb R_+ \times \Omega \times J$, $\Phi_H \mathbf 1_{A_n}$ is integrable with respect to $\bar{\mu}^M$ for each $n\geq 1$, and 
\begin{itemize}
\item[(i)] if $\Phi \cdot M \in L^p(\Omega; X)$ then $(\Phi_H\mathbf 1_{A_n}) \star \bar{\mu}^M \to \Phi \cdot M$ in $L^p(\Omega; X)$;
\item[(ii)] if $\Phi \cdot M \not\in L^p(\Omega; X)$ then $\|(\Phi_H\mathbf 1_{A_n}) \star \bar{\mu}^M\|_{L^p(\Omega;X)}\to\infty$ for $n\to \infty$.
\end{itemize}
\end{lemma}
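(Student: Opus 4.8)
The plan is to realise $N:=\Phi\cdot M$ as the compensated sum of its own jumps, localised along a suitable exhausting sequence of predictable sets, the jumps being exactly $\Phi_H(t,\Delta M_t)$. \emph{Step 1: jumps of $\Phi\cdot M$.} First I would verify that $N$ is a purely discontinuous, quasi-left continuous $X$-valued local martingale with $N_0=0$ and $\Delta N_t=\Phi(t)\Delta M_t=\Phi_H(t,\Delta M_t)$ for all $t\geq 0$ a.s. These properties are inherited from $M$: each $\langle M,h_k\rangle$ is a real purely discontinuous quasi-left continuous local martingale (by \eqref{eq:quadvarcoorwise} and the definition of quasi-left continuity), and stopping at the deterministic times $t_{n-1},t_n$, multiplying by the bounded $\mathcal F_{t_{n-1}}$-measurable $\mathbf 1_{A_{mn}}$, and taking finite sums all preserve these (Proposition~\ref{thm:purdiscorthtoanycont1} for pure discontinuity); the jump formula is then read off from \eqref{eq:intelpred}, using that $\Delta M_{t_n}=0$ a.s.\ since $M$ is quasi-left continuous and $t_n$ is a predictable stopping time. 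In particular $N$ is constant after $t_N$, so $N_\infty:=N_{t_N}$ is well defined and ``$\Phi\cdot M\in L^p(\Omega;X)$'' means $N_\infty\in L^p(\Omega;X)$.

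\emph{Step 2: the exhausting sequence.} Put $C_\Phi:=\sup_{t,\omega}\|\Phi(t,\omega)\|_{\mathcal L(H,X)}<\infty$ and $\sigma_n:=\inf\{t:[M]_t\geq n\}\wedge n$, which is a stopping time with $\sigma_n\uparrow\infty$ a.s.\ and $[0,\sigma_n)$ predictable, and set
\[
A_n:=\bigl\{(t,\omega,h)\in\widetilde\Omega:\ \tfrac1n\leq\|h\|_H\leq n,\ t<\sigma_n(\omega)\bigr\}\cup\{\Phi_H=0\}\in\widetilde{\mathcal P}.
\]
These increase to $\widetilde\Omega$, and $\Psi^n:=\Phi_H\mathbf 1_{A_n}$ increases pointwise to $\Phi_H$. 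Since $M$ is quasi-left continuous, $\mu^M$ is $\widetilde{\mathcal P}$-$\sigma$-finite with compensator $\nu^M$ non-atomic in time (Lemma~\ref{lemma:corII.1.19}), and using $\#\{u<\sigma_n:\|\Delta M_u\|\geq\tfrac1n\}\leq n^2[M]_{\sigma_n-}\leq n^3$ together with $\|\Phi(u)\Delta M_u\|\leq C_\Phi n$ on $A_n$ one gets the deterministic bound $\mathbb E(\|\Psi^n\|\star\nu^M)_\infty=\mathbb E(\|\Psi^n\|\star\mu^M)_\infty\leq C_\Phi n^4<\infty$. Hence each $\Psi^n$ is $\bar{\mu}^M$-integrable and $N^n:=\Psi^n\star\bar{\mu}^M$ is a well-defined $L^1$-martingale.

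\emph{Step 3: the remainder vanishes.} Since $\nu^M$ is non-atomic in time, $\Psi^n\star\nu^M$ is continuous, so $\Delta N^n_t=\Psi^n(t,\Delta M_t)$ and $R^n:=N-N^n$ is a purely discontinuous (quasi-left continuous) local martingale with $\Delta R^n_t=\Phi_H(t,\Delta M_t)\mathbf 1_{A_n^c}(t,\Delta M_t)$; along each path these jumps decrease to $0$ as $n\to\infty$ (for large $n$ they reduce to the $\Phi$-images of the jumps of $M$ of size $<\tfrac1n$, whose square-sum is a tail of $\sum_u\|\Delta M_u\|^2\leq[M]_{t_N}<\infty$ and hence vanishes). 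Consequently, for each $x^*\in X^*$ the real purely discontinuous local martingale $\langle R^n,x^*\rangle$ satisfies $[\langle R^n,x^*\rangle]_\infty=\sum_u|\langle\Phi_H(u,\Delta M_u),x^*\rangle|^2\mathbf 1_{A_n^c}(u,\Delta M_u)\downarrow 0$ a.s., and a standard localisation (making $\langle R^n,x^*\rangle$ an $L^2$-martingale off an event of arbitrarily small probability) gives $\langle R^n_\infty,x^*\rangle\to 0$ in probability. Thus $N^n_\infty\to N_\infty$ weakly in probability, which already forces any stronger limit of a subsequence of $(N^n_\infty)$ to equal $N_\infty$.

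\emph{Step 4: the $L^p$ dichotomy, and the main obstacle.} This is the heart of the matter. I would carry it out for $X=L^q(S)$, the case in which the lemma is applied; the general Banach-space case reduces, via the elementary form of $\Phi$, to a finite-dimensional integrator $H$ and then to the identification of $\Phi\cdot M$ with the compensated-random-measure integral $\Phi_H\star\bar{\mu}^M$ in the sense of \cite[Ch.~II.1]{JS}, for which $N^n\to N$ in probability uniformly on compacts. By Theorem~\ref{thm:mainintranmeasIntro}, $\|N^n_\infty\|_{L^p(\Omega;L^q(S))}\eqsim_{p,q}\|\Psi^n\|_{\mathcal I_{p,q}}$ (both sides allowed to be $+\infty$), and as each constituent norm of $\mathcal I_{p,q}$ is monotone under $|F|\uparrow$ and $|\Psi^n|\uparrow|\Phi_H|$, monotone convergence gives $\|\Psi^n\|_{\mathcal I_{p,q}}\uparrow\|\Phi_H\|_{\mathcal I_{p,q}}\in(0,\infty]$. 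If $\Phi\cdot M\in L^p$, then $R^n$ is weakly differentially subordinate to $N$ (indeed $|\Delta\langle R^n,x^*\rangle_t|\leq|\Delta\langle N,x^*\rangle_t|$ and both are purely discontinuous), so the weak differential subordination $L^p$-estimate in $L^q(S)$ (\cite{Y17FourUMD}) yields $\sup_n\|R^n_\infty\|_{L^p}\lesssim_{p,q}\|N_\infty\|_{L^p}<\infty$, hence $\sup_n\|N^n_\infty\|_{L^p}<\infty$ and $\|\Phi_H\|_{\mathcal I_{p,q}}<\infty$; then $|\Phi_H-\Psi^n|=|\Phi_H|\mathbf 1_{A_n^c}\downarrow 0$ is dominated by $|\Phi_H|\in\mathcal I_{p,q}$, so $\Psi^n\to\Phi_H$ in $\mathcal I_{p,q}$, $(N^n_\infty)$ is $L^p$-Cauchy, and its limit equals $N_\infty$ by Step 3 — this is (i). Conversely, if $\Phi\cdot M\notin L^p$ then $\|\Phi_H\|_{\mathcal I_{p,q}}=\infty$ (otherwise the preceding chain, read in reverse, would place $N_\infty$ in $L^p$), so $\|N^n_\infty\|_{L^p}\eqsim_{p,q}\|\Psi^n\|_{\mathcal I_{p,q}}\to\infty$ — this is (ii). The hard part throughout is precisely this upgrade from ``the jumps of $R^n$ disappear'' to the norm dichotomy, for which the sharp equivalence of Theorem~\ref{thm:mainintranmeasIntro} and the weak differential subordination estimate of \cite{Y17FourUMD} (or, for general $X$, the compensated-random-measure machinery of \cite{JS}) are the essential inputs; a secondary point requiring care is the bookkeeping in Step 2 that keeps each $\Psi^n$ genuinely $\bar{\mu}^M$-integrable while retaining $\Psi^n\uparrow\Phi_H$.
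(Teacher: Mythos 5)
Your Steps~1--3 are fine and the construction of the exhausting predictable sets is in the same spirit as the paper's (the paper uses the $l$-th jump time with size in $[1/k,k]$ and a closed stochastic interval; your construction via $[M]$-based stopping times works modulo a measurability point noted below). The real gap is in Step~4: you correctly identify the $L^p$ dichotomy as the crux, but the tools you propose are both far heavier than needed and, more seriously, do not prove the statement. The lemma is asserted for an \emph{arbitrary} Banach space $X$, whereas Theorem~\ref{thm:mainintranmeasIntro} and the weak differential subordination estimate of \cite{Y17FourUMD} are $L^q(S)$-specific; you acknowledge this and gesture at a reduction ``via the elementary form of $\Phi$'', but that reduction, carried out, \emph{is} the entire proof of the paper and makes the heavy machinery unnecessary. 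Concretely: since $\Phi$ is elementary predictable it takes values in a fixed finite-dimensional subspace $X_0\subset X$, on which the given norm is equivalent to a Euclidean norm $\tnorm{\cdot}$. The Burkholder--Davis--Gundy inequality for the (finite-dimensional, hence Hilbertian) martingale $N-N^n$ then gives
$\mathbb E\tnorm{N_\infty-N^n_\infty}^p\eqsim_p\mathbb E[N-N^n]_\infty^{p/2}$, and $[N-N^n]_\infty=\sum_t\tnorm{\Delta N_t}^2\mathbf 1_{A_n^c}(t,\Delta M_t)$; monotone convergence handles the divergent case and dominated convergence (domination by $[N]_\infty^{p/2}$) the convergent case. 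That is the whole argument. Your route, by contrast, also has a genuine gap: the claim that $\|\Psi^n\|_{\mathcal I_{p,q}}\uparrow\|\Phi_H\|_{\mathcal I_{p,q}}$ follows ``from monotone convergence of the constituent norms'' is not correct for the cases of $\mathcal I_{p,q}$ involving sums of spaces, where the norm is an infimum over decompositions; one gets $\|\Psi^n\|\leq\|\Psi^{n+1}\|\leq\|\Phi_H\|$, but not that the increasing limit equals $\|\Phi_H\|$ (you would need a separate argument, e.g.\ lower semicontinuity of the $\mathcal D$-norms along the specific sequence, which again becomes unnecessary once you use the finite-dimensional BDG shortcut).

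A secondary, fixable point: your $A_n$ involves the set $\{t<\sigma_n\}$, i.e.\ the half-open stochastic interval $[0,\sigma_n)$. For a general stopping time $\sigma_n$ (and $\inf\{t:[M]_t\geq n\}$ need not be predictable when $[M]$ is a pure-jump process), $[0,\sigma_n)$ is optional but not necessarily predictable, so $A_n\in\widetilde{\mathcal P}$ is not guaranteed. The paper avoids this by using closed intervals $[0,\tau]\times B_k$, which are always predictable; you should do the same, adjusting the jump count by at most one to retain integrability.
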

\begin{proof}
For each $k, l\geq 1$ we define a stopping time $\tau_{k,l}$ by
\[
 \tau_{k,l} = \inf\{t\in \mathbb R_+: \#\{s\in [0,t] : \|\Delta M_s\|\in [1/k, k]\} = l\}.
\]
Since $M$ has c\`adl\`ag trajectories, $\tau_{k,l}$ is a.s.\ well-defined and takes its values in $[0,\infty]$. Moreover, $\tau_{k,l}\to \infty$ for each $k\geq 1$ a.s.\ as $l\to \infty$. 

Set $B_k = \{h\in H:\|h\|\in [1/k,k]\}$. For each $k,l\geq 1$ define $A_{k,l} = \mathbf 1_{[0,\tau_{k.l}]\times B_k}\subset \widetilde{\mathcal P}$. Then $\Phi_H \mathbf 1_{A_{k,l}}$ is integrable with respect to $\mu^M$. Indeed,~a.s.
\[
 \bigl((\Phi_H \mathbf 1_{A_{k,l}}) \star \mu^M\bigr)_{\infty} \leq \sup\|\Phi\| k \bigl(\mathbf 1_{A_{k,l}}\star \mu^M\bigr)_{\infty} \leq \sup\|\Phi\| kl.
\]

Since $\tau_{k,l}\to \infty$ for each $k\geq 1$ a.s.\ as $l\to \infty$, we can find a subsequence $(\tau_{k_n,l_n})_{n\geq 1}$ such that $k_n\geq n$ for each $n\geq 1$ and $\inf_{m\geq n} \tau_{k_m, l_m}\to \infty$ a.s.\ as $n\to \infty$. Let $\tau_n = \inf_{m\geq n} \tau_{k_m, l_m}$ and define $(A_n)_{n\geq 1}\subset \widetilde{\mathcal P}$ by
\[
 A_n = \mathbf 1_{[0,\tau_n]\times B_n}.
\]
Then $\cup_n A_n = \mathbb R_+ \times \Omega \times J$ and $\Phi_H\mathbf 1_{A_n}$ is integrable with respect to $\bar{\mu}^M$ for all $n\geq 1$.

Now prove that $(\Phi_H\mathbf 1_{A_n}) \star \bar{\mu}^M \to \Phi \cdot M$ in $L^p(\Omega; X)$.
Since $\Phi$ is simple, it takes its values in a finite dimensional subspace of $X$, so we can endow $X$ with a Euclidean norm $\tnorm{\cdot}$. First suppose that $(\Phi \cdot M)_{\infty} \notin L^p(\Omega; X)$. By the Burkholder-Davis-Gundy inequality this is equivalent to the fact that $[\Phi\cdot M]_{\infty}^{\frac 12}\notin L^p(\Omega; X)$. Notice that
\begin{align*}
  \mathbb E\tnorm{(\Phi_H \mathbf 1_{A_n})\star \bar{\mu}^M)_{\infty}}^p & \eqsim_p \mathbb E\bigl[(\Phi_H\mathbf 1_{A_n})\star \bar{\mu}^M\bigr]^{\frac p2}_{\infty} \\
 & = \mathbb E \Bigl(\sum_{t\in [0, \tau_n]}\tnorm{\Delta(\Phi \cdot M)_t}^2 \mathbf 1_{\|\Delta M_t\| \in [1/n,n]}\Bigr)^{\frac p2},
\end{align*}
and the last expression monotonically goes to infinity since $\tau_n \to \infty$ a.s.\ and
\[
 \mathbb E \Bigl(\sum_{t\geq 0}\|\Delta(\Phi \cdot M)_t\|^2 \Bigr)^{\frac p2} = \mathbb E [\Phi\cdot M]_{\infty}^{\frac p2} = \infty.
\]
So if $(\Phi \cdot M)_{\infty} \notin L^p(\Omega; X)$, then $\bigl\|\bigl((\Phi_H \mathbf 1_{A_n})\star \bar{\mu}^M\bigr)_{\infty}\bigr\|_{L^p(\Omega; X)} \to \infty$ as $n\to \infty$.

Now assume that $(\Phi \cdot M)_{\infty} \in L^p(\Omega; X)$. Then
\begin{align*}
& \mathbb E \tnorm{(\Phi \cdot M)_{\infty} - ( (\Phi_H\mathbf 1_{A_n}) \star \bar{\mu}^M)_{\infty}}^p \eqsim_p \mathbb E [\Phi \cdot M - (\Phi_H\mathbf 1_{A_n}) \star \bar{\mu}^M]_{\infty}^{\frac p2}\\
 &\qquad =\mathbb E \Bigl( \sum_{t\in [0, \tau_n]}\tnorm{\Delta(\Phi \cdot M)_t}^2 \mathbf 1_{\|\Delta M_t\| \notin [1/n,n]}\\ 
 &\qquad \qquad \qquad +\sum_{t\in (\tau_n,\infty)}\tnorm{\Delta(\Phi \cdot M)_t}^2 \Bigr)^{\frac p2} \to 0,\;\; n\to \infty
\end{align*}
by the dominated convergence theorem.
\end{proof} 
By Lemmas~\ref{lemma:corII.1.19} and \ref{lemma:whyPhijisint} it now suffices to obtain sharp bounds for the stochastic integral $(F\star \bar{\mu})_{\infty}$, where $\mu$ is any optional integer-valued random measure whose compensator $\nu$ is non-atomic in time.

\subsection{Integrals with respect to random measures}

Throughout this section, \emph{$\mu$~denotes an optional integer-valued random measure whose compensator $\nu$ is non-atomic in time}, i.e., $\nu(\{t\}\times J) = 0$ a.s.\ for all $t\geq 0$. 
The following result was first shown in \cite[Theorem 1]{Nov75}.
\begin{lemma}[A.A.\ Novikov]\label{lemma:Nov}
 Let $f:\mathbb R_+ \times \Omega \times J \to \mathbb R$ be $\widetilde{\mathcal P}$-measurable. Then
 \begin{align*}
  \mathbb E | f\star \bar{\mu}|^p &\lesssim_{p} \mathbb E |f|^p\star \nu \text{ if }\; 1\leq p\leq 2,\\
  \mathbb E |f\star \bar{\mu}|^p &\lesssim_{p} (\mathbb E |f|^2 \star \nu)^{\frac p2}+ \mathbb E |f|^p\star \nu \text{ if }\;  p\geq 2.
 \end{align*}
\end{lemma}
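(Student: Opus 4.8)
The plan is to reduce everything, via the Burkholder--Davis--Gundy (BDG) inequality, to elementary manipulations of the jump measure of $X:=f\star\bar\mu$. If the right-hand side is infinite there is nothing to prove, so assume $\mathbb E(|f|^p\star\nu)_\infty<\infty$. By a routine truncation---replacing $f$ by $f\mathbf 1_{A_n}$ for an increasing $\widetilde{\mathcal P}$-measurable exhaustion $(A_n)$ on which $(f\mathbf 1_{A_n})\star\bar\mu$ is a genuine $L^p$-martingale, and letting $n\to\infty$ using monotone convergence on the (manifestly $|f|$-monotone) right-hand side and Fatou on the left---one may assume $X$ is an $L^p$-martingale with $X_0=0$. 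Since $\nu$ is non-atomic in time, $f\star\nu$ is continuous, hence $\Delta X_t=\int_J f(t,\cdot,x)\,\mu(\{t\}\times\ud x)$; and since $\mu$ is integer-valued ($\mu(\{t\}\times J)\le 1$), this forces $(\Delta X_t)^2=\int_J|f(t,\cdot,x)|^2\,\mu(\{t\}\times\ud x)$. As compensated jump-measure integrals are purely discontinuous, $[X]_\infty=\sum_t(\Delta X_t)^2=(|f|^2\star\mu)_\infty$, so BDG gives $\mathbb E|X_\infty|^p\eqsim_p\mathbb E\,(|f|^2\star\mu)_\infty^{p/2}$.

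For $1\le p\le 2$ the argument is now immediate: subadditivity of $t\mapsto t^{p/2}$ on $\mathbb R_+$ yields $(|f|^2\star\mu)_\infty^{p/2}=\bigl(\sum_t(\Delta X_t)^2\bigr)^{p/2}\le\sum_t|\Delta X_t|^p=(|f|^p\star\mu)_\infty$ (again using $\mu(\{t\}\times J)\le 1$), and taking expectations together with the defining identity of the compensator, $\mathbb E(|f|^p\star\mu)_\infty=\mathbb E(|f|^p\star\nu)_\infty$, concludes this case.

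For $p\ge 2$ I would induct on the dyadic scale of $p$, with base interval $(2,4]$. Writing $|f|^2\star\mu=|f|^2\star\bar\mu+|f|^2\star\nu$ and applying the triangle inequality in $L^{p/2}(\Omega)$, with $Y:=(|f|^2)\star\bar\mu$,
\[
\mathbb E\,(|f|^2\star\mu)_\infty^{p/2}\lesssim_p\mathbb E\,|Y_\infty|^{p/2}+\mathbb E\,(|f|^2\star\nu)_\infty^{p/2},
\]
and the last term is precisely the first term on the right-hand side of the lemma. For $\mathbb E|Y_\infty|^{p/2}$, set $g:=|f|^2\ge 0$: if $p/2\le 2$, the case already proved (applied to $g$ with exponent $p/2$) bounds it by $\mathbb E(|g|^{p/2}\star\nu)_\infty=\mathbb E(|f|^p\star\nu)_\infty$; if $p/2>2$, the induction hypothesis (exponent $p/2<p$) bounds it by $\mathbb E(|f|^4\star\nu)_\infty^{p/4}+\mathbb E(|f|^p\star\nu)_\infty$. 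The new moment $\mathbb E(|f|^4\star\nu)_\infty^{p/4}$ is absorbed into the two target terms: pointwise in $\omega$, Hölder's inequality on $\nu$ with exponents $\tfrac1\theta,\tfrac1{1-\theta}$ and $\theta=\tfrac{p-4}{p-2}$ (so that $4=2\theta+p(1-\theta)$) gives $(|f|^4\star\nu)_\infty\le(|f|^2\star\nu)_\infty^{\theta}(|f|^p\star\nu)_\infty^{1-\theta}$, and Young's inequality then bounds $(|f|^4\star\nu)_\infty^{p/4}$ by a fixed multiple of $(|f|^2\star\nu)_\infty^{p/2}+(|f|^p\star\nu)_\infty$. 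Taking expectations closes the induction.

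The main subtlety is that the term $\mathbb E\,(|f|^2\star\nu)_\infty^{p/2}$ is genuinely stochastic and cannot be replaced by $\bigl(\mathbb E(|f|^2\star\nu)_\infty\bigr)^{p/2}$---this already fails for mixed Poisson measures---so the induction unavoidably produces the higher moment $\mathbb E(|f|^4\star\nu)_\infty^{p/4}$, and the only real work is the Hölder--Young absorption above; verifying that the implicit constant stays finite along the (finite) dyadic recursion is bookkeeping. A conceptually cleaner alternative, matching the time-change strategy used later in the paper, would be to time-change by the continuous increasing process $t\mapsto(|f|^2\star\nu)_t$ so as to reduce to a Poisson random measure and then invoke the scalar Rosenthal inequality of Theorem~\ref{thm:summaryRosIntro}; this bypasses the induction but relies on the (standard but technical) transformation of a quasi-left continuous integer-valued random measure into a Poisson one.
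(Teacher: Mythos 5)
The paper itself does not prove Lemma~\ref{lemma:Nov}---it is stated with a citation to Novikov's 1975 paper and used as a black box---so there is no in-paper proof to compare against. What you have written is an independent, self-contained derivation, and I believe it is correct.

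Your key reductions are all sound. After truncation, $X=f\star\bar\mu$ is a purely discontinuous $L^p$-martingale. Since $\nu$ is non-atomic in time, $f\star\nu$ is continuous, so the jumps of $X$ are exactly the jumps of $f\star\mu$; since $\mu$ is integer-valued with $\mu(\{t\}\times J)\le 1$, there is at most one atom per time slice and hence $|\Delta X_t|^r=\int_J|f(t,\cdot,x)|^r\,\mu(\{t\}\times\ud x)$ for any $r>0$, which gives both $[X]_\infty=(|f|^2\star\mu)_\infty$ and, for $p\le 2$, the subadditivity bound $(|f|^2\star\mu)_\infty^{p/2}\le(|f|^p\star\mu)_\infty$. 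Together with BDG (only the direction $\E|X_\infty|^p\lesssim_p\E[X]_\infty^{p/2}$ is used, so the claimed two-sided $\eqsim_p$ is slightly stronger than needed and not quite right at $p=1$, but harmless) and $\E(|f|^p\star\mu)=\E(|f|^p\star\nu)$, this settles $1\le p\le 2$. For $p>2$ the dyadic induction is the standard Rosenthal-type mechanism: BDG reduces the $p$-th moment of $X_\infty$ to the $(p/2)$-th moment of $|f|^2\star\mu=Y_\infty+|f|^2\star\nu$ with $Y=|f|^2\star\bar\mu$, the second summand is a target term, and the first is handled by the already-proved case or the induction hypothesis at exponent $p/2$. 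I checked your H\"older exponent $\theta=\frac{p-4}{p-2}$ (indeed $4=2\theta+p(1-\theta)$ and $\theta\in(0,1)$ for $p>4$) and the Young weights $\frac\theta2+\frac{(1-\theta)p}{4}=1$; both identities are correct, so the intermediate moment $\E(|f|^4\star\nu)^{p/4}$ is absorbed into the two target terms as claimed. The recursion has depth $O(\log p)$, so the constant stays finite. Your remark about the alternative time-change route to a Poisson random measure is also accurate and would be consistent with the paper's later use of time-change in the proof of Theorem~\ref{thm:mainintranmeas}.

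Two small points worth tightening if you wrote this up formally: (i) make the truncation/Fatou step precise by stating in which sense $(f\mathbf 1_{A_n}\star\bar\mu)_\infty\to(f\star\bar\mu)_\infty$ (e.g.\ in probability via convergence of quadratic variations, which suffices for Fatou); (ii) replace $\eqsim_p$ by $\lesssim_p$ in the BDG step, since the lower bound is neither needed nor available at $p=1$.
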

The following lemma easily follows from \cite[Theorem II.1.33]{JS} (or from \cite[p.98]{Grig71} and \cite[(6)]{Nov75} as well).
\begin{lemma}\label{lemma:seqmoments}
 Let $H$ be a Hilbert space,  $f:\mathbb R_+ \times \Omega \times J \to H$ be $\widetilde{\mathcal P}$-measurable. Then
 \begin{equation}\label{eq:seqmoments}
  \mathbb E \|f\star\bar{\mu}\|^2 =  \mathbb E \|f\|^2 \star \nu.
 \end{equation}
 Equivalently, for each $\widetilde{\mathcal P}$-measurable $f,g:\mathbb R_+ \times \Omega \times J \to H$ such that $\mathbb E \|f\|^2 \star \nu<\infty$ and $\mathbb E \|g\|^2 \star \nu<\infty$
 \begin{equation}\label{eq:seqmomentsinnprod}
  \mathbb E \langle f\star\bar{\mu},g\star\bar{\mu}\rangle=  \mathbb E \langle f, g\rangle \star \nu.
 \end{equation}
\end{lemma}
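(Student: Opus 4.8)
To prove Lemma~\ref{lemma:seqmoments}, the plan is to reduce the two displayed identities to a single scalar statement and then invoke the known scalar case. Since $\|x\|^2$ is recovered from the inner product by polarization, \eqref{eq:seqmomentsinnprod} follows from \eqref{eq:seqmoments} applied to $f\pm g$ (and, in the complex-scalar case, also to $f\pm ig$) together with the linearity of the integral; so it is enough to prove \eqref{eq:seqmoments}. We may assume that $H$ is separable — a $\widetilde{\mathcal P}$-strongly measurable $f$ is essentially separably valued — and that $\mathbb E\|f\|^2\star\nu<\infty$, the remaining case being the trivial $\infty=\infty$ once $f\star\bar\mu$ is read as the usual $L^2$-extension of the integral from simple integrands. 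Fix an orthonormal basis $(h_n)_{n\geq1}$ of $H$. For each $n$ the scalar function $\langle f,h_n\rangle$ is $\widetilde{\mathcal P}$-measurable with $\mathbb E|\langle f,h_n\rangle|^2\star\nu\leq\mathbb E\|f\|^2\star\nu<\infty$, and $\langle f\star\bar\mu,h_n\rangle=\langle f,h_n\rangle\star\bar\mu$ (immediate for simple $f$, and both sides depend continuously on $f$ in $L^2$). Hence, pathwise, $\|f\star\bar\mu\|^2=\sum_{n\geq1}|\langle f,h_n\rangle\star\bar\mu|^2$ and $\|f\|^2\star\nu=\sum_{n\geq1}(|\langle f,h_n\rangle|^2\star\nu)$; taking expectations and using Tonelli, \eqref{eq:seqmoments} reduces to the scalar identity $\mathbb E|g\star\bar\mu|^2=\mathbb E|g|^2\star\nu$ for every $\widetilde{\mathcal P}$-measurable $g:\mathbb R_+\times\Omega\times J\to\mathbb R$ with $\mathbb E|g|^2\star\nu<\infty$.

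For the scalar identity I would use that $g\star\bar\mu$ is an $L^2$-martingale null at $0$ with explicitly computable quadratic variation. Because $\nu$ is non-atomic in time, the compensator part $g\star\nu$ is continuous, so it contributes neither jumps nor any covariation term; consequently $[g\star\bar\mu]_t=\sum_{s\leq t}|\Delta(g\star\mu)_s|^2=(|g|^2\star\mu)_t$, whose predictable compensator is $\langle g\star\bar\mu\rangle_t=(|g|^2\star\nu)_t$. This is precisely the content of \cite[Theorem II.1.33]{JS} in the non-atomic case. Since $g\star\bar\mu$ is an $L^2$-martingale starting at $0$, one concludes $\mathbb E|g\star\bar\mu|^2=\mathbb E[g\star\bar\mu]_\infty=\mathbb E\langle g\star\bar\mu\rangle_\infty=\mathbb E|g|^2\star\nu$. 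If a self-contained argument is preferred, the scalar identity can instead be verified first for simple integrands $g=V\mathbf 1_{(\sigma,\tau]\times D}$ with $V$ bounded and $\mathcal F_\sigma$-measurable, $\sigma\leq\tau$ stopping times and $D\in\mathcal J$, where it reduces to $\mathbb E[\bar\mu((\sigma,\tau]\times D)^2\mid\mathcal F_\sigma]=\mathbb E[\nu((\sigma,\tau]\times D)\mid\mathcal F_\sigma]$; this in turn follows from the martingale property of $t\mapsto\bar\mu((\sigma,t\wedge\tau]\times D)$ together with the fact that each atom of $\mu$ carries mass one, so that the quadratic variation of this martingale equals $\mu((\sigma,\cdot\wedge\tau]\times D)$ with compensator $\nu((\sigma,\cdot\wedge\tau]\times D)$. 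One then extends by density, using that such simple integrands are dense in $L^2$ of the measure $M_\nu(A)=\mathbb E(\mathbf 1_A\star\nu)_\infty$ on $(\widetilde\Omega,\widetilde{\mathcal P})$ and that $g\mapsto g\star\bar\mu$ is an $L^2$-isometry there.

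The step I expect to be most delicate is not a deep estimate but the bookkeeping around well-definedness. The earlier part of the paper only sets up an $L^1$-type integral against $\bar\mu$, so one first has to make sense of $f\star\bar\mu$ — scalar and $H$-valued — as an $L^2$-object via the isometric extension from simple integrands, and then justify the componentwise identity $\langle f\star\bar\mu,h_n\rangle=\langle f,h_n\rangle\star\bar\mu$ under this extension. In the self-contained route the genuinely substantive point is the conditional second-moment computation for $\bar\mu$ on a stochastic interval, which is exactly where non-atomicity of $\nu$ in time enters: it guarantees that $g\star\nu$ is continuous, hence that $[g\star\bar\mu]=|g|^2\star\mu$ with no correction term. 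Everything else — polarization, the basis reduction, the interchange of sum and expectation — is routine.
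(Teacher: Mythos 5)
Your proof is correct and follows essentially the same route as the paper's: the paper also reduces to the scalar case (citing the same Jacod--Shiryaev result on the quadratic variation of $W\star(\mu-\nu)$, noting that non-atomicity of $\nu$ in time makes the compensator continuous) and then passes to the $H$-valued statement coordinate-wise via an orthonormal basis. You merely spell out the details the paper compresses into two lines and add an alternative self-contained verification on simple integrands.
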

\begin{proof}
The case $H = \mathbb R$ can be deduced from \cite[II.1.34]{JS} as $\nu$ is assumed to be non-atomic in time. By applying this special case coordinate-wise, we obtain the general case. 
\end{proof}

\begin{corollary}\label{lemma:equalityfordual}
 Let $X$ be a Banach space, $1<p<\infty$, $\mu$ be a~random measure, $\nu$ be the corresponding compensator, $F:\mathbb R_+ \times \Omega \times J \to X$ and  $G:\mathbb R_+ \times \Omega \times J \to X^*$ be simple $\widetilde {\mathcal P}$-measurable functions. Then for each $A \in \widetilde{\mathcal P}$ such that $\mathbb E (\mathbf 1_{A} \star \mu)_{\infty} <\infty$ the stochastic integrals $(F\mathbf 1_{A})\star \bar{\mu}$ and $(G\mathbf 1_{A})\star \bar{\mu}$ are well-defined and
 \begin{equation}\label{eq:intequal}
  \mathbb E \langle (F\mathbf 1_{A})\star \bar{\mu} , (G\mathbf 1_{A}) \star \bar{\mu}\rangle = \mathbb E(\langle F, G\rangle\mathbf 1_{A})\star \nu.
 \end{equation}
\end{corollary}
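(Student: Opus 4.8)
The plan is to reduce the vector-valued identity \eqref{eq:intequal} to the scalar case already available in Lemma~\ref{lemma:seqmoments} by using the simplicity of $F$ and $G$. First I would note that since $F$ and $G$ are simple $\widetilde{\mathcal P}$-measurable functions, they take values in finite-dimensional subspaces of $X$ and $X^*$ respectively; writing $F = \sum_{k} x_k \mathbf 1_{C_k}$ with $x_k \in X$, $C_k \in \widetilde{\mathcal P}$, and $G = \sum_{l} x_l^* \mathbf 1_{D_l}$ with $x_l^* \in X^*$, $D_l \in \widetilde{\mathcal P}$, the pairing $\langle F, G\rangle = \sum_{k,l} \langle x_l^*, x_k\rangle \mathbf 1_{C_k \cap D_l}$ is itself a simple real-valued $\widetilde{\mathcal P}$-measurable function. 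By bilinearity of the stochastic integral and of the inner product, it suffices to establish \eqref{eq:intequal} in the case $F = x \mathbf 1_{C}$, $G = x^* \mathbf 1_{D}$ for a single pair $x \in X$, $x^* \in X^*$ and $C, D \in \widetilde{\mathcal P}$.

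Next I would handle well-definedness of the integrals $(F\mathbf 1_A)\star\bar\mu$ and $(G\mathbf 1_A)\star\bar\mu$: since $\mathbb E(\mathbf 1_A \star \mu)_\infty < \infty$ and $F, G$ are bounded (being simple), we have $\mathbb E(\|F\mathbf 1_A\|\star\mu)_\infty \leq \|F\|_\infty \mathbb E(\mathbf 1_A\star\mu)_\infty < \infty$, and likewise for $G$, so both $F\mathbf 1_A \star \bar\mu = F\mathbf 1_A \star \mu - F\mathbf 1_A \star\nu$ and $G\mathbf 1_A\star\bar\mu$ are well-defined in the sense discussed in the preliminaries on random measures. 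Then for the reduced case, $(F\mathbf 1_A)\star\bar\mu = x\,\bigl((\mathbf 1_{C\cap A})\star\bar\mu\bigr)$ and $(G\mathbf 1_A)\star\bar\mu = x^*\,\bigl((\mathbf 1_{D\cap A})\star\bar\mu\bigr)$, where now $(\mathbf 1_{C\cap A})\star\bar\mu$ and $(\mathbf 1_{D\cap A})\star\bar\mu$ are real-valued and square-integrable (again because $\mathbb E(\mathbf 1_{C\cap A}\star\mu)_\infty < \infty$ gives $\mathbb E\bigl|(\mathbf 1_{C\cap A})\star\bar\mu\bigr|^2 = \mathbb E(\mathbf 1_{C\cap A}\star\nu) < \infty$ by the scalar Lemma~\ref{lemma:seqmoments}). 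Hence
\begin{equation*}
 \langle (F\mathbf 1_A)\star\bar\mu, (G\mathbf 1_A)\star\bar\mu\rangle = \langle x^*, x\rangle\, \bigl((\mathbf 1_{C\cap A})\star\bar\mu\bigr)\bigl((\mathbf 1_{D\cap A})\star\bar\mu\bigr),
\end{equation*}
and taking expectations and applying the scalar polarized identity \eqref{eq:seqmomentsinnprod} of Lemma~\ref{lemma:seqmoments} with $f = \mathbf 1_{C\cap A}$, $g = \mathbf 1_{D\cap A}$ yields
\begin{equation*}
 \mathbb E\langle (F\mathbf 1_A)\star\bar\mu, (G\mathbf 1_A)\star\bar\mu\rangle = \langle x^*, x\rangle\, \mathbb E(\mathbf 1_{C\cap D\cap A}\star\nu) = \mathbb E\bigl(\langle F, G\rangle \mathbf 1_A\bigr)\star\nu.
\end{equation*}
Summing over the finitely many terms $k, l$ gives the general simple case.

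The only mild subtlety — and the step I would be most careful about — is the interchange of the finite sum (over the building blocks of $F$ and $G$) with the stochastic integral and the expectation, and making sure the integrability hypothesis $\mathbb E(\mathbf 1_A\star\mu)_\infty < \infty$ is genuinely enough to guarantee that each cross term $\mathbb E\bigl((\mathbf 1_{C_k\cap A})\star\bar\mu\bigr)\bigl((\mathbf 1_{D_l\cap A})\star\bar\mu\bigr)$ is finite so that Cauchy--Schwarz and the scalar identity apply termwise; this follows from $\mathbf 1_{C_k\cap A} \le \mathbf 1_A$, $\mathbf 1_{D_l\cap A}\le\mathbf 1_A$ pointwise together with $\mathbb E(\mathbf 1_A\star\nu)_\infty = \mathbb E(\mathbf 1_A\star\mu)_\infty < \infty$ and the scalar $L^2$-isometry, so there is no real obstacle, just bookkeeping. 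No limiting argument is needed here since everything is finite and simple; the genuinely analytic content is entirely contained in the already-proven Lemma~\ref{lemma:seqmoments}.
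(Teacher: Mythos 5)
Your proof is correct and follows essentially the same route as the paper's: both reduce the vector-valued identity to the scalar polarized $L^2$-isometry of Lemma~\ref{lemma:seqmoments} via bilinearity of the stochastic integral and the duality pairing, and both use the integrability hypothesis on $\mathbf 1_A\star\mu$ to justify $L^2$-boundedness of the scalar integrals. The only cosmetic differences are that the paper first passes to finite-dimensional $X$ and invokes Lemma~\ref{lemma:restmu_1tomu_2} to absorb $\mathbf 1_A$ into $\mu$ before decomposing $F$, $G$ with respect to a fixed basis, whereas you keep $\mathbf 1_A$ explicit and decompose directly by the indicator building blocks of the simple functions; both devices accomplish the same reduction.
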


\begin{proof}
Without loss of generality we can assume that $X$ is finite dimensional. By Lemma~\ref{lemma:restmu_1tomu_2}, we can also redefine $F := F\mathbf 1_{A}$, $G:= G\mathbf 1_{A}$. First notice that since $\|F\|_{\infty}, \|G\|_{\infty}<\infty$ and $\mathbb E \mu (F\neq 0), \mathbb E \mu(G\neq 0)<\infty$, both integrals $F\star \bar{\mu}$ and $G \star \bar{\mu}$ exist. Moreover, every finite dimensional space is isomorphic to a Hilbert space, so by Lemma \ref{lemma:seqmoments} both $F\star \bar{\mu}$ and $G \star \bar{\mu}$ are $L^2$-integrable, and therefore the left-hand side of \eqref{eq:intequal} is well-defined.

Now let $d$ be the dimension of $X$, $(x_k)_{k=1}^d$ and $(x_k^*)_{k=1}^d$ be bases of $X$ and~$X^*$ respectively. Then there exist simple $\widetilde {\mathcal P}$-measurable $F^1,\ldots,F^d, G^1,\ldots,G^d:\mathbb R_+ \times \Omega \times J \to \mathbb R$ such that $F = F^1x_1 + \cdots + F^d x_d$ and $G = G^1x_1^* + \cdots + G^d x_d^*$. Now \eqref{eq:seqmomentsinnprod} implies
\begin{align*}
 \mathbb E \langle F\star \bar{\mu} , G \star \bar{\mu}\rangle &= \sum_{k,l=1}^d \langle x_k, x_l^*\rangle\mathbb E\bigl (F^k\star \bar{\mu} \cdot G^l\star \bar{\mu}\bigr)=\sum_{k,l=1}^d \langle x_k, x_l^*\rangle \mathbb E(F^k G^l)\star \nu\\
 &=\mathbb E\Bigl(\sum_{k,l=1}^d \langle x_k, x_l^*\rangle F^k G^l\Bigr)\star \nu  = \mathbb E\langle F, G\rangle\star \nu.
\end{align*}
\end{proof}

The following proposition extends Novikov's inequalities presented in Lemma~\ref{lemma:Nov} in the case that $\nu(\mathbb R_+ \times J)\leq 1$ a.s. If $X=L^q(S)$ this result can be seen as a special case of Theorem \ref{thm:mainintranmeas} below. In the proof we will use the measure $\mathbb P \times \nu$ on $\mathcal B(\mathbb R_+)\otimes \mathcal F \otimes \mathcal J$ that is defined by setting 
\[
\mathbb P \times \nu\Big(\bigcup_{i=1}^n A_i\times B_i\Big) := \sum_{i=1}^n \mathbb E(\mathbf 1_{A_i}\nu(B_i)),
\]
for disjoint $A_i \in \mathcal F$ and disjoint $B_i \in \mathcal B(\mathbb R_+) \otimes \mathcal J$, and extending $\mathbb P \times \nu$ to $\mathcal B(\mathbb R_+)\otimes \mathcal F \otimes \mathcal J$ via the Carath\'eodory extension theorem.

\begin{proposition}\label{prop:Nov}
Suppose that $\nu(\mathbb R_+ \times J)\leq 1$ a.s. Let $X$ be a Banach space and $f:\mathbb R_+ \times \Omega \times J \to X$ be simple $\widetilde {\mathcal P}$-measurable. Then for all $1<p<\infty$
 \[
  \mathbb E \|F \star \bar{\mu}\|^p \eqsim_p \mathbb E \|F\|^p  \star \nu.
 \]
\end{proposition}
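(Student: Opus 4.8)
The plan is to prove the upper bound $\mathbb E\|(F\star\bar{\mu})_\infty\|_X^p\lesssim_p\mathbb E(\|F\|^p\star\nu)$ for an \emph{arbitrary} Banach space $X$ and every $1<p<\infty$, and then to deduce the lower bound from it by a duality argument. For the upper bound, I would start from $F\star\bar{\mu}=F\star\mu-F\star\nu$ (well defined, since $F$ is simple and $\mathbb E(\|F\|\star\mu)_\infty=\mathbb E(\|F\|\star\nu)_\infty<\infty$ by $\nu(\mathbb R_+\times J)\le1$) and apply the triangle inequality pathwise: $\|(F\star\bar{\mu})_\infty\|_X\le(\|F\|\star\mu)_\infty+(\|F\|\star\nu)_\infty$, using $\|(F\star\mu)_\infty\|_X\le(\|F\|\star\mu)_\infty$ and $\|(F\star\nu)_\infty\|_X\le(\|F\|\star\nu)_\infty$. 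At this point all trace of the vector-valued structure is gone, and it remains only to estimate the $L^p$-norms of the two nonnegative \emph{scalar} random variables $(G\star\mu)_\infty$ and $(G\star\nu)_\infty$, where $G:=\|F\|$ is a simple $\widetilde{\mathcal P}$-measurable nonnegative function. For $G\star\nu$, H\"older's inequality applied pathwise to the measure $\nu(\omega;\cdot)$ gives $G\star\nu\le(G^p\star\nu)^{1/p}\,\nu(\mathbb R_+\times J)^{1/p'}\le(G^p\star\nu)^{1/p}$, hence $\mathbb E|G\star\nu|^p\le\mathbb E(\|F\|^p\star\nu)$. For $G\star\mu=G\star\bar{\mu}+G\star\nu$, Novikov's inequality (Lemma~\ref{lemma:Nov}) together with the bound just obtained reduces the matter, in the range $p\ge2$, to absorbing $(\mathbb E(G^2\star\nu))^{p/2}$ into $\mathbb E(G^p\star\nu)$; this is once more H\"older plus Jensen combined with $\nu(\mathbb R_+\times J)\le1$, namely pathwise $G^2\star\nu\le(G^p\star\nu)^{2/p}$, so $(\mathbb E(G^2\star\nu))^{p/2}\le\bigl(\mathbb E(G^p\star\nu)^{2/p}\bigr)^{p/2}\le\mathbb E(G^p\star\nu)$. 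Collecting the estimates yields $\mathbb E\|(F\star\bar{\mu})_\infty\|_X^p\lesssim_p\mathbb E(\|F\|^p\star\nu)$, and the variant with a supremum over time then follows from Doob's maximal inequality.

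For the lower bound I would feed the upper bound back in, applied to the dual space $X^*$ with the conjugate exponent $p'$. Writing $F=\sum_{k=1}^N\mathbf 1_{C_k}x_k$ with pairwise disjoint $C_k\in\widetilde{\mathcal P}$, I choose (by Hahn--Banach) functionals $x_k^*\in X^*$ with $\|x_k^*\|=1$ and $\langle x_k,x_k^*\rangle=\|x_k\|$ (and $x_k^*=0$ if $x_k=0$), and set $G:=\sum_{k=1}^N\mathbf 1_{C_k}\|x_k\|^{p-1}x_k^*$; this is a simple $\widetilde{\mathcal P}$-measurable $X^*$-valued function with $\|G\|=\|F\|^{p-1}$ and $\langle F,G\rangle=\|F\|^p$ pointwise. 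Corollary~\ref{lemma:equalityfordual}, applied with $A=\bigcup_k C_k$ (which satisfies $\mathbb E(\mathbf 1_A\star\mu)_\infty\le1$), gives $\mathbb E\langle F\star\bar{\mu},G\star\bar{\mu}\rangle=\mathbb E(\langle F,G\rangle\star\nu)=\mathbb E(\|F\|^p\star\nu)$, while H\"older's inequality and the upper bound applied to $G$ (using $(p-1)p'=p$) give $\mathbb E\langle F\star\bar{\mu},G\star\bar{\mu}\rangle\le(\mathbb E\|F\star\bar{\mu}\|_X^p)^{1/p}(\mathbb E\|G\star\bar{\mu}\|_{X^*}^{p'})^{1/p'}\lesssim_p(\mathbb E\|F\star\bar{\mu}\|_X^p)^{1/p}(\mathbb E(\|F\|^p\star\nu))^{1/p'}$. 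Comparing the two and cancelling the common factor $(\mathbb E(\|F\|^p\star\nu))^{1/p'}$ — which is finite because $F$ is bounded and $\mathbb E\nu(\mathbb R_+\times J)\le1$, and if it vanishes the claim is trivial — produces $\mathbb E(\|F\|^p\star\nu)\lesssim_p\mathbb E\|F\star\bar{\mu}\|_X^p$.

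The one step that requires a real idea is the reduction to the scalar case: since $X$ is an arbitrary Banach space the Burkholder--Davis--Gundy inequality is unavailable, but after the pathwise triangle inequality $\|F\star\bar{\mu}\|\le\|F\|\star\mu+\|F\|\star\nu$ the geometry of $X$ plays no role at all, and the hypothesis $\nu(\mathbb R_+\times J)\le1$ turns out to be exactly what is needed to absorb the superlinear term in Novikov's inequality. It is essential here that the upper bound be proved for \emph{every} Banach space and not merely for $L^q(S)$, since the duality step uses it for $X^*$. The remaining ingredients — well-definedness of the integrals, the identity of Corollary~\ref{lemma:equalityfordual}, and the transition to a supremum over time — are routine.
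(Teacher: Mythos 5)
Your proposal is correct and follows essentially the same route as the paper. The upper bound is obtained exactly as in the paper's Step~1: the pathwise (or $L^p$) triangle inequality eliminates the vector-valued structure, the scalar case is Lemma~\ref{lemma:Nov}, and the quadratic term for $p\geq 2$ is absorbed using $\nu(\mathbb R_+\times J)\leq 1$ (the paper does this by $\|\cdot\|_{L^2(\mathbb P\otimes\nu)}\leq\|\cdot\|_{L^p(\mathbb P\otimes\nu)}$ on the product probability space, you do it pathwise followed by Jensen — these are the same computation). For the lower bound the paper also dualizes via Corollary~\ref{lemma:equalityfordual} and the upper bound in $X^*$, but phrases it as a supremum over the unit ball of $L^{p'}(\mathbb P\otimes\nu;X^*)$ after first reducing to finite-dimensional $X$ (so that the duality $L^p(\mathbb P\otimes\nu;X)^*=L^{p'}(\mathbb P\otimes\nu;X^*)$ is available); your version writes down an explicit norming function $G$ with $\|G\|=\|F\|^{p-1}$ and $\langle F,G\rangle=\|F\|^p$ built from Hahn–Banach functionals, which bypasses that reduction and is a marginally more elementary rendering of the same duality step.
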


\begin{proof}
We first prove $\lesssim_p$, and later deduce $\gtrsim_p$ by a duality argument.

{\it Step 1: upper bounds.}
 The case $X = \mathbb R$ follows from Lemma \ref{lemma:Nov} and the fact that $\|\cdot\|_{L^2(\mathbb R_+\times \Omega \times J, \mathbb P\otimes \nu)} \leq \|\cdot\|_{L^p(\mathbb R_+\times \Omega \times J, \mathbb P\otimes \nu)}$ for each $p\geq 2$ since $\mathbb P\otimes \nu(\mathbb R_+ \times \Omega \times J)\leq 1$. Now let $X$ be a general Banach space. Then
 \begin{align*}
   \mathbb E \|F \star \bar{\mu}\|^p &\stackrel{(i)}\lesssim_p    \mathbb E \|F \star \mu\|^p +    \mathbb E \|F \star\nu\|^p \stackrel{(ii)}\leq \mathbb E \bigl|\|F\| \star \mu\bigr|^p +    \mathbb E \bigl|\|F\| \star\nu\bigr|^p\\
   &\stackrel{(iii)}\lesssim_p  \mathbb E \bigl|\|F\| \star \bar{\mu}\bigr|^p +    \mathbb E \bigl|\|F\| \star\nu\bigr|^p \stackrel{(iv)}\lesssim_p \mathbb E \|F\|^p  \star \nu,
 \end{align*}
where $(i)$ and $(iii)$ follow from the fact that $\bar{\mu} = \mu - \nu$ and the triangle inequality, $(ii)$ follows from \cite[Proposition 1.2.2]{HNVW1}, and $(iv)$ follows from the real-valued case and the fact that a.s.
$$
\|\cdot\|_{L^1(\mathbb R_+ \times J; \nu)}\leq \|\cdot\|_{L^p(\mathbb R_+ \times J; \nu)}.
$$

{\it Step 2: lower bounds.} We can assume that $X$ is finite dimensional since $F$ is simple. Let $Y = L^p(\mathbb R_+ \times \Omega \times J, \mathbb P \otimes \nu; X)$. Recall that by \cite[Proposition 1.3.3]{HNVW1} $Y^* = L^{p'}(\mathbb R_+ \times \Omega \times J, \mathbb P \otimes \nu; X^*)$ and $(L^p(\Omega; X))^* = L^{p'}(\Omega; X^*)$. Therefore due to the upper bounds from Step 1 and Corollary \ref{lemma:equalityfordual}
\begin{align*}
 (\mathbb E \|F\|^p \star \nu)^{\frac 1p} &= \sup_{G\in Y^*:\|G\|\leq 1} \mathbb E \langle F, G\rangle\star \nu = \sup_{G\in Y^*:\|G\|\leq 1} \mathbb E \langle F \star \bar{\mu}, G \star \bar{\mu}\rangle\\
 &\lesssim_{p} \sup_{\xi \in L^{p'}(\Omega; X^*):\|\xi\|\leq 1} \mathbb E\langle  F \star \bar{\mu},\xi\rangle = (\mathbb E \|F\star \bar{\mu}\|^p)^{\frac 1p}.
\end{align*}
\end{proof}
\begin{remark}
The condition $\nu(\mathbb R_+ \times J)\leq 1$ a.s.\ is necessary in general. Indeed, let $N$ be a Poisson process with intensity parameter $\lambda$ and let $\mu$ be the random measure on $\R_+\times \{0\}$ defined by $\mu([0,t]\times\{0\})=N_t$. Then the corresponding compensator $\nu$ satisfies $\nu([0,t]\times\{0\})=\lambda t$. In particular,
\[
 \mathbb E |\mathbf 1_{[0, 1]}\star \bar{\mu}|^4 = \mathbb E |N -\lambda |^4 = \sum_{k=0}^{\infty} \frac{(k-\lambda)^4\lambda ^k e^{-\lambda}}{k!} = \lambda(3\lambda +1),
\]
which is not comparable with $\mathbb E |\mathbf 1_{[0,1]}|^4\star \nu = \lambda$ if $\lambda$ is large.

The condition $\nu(\mathbb R_+ \times J)\leq 1$ a.s.\ is however not needed for the upper bounds if $1\leq p\leq 2$ and $X$ is a Hilbert space. Indeed, for $p=1$ 
 \[
  \mathbb E \|F\star \bar{\mu}\| \leq \mathbb E \|F\star \mu\| +  \mathbb E \|F\star \nu\| \leq  \mathbb E \|F\|\star \mu +  \mathbb E \|F\|\star \nu = 2\mathbb E \|F\|\star \nu,
 \]
and for case $p=2$ follows immediately from Lemma \ref{lemma:seqmoments}:
\[
 \mathbb E \|F\star \bar{\mu}\|^2 = \mathbb E \|F\|^2\star \nu.
\]
Therefore by the vector-valued Riesz-Thorin theorem \cite[Theorem 2.2.1]{HNVW1} for each $1\leq p\leq 2$
\[
 (\mathbb E \|F\star \bar{\mu}\|^p)^{\frac 1p}\leq 2 (\mathbb E \|F\|^p\star\nu )^{\frac 1p}.
\]
 \end{remark}

\begin{corollary}\label{cor:condexpmunu}
Suppose that $\nu(\mathbb R_+ \times J)\leq 1$ a.s. Let $X$ be a Banach space, $f:\mathbb R_+ \times \Omega \times J \to X$ be simple $\widetilde{ \mathcal P}$-measurable. Then for each $p\in (1,\infty)$ a.s.
 \begin{equation}\label{eq:condexpmunu}
   (\mathbb E \|F \star \bar{\mu}\|^p|\mathcal F_0) \eqsim_p (\mathbb E \|F\|^p  \star \nu|\mathcal F_0).
 \end{equation}
\end{corollary}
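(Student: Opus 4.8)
The plan is to deduce this conditional estimate from its unconditional counterpart, Proposition~\ref{prop:Nov}, by the standard device of testing against $\mathcal F_0$-measurable sets. Note first that both $\mathbb E(\|F\star\bar\mu\|^p\mid\mathcal F_0)$ and $\mathbb E(\|F\|^p\star\nu\mid\mathcal F_0)$ are nonnegative, $\mathcal F_0$-measurable, and finite a.s.: indeed, $F$ is simple, so $\|F\|$ is bounded, and $\nu(\mathbb R_+\times J)\leq 1$ a.s., so $\|F\|^p\star\nu$ is bounded and hence $\mathbb E\|F\star\bar\mu\|^p<\infty$ by the upper bound in Proposition~\ref{prop:Nov}. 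Consequently it suffices to produce constants $c_p,C_p>0$ depending only on $p$ with
$$c_p\,\mathbb E\bigl(\mathbf 1_A(\|F\|^p\star\nu)\bigr)\leq \mathbb E\bigl(\mathbf 1_A\|F\star\bar\mu\|^p\bigr)\leq C_p\,\mathbb E\bigl(\mathbf 1_A(\|F\|^p\star\nu)\bigr)$$
for every $A\in\mathcal F_0$; the pointwise estimate \eqref{eq:condexpmunu} then follows by the usual argument, taking $A$ to be the $\mathcal F_0$-measurable set on which one of the two desired a.s.\ inequalities is violated and concluding from the displayed bounds that this set is null.

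To establish the displayed inequalities I would fix $A\in\mathcal F_0$ and apply Proposition~\ref{prop:Nov} not to $F$ but to $G:=\mathbf 1_A F$, i.e.\ $G(t,\omega,x):=\mathbf 1_A(\omega)F(t,\omega,x)$. Since $A\in\mathcal F_0$ the cylinder $\mathbb R_+\times A\times J$ lies in $\widetilde{\mathcal P}$, so $G$ is again a simple $\widetilde{\mathcal P}$-measurable $X$-valued integrand; and pulling the $\omega$-only factor $\mathbf 1_A$ through the random-measure integrals, which act in the $(t,x)$-variables, gives $G\star\mu=\mathbf 1_A(F\star\mu)$, $G\star\nu=\mathbf 1_A(F\star\nu)$, whence $G\star\bar\mu=\mathbf 1_A(F\star\bar\mu)$ and likewise $\|G\|^p\star\nu=\mathbf 1_A(\|F\|^p\star\nu)$. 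Feeding this into Proposition~\ref{prop:Nov} for $G$ yields $\mathbb E(\mathbf 1_A\|F\star\bar\mu\|^p)=\mathbb E\|G\star\bar\mu\|^p\eqsim_p\mathbb E(\|G\|^p\star\nu)=\mathbb E(\mathbf 1_A(\|F\|^p\star\nu))$ with constants depending only on $p$, which is precisely the displayed two-sided inequality.

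There is no genuinely hard step here; the argument is a routine localization. The only points that deserve a moment's attention are the verification that $\mathbf 1_A F$ is still an admissible (simple, $\widetilde{\mathcal P}$-measurable) integrand — which is exactly where one uses $A\in\mathcal F_0$, rather than merely $A\in\mathcal F$, so that $\mathbf 1_A$ is predictable — and the elementary but bookkeeping-heavy identity $(\mathbf 1_A F)\star\bar\mu=\mathbf 1_A(F\star\bar\mu)$, which follows by unwinding the definition of $\star$ and using that $\mathbf 1_A$ is constant in the integration variables.
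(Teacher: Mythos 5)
Your argument is correct and is essentially the paper's: fix $A\in\mathcal F_0$, observe that $\mathbf 1_A F$ is again a simple $\widetilde{\mathcal P}$-measurable integrand, apply Proposition~\ref{prop:Nov} to it, and deduce the a.s.\ conditional estimate by varying $A$. The only cosmetic difference is that the paper routes the intermediate identity through Lemma~\ref{lemma:restmu_1tomu_2} (restricting $\mu$ and $\nu$ to $\mathbb R_+\times A\times J$ and noting the compensator restricts compatibly), whereas you pull $\mathbf 1_A(\omega)$ directly out of the $\omega$-by-$\omega$ integrals $F\star\mu$ and $F\star\nu$; the two observations are equivalent, and yours is if anything slightly more elementary since it keeps $(\mu,\nu)$ fixed and only changes the integrand.
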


\begin{proof}
 Fix $A\in \mathcal F_0$. Then by Lemma~\ref{lemma:restmu_1tomu_2} and Proposition~\ref{prop:Nov}
 \begin{align*}
  \mathbb E (\|F \star \bar{\mu}\|^p\cdot \mathbf 1_{A}) = \mathbb E \|(F \cdot \mathbf 1_{A})\star \bar{\mu}\|^p \eqsim_p\mathbb E \|F \cdot \mathbf 1_{A}\|^p  \star \nu = \mathbb E (\|F\|^p  \star \nu \cdot \mathbf 1_{A}).
 \end{align*}
Since $A$ is arbitrary, \eqref{eq:condexpmunu} holds.
\end{proof}

For each $m\geq 1$ let $\mathcal P_m$ be the $\sigma$-field on $\mathbb R_+\times \Omega$ generated by all $\mathcal P$-measurable $f:\mathbb R_+\times \Omega \to \mathbb R $ such that $f\big|_{ (\frac{n}{2^m}, \frac{n+1}{2^m}]\times \Omega}$ is $\mathcal B\bigl((\frac{n}{2^m}, \frac{n+1}{2^m}]\bigr) \otimes \mathcal F_{\frac{n}{2^m}}$-measurable for each $n\geq 0$. Then the following theorem holds.

\begin{theorem}\label{thm:condexpwrtP_m}
 Let $f:\mathbb R_+\times \Omega \to \mathbb R$ be bounded and $\mathcal P$-measurable. Then for each $m\geq 1$
 \begin{equation}\label{eq:condexppred}
    \mathbb E(f|\mathcal P_m)(s) = \sum_{n\geq 0}\mathbb E\bigl(f(s)\big| \mathcal F_{\frac{n}{2^m}}\bigr),\;\;\; s\in \Bigl(\frac{n}{2^m}, \frac{n+1}{2^m}\Bigr],n\geq 0.
 \end{equation}
Moreover, $\mathbb E(f|\mathcal P_m) \to f$ a.s.\ on $\mathbb R_+\times \Omega$ as $m\to \infty$.
\end{theorem}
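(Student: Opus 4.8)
The plan is to treat the two assertions separately: first derive \eqref{eq:condexppred} from the defining property of conditional expectation on the $\sigma$-finite space $(\mathbb R_+\times\Omega,\mathcal P,\lambda\otimes\mathbb P)$, where $\lambda$ is Lebesgue measure on $\mathbb R_+$ (note $\lambda\otimes\mathbb P$ is $\sigma$-finite on $\mathcal P$), and then obtain the a.s.\ convergence from the forward (L\'evy upward) martingale convergence theorem. Throughout write $I^m_n=(n/2^m,(n+1)/2^m]$.

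For \eqref{eq:condexppred}, set $g(s,\omega):=\sum_{n\ge0}\mathbb E(f(s)|\mathcal F_{n/2^m})(\omega)\mathbf 1_{I^m_n}(s)$, where on each $I^m_n$ a jointly measurable version of the conditional expectation is used. I would first check that such a version exists and that $g$ is $\mathcal P_m$-measurable by the functional monotone class theorem: the family of bounded $\mathcal B(\mathbb R_+)\otimes\mathcal F$-measurable $f$ for which $(s,\omega)\mapsto\mathbb E(f(s)|\mathcal F_{n/2^m})(\omega)$ admits a $\mathcal B(I^m_n)\otimes\mathcal F_{n/2^m}$-measurable version on $I^m_n\times\Omega$ for every $n$ is a vector space, stable under bounded monotone limits by conditional monotone convergence, and contains all products $f(s,\omega)=\varphi(s)\psi(\omega)$ with $\varphi$ Borel and $\psi$ bounded $\mathcal F$-measurable (there the expression is $\varphi(s)\,\mathbb E(\psi|\mathcal F_{n/2^m})(\omega)$); since these products generate $\mathcal B(\mathbb R_+)\otimes\mathcal F$, the family is everything, so $g$ is well defined and $\mathcal P_m$-measurable. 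Next, $\mathcal P_m$ is generated by the $\pi$-system of rectangles $B\times C$ with $B\in\mathcal B(I^m_n)$, $C\in\mathcal F_{n/2^m}$ (over all $n$), so by a monotone class argument it suffices to verify $\int_{B\times C}f\ud(\lambda\otimes\mathbb P)=\int_{B\times C}g\ud(\lambda\otimes\mathbb P)$ for such rectangles; by Fubini (legitimate since $\lambda(B)<\infty$ and $f$ is bounded) together with $C\in\mathcal F_{n/2^m}$,
\[
\int_{B\times C}f\ud(\lambda\otimes\mathbb P)=\int_B\mathbb E\bigl(\mathbf 1_C f(s)\bigr)\ud s=\int_B\mathbb E\bigl(\mathbf 1_C\,\mathbb E(f(s)|\mathcal F_{n/2^m})\bigr)\ud s=\int_{B\times C}g\ud(\lambda\otimes\mathbb P),
\]
which proves $g=\mathbb E(f|\mathcal P_m)$, i.e.\ \eqref{eq:condexppred}.

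For the convergence, the key observation is that $(\mathcal P_m)_{m\ge1}$ is increasing: since $I^m_n=I^{m+1}_{2n}\cup I^{m+1}_{2n+1}$ and $\mathcal F_{n/2^m}=\mathcal F_{2n/2^{m+1}}\subseteq\mathcal F_{(2n+1)/2^{m+1}}$, every function generating $\mathcal P_m$ also generates $\mathcal P_{m+1}$. Hence $(\mathbb E(f|\mathcal P_m))_{m\ge1}$ is a martingale for the filtration $(\mathcal P_m)$ on $(\mathbb R_+\times\Omega,\mathcal P,\lambda\otimes\mathbb P)$. Restricting to the finite-measure slice $[0,T]\times\Omega$ and using that $f$ is bounded (so the martingale is uniformly integrable), the forward martingale convergence theorem gives $\mathbb E(f|\mathcal P_m)\to\mathbb E(f|\mathcal P_\infty)$ $\lambda\otimes\mathbb P$-a.e.\ on $[0,T]\times\Omega$, where $\mathcal P_\infty=\sigma(\bigcup_m\mathcal P_m)$. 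It then remains to identify $\mathbb E(f|\mathcal P_\infty)$ with $f$, i.e.\ to show $f$ is $\mathcal P_\infty$-measurable up to a $\lambda\otimes\mathbb P$-null set; as $f$ is $\mathcal P$-measurable it suffices to prove $\mathcal P\subseteq\overline{\mathcal P_\infty}$, the $\lambda\otimes\mathbb P$-completion. For this, given any c\`ag adapted $h$, the dyadic step approximations $h_m(s,\omega):=\sum_{n\ge0}h(n/2^m,\omega)\mathbf 1_{I^m_n}(s)$ are $\mathcal P_m$-measurable, and by left-continuity $h_m(s,\omega)\to h(s,\omega)$ for every $\omega$ and every $s>0$ (the left endpoint of the dyadic interval containing $s$ lies below $s$ and tends to $s$); since $\{0\}\times\Omega$ is $\lambda\otimes\mathbb P$-null, $h$ is $\mathcal P_\infty$-measurable up to a null set, and as such $h$ generate $\mathcal P$ we conclude $\mathcal P\subseteq\overline{\mathcal P_\infty}$. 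Therefore $\mathbb E(f|\mathcal P_\infty)=f$ a.e.\ on $[0,T]\times\Omega$, and letting $T\to\infty$ gives $\mathbb E(f|\mathcal P_m)\to f$ $\lambda\otimes\mathbb P$-a.e.\ on $\mathbb R_+\times\Omega$.

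The step I expect to be the main obstacle is the last one: pinning down $\mathcal P_\infty$ and showing it recovers the predictable $\sigma$-field modulo null sets via the c\`ag approximation. The monotone class verification of \eqref{eq:condexppred} and the martingale convergence are routine once one notices $\mathcal P_m\subseteq\mathcal P_{m+1}$. A secondary technical point to handle with care is that $\lambda\otimes\mathbb P$ is only $\sigma$-finite on $\mathcal P$, which is why the convergence theorem is applied on the finite slices $[0,T]\times\Omega$ and the global statement assembled afterwards.
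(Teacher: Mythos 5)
Your proof is correct and takes essentially the same route as the paper: identify the conditional expectation by verifying the defining integral identity against $\mathcal P_m$-measurable test functions, note $(\mathcal P_m)_{m\geq 1}$ is a filtration with $\sigma\{\cup_m\mathcal P_m\}$ (essentially) equal to $\mathcal P$, and invoke the martingale convergence theorem. You are more careful than the paper on several points it glosses over: you justify the joint measurability needed to define a $\mathcal P_m$-measurable version of $\sum_n\mathbb E(f(s)\,|\,\mathcal F_{n/2^m})\mathbf 1_{I^m_n}(s)$; you handle the fact that $\lambda\otimes\mathbb P$ is only $\sigma$-finite by working on finite slices $[0,T]\times\Omega$; and you actually verify, via c\`ag dyadic step approximation, that $\mathcal P\subseteq\overline{\sigma(\cup_m\mathcal P_m)}$ modulo null sets rather than asserting the equality $\sigma\{\cup_m\mathcal P_m\}=\mathcal P$ as "obvious" — which, strictly speaking, only holds up to $\lambda\otimes\mathbb P$-null sets (the dyadic approximation fails at $s=0$), but that is all the martingale convergence argument requires.
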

\begin{proof}
 Let us first show \eqref{eq:condexppred}. Fix $m\geq 1$. Fix a simple $\mathcal P_m$-measurable process $g:\mathbb R_+ \times \Omega \to \mathbb R$. Then for each $n\geq 0$ and $s\in (\frac{n}{2^m}, \frac{n+1}{2^m}]$ a random variable $g(s)$ is $\mathcal F_{\frac{n}{2^m}}$-measurable.
Define $\widetilde f:\mathbb R_+\times \Omega \to \mathbb R$ by
\[
 \widetilde f(s) = \sum_{n\geq 0}\mathbb E\bigl(f(s)\big| \mathcal F_{\frac{n}{2^m}}\bigr)\mathbf 1_{s \in (\frac{n}{2^m}, \frac{n+1}{2^m}]},\;\;\; s\geq 0.
\]
Then for each $n\geq 0$ and $s \in (\frac{n}{2^m}, \frac{n+1}{2^m}]$
\begin{align*}
 \mathbb E \bigl[(f(s) - \widetilde f(s))g(s)\bigr] &= \mathbb E\Bigl[ \mathbb E\bigl((f(s) - \widetilde f(s))g(s)\big|\mathcal F_{\frac{n}{2^m}}\bigr) \Bigr]\\
 &=\mathbb E\Bigl[ \mathbb E\bigl((f(s) - \widetilde f(s))\big|\mathcal F_{\frac{n}{2^m}}\bigr)g(s) \Bigr] = 0.
\end{align*}
Therefore
 \begin{align*}
  \mathbb E \int_{\mathbb R_+}\bigl(f(s) -\widetilde f(s)\bigr)g(s)\ud s =\int_{\mathbb R_+} \mathbb E \Bigl[\bigl(f(s) - \widetilde f(s)\bigr)g(s)\Bigr]\ud s=0,
 \end{align*}
and hence \eqref{eq:condexppred} holds. Now notice that $(\mathcal P_m)_{m\geq 1}$ forms a filtration on $\mathbb R_+\times \Omega$, and obviously $\sigma\{\cup_m\mathcal P_m\} = \mathcal P$. Therefore the second part of the theorem follows from the martingale convergence theorem (see e.g.\ \cite[Theorem 7.23]{Kal}).
\end{proof}

\begin{corollary}\label{cor:Ffromcondexpex}
 Let $F:\mathbb R_+ \times \Omega \to \mathbb R_+$ be an increasing predictable function such that $F(t)-F(s) \leq C(t-s)$ a.s.\ for all $0\leq s\leq t$ and for some fixed constant $C\geq 0$  and $F(0)=0$ a.s. Then for each fixed $T\geq 0$
 \[
  F(T) = \lim_{m\to\infty} \sum_{n=0}^{[2^m T]-1}\mathbb E\Bigl[F\Bigl(\frac{n+1}{2^m}\Bigr) - F\Bigl(\frac{n}{2^m}\Bigr)\Big|\mathcal F_{\frac {n}{2^m}}\Bigr],
 \]
where the last limit holds a.s.\ and in $L^p(\Omega)$ for all $1<p<\infty$.
\end{corollary}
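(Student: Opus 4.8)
The plan is to represent $F$ through a predictable density and then recognise the dyadic sum as an integral against the conditional expectations $\mathbb E(f|\mathcal P_m)$ studied in Theorem~\ref{thm:condexpwrtP_m}. Since $t\mapsto F(t,\omega)$ is Lipschitz with constant $C$, it is absolutely continuous, so $F(t,\omega)=\int_0^t f(s,\omega)\ud s$ with $0\le f\le C$ (using $F(0)=0$ a.s.). To obtain a \emph{predictable} version of the density, I would put $f_k(s,\omega):=k\bigl(F(s,\omega)-F((s-1/k)\vee 0,\omega)\bigr)$. Each $f_k$ is predictable, because $F$ is predictable and $(s,\omega)\mapsto F((s-1/k)\vee 0,\omega)$ is continuous and adapted (at time $s$ it is $\mathcal F_{(s-1/k)\vee 0}$-measurable), and $0\le f_k\le C$. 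By the Lebesgue differentiation theorem $f_k\to f$ for $(\mathrm{Leb}\times\bP)$-a.e.\ $(s,\omega)$, so $f:=\limsup_k f_k$ is $\mathcal P$-measurable, bounded by $C$, and a density of $F$.

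Next, fix $m$ and $n$ and apply the conditional Fubini theorem (legitimate since $f$ is bounded and the interval is finite) to get
$$\mathbb E\Bigl[F\Bigl(\tfrac{n+1}{2^m}\Bigr)-F\Bigl(\tfrac{n}{2^m}\Bigr)\Bigm|\mathcal F_{\frac{n}{2^m}}\Bigr]=\int_{n/2^m}^{(n+1)/2^m}\mathbb E\bigl[f(s)\bigm|\mathcal F_{\frac{n}{2^m}}\bigr]\ud s.$$
By Theorem~\ref{thm:condexpwrtP_m}, for $s\in(\tfrac{n}{2^m},\tfrac{n+1}{2^m}]$ the integrand equals $\mathbb E(f|\mathcal P_m)(s)$. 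Summing over $0\le n\le[2^mT]-1$ therefore yields
$$\sum_{n=0}^{[2^mT]-1}\mathbb E\Bigl[F\Bigl(\tfrac{n+1}{2^m}\Bigr)-F\Bigl(\tfrac{n}{2^m}\Bigr)\Bigm|\mathcal F_{\frac{n}{2^m}}\Bigr]=\int_0^{a_m}\mathbb E(f|\mathcal P_m)(s)\ud s,\qquad a_m:=\frac{[2^mT]}{2^m}\le T.$$

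Finally I would pass to the limit. Theorem~\ref{thm:condexpwrtP_m} gives $\mathbb E(f|\mathcal P_m)\to f$ $(\mathrm{Leb}\times\bP)$-a.e., so by Fubini, for $\bP$-a.e.\ $\omega$ one has $\mathbb E(f|\mathcal P_m)(\cdot,\omega)\to f(\cdot,\omega)$ Lebesgue-a.e.\ on $[0,T]$; since $a_m\to T$ and $0\le\mathbb E(f|\mathcal P_m)\le C$, dominated convergence gives $\int_0^{a_m}\mathbb E(f|\mathcal P_m)(\cdot,\omega)\ud s\to\int_0^Tf(\cdot,\omega)\ud s=F(T,\omega)$ for $\bP$-a.e.\ $\omega$, which is the claimed a.s.\ convergence. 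As these random variables are uniformly bounded by $CT$, the a.s.\ convergence upgrades to convergence in $L^p(\Omega)$ for every $1<p<\infty$ by dominated convergence.

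The only genuinely delicate point is the first step: one must ensure the density can be chosen $\mathcal P$-measurable so that Theorem~\ref{thm:condexpwrtP_m} applies, which the difference-quotient construction above settles. The remaining steps are routine applications of conditional Fubini, Theorem~\ref{thm:condexpwrtP_m}, and the dominated convergence theorem.
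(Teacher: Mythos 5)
Your argument is correct and follows the same route as the paper: construct a predictable density $f \in [0,C]$ of $F$ via difference quotients (this is precisely the paper's Lemma~\ref{lem:fin[0,C]exists}), identify the dyadic conditional increments with $\int \mathbb E(f|\mathcal P_m)\,\ud s$ using Theorem~\ref{thm:condexpwrtP_m}, and pass to the limit by dominated convergence. The only cosmetic difference is that you rederive the density lemma inline and invoke conditional Fubini rather than citing formula \eqref{eq:condexppred} directly, but the substance is identical.
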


For the proof we will need the following lemma.
\begin{lemma}\label{lem:fin[0,C]exists}
 Let $F:\mathbb R_+ \times \Omega \to \mathbb R_+$ be an increasing predictable function such that $F(t)-F(s) \leq C(t-s)$ a.s.\ for all $0\leq s\leq t$ and for some fixed constant $C\geq 0$ and $F(0)=0$ a.s. Then there exists a predictable $f:\mathbb R_+\times \Omega \to \mathbb [0, C]$ such that $F(T) = \int_0^T f(s)\ud s$ for each fixed $T\geq 0$.
\end{lemma}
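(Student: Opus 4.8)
The plan is to realise $f$ as a predictable version of the time-derivative $\partial_t F$, built from \emph{backward} difference quotients so that predictability is preserved. First I would pass to pathwise regularity: since for each fixed pair $s\leq t$ the inequality $F(t)-F(s)\leq C(t-s)$ holds a.s., intersecting over rational pairs and using right-continuity of $F$ shows that, almost surely, $t\mapsto F(t,\omega)$ is increasing and $C$-Lipschitz on all of $\mathbb R_+$. In particular such a path is absolutely continuous, hence differentiable for a.e.\ $t$ with $\partial_t F(t,\omega)\in[0,C]$, and $F(T,\omega)=\int_0^T\partial_t F(t,\omega)\ud t$ for every $T\geq 0$ by the fundamental theorem of calculus.

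Next I would construct the predictable candidate. For $n\geq 1$ put $G^n(t,\omega):=F\bigl((t-\tfrac1n)\vee 0,\omega\bigr)$ and $D^n:=n(F-G^n)$. Because $F$ is continuous and adapted it is predictable, while $G^n$ is continuous (as $F$ is continuous) and adapted --- its value at time $t$ is $\mathcal F_{(t-1/n)\vee 0}$-measurable --- hence $G^n$, and therefore $D^n$, is predictable; moreover $0\leq D^n\leq C$ a.s.\ by monotonicity and the Lipschitz bound. Put $f:=0\vee\bigl(\limsup_{n\to\infty}D^n\bigr)\wedge C$, which is predictable with values in $[0,C]$. The identification with $\partial_t F$ is then immediate: on the almost sure event from the first step, at every $t$ where $F(\cdot,\omega)$ is differentiable one has $D^n(t,\omega)=\tfrac{F(t,\omega)-F(t-1/n,\omega)}{1/n}\to\partial_t F(t,\omega)$, so $f(t,\omega)=\partial_t F(t,\omega)$ for a.e.\ $t$, whence $\int_0^T f(t,\omega)\ud t=\int_0^T\partial_t F(t,\omega)\ud t=F(T,\omega)$ for every $T\geq 0$, almost surely.

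The one genuinely delicate point --- and the reason one must use backward rather than forward quotients --- is the predictability bookkeeping: a forward-shifted process $t\mapsto F(t+h,\omega)$ is not even adapted, so difference quotients of the form $n\bigl(F(\cdot+\tfrac1n)-F\bigr)$ are of no use here, whereas the backward shift $G^n$ is adapted and, thanks to the continuity of $F$ (which itself follows from the Lipschitz bound), left-continuous, and it is exactly this that makes $D^n$ and hence $f$ predictable. Everything else --- upgrading the Lipschitz inequality from rational to all pairs, the boundedness of $D^n$, and the a.e.-$t$ convergence --- is routine.
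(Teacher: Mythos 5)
Your proposal is correct and follows essentially the same route as the paper: both take backward difference quotients $n\bigl(F(t)-F((t-1/n)\vee 0)\bigr)$, observe that these are predictable, and pass to the a.e.\ limit, which exists and lies in $[0,C]$ by the Lipschitz property. The only differences are cosmetic refinements you add (intersecting over rational pairs to upgrade the a.s.\ Lipschitz inequality to a pathwise statement, and using a truncated $\limsup$ rather than $\lim$ so that $f$ is everywhere defined), which the paper leaves implicit; as a tiny remark, the extension of the Lipschitz bound from rationals to all pairs needs only monotonicity of $F$, not right-continuity, so that part of your argument can be streamlined.
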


\begin{proof}
$F$ is a.s.\ differentiable in $t$ because $F$ is Lipschitz, so there exists $f:\mathbb R_+\times \Omega \to \mathbb [0, C]$ such that for a.e.\ $\omega \in \Omega$ and $t\geq 0$
\[
 f(t,\omega) = \lim_{\eps\to 0} \frac{F(t,\omega) - F((t-\eps) \vee 0,\omega)}{\eps}.
\]
Since $F$ is predictable, $t\mapsto F(t) - F((t-\eps) \vee 0)$ is a predictable process as well for each $\eps\geq 0$, so the obtained $f$ is predictable.
\end{proof}

\begin{proof}[Proof of Corollary \ref{cor:Ffromcondexpex}]
 Let $f:\mathbb R_+ \times \Omega \to [0, C]$ be as defined in Lemma \ref{lem:fin[0,C]exists}. Then by Theorem \ref{thm:condexpwrtP_m}, $\mathbb E(f|\mathcal P_m)$ exists and converges to $f$ a.s.\ on $\mathbb R_+ \times \Omega$. Moreover, $f$ is bounded by $C$, so $\mathbb E(f|\mathcal P_m)$ is bounded by $C$ as well. Therefore for each $m\geq 1$ we find using \eqref{eq:condexppred}
 \begin{align*}
   \sum_{n=0}^{[2^m T]-1}\mathbb E\Bigl[F\Bigl(\frac{n+1}{2^m}\Bigr) - F\Bigl(\frac{n}{2^m}\Bigr)\Big|\mathcal F_{\frac {n}{2^m}}\Bigr]
  &=  \sum_{n=0}^{[2^m T]-1}\mathbb E\left[\left.\int_{(\frac{n}{2^m},\frac{n+1}{2^m}]}f(s)\ud s\right|\mathcal F_{\frac {n}{2^m}}\right]\\
  &= \mathbb \sum_{n=0}^{[2^m T]-1}\int_{(\frac{n}{2^m},\frac{n+1}{2^m}]}E\bigl(f(s)\big|\mathcal F_{\frac {n}{2^m}}\bigr)\ud s\\
  &= \int_{\bigl(0,\frac{[2^m T]}{2^m}\bigr]}\mathbb E(f|\mathcal P_m)(s)\ud s,
 \end{align*}
and since $\frac{[2^m T]}{2^m} \to T$ as $m\to \infty$, 
\begin{align*}
 \lim_{m\to \infty}\sum_{n=0}^{[2^m T]-1}\mathbb E\Bigl[F\Bigl(\frac{n+1}{2^m}\Bigr) - F\Bigl(\frac{n}{2^m}\Bigr)\Big|\mathcal F_{\frac {n}{2^m}}\Bigr] &= \lim_{m\to \infty}\int_{\bigl(0,\frac{[2^m T]}{2^m}\bigr]}\mathbb E(f|\mathcal P_m)(s)\ud s\\
 & = \int_{(0,T]} f(s)\ud s = F(T),
\end{align*}
where the limit holds a.s., and since $F(T)\leq CT$ and all the functions above are bounded by $CT$ as well, by the dominated convergence theorem the limit holds in $L^p(\Omega)$ for each $1<p<\infty$.
\end{proof}

In the proof of Theorem~\ref{thm:mainintranmeas} we will use a time-change argument. We recall some necessary definitions and results. A nondecreasing, right-continuous family of stopping times $\tau =(\tau_s)_{s\geq 0}$ is called a  \textit{random time-change}. If $\mathbb F$ is right-continuous, then according to \cite[Lemma~7.3]{Kal} the same holds true
for the  \textit{induced filtration} $\mathbb G = (\mathcal G_s)_{s \geq 0} = (\mathcal F_{\tau_s})_{s\geq 0}$. 

For a random time-change $\tau =(\tau_s)_{s\geq 0}$ and for a random measure $\mu$ we define $\mu\circ \tau$ in the following way:
\[
 \mu\circ\tau ((s,t]\times B) = \mu((\tau_s,\tau_t]\times A),\;\;\; t\geq s\geq 0, A\in\mathcal J.
 \]
$\mu$ is said to be \textit{$\tau$-continuous} if $\mu((\tau_{s-}, \tau_s]\times J) = 0$ a.s.\ for each $s \geq 0$, where we let $\tau_{s-} := \lim_{\eps\to 0} \tau_{s-\eps}$, $\tau_{0-} := \tau_0$. Later we will need the following proposition.
 
\begin{proposition}\label{prop:apptimechange}
 Let $A:\mathbb R_+ \times \Omega\to \mathbb R_+$ be a strictly increasing continuous predictable process such that $A_0 = 0$ and $A_{t} \to \infty$ as $t\to \infty$ a.s. Then
 \[
  \tau_s = \{t: A_t=s\},\;\;\; s\geq 0.
 \]
defines a random time-change $\tau = (\tau_s)_{s\geq 0}$. It satisfies $(A\circ \tau)(t) = (\tau \circ A)(t) = t$ a.s.\ for each $t\geq 0$. Let $\mathbb G = (\mathcal G_s)_{s \geq 0} = (\mathcal F_{\tau_s})_{s\geq 0}$ be the induced filtration. Then $(A_t)_{t\geq 0}$ is a random time-change with respect to $\mathbb G$. Moreover, for any random measure $\mu$ the following hold:
\begin{itemize}
 \item [(i)] if $\mu$ is $\mathbb F$-optional, then $\mu\circ\tau$ is $\mathbb G$-optional,
 \item[(ii)] if $\mu$ is $\mathbb F$-predictable, then $\mu\circ\tau$ is $\mathbb G$-predictable,
 \item [(iii)] if $\mu$ is an $\mathbb F$-optional random measure with a compensator $\nu$, then $\nu\circ\tau$ is a compensator of $\mu\circ\tau$, and for each $\widetilde{\mathcal P}$-measurable simple $F:\mathbb R_+ \times \Omega\times J \to \mathbb R$ such that $\mathbb E (F\star\mu)_{\infty}<\infty$ we have $\mathbb E \bigl((F\circ \tau) \star (\mu\circ\tau)\bigr)_{\infty}<\infty$ and a.s.
 \begin{equation}\label{eq:apptimechangesimple}
  \begin{split}
    (F\star\mu)_{\infty} &= \bigl((F\circ \tau) \star (\mu\circ\tau)\bigr)_{\infty},\\
  (F\star\nu)_{\infty} &= \bigl((F\circ \tau) \star (\nu\circ\tau)\bigr)_{\infty},
  \end{split}
 \end{equation}
 \begin{equation}\label{eq:apptimechange}
  (F\star \bar{\mu})(\tau_s)= ((F\circ \tau)\star (\overline{\mu \circ\tau}))(s),\;\;\; s\geq 0.
 \end{equation}
\end{itemize}
\end{proposition}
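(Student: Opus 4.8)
The plan is to identify $\tau$ as the pathwise inverse of $A$ and then transport every assertion through the substitution $u=A_v$. First I would note that, since $A$ is continuous, strictly increasing, $A_0=0$ and $A_t\to\infty$ a.s., for a.e.\ $\omega$ the map $t\mapsto A_t(\omega)$ is a homeomorphism of $[0,\infty)$, so $\tau_s(\omega)=\{t:A_t(\omega)=s\}$ is its inverse; hence $\tau$ is continuous, strictly increasing, $A\circ\tau=\tau\circ A=\mathrm{id}$, and $\tau_s\to\infty$ as $s\to\infty$. Adaptedness of $A$ gives $\{\tau_s\leq t\}=\{A_t\geq s\}\in\mathcal F_t$, so each $\tau_s$ is an $\mathbb F$-stopping time, and continuity gives right-continuity; thus $\tau$ is a random time-change and, by \cite[Lemma~7.3]{Kal}, $\mathbb G=(\mathcal G_s)_{s\geq0}=(\mathcal F_{\tau_s})_{s\geq0}$ is a right-continuous filtration. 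Since $\tau_s$ is $\mathcal F_{\tau_s}$-measurable, $\{A_t\leq s\}=\{\tau_s\geq t\}\in\mathcal F_{\tau_s}=\mathcal G_s$, so each $A_t$ is a $\mathbb G$-stopping time, which together with the continuity and monotonicity of $t\mapsto A_t$ shows that $(A_t)_{t\geq0}$ is a random time-change for $\mathbb G$; moreover one checks that the filtration $(\mathcal G_{A_t})_{t\geq0}=(\mathcal F_{\tau_{A_t}})_{t\geq0}$ induced by this $\mathbb G$-time-change is again $\mathbb F$.

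Next I would prove the change-of-variables identity: writing $(H\circ A)(v,\omega,x):=H(A_v(\omega),\omega,x)$, for every random measure $\kappa$ and every nonnegative measurable $H$ on $\mathbb R_+\times\Omega\times J$,
\begin{equation}\label{eq:timechcov}
  \bigl(H\star(\kappa\circ\tau)\bigr)_s=\bigl((H\circ A)\star\kappa\bigr)_{\tau_s},\qquad s\geq0.
\end{equation}
This is checked for $H=\mathbf 1_{(a,b]\times C\times B}$ --- where $\mathbf 1_{(a,b]}(A_v)=\mathbf 1_{(\tau_a,\tau_b]}(v)$ because $\tau\circ A=\mathrm{id}$ and $A,\tau$ are strictly increasing, and both sides of \eqref{eq:timechcov} equal $\mathbf 1_C\,\kappa\bigl((\tau_a\wedge\tau_s,\tau_b\wedge\tau_s]\times B\bigr)$ by the definition $\kappa\circ\tau((a,b]\times B)=\kappa((\tau_a,\tau_b]\times B)$ --- and then extended to general $H$ by a monotone class argument. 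Applying \eqref{eq:timechcov} with $\kappa\in\{\mu,\nu\}$ and $H=F\circ\tau$, and using $(F\circ\tau)\circ A=F$, yields
\begin{equation}\label{eq:timechcov2}
  \bigl((F\circ\tau)\star(\mu\circ\tau)\bigr)_s=(F\star\mu)_{\tau_s},\qquad\bigl((F\circ\tau)\star(\nu\circ\tau)\bigr)_s=(F\star\nu)_{\tau_s},\qquad F\geq0.
\end{equation}
Alongside this I would establish the transfer of measurability: $Y\mapsto Y\circ\tau$ sends $\mathbb F$-optional (resp.\ $\mathbb F$-predictable) processes to $\mathbb G$-optional (resp., using continuity of $\tau$, $\mathbb G$-predictable) ones, and $Z\mapsto Z\circ A$ sends $\mathbb G$-optional (resp.\ $\mathbb G$-predictable) processes to $\mathbb F$-optional (resp.\ $\mathbb F$-predictable) ones. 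These are instances of the time-change theorem for optional/predictable processes, applied to $\tau$ as a time-change of $\mathbb F$ with induced filtration $\mathbb G$ and to $A$ as a time-change of $\mathbb G$ with induced filtration $\mathbb F$; alternatively they can be verified by hand on generators $\mathbf 1_{(a,b]\times C}$ by checking left-continuity (resp.\ right-continuity) and adaptedness of the image process, using $\mathcal F_a\cap\{a\leq\sigma\}\subseteq\mathcal F_\sigma$ (\cite[Lemma~25.2]{Kal}) for the relevant stopping time $\sigma$. Tensoring with $\mathcal J$ handles the $x$-variable.

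With these two ingredients the three parts follow quickly. For (i), if $G\geq0$ is optional for $\mathbb G$, then $G\circ A$ is optional for $\mathbb F$, so $(G\circ A)\star\mu$ is an $\mathbb F$-optional process since $\mu$ is $\mathbb F$-optional; by \eqref{eq:timechcov}, $s\mapsto(G\star(\mu\circ\tau))_s=((G\circ A)\star\mu)_{\tau_s}$ is an $\mathbb F$-optional process time-changed by $\tau$, hence $\mathbb G$-optional, so $\mu\circ\tau$ is $\mathbb G$-optional; part (ii) is identical with ``predictable'' in place of ``optional''. For (iii): if $(B_n)$ exhaust $\mathbb R_+\times\Omega\times J$ with $B_n$ predictable for $\mathbb F$ and $\mathbf 1_{B_n}\star\mu<\infty$ a.s., then $\widetilde B_n:=\{(u,\omega,x):(\tau_u(\omega),\omega,x)\in B_n\}$ are predictable for $\mathbb G$ (measurability transfer applied to $\mathbf 1_{B_n}$), exhaust $\mathbb R_+\times\Omega\times J$, and satisfy $\mathbf 1_{\widetilde B_n}\star(\mu\circ\tau)=(\mathbf 1_{B_n}\star\mu)\circ\tau<\infty$ a.s.\ by \eqref{eq:timechcov} (since $\mathbf 1_{\widetilde B_n}\circ A=\mathbf 1_{B_n}$), so $\mu\circ\tau$ and likewise $\nu\circ\tau$ are $\widetilde{\mathcal P}$-$\sigma$-finite for $\mathbb G$; by (ii), $\nu\circ\tau$ is $\mathbb G$-predictable; and for any nonnegative $\mathbb G$-predictable $W$,
\[
  \mathbb E\bigl(W\star(\mu\circ\tau)\bigr)_\infty=\mathbb E\bigl((W\circ A)\star\mu\bigr)_\infty=\mathbb E\bigl((W\circ A)\star\nu\bigr)_\infty=\mathbb E\bigl(W\star(\nu\circ\tau)\bigr)_\infty
\]
by \eqref{eq:timechcov} and the defining property of the compensator $\nu$ of $\mu$ applied to the $\mathbb F$-predictable $W\circ A$, so $\nu\circ\tau$ is the compensator of $\mu\circ\tau$ by uniqueness (\cite[Theorem~II.1.8]{JS}). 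Finally, for simple $\widetilde{\mathcal P}$-measurable (for $\mathbb F$) $F$ with $\mathbb E(|F|\star\mu)_\infty<\infty$, the process $F\circ\tau$ is bounded and $\widetilde{\mathcal P}$-measurable for $\mathbb G$, with $\mathbb E(|F\circ\tau|\star(\mu\circ\tau))_\infty=\mathbb E(|F|\star\mu)_\infty<\infty$ by \eqref{eq:timechcov}, so $(F\circ\tau)\star(\overline{\mu\circ\tau})$ is well defined; \eqref{eq:apptimechangesimple} follows by letting $s\to\infty$ in \eqref{eq:timechcov2} (using $\tau_s\to\infty$ and dominated convergence), and \eqref{eq:apptimechange} follows by subtracting the two identities in \eqref{eq:timechcov2}.

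The step I expect to be the main obstacle is the transfer of measurability in the second paragraph: carefully keeping track of the relations between $\mathbb F$, $\mathbb G=(\mathcal F_{\tau_s})$ and the two families of stopping times $\tau_s$, $A_t$, and verifying that the substitution really does carry $\mathbb G$-predictable (resp.\ optional) generators to $\mathbb F$-predictable (resp.\ optional) processes and conversely. The remaining steps amount to a change of variables via a monotone class argument followed by a routine limit $s\to\infty$.
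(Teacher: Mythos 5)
Your proof is correct but takes a genuinely different route from the paper's. The paper's proof is essentially a citation: after observing that continuity and strict monotonicity of $A$ make $\tau$ continuous (so any $\mu$ is automatically $\tau$-continuous), it invokes Jacod's time-change theorems \cite[Theorems 10.27(c,d,e) and 10.28]{Jac79} for (i), (ii), (iii) and the formulas. You instead reconstruct the needed results from first principles: pathwise inversion gives $\tau=A^{-1}$ with $A\circ\tau=\tau\circ A=\mathrm{id}$; $\{\tau_s\leq t\}=\{A_t\geq s\}$ and $\{A_t\leq s\}=\{\tau_s\geq t\}$ give the two families of stopping times; the explicit change-of-variables identity $\bigl(H\star(\kappa\circ\tau)\bigr)_s=\bigl((H\circ A)\star\kappa\bigr)_{\tau_s}$, established on generators via $\mathbf 1_{(a,b]}(A_v)=\mathbf 1_{(\tau_a,\tau_b]}(v)$ and extended by a monotone class argument, does all the computational work; and the optional/predictable measurability transfer reduces (i), (ii) and the compensator identification in (iii) to the compensator's defining property together with uniqueness. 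The paper's approach buys brevity and leans on a standard reference; yours buys self-containedness and makes explicit exactly where the continuity of $\tau$ (equivalently, the strict monotonicity and continuity of $A$) enters—namely, in the generator computation $\{v:A_v\in(a,b]\}=(\tau_a,\tau_b]$, which is the elementary form of $\tau$-continuity that Jacod's theorems assume. The one step you flag as the main obstacle, the optional/predictable measurability transfer, is exactly the content of Jacod's Theorem 10.27; your sketch of verifying it on generators using $\mathcal F_t\cap\{t\leq\tau_s\}\subset\mathcal F_{\tau_s}$ is correct and fills in what the paper leaves to the citation.
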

\begin{proof}
 First of all notice that since $A$ is strictly increasing and continuous a.s., $s\mapsto \tau_s$ is an a.s.\ continuous function, so any random measure $\mu$ is $\tau$-continuous. Therefore (i) and (ii) follow from \cite[Theorem 10.27(c,d)]{Jac79}. Let us prove (iii). The fact that $\nu\circ\tau$ is a compensator of $\mu\circ\tau$ holds due to \cite[Theorem 10.27(e)]{Jac79}, while the rest follows from \cite[Theorem 10.28]{Jac79}, and in particular \eqref{eq:apptimechangesimple} follows from the~definition of $\mu\circ\tau$ and $\nu\circ\tau$.
\end{proof}
For more information on time-changes for random measures we refer to \cite[Chapter X]{Jac79}.

Let $(S, \Sigma,\rho)$ be a measure space. For $1<p,q<\infty$ we define $\hat{\mathcal S}_q^p$, $\hat{\mathcal D}_{q,q}^p$ and $\hat{\mathcal D}_{p,q}^p$ as the Banach spaces of all functions $F: \mathbb R_+ \times\Omega \times J \to L^q(S)$ that are $\widetilde{\mathcal P}$-measurable and for which the corresponding norms are finite:
\begin{equation}\label{eq:spqhatdpqqhatdppqhat}
\begin{split}
\|F\|_{\hat{\mathcal S}_q^p} &:= \Bigl( \mathbb E \Bigl\| \Bigl(\int_{\mathbb R_+\times J}|F|^2 \ud \nu \Bigr)^{\frac 12}\Bigr\|^p_{L^q(S)} \Bigr)^{\frac 1p},\\
\|F\|_{\hat{\mathcal D}_{q,q}^p} &:=  \Bigl( \mathbb E \Bigl( \int_{\mathbb R_+\times J}\|F\|^q_{L^q(S)} \ud \nu \Bigr)^{\frac pq} \Bigr)^{\frac 1p},\\
\|F\|_{\hat{\mathcal D}_{p,q}^p} &:=  \Bigl( \mathbb E  \int_{\mathbb R_+\times J}\|F\|^p_{L^q(S)} \ud \nu \Bigr)^{\frac 1p}.
\end{split}
\end{equation}
We show in Appendix~\ref{sec:appDuals} that 
$$(\hat{\mathcal S}_q^p)^* = \hat{\mathcal S}_{q'}^{p'}, \qquad (\hat{\mathcal D}_{q,q}^p)^* = \hat{\mathcal D}_{q',q'}^{p'}, 
\qquad (\hat{\mathcal D}_{p,q}^p)^* = \hat{\mathcal D}_{p',q'}^{p'}$$
hold isomorphically with constants depending only on $p$ and $q$.
\begin{theorem}\label{thm:mainintranmeas}
Fix $1<p,q<\infty$. Let $\mu$ be an optional $\widetilde{\mathcal P}$-$\si$-finite random measure on $\mathbb R_+\times \mathcal J$ and suppose that its compensator $\nu$ is non-atomic in time.
 Then for any 
 simple $\widetilde{\mathcal P}$-measurable $F:\mathbb R_+ \times \Omega \times J \to L^q(S)$ and for any $A \in \widetilde{\mathcal P}$ with $\mathbb E \mathbf 1_{A}\star \mu < \infty$
 \begin{equation}\label{eq:mainstochint}
  \Bigl( \mathbb E \sup_{0\leq s\leq t}\| ((F\mathbf 1_{A}) \star \bar{\mu})_s\|^p_{L^q(S)} \Bigr)^{\frac 1p} \eqsim_{p,q}\|F \mathbf 1_{A}\mathbf 1_{[0,t]}\|_{\mathcal I_{p,q}},
 \end{equation}
where $\mathcal I_{p,q}$ is given by
\begin{equation}\label{eq:ipq}
 \begin{split}
   \hat{\mathcal S}^p_q \cap \hat{\mathcal D}^p_{q,q} \cap \hat{\mathcal D}^p_{p,q} & \text{ if } \; 2\leq q\leq p<\infty,\\
 \hat{\mathcal S}^p_q \cap (\hat{\mathcal D}^p_{q,q} + \hat{\mathcal D}^p_{p,q}) & \text{ if } \; 2\leq p\leq q<\infty,\\
 (\hat{\mathcal S}^p_q \cap \hat{\mathcal D}^p_{q,q}) + \hat{\mathcal D}^p_{p,q} & \text{ if } \; 1<p<2\leq q<\infty,\\
 (\hat{\mathcal S}^p_q + \hat{\mathcal D}^p_{q,q}) \cap \hat{\mathcal D}^p_{p,q} & \text{ if } \; 1<q<2\leq p<\infty,\\
 \hat{\mathcal S}^p_q + (\hat{\mathcal D}^p_{q,q} \cap \hat{\mathcal D}^p_{p,q}) & \text{ if } \; 1<q\leq p\leq 2,\\
 \hat{\mathcal S}^p_q + \hat{\mathcal D}^p_{q,q} + \hat{\mathcal D}^p_{p,q} & \text{ if } \; 1<p\leq q \leq 2.
 \end{split}
\end{equation}
\end{theorem}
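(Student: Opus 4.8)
The plan is to reduce the integral against $\bar\mu$ to a discrete-time martingale and invoke the Burkholder--Rosenthal inequalities of Theorem~\ref{thm:summaryBRIntro} (in the form of Remark~\ref{rem:oddmartingales} is not needed here — rather the plain statement suffices), together with the small-mass Novikov-type estimate of Proposition~\ref{prop:Nov} applied conditionally as in Corollary~\ref{cor:condexpmunu}. First I would reduce to the case $A = \widetilde\Omega$ and $\nu(\mathbb R_+\times J)\le 1$ a.s.\ is \emph{not} yet available; instead the first genuine step is a time-change. By Proposition~\ref{prop:apptimechange}, applied to a strictly increasing continuous predictable process $A_t$ built from (a regularized version of) the predictable ``clock'' $t\mapsto \nu([0,t]\times J)$ — one must first arrange, by adding a deterministic drift $\epsilon t$ and truncating, that this clock is strictly increasing, continuous, Lipschitz and tends to $\infty$ — we may pass to the induced filtration $\mathbb G$ and the time-changed measure $\mu\circ\tau$, whose compensator $\nu\circ\tau$ then has $\nu\circ\tau([s,s']\times J) = s'-s$; in particular it puts mass exactly $1$ on each unit interval. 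The identities \eqref{eq:apptimechangesimple}--\eqref{eq:apptimechange} guarantee that the left-hand side of \eqref{eq:mainstochint} and each of the three norms $\hat{\mathcal S}^p_q,\hat{\mathcal D}^p_{q,q},\hat{\mathcal D}^p_{p,q}$ are invariant under this change (after composing $F$ with $\tau$), so it suffices to prove the theorem in the normalized setting.

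Next, in the normalized setting, I would discretize time at the integer grid. Write $X_n := ((F\circ\tau)\star\overline{\mu\circ\tau})_n - ((F\circ\tau)\star\overline{\mu\circ\tau})_{n-1}$ for the increment of the stochastic integral over $(n-1,n]$; by Lemma~\ref{lemma:seqmoments} (or directly from the martingale property of $F\star\bar\mu$) the sequence $(X_n)_{n\ge1}$ is an $L^q(S)$-valued $\mathbb G_n$-martingale difference sequence, where $\mathbb G_n = \mathcal G_n$. Here the point of the normalization is that on each interval $(n-1,n]$ the restricted measure carries compensator-mass $\le 1$, so that Corollary~\ref{cor:condexpmunu} applies conditionally on $\mathcal G_{n-1}$: up to constants depending only on $p,q$,
\begin{equation*}
\mathbb E\bigl(\|X_n\|_{L^q(S)}^p \mid \mathcal G_{n-1}\bigr) \eqsim_{p,q} \mathbb E\Bigl(\int_{(n-1,n]\times J}\|F\circ\tau\|_{L^q(S)}^p\,\mathrm d(\nu\circ\tau)\ \Big|\ \mathcal G_{n-1}\Bigr),
\end{equation*}
and similarly with the square-function norm $\|\cdot\|_{S^p_q}$ on the left and $\int |F\circ\tau|^2\,\mathrm d(\nu\circ\tau)$ on the right, and with $\mathbb E_{n-1}\|X_n\|^q_{L^q(S)}$ versus $\mathbb E_{n-1}\int \|F\circ\tau\|^q\,\mathrm d(\nu\circ\tau)$. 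Summing these conditional estimates over $n$ and taking $L^p(\Omega)$-norms of the resulting sums identifies, up to $\eqsim_{p,q}$,
\begin{equation*}
\|(X_n)\|_{S^p_q}\eqsim_{p,q}\|F\circ\tau\|_{\hat{\mathcal S}^p_q},\qquad \|(X_n)\|_{D^p_{q,q}}\eqsim_{p,q}\|F\circ\tau\|_{\hat{\mathcal D}^p_{q,q}},\qquad \|(X_n)\|_{D^p_{p,q}}\eqsim_{p,q}\|F\circ\tau\|_{\hat{\mathcal D}^p_{p,q}}.
\end{equation*}
Here one must be slightly careful: the $\hat{\mathcal D}^p_{p,q}$ identity is exact because the sum of the $p$-th moments over the blocks equals the single integral, while the $\hat{\mathcal S}^p_q$ and $\hat{\mathcal D}^p_{q,q}$ identities lose only constants. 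Feeding these three equivalences into the Burkholder--Rosenthal inequalities of Theorem~\ref{thm:summaryBRIntro} (whose case distinction in $s_{p,q}$ matches \eqref{eq:ipq} exactly) together with Doob's maximal inequality applied to $n\mapsto ((F\circ\tau)\star\overline{\mu\circ\tau})_n$ — which controls $\sup_{0\le s\le t}$, noting that the supremum over continuous time on a block is comparable to the block endpoint value by the $p=1$ and $p=2$ Novikov bounds inside the block — yields \eqref{eq:mainstochint} in the normalized setting, and undoing the time change finishes the proof.

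The main obstacle I anticipate is \emph{not} the martingale step but the bookkeeping around the time change: the clock $t\mapsto\nu([0,t]\times J)$ need not be finite, strictly increasing, or continuous, so one has to regularize (add $\epsilon t$, truncate at a large constant, apply Lemma~\ref{lem:fin[0,C]exists}/Corollary~\ref{cor:Ffromcondexpex} to realize it as $\int_0^t f_s\,\mathrm ds$ with $0\le f\le C$) and then argue that all quantities in \eqref{eq:mainstochint} depend continuously on $\epsilon$ and on the truncation level, so that the estimate survives a limiting argument; the restriction to $A\in\widetilde{\mathcal P}$ with $\mathbb E\mathbf 1_A\star\mu<\infty$ is exactly what makes the clock of $\mu\mathbf 1_A$ manageable, via Lemma~\ref{lemma:restmu_1tomu_2}. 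A secondary technical point is that Doob's inequality only gives the discrete maximum at integer times; controlling the genuine running supremum requires the within-block estimate mentioned above, which in turn uses that $\nu\circ\tau$ has mass $1$ per block, i.e.\ precisely the output of the time change. Once these two points are handled, the algebra of sums and intersections is dictated term-by-term by Theorem~\ref{thm:summaryBRIntro} and requires no further ideas.
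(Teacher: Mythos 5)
Your plan has the right skeleton (time change to normalize the compensator, then block discretization and Burkholder--Rosenthal), and it is broadly the same route the paper takes, but two of the steps as you have written them do not go through.

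\textbf{The integer-grid discretization does not yield the claimed norm equivalences.} You assert that
\[
\|(X_n)\|_{S^p_q}\eqsim_{p,q}\|F\circ\tau\|_{\hat{\mathcal S}^p_q},\qquad
\|(X_n)\|_{D^p_{q,q}}\eqsim_{p,q}\|F\circ\tau\|_{\hat{\mathcal D}^p_{q,q}}.
\]
Trace the left-hand sides through: the pointwise-in-$S$ conditional isometry gives $\mathbb E_{\mathcal G_{n-1}}|X_n|^2 = \mathbb E_{\mathcal G_{n-1}}\int_{(n-1,n]\times J}|F\circ\tau|^2\,d\nu_\tau$, so
\[
\|(X_n)\|_{S^p_q}=\Bigl(\mathbb E\Bigl\|\Bigl(\sum_n\mathbb E_{\mathcal G_{n-1}}\!\!\int_{(n-1,n]\times J}|F\circ\tau|^2\,d\nu_\tau\Bigr)^{1/2}\Bigr\|_{L^q(S)}^p\Bigr)^{1/p},
\]
whereas $\|F\circ\tau\|_{\hat{\mathcal S}^p_q}$ has the same expression \emph{without} the conditional expectations $\mathbb E_{\mathcal G_{n-1}}$ inside the sum. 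These are genuinely different random variables (they agree only in expectation), so the two norms are not comparable for a fixed grid; the same objection applies to $D^p_{q,q}$. Only $D^p_{p,q}$ escapes, because there the outer structure $\sum_n\mathbb E\,\mathbb E_{\mathcal G_{n-1}}(\cdot)=\sum_n\mathbb E(\cdot)$ kills the conditioning. This is precisely why the paper's Step~1 works at dyadic scale $2^{-m}$ and then sends $m\to\infty$: by Corollary~\ref{cor:Ffromcondexpex} the sum of conditional increments $\sum_n\mathbb E_{\mathcal F_{n/2^m}}\bigl((|F|^2\star\nu)_{(n+1)/2^m}-(|F|^2\star\nu)_{n/2^m}\bigr)$ converges a.s.\ to $(|F|^2\star\nu)_\infty$ as the mesh shrinks, which is how the conditioning inside the square function is removed. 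Your fixed mesh never does this, so the equivalence fails as stated.

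\textbf{The lower bound is not a consequence of the norm equivalences, even if they held.} When $s_{p,q}$ and $\mathcal I_{p,q}$ are sums (e.g.\ $1<p\le q\le 2$), the norms are infima over decompositions. A decomposition $F=F_1+F_2+F_3$ yields one of $(X_n)=(X_n^1)+(X_n^2)+(X_n^3)$, which gives the upper bound; but a generic decomposition of $(X_n)$ into martingale difference sequences has no reason to come from predictable integrands $F_\alpha$, so the reverse inequality $\|F\|_{\mathcal I_{p,q}}\lesssim\|(X_n)\|_{s_{p,q}}$ does not follow from block-wise comparisons. The paper avoids this entirely: it proves only the upper bound by the block argument and then derives the lower bound from the dualities $(\hat{\mathcal S}^p_q)^*=\hat{\mathcal S}^{p'}_{q'}$, $(\hat{\mathcal D}^p_{q,q})^*=\hat{\mathcal D}^{p'}_{q'}$ (Corollary~\ref{cor:dualIpq}) together with Corollary~\ref{lemma:equalityfordual} and the abstract duality Lemma~\ref{lemma:jkstaff}. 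Your proposal does not invoke duality at all, so the lower bound is missing.

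Two minor points: the time change built from $A_t=\nu([0,t]\times J)+t$ gives $\nu_\tau((s,t]\times J)\le t-s$, not exact equality, which is still enough for Corollary~\ref{cor:condexpmunu}; and the running supremum is handled in one line by Doob's maximal inequality applied to the continuous-time martingale $F\star\bar\mu$ itself, so the ``within-block'' argument you anticipate is not needed.
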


\begin{proof} By Lemma~\ref{lemma:restmu_1tomu_2} we can assume without loss of generality that $F:= F\mathbf 1_{A}$, $\mu:= \mu\mathbf 1_{A},$ and that there exists a $T\geq 0$ such that $F(t) =0$ for each $t\geq T$. Since $F$ is simple, it is uniformly bounded on $\mathbb R_+ \times \Omega \times J$ and, due to the fact that $\mathbb E \mathbf 1_{A} \star \mu = \mathbb E \mu(\mathbb R_+ \times \Omega) < \infty$, we find $\mathbb E \|F\star \mu\|< \infty$. Consequently $F\star \bar{\mu}$ exists and it is a local martingale. Therefore Doob's maximal inequality implies
\[
 (\mathbb E\|(F\star \bar{\mu})_{t}\|^p)^{\frac 1p} \eqsim_{p}  \Bigl( \mathbb E \sup_{0\leq s\leq t}\| (F\mathbf  \star \bar{\mu})_s\|^p_{L^q(S)} \Bigr)^{\frac 1p}
\]
and so it is enough to show that
\begin{equation}\label{eq:mainstochint1}
(\mathbb E\|(F\star \bar{\mu})_{t}\|^p)^{\frac 1p}  \eqsim_{p,q}\|F \mathbf 1_{[0,t]}\|_{\mathcal I_{p,q}}.
\end{equation}
The proof consists of two steps. In the first step we assume that $\nu$ is absolutely continuous with respect to Lebesgue measure. In this case, we can derive the upper bounds in \eqref{eq:mainstochint1} from the Burkholder-Rosenthal inequalities, Corollary \ref{cor:condexpmunu} and Corollary~\ref{cor:Ffromcondexpex}. The lower bounds then follow by duality. In the second step we deduce the general result via a time-change argument based on Proposition~\ref{prop:apptimechange}.

{\it Step 1: $\nu((s, t]\times J) \leq (t-s)$ for each $t\geq s\geq 0$ a.s.} 
 We will consider the cases $2\leq q\leq p<\infty$ and $1< p\leq q\leq 2$, the proofs in the other cases are similar. 
 
 {\it Case $2\leq q\leq p<\infty$}: Fix $m\geq 1$. Let $F_n := F\mathbf 1_{\left(\frac{n}{2^m}, \frac{n+1}{2^m}\right]}$ for each $n\geq 0$. Then
\[
 (d_n)_{n\geq 0} := \left((F_n \star \bar{\mu})_{\infty}\right)_{n\geq 0}
\]
is an $L^q(S)$-valued martingale difference sequence with respect to a filtration \linebreak $\bigl(\mathcal F_{\frac{n+1}{2^m}}\bigr)_{n\geq 0}$. Theorem \ref{thm:summaryBRIntro} implies
\begin{align*}
 \mathbb E \|(F \star \bar{\mu})_{\infty}\|_{L^q(S)}^p &=  \mathbb E \Bigl\|\sum_{n\geq 0} (F_n \star \bar{\mu})_{\infty}\Bigr\|_{L^q(S)}^p = \mathbb E \Bigl\|\sum_{n\geq 0} d_n\Bigr\|_{L^q(S)}^p\eqsim_{p, q} \|(d_n)\|_{s_{p,q}}^p\\
 &\eqsim_p (\|(d_n)\|_{S^p_q} + \|(d_n)\|_{D^p_{q,q}} + \|(d_n)\|_{D^p_{p,q}})^p.
\end{align*}
To bound $\|(d_n)\|_{S^p_q}$, observe that
\begin{align}\label{eq:dijspq}
    \|(d_{n})\|_{S^p_q} &= \Bigl( \mathbb E \Bigl\| \sum_{n} \mathbb E_{\mathcal F_{\frac{n}{2^m}}}|d_{n}|^2 \Bigr\|^p_{L^q(S)} \Bigr)^{\frac 1p} = \Bigl( \mathbb E \Bigl\| \sum_{n} \mathbb E_{\mathcal F_{\frac{n}{2^m}}}|(F_n \star \bar{\mu})_{\infty}|^2 \Bigr\|^p_{L^q(S)} \Bigr)^{\frac 1p}\nonumber\\
  &\stackrel{(*)}\eqsim_p \Bigl( \mathbb E \Bigl\| \sum_{n} \mathbb E_{\mathcal F_{\frac{n}{2^m}}}(|F_n|^2 \star \nu)_{\infty}\Bigr\|^p_{L^q(S)} \Bigr)^{\frac 1p}\\
  &=\Bigl( \mathbb E \Bigl\| \sum_{n} \mathbb E_{\mathcal F_{\frac{n}{2^m}}}\bigl((|F|^2 \star \nu)_{\frac{n+1}{2^m}} - (|F|^2 \star \nu)_{\frac{n}{2^m}}\bigr)\Bigr\|^p_{L^q(S)} \Bigr)^{\frac 1p},\nonumber
\end{align}
 where $(*)$ holds by Corollary \ref{cor:condexpmunu} and the fact that 
 $$
 \nu\Bigl(\Bigl(\frac{n}{2^m}, \frac{n+1}{2^m}\Bigr]\Bigr) \leq \frac{n+1}{2^m}-\frac{n}{2^m} = \frac{1}{2^m}\leq 1.
 $$ 
 Notice that for a.e.\ $\omega\in\Omega$, all $s\in S$, and each $t\geq u\geq 0$
 \begin{align*}
  (|F|^2 \star \nu)_{t}(s,\omega) - (|F|^2 \star \nu)_{u}(s,\omega)&\leq \sup|F(s)|^2(\nu((u, t]\times J)(\omega))\\
  &\leq \sup|F(s)|^2(t-u),
 \end{align*}
so by Corollary \ref{cor:Ffromcondexpex}
\[
\sum_{n} \mathbb E_{\mathcal F_{\frac{n}{2^m}}}\bigl((|F|^2 \star \nu)_{\frac{n+1}{2^m}} - (|F|^2 \star \nu)_{\frac{n}{2^m}}\bigr) \to (|F|^2\star \nu)_T = (|F|^2\star \nu)_{\infty}
\]
a.s.\ as $m\to \infty$. Therefore thanks to \eqref{eq:dijspq}
\begin{equation}\label{eq:SpqfromNov}
 \begin{split}
   \|(d_{n})\|_{S^p_q}&\eqsim\Bigl( \mathbb E \Bigl\| \sum_{n} \mathbb E_{\mathcal F_{\frac{n}{2^m}}}\bigl((|F|^2 \star \nu)_{\frac{n+1}{2^m}} - (|F|^2 \star \nu)_{\frac{n}{2^m}}\bigr)\Bigr\|^p_{L^q(S)} \Bigr)^{\frac 1p}\\
  &\quad\stackrel{m\to \infty}\longrightarrow (\mathbb E \|(|F|^2\star \nu)_{\infty}\|_{L^q(S)}^p)^{\frac 1p} = \|F\|_{\mathcal S^p_q}.
 \end{split}
\end{equation}
Now let us estimate $\|(d_n)\|_{D_{q,q}^p}$. Analogously to \eqref{eq:dijspq}
\begin{align}\label{eq:dijdpqq}
  \|(d_{n})\|_{D_{q,q}^p} &= \Bigl( \mathbb E \Bigl(\sum_{n}  \mathbb E_{\mathcal F_{\frac{n}{2^m}}}\|d_{n} \|^q_{L^q(S)} \Bigr)^{\frac pq}\Bigr)^{\frac 1p} = \Bigl( \mathbb E  \Bigl(\sum_{n} \mathbb E_{\mathcal F_{\frac{n}{2^m}}}\|( F_n \star \bar{\mu} )_{\infty}\|^q_{L^q(S)} \Bigr)^{\frac pq}\Bigr)^{\frac 1p}\nonumber\\
  &\eqsim \Bigl( \mathbb E  \Bigl(\sum_{n} \mathbb E_{\mathcal F_{\frac{n}{2^m}}}(\| F_n\|^q_{L^q(S)} \star \nu)_{\infty}  \Bigr)^{\frac pq}\Bigr)^{\frac 1p}\\
  &= \Bigl( \mathbb E  \Bigl(\sum_{n} \mathbb E_{\mathcal F_{\frac{n}{2^m}}}\bigl((\| F\|^q_{L^q(S)} \star \nu)_{\frac{n+1}{2^m}} -  (\| F\|^q_{L^q(S)} \star \nu)_{\frac{n}{2^m}}\bigr) \Bigr)^{\frac pq}\Bigr)^{\frac 1p},\nonumber
\end{align}
and similarly to \eqref{eq:SpqfromNov} the last expression converges to $\|F\|_{\mathcal D_{q,q}^p}$. The same can be shown for $\mathcal D_{p,q}^p$.
 
{\it Case $1<p \leq q\leq 2$}: Let $\cI_{\mathrm{elem}}(\widetilde {\mathcal P})$ denote the linear space of all simple $\widetilde{\mathcal P}$-measurable $L^q(S)$-valued functions. This linear space is dense in $\hat{\cS}_{q}^p$, $\hat{\cD}_{p,q}^p$ and $\hat{\cD}_{q,q}^p$. Let $F \in \cI_{\mathrm{elem}}(\widetilde {\mathcal P})$. Fix a decomposition $F=F_1+F_2+F_3$ with $F_{\alpha} \in \cI_{\mathrm{elem}}(\widetilde {\mathcal P})$.

Fix $m\geq 1$ and set $F_{n,\alpha} = F_{\alpha}\mathbf 1_{\left(\frac{n}{2^m}, \frac{n+1}{2^m}\right]}$, $d_{n,\alpha} = F_{n,\alpha}\star\bar{\mu}$, $\alpha = 1,2,3$, so that
$$
(F\star \bar{\mu})_{T} = (F\star \bar{\mu})_{\infty} = \sum_{n} d_{n,1} + d_{n,2} + d_{n,3}.
$$
Then by Theorem~\ref{thm:summaryBRIntro}, \eqref{eq:dijspq}, \eqref{eq:SpqfromNov} and \eqref{eq:dijdpqq} we conclude that
$$
\Big(\E\|(F\star \bar{\mu})_{\infty}\|_{L^q(S)}^p\Big)^{\frac{1}{p}} \lesssim_{p,q} \|F_1\|_{\cS_{q}^p}  + \|F_2\|_{\cD_{p,q}^p} + \|F_3\|_{\cD_{q,q}^p}.
$$
Since $\cI_{\mathrm{elem}}(\widetilde {\mathcal P})$ is dense in $\hat{\cS}_{q}^p$, $\hat{\cD}_{p,q}^p$ and $\hat{\cD}_{q,q}^p$, we conclude by taking the infimum over $F_1,F_2,F_3$ as above that
$$
\Big(\E\|(F\star \bar{\mu})_{\infty}\|_{L^q(S)}^p\Big)^{\frac{1}{p}} \lesssim_{p,q} \|F\|_{\cI_{p,q}}.
$$

{\it The duality argument:} Fix $t< \infty$, $1<p,q<\infty$. Using the upper bounds in \eqref{eq:mainstochint1} we can obtain the stochastic integral $(F\star \bar{\mu})_{t}$ as an $L^p$-limit of the integrals of the corresponding simple approximations of $F$ in $\mathcal I_{p,q}$. Let $Y$ be the closure of the linear subspace $\cup_{F\in \mathcal I_{p,q}}(F\star \bar{\mu})_t$ in $L^p(\Omega;L^q(S))$ and let $X = \mathcal I_{p,q}$. By Corollary \ref{cor:dualIpq}, $X^* = \mathcal I_{p',q'}$. Let $U$ (resp.\ $V$) be the dense subspace of $X$ (resp.\ $X^*$) consisting of all $\widetilde {\mathcal P}$-measurable simple $L^q(S)$-valued (resp.\ $L^{q'}(S)$-valued) functions. Define both $j_0:U\to Y$ and $k_0: V \to Y^*$ by $F \mapsto (F\star \bar{\mu})_{t}$. Note that $k_0$ maps into $Y^*$ since each $(F\star \bar{\mu})_{t}$ is in $L^{p'}(\Omega; L^{q'}(S))$, so it defines a bounded linear functional on $Y$. By the upper bounds in \eqref{eq:mainstochint1}, $j_0$ and $k_0$ are bounded. Moreover, by the definition of $Y$, $\text{ran }j_0$ is dense in $Y$. Finally, by Corollary \ref{lemma:equalityfordual} $\langle F^*, F\rangle = \langle k_0(F^*), j_0(F)\rangle$ for all $F\in U$ and $F^*\in V$. Now \eqref{eq:mainstochint} follows from Lemma \ref{lemma:jkstaff}. 

{\it Step 2: general case.} Recall that, due to our assumptions in the beginning of the proof, $\mathbb E \mu(\mathbb R_+\times \Omega) = \mathbb E \nu(\mathbb R_+ \times \Omega)<\infty$. Since $\nu$ is non-atomic in time, we can define a continuous strictly increasing predictable process $A:\mathbb R_+ \times \Omega \to \mathbb R_+$ by 
\[
 A_t = \nu([0,t]\times J) + t,\;\;\; t\geq 0.
\]
Let $\tau =(\tau_s)_{s\geq 0}$ be the time-change defined in Proposition \ref{prop:apptimechange}. Then according to Proposition \ref{prop:apptimechange} the random measure $\mu_{\tau}:= \mu\circ\tau$ is $\mathbb G$-optional, where $\mathbb G:= (\mathcal G_s)_{s\geq 0} = (\mathcal F_{\tau_s})_{s\geq 0}$. Moreover, $\nu_{\tau}:= \nu\circ\tau$ is $\mathbb G$-predictable and a compensator of $\mu_{\tau}$. Let $G:= F\circ\tau$. Notice that for each $t\geq s\geq 0$ a.s.
\begin{equation}\label{eq:nu_tauisforstep1}
 \begin{split}
    \nu_{\tau}((s,t]\times J) &= \nu((\tau_s, \tau_t]\times J) = \nu((0, \tau_t]\times J) - \nu((0, \tau_s]\times J)\\
  &\leq\nu((0, \tau_t]\times J) - \nu((0, \tau_s]\times J) + (\tau_t - \tau_s)\\
  &= (\nu((0, \tau_t]\times J) + \tau_t) - (\nu((0, \tau_s]\times J) + \tau_s) = t-s.
 \end{split}
\end{equation}
Let $\mathcal I_{p,q}^{\tau}$ be defined as $\mathcal I_{p,q}$ but for the random measure $\nu_{\tau}$. By \eqref{eq:nu_tauisforstep1} Step 1 yields $\mathbb E\|G\star \bar{\mu}_{\tau}\|^p \eqsim_{p,q} \|G\|_{\mathcal I_{p, q}^{\tau}}^p$. Indeed, by \eqref{eq:apptimechange}, $\mathbb E\|(G\star \bar{\mu}_{\tau})_{\infty}\|^p = \mathbb E\|F\star \bar{\mu}\|^p$. Moreover, for given $F_i$ and $G_i=F_i\circ \tau$, $i=1,2,3$, it follows from \eqref{eq:apptimechangesimple} that
\begin{equation*}
\begin{split}
 \mathbb E \Bigl\| \Bigl(\int_{\mathbb R_+\times J}|G_1|^2 \ud \nu_{\tau} \Bigr)^{\frac 12}\Bigr\|^p_{L^q(S)}&=  \mathbb E \Bigl\| \Bigl(\int_{\mathbb R_+\times J}|F_1|^2 \ud \nu \Bigr)^{\frac 12}\Bigr\|^p_{L^q(S)} = \|F_1\|_{\mathcal S_p^q}^p,\\
 \mathbb E \Bigl( \int_{\mathbb R_+\times J}\|G_2\|^q_{L^q(S)} \ud \nu_{\tau} \Bigr)^{\frac pq}&=  \mathbb E \Bigl( \int_{\mathbb R_+\times J}\|F_2\|^q_{L^q(S)} \ud \nu \Bigr)^{\frac pq} = \|F_2\|_{\mathcal D_{q,q}^p}^p,\\
 \mathbb E  \int_{\mathbb R_+\times J}\|G_3\|^p_{L^q(S)} \ud \nu_{\tau}&= \mathbb E  \int_{\mathbb R_+\times J}\|F_3\|^p_{L^q(S)} \ud \nu = \|F_3\|_{\mathcal D_{p,q}^p}^p.
\end{split}
\end{equation*}
Consequently, $\|G\|_{\mathcal I_{p, q}^{\tau}} = \|F\|_{\mathcal I_{p, q}}$. We conclude that $\mathbb E\|F\star \bar{\mu}\|^p \eqsim_{p,q} \|F\|_{\mathcal I_{p, q}}^p$.
\end{proof}

\begin{remark}
Let us compare our result to the literature. The upper bounds in Theorem~\ref{thm:mainintranmeas} were discovered in the scalar-valued case by A.A.\ Novikov in \cite[Theorem 1]{Nov75}. By exploiting an orthonormal basis one can easily extend this result to the Hilbert-space valued integrands, see \cite[Section 3.3]{MaR14} for details. The paper \cite{MaR14} contains several other proofs of the Hilbert-space valued version of Novikov's inequality. In the context of Poisson random measures, Theorem~\ref{thm:mainintranmeas} was obtained in \cite{Dir14}. Some one-sided extensions of the latter result in the context of general Banach spaces were obtained in \cite{DMN12}. However, these bounds, which are based on the martingale type and cotype of the space, are only matching in the Hilbert-space case and not optimal in general (in particular for $L^q$-spaces). A very different proof of the upper bounds in Theorem~\ref{thm:mainintranmeas}, which exploits tools from stochastic analysis, was discovered independently of our work by Marinelli in \cite{Mar13}.
\end{remark}
As a corollary, we obtain the following sharp bounds for stochastic integrals.
\begin{theorem}\label{thm:intwrtpurdismart}
Fix $1<p,q<\infty$. Let $H$ be a Hilbert space, $(S, \Sigma,\rho)$ be a measure space and let $M:\mathbb R_+ \times \Omega \to H$ be a purely discontinuous quasi-left continuous local martingale. Let $\Phi:\mathbb R_+ \times \Omega \to \mathcal L(H,L^q(S))$ be elementary predictable. Then 
 \begin{equation}\label{eq:mainstochintwrtpurdismart}
  \Bigl(\mathbb E \sup_{0\leq s\leq t} \|(\Phi\cdot M)_s\|^p_{L^q(S)} \Bigr)^{\frac 1p} \eqsim_{p,q}\|\Phi_H{ \mathbf{1}_{[0,t]}}\|_{\mathcal I_{p,q}},
 \end{equation}
where $\Phi_H:\mathbb R_+ \times \Omega \times H \to L^q(S)$ is defined by
\[
 \Phi_H(t,\omega, h) := \Phi(t,\omega)h,\;\;\; t\geq0, \omega \in \Omega, h\in H,
\]
and $\mathcal I_{p,q}$ is given as in \eqref{eq:ipq} for $\nu = \nu^M$.
\end{theorem}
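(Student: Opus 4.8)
The plan is to obtain the result as a corollary of Theorem~\ref{thm:mainintranmeas}, by combining it with the representation of $\Phi\cdot M$ as an integral against the compensated jump measure of $M$ and passing to a limit in an exhausting sequence of truncations. Replacing $\Phi$ by $\Phi\mathbf 1_{[0,t]}$, which is again elementary predictable and satisfies $(\Phi\mathbf 1_{[0,t]})_H=\Phi_H\mathbf 1_{[0,t]}$ and $\sup_{s\ge 0}\|((\Phi\mathbf 1_{[0,t]})\cdot M)_s\|=\sup_{0\le s\le t}\|(\Phi\cdot M)_s\|$, it suffices to treat $t=\infty$. Since $M$ is purely discontinuous and quasi-left continuous, Lemma~\ref{lemma:corII.1.19} shows that $\mu:=\mu^M$ is an optional $\widetilde{\mathcal P}$-$\sigma$-finite random measure whose compensator $\nu:=\nu^M$ is non-atomic in time, so Theorem~\ref{thm:mainintranmeas} applies to $\bar\mu=\mu-\nu$. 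Applying Lemma~\ref{lemma:whyPhijisint} with $X=L^q(S)$ produces an increasing sequence $(A_n)_{n\ge 1}\subset\widetilde{\mathcal P}$ with $\bigcup_n A_n=\mathbb R_+\times\Omega\times H$, $\mathbb E\,\mathbf 1_{A_n}\star\mu<\infty$, each $\Phi_H\mathbf 1_{A_n}$ integrable against $\bar\mu$, and the dichotomy: $(\Phi_H\mathbf 1_{A_n})\star\bar\mu\to\Phi\cdot M$ in $L^p(\Omega;L^q(S))$ if $\Phi\cdot M\in L^p(\Omega;L^q(S))$, while $\|(\Phi_H\mathbf 1_{A_n})\star\bar\mu\|_{L^p(\Omega;L^q(S))}\to\infty$ otherwise.

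For each fixed $n$ the integrand $\Phi_H\mathbf 1_{A_n}$ is bounded and supported on $A_n$, a set of finite $\mathbb P\times\nu$-measure (indeed $\mathbb E\,\mathbf 1_{A_n}\star\nu=\mathbb E\,\mathbf 1_{A_n}\star\mu<\infty$); hence it lies in $\mathcal I_{p,q}$ and is approximated, simultaneously in $\hat{\mathcal S}_q^p$, $\hat{\mathcal D}_{q,q}^p$ and $\hat{\mathcal D}_{p,q}^p$, by simple $\widetilde{\mathcal P}$-measurable functions with convergent stochastic integrals. Therefore Theorem~\ref{thm:mainintranmeas}, applied with $F=\Phi_H$ and $A=A_n$ and extended from simple integrands by this approximation, gives, writing $Z_n:=\sup_{s\ge 0}\|((\Phi_H\mathbf 1_{A_n})\star\bar\mu)_s\|_{L^q(S)}$,
$$
\|Z_n\|_{L^p(\Omega)}\eqsim_{p,q}\|\Phi_H\mathbf 1_{A_n}\|_{\mathcal I_{p,q}},\qquad n\ge 1.
$$
By the Burkholder--Davis--Gundy inequality, Doob's maximal inequality and the two alternatives of Lemma~\ref{lemma:whyPhijisint}, $\|Z_n\|_{L^p(\Omega)}\to\bigl(\mathbb E\sup_{s\ge 0}\|(\Phi\cdot M)_s\|_{L^q(S)}^p\bigr)^{1/p}$ in $[0,\infty]$, this limit being finite in the first case and $+\infty$ in the second.

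It remains to pass to the limit on the right-hand side. Because $A_n$ increases to the full space and the three norms $\hat{\mathcal S}_q^p$, $\hat{\mathcal D}_{q,q}^p$, $\hat{\mathcal D}_{p,q}^p$ are built from integrals of quantities that increase pointwise under multiplication by $\mathbf 1_{A_n}$, each of them satisfies $\|\Phi_H\mathbf 1_{A_n}\|\uparrow\|\Phi_H\|$ by monotone convergence (with the value possibly infinite); since multiplication by $\mathbf 1_{A_n}$ can only decrease any of the six sum/intersection expressions, it follows that $n\mapsto\|\Phi_H\mathbf 1_{A_n}\|_{\mathcal I_{p,q}}$ is nondecreasing. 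Combining this with the displayed two-sided estimate forces $\|Z_m\|_{L^p}\le(C_{p,q}/c_{p,q})\|Z_{m'}\|_{L^p}$ for $m\le m'$, hence (letting $m'\to\infty$) $\sup_m\|Z_m\|_{L^p}\eqsim_{p,q}\lim_m\|Z_m\|_{L^p}$, and therefore
$$
\Bigl(\mathbb E\sup_{s\ge 0}\|(\Phi\cdot M)_s\|_{L^q(S)}^p\Bigr)^{1/p}\eqsim_{p,q}\sup_n\|\Phi_H\mathbf 1_{A_n}\|_{\mathcal I_{p,q}},
$$
with both sides possibly infinite.

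The main obstacle is then to identify $\sup_n\|\Phi_H\mathbf 1_{A_n}\|_{\mathcal I_{p,q}}$ with $\|\Phi_H\|_{\mathcal I_{p,q}}$. When $\mathcal I_{p,q}$ is a pure intersection this is immediate from the componentwise monotone convergence just noted. In the cases involving a sum of spaces one must upgrade the supremum of the truncated sum-norms to the full sum-norm. I would do this using reflexivity of $\hat{\mathcal S}_q^p$, $\hat{\mathcal D}_{q,q}^p$, $\hat{\mathcal D}_{p,q}^p$ (and hence of their intersections) for $1<p,q<\infty$, which follows from the duality identities of Appendix~\ref{sec:appDuals}: given near-optimal decompositions $\Phi_H\mathbf 1_{A_n}=u_n+v_n\,(+\,w_n)$ in the relevant components, one extracts weakly convergent subsequences $u_n\rightharpoonup u$, $v_n\rightharpoonup v$ $(w_n\rightharpoonup w)$; since $\Phi_H\mathbf 1_{A_n}\to\Phi_H$ pointwise, the weak limits provide a decomposition of $\Phi_H$ with $\|\Phi_H\|_{\mathcal I_{p,q}}\le\lim_n\|\Phi_H\mathbf 1_{A_n}\|_{\mathcal I_{p,q}}$, while the reverse inequality is trivial from truncation. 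This yields \eqref{eq:mainstochintwrtpurdismart}.
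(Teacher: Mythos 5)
Your proposal is correct and follows the same route as the paper, which gives a one-line proof citing Doob's maximal inequality, Lemma~\ref{lemma:whyPhijisint}, Theorem~\ref{thm:mainintranmeas}, and the monotone convergence $\|\Phi_H\mathbf 1_{A_n}\|_{\mathcal I_{p,q}}\nearrow\|\Phi_H\|_{\mathcal I_{p,q}}$. The one place where you do genuinely more is the final step: the paper attributes $\sup_n\|\Phi_H\mathbf 1_{A_n}\|_{\mathcal I_{p,q}}=\|\Phi_H\|_{\mathcal I_{p,q}}$ simply to ``the monotone convergence theorem,'' which is immediate only when $\mathcal I_{p,q}$ is a pure intersection. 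For the sum cases that assertion is not automatic, and your observation that one must take near-optimal decompositions of $\Phi_H\mathbf 1_{A_n}$, use reflexivity of $\hat{\mathcal S}_q^p$, $\hat{\mathcal D}_{q,q}^p$, $\hat{\mathcal D}_{p,q}^p$ (which follows from the duality results of Appendix~\ref{sec:appDuals}) to extract weakly convergent subsequences, and then use weak lower semicontinuity of the norm to produce a decomposition of $\Phi_H$ with comparable norm, is a sound and necessary patch. In short: same decomposition, same lemmas, but you supply a genuine justification for a limit the paper waves through; this is a strengthening rather than a different method.
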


\begin{proof}
The result follows from Doob's maximal inequality, Lemma~\ref{lemma:whyPhijisint}, Theorem~\ref{thm:mainintranmeas}, and the fact that $\|\Phi_H \mathbf 1_{A_n}\|_{\mathcal I_{p, q}}\nearrow \|\Phi_H \|_{\mathcal I_{p, q}}$ as $n\to \infty$ by the monotone convergence theorem.
\end{proof}

\subsection{Integration with respect to continuous martingales}
\label{sec:ItoC}

Finally, let us recall the known sharp bounds for $L^q$-valued stochastic integrals with respect to continuous local martingales. These bounds are a special case of the main result in \cite{VY}. To formulate these, we will need $\gamma$-radonifying operators. Let $(\gamma_n')_{n \geq 1}$ be a sequence of independent standard Gaussian random variables
on a probability space $(\Omega', {\mathcal F'}, \mathbb P')$ (we reserve the notation $(\Omega, {\mathcal F}, \mathbb P)$ for the probability space on which our processes live) and let $H$ be a separable Hilbert space. A bounded operator $R \in\mathcal L (H, X)$ is said to be {\it $\gamma$-radonifying} if for some (and then for each) orthonormal basis $(h_n)_{n\geq 1}$ of $H$ the Gaussian series $\sum_{n\geq 1} \gamma_n' Rh_n$ converges in $L^2 (\Omega'; X)$. We then define
$$
\|R\|_{\gamma(H,X)} :=\Bigl(\mathbb E'\Bigl\|\sum_{n\geq 1} \gamma_n' Rh_n \Bigr\|_X^2\Bigr)^{\frac 12}.
$$
This number does not depend on the sequence $(\gamma_n')_{n\geq 1}$ and the basis $(h_n)_{n\geq 1}$, and
defines a norm on the space $\gamma(H, X)$ of all $\gamma$-radonifying operators from $H$ into
$X$. Endowed with this norm, $\gamma(H, X)$ is a Banach space, which is separable if $X$
is separable. Moreover, if $X = L^q(S)$ for some separable measure space $(S, \Sigma, \rho)$, then thanks to the Trace Duality that is presented e.g. in \cite{HNVW2} we have that
\begin{equation}\label{eq:traceduality}
 (\gamma(H, X))^*\simeq \gamma(H^*, X^*).
\end{equation}
We refer to \cite{HNVW2} and the references therein for further details on $\gamma$-radonifying operators.

For $F:\mathbb R_+ \to \mathbb R_+$ nondecreasing, we define a measure $\rho_F$ on $\mathcal B(\mathbb R_+)$ by
\[
 \rho_F((s, t]) = F(t)-F(s),\;\;\; 0\leq s<t<\infty.
\]
If $X$ is a Banach space and $1\leq p\leq \infty$, then we write $L^p(\mathbb R_+,F;X)$ for the Banach space $L^p(\mathbb R_+,\rho_F;X)$.

Let $M:\mathbb R_+ \times \Omega \to H$ be a continuous local martingale. Then thanks to \cite[Chapter 14.3]{MP} one can define a continuous predictable process $[M]:\mathbb R_+ \times \Omega \to \mathbb R$ and a strongly progressively measurable $q_M:\mathbb R_+ \times \Omega \to \mathcal L(H)$ such that $[M]$ is a quadratic variation of $M$ and $\int_0^\cdot \langle q_M(s) h, h\rangle \ud[M]_s$ is a quadratic variation of $[Mh]$ for each $h \in H$. The following theorem immediately follows from \cite[Theorem 4.1]{VY} and formula \cite[(3.9)]{VY}.
\begin{theorem}\label{thm:itoisomcontlocmart}
  Let $H$ be a Hilbert space, $1<p,q<\infty$. Let $M:\mathbb R_+ \times \Omega \to H$ be a continuous local martingale, $\Phi :\mathbb R_+ \times \Omega \to \mathcal L(H,L^q(S))$ be elementary predictable. Then
  \[
  \mathbb E\bigl(\sup_{0\leq s\leq t} \|(\Phi \cdot M)_s\|^p\bigr) \eqsim_{p,q}\mathbb E\|\Phi q_{M}^{\frac12}\mathbf{1}_{[0,t]}\|^p_{\gamma(L^2(\mathbb R_+,[M^{c}];H),L^q(S))}.
  \]
\end{theorem}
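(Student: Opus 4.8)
The plan is to deduce Theorem~\ref{thm:itoisomcontlocmart} directly from the general It\^o isomorphism for continuous local martingales proved in \cite{VY}, which applies to integrators with values in any Hilbert space and integrands taking values in the operators into a UMD Banach function space. Since $1<q<\infty$, the space $L^q(S)$ is a UMD Banach function space, so the first step is simply to verify that the hypotheses of \cite[Theorem 4.1]{VY} are met in our setting: $M$ is by assumption a continuous local martingale on $\mathbb R_+\times\Omega$ with values in the Hilbert space $H$, and an elementary predictable $\Phi:\mathbb R_+\times\Omega\to\mathcal L(H,L^q(S))$ is certainly an admissible (in particular, stochastically integrable) integrand. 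One also has to note that the objects $q_M$ and $[M^c]=[M]$ appearing in \cite{VY} are precisely the ones recalled above via \cite[Chapter 14.3]{MP}, so that the covariation data of $M$ is described identically in both places.

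The second step is to identify the right-hand side. Theorem~4.1 of \cite{VY} expresses $\bigl(\mathbb E\sup_{0\le s\le t}\|(\Phi\cdot M)_s\|^p\bigr)^{1/p}$, up to constants depending only on $p$ and on the UMD (and related) constants of $L^q(S)$ --- hence only on $p$ and $q$ --- as a $\gamma$-norm built from the instantaneous covariance structure of $M$ along $\Phi$. Formula \cite[(3.9)]{VY} rewrites this $\gamma$-norm explicitly: the self-adjoint operator $q_M^{1/2}$ takes the ``square root'' of the instantaneous covariance of $M$, and $L^2(\mathbb R_+,[M^c];H)$ is the natural time-integration space, so the abstract expression equals $\mathbb E\|\Phi q_M^{1/2}\mathbf 1_{[0,t]}\|^p_{\gamma(L^2(\mathbb R_+,[M^c];H),L^q(S))}$. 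Combining the two steps gives the asserted equivalence $\eqsim_{p,q}$.

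There is no genuine obstacle here: the substance of the statement lies entirely in \cite{VY}, and the only work is the bookkeeping of matching notation and confirming that our class of elementary predictable integrands is covered by the class treated there. (Were one to insist on a self-contained argument, the real difficulty would be to reprove \cite[Theorem 4.1]{VY} for $X=L^q(S)$, which rests on square-function estimates and decoupling in UMD Banach function spaces; since this is already available in the literature, we will simply invoke it.)
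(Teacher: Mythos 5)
Your proposal is correct and coincides with the paper's own treatment: the paper simply states that Theorem~\ref{thm:itoisomcontlocmart} follows immediately from \cite[Theorem 4.1]{VY} together with formula \cite[(3.9)]{VY}, exactly the two ingredients you invoke. The bookkeeping you describe (verifying $L^q(S)$ is UMD, matching the covariation data $q_M$ and $[M^c]$, and rewriting the $\gamma$-norm) is precisely what is meant by ``immediately follows.''
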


\subsection{Integration with respect to general local martingales}
\label{sec:mainResult}

We can now combine the sharp estimates obtained for the three special type of stochastic integrals to obtain sharp estimates for $\Phi\cdot M$, where $M$ is an arbitrary local martingale.

\begin{lemma}\label{lemma:intcontcontintdisdis}
 Let $H$ be a Hilbert space, $X$ be a finite-dimensional space, $M:\mathbb R_+ \times \Omega \to H$ be a local martingale, $\Phi:\mathbb R_+ \times \Omega \to \mathcal L(H,X)$ be elementary predictable, $F:\mathbb R_+ \times \Omega \times H \to X$ be elementary $\widetilde{\mathcal P}$-measurable. Then
 \begin{itemize}
  \item[(i)] if $M$ is continuous, then $\Phi \cdot M$ is continuous,
  \item[(ii)] if $M$ is purely discontinuous quasi-left continuous, then $F \star \bar{\mu}^M$ is purely discontinuous quasi-left continuous,
  \item[(iii)] if $M$ is purely discontinuous with accessible jumps, then $\Phi \cdot M$ is purely discontinuous with accessible jumps.
 \end{itemize}
\end{lemma}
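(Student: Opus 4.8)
The plan is to reduce all three assertions to scalar statements. Since $X$ is finite-dimensional we fix a Euclidean norm on $X$, a basis $(x_j)$ of $X$ and the dual basis $(x_j^*)$ of $X^*$; recall that (by definition, and by the coordinate formula \eqref{eq:quadvarcoorwise} in the Hilbert case) a local martingale with values in $H$ or in $X$ is purely discontinuous, resp.\ has accessible jumps, resp.\ is quasi-left continuous, precisely when every scalar projection $\langle M,h\rangle$, $h\in H$ (resp.\ $\langle M,x^*\rangle$, $x^*\in X^*$) has that property. The processes in (i) and (iii) are written out explicitly in \eqref{eq:intelpred} as finite sums of terms $\mathbf 1_{A_{mn}}\,x_{kmn}\bigl(\langle M,h_k\rangle^{t_n}-\langle M,h_k\rangle^{t_{n-1}}\bigr)$ with $A_{mn}\in\mathcal F_{t_{n-1}}$, that is, as stochastic integrals of the bounded simple predictable scalar processes $g_{mn}=\mathbf 1_{(t_{n-1},t_n]\times A_{mn}}$ against the $\langle M,h_k\rangle$. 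These, and $F\star\bar\mu^M$, are local martingales by construction. For (i): if $M$ is continuous then each $t\mapsto\langle M_{t_n\wedge t}-M_{t_{n-1}\wedge t},h_k\rangle$ is continuous and multiplication by the $t$-independent random variable $\mathbf 1_{A_{mn}}$ preserves continuity, so $\Phi\cdot M$ is continuous.

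For (iii), fix $x^*\in X^*$, so that $\langle\Phi\cdot M,x^*\rangle$ is a finite linear combination of the scalar integrals $g_{mn}\cdot\langle M,h_k\rangle$. Each $\langle M,h_k\rangle$ is a coordinate of the purely discontinuous, accessible-jump local martingale $M$, hence is itself purely discontinuous with accessible jumps; since $[\,g\cdot N\,]=g^2\cdot[N]$, integrating the bounded predictable $g_{mn}$ against such an $N$ again gives a purely discontinuous local martingale, and its jumps occur only where $N$ jumps. Finite sums of purely discontinuous local martingales are purely discontinuous (by Proposition~\ref{thm:purdiscorthtoanycont1}: the covariation with any continuous bounded martingale vanishes termwise), and the jump set of $\langle\Phi\cdot M,x^*\rangle$ is contained in that of the finite family $\{\langle M,h_k\rangle\}_k$, which is exhausted by a finite union of sequences of predictable stopping times, hence — after the usual disjointification — by a single such sequence with disjoint graphs. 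Thus $\langle\Phi\cdot M,x^*\rangle$, and therefore $\Phi\cdot M$, is purely discontinuous with accessible jumps.

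For (ii), Lemma~\ref{lemma:corII.1.19} shows that $M$ purely discontinuous and quasi-left continuous forces $\nu^M$ to be non-atomic in time; we take for granted, as the statement does, that $F$ being elementary makes $F\star\bar\mu^M$ well-defined. Write $F\star\bar\mu^M=F\star\mu^M-F\star\nu^M$ and argue coordinate-wise: each scalar component is a stochastic integral against the compensated integer-valued random measure $\bar\mu^M$, hence a purely discontinuous local martingale by the theory of \cite[Chapter~II.1]{JS}; so $F\star\bar\mu^M$ is purely discontinuous. For quasi-left continuity, let $\tau$ be a predictable stopping time. Since $\nu^M$ is non-atomic in time, $F\star\nu^M$ is continuous and $\Delta(F\star\nu^M)_\tau=0$, while $\Delta(F\star\mu^M)_\tau=F(\tau,\cdot,\Delta M_\tau)\mathbf 1_{\{\Delta M_\tau\neq 0\}}$ vanishes a.s.\ on $\{\tau<\infty\}$ because $M$ is quasi-left continuous. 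Hence $\Delta(F\star\bar\mu^M)_\tau=0$ a.s.\ on $\{\tau<\infty\}$, i.e.\ $F\star\bar\mu^M$ is quasi-left continuous.

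The main obstacle is part (ii): one has to pin down exactly which statement from the theory of random measures in \cite[Chapter~II.1]{JS} yields that $f\star\bar\mu^M$ is purely discontinuous, and to be precise about the well-definedness and integrability of $F\star\bar\mu^M$ for elementary $F$; the reduction of the $X$-valued claims to scalar ones via finite-dimensionality of $X$ (so that Proposition~\ref{thm:purdiscorthtoanycont1} and the Hilbert-space notion of pure discontinuity apply to $\Phi\cdot M$) also needs to be made explicit. Parts (i) and (iii) are essentially bookkeeping once \eqref{eq:intelpred} has been written out.
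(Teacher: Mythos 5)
Your proposal is correct and essentially follows the paper's approach, with a couple of minor reorganizations. For part (iii), the paper argues directly on the $X$-valued process: endow $X$ with a Euclidean norm and compute $[\Phi\cdot M]$ as a pure jump process (exactly the part (ii) argument with the compensator term absent), then invoke \cite[Theorem~26.14]{Kal}; you instead reduce to scalar projections $\langle\Phi\cdot M,x^*\rangle$, use $[g\cdot N]=g^2\cdot[N]$, and assemble via Proposition~\ref{thm:purdiscorthtoanycont1}. Both are valid and roughly the same length. For part (ii), the reference you were missing is precisely \cite[Proposition~II.1.28]{JS}: it gives $[F\star\bar\mu^M]_t=\sum_{0\leq s\leq t}\|F(\Delta M_s)\|^2$ (the continuous compensator does not contribute to the bracket), so $[F\star\bar\mu^M]$ is pure jump and pure discontinuity follows again from \cite[Theorem~26.14]{Kal}. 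Your quasi-left-continuity argument, splitting $\bar\mu^M=\mu^M-\nu^M$ and using non-atomicity of $\nu^M$ to kill $\Delta(F\star\nu^M)_\tau$, is just a slightly more explicit rendering of the paper's one-line computation $\Delta(F\star\bar\mu^M)_\tau=F(\Delta M_\tau)=0$. In short, the substance and the citations you needed are as you suspected; no gap remains once \cite[Proposition~II.1.28]{JS} is supplied.
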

\begin{proof}
 (i) holds since if $M$ is continuous, then the formula \eqref{eq:intelpred} defines an a.s.\ continuous process. 
 
 To prove pure discontinuity in (ii) one has to endow $X$ with a Euclidean norm and notice that if $M$ is purely discontinuous quasi-left continuous then by \cite[Proposition II.1.28]{JS} $[F \star \bar{\mu}^M]_t = \sum_{0\leq s\leq t} \|F(\Delta M)\|^2$ a.s.\ for all $t\geq 0$ since $F \star \nu^M$ is absolutely continuous, so it does not effect on the quadratic variation. Therefore $[F \star \bar{\mu}^M]$ is purely discontinuous, and so $F\star \bar{\mu}^M$ is purely discontinuous by \cite[Theorem 26.14]{Kal}. Quasi-left continuity then follows as $\Delta(F\star \bar{\mu}^M)_{\tau} = F(\Delta M_{\tau}) = 0$ a.s.\ for any predictable stopping time $\tau$.
 
 Pure discontinuity of $\Phi \cdot M$ in (iii) follows from the same argument as in (ii), and the rest can be proven using the fact that a.s.
 \[
 \{t\in \mathbb R_+ : \Delta(\Phi \cdot M)_t \neq 0\} \subset \{t\in \mathbb R_+ : \Delta M_t \neq 0\}.
 \]
\end{proof}

The following observation is fundamental for the duality argument used to prove the lower bounds in Theorem~\ref{thm:preintgenmart}.
\begin{lemma}\label{lemma:contanddiscpart}
Let $H$ be a Hilbert space, $X$ be a Banach space, $M^{c}, M^{q}:\mathbb R_+ \times \Omega \to H$ be continuous and purely discontinuous quasi-left continuous martingales, $M^{a,1}:\mathbb R_+ \times \Omega \to X$, $M^{a,2}:\mathbb R_+ \times \Omega \to X^*$ be purely discontinuous martingales with accessible jumps, $\Phi_1:\mathbb R_+ \times \Omega \to \mathcal L(H, X)$, $\Phi_2:\mathbb R_+ \times \Omega \to \mathcal L(H, X^*)$ be elementary predictable, $F_1:\mathbb R_+ \times \Omega\times H\to X$, $F_2:\mathbb R_+ \times \Omega\times H\to X^*$ be elementary $\widetilde{\mathcal P}$-measurable. Assume that $(\Phi_1 \cdot M^{c})_{\infty}$, $(F_1 \star \bar{\mu}^{M^{q}})_{\infty}$, $M^{a,1}_{\infty} \in L^p(\Omega; X)$ and $(\Phi_2 \cdot M^{c})_{\infty}$, $(F_2 \star \bar{\mu}^{M^{q}})_{\infty}$, $M^{a,2}_{\infty} \in L^{p'}(\Omega; X^*)$ for some $1<p<\infty$. Then, for all $t\geq 0$,
\begin{multline}\label{eq:contdiscint}
\mathbb E\langle(\Phi_1 \cdot M^{c} + F_1 \star \bar{\mu}^{M^{q}} + M^{a,1} )_t,(\Phi_2 \cdot M^{c} + F_2 \star \bar{\mu}^{M^{q}}+  M^{a,2} )_t\rangle \\  =\mathbb E\langle(\Phi_1 \cdot M^{c})_t,(\Phi_2 \cdot M^{c})_t \rangle + \mathbb E\langle(F_1 \star \bar{\mu}^{M^{q}})_t, (F_2 \star \bar{\mu}^{M^{q}})_t\rangle +\mathbb E\langle M^{a,1}_t, M^{a,2}_t \rangle.
\end{multline}
\end{lemma}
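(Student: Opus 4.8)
The plan is to expand the inner product on the left-hand side of~\eqref{eq:contdiscint} by bilinearity into nine terms. The three ``diagonal'' terms are exactly those on the right-hand side, so it suffices to prove that each of the six mixed terms vanishes, that is,
$$
\mathbb E\langle (\Phi_1\cdot M^{c})_t,(F_2\star\bar\mu^{M^{q}})_t\rangle=\mathbb E\langle (\Phi_1\cdot M^{c})_t,M^{a,2}_t\rangle=\mathbb E\langle (F_1\star\bar\mu^{M^{q}})_t,M^{a,2}_t\rangle=0,
$$
together with the three terms obtained by interchanging the first and second slot. By Lemma~\ref{lemma:intcontcontintdisdis}, $\Phi_1\cdot M^{c}$ and $\Phi_2\cdot M^{c}$ are continuous, $F_1\star\bar\mu^{M^{q}}$ and $F_2\star\bar\mu^{M^{q}}$ are purely discontinuous and quasi-left continuous, and $M^{a,1},M^{a,2}$ are purely discontinuous with accessible jumps; thus in each mixed term the two factors are martingales of two \emph{different} types among these three.

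Since $\Phi_1,\Phi_2,F_1,F_2$ are elementary, the processes $\Phi_1\cdot M^{c}$, $F_1\star\bar\mu^{M^{q}}$ take values in a finite-dimensional subspace of $X$, and $\Phi_2\cdot M^{c}$, $F_2\star\bar\mu^{M^{q}}$ in one of $X^*$. Expanding such a process in a fixed basis, and pairing $M^{a,1}_t$ (resp.\ $M^{a,2}_t$) with the finitely many vectors of $X^*$ (resp.\ $X$) occurring in the basis expansion of the accompanying continuous or quasi-left continuous integral — the coordinates $\langle M^{a,1}_t,x^*\rangle$, $\langle x,M^{a,2}_t\rangle$ being scalar purely discontinuous martingales with accessible jumps — one reduces each mixed term to a finite sum of expressions $\mathbb E[L^{(1)}_t L^{(2)}_t]$, where $L^{(1)},L^{(2)}$ are scalar local martingales of two different types and where at least one of them starts at $0$ (namely the coordinate coming from $\Phi_i\cdot M^{c}$ or $F_i\star\bar\mu^{M^{q}}$, which vanish at $t=0$ by~\eqref{eq:intelpred} and the definition of the compensated integral).

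Now I would use the covariation orthogonality of the different types: a continuous local martingale has zero covariation with any purely discontinuous one (Proposition~\ref{thm:purdiscorthtoanycont1}, cf.\ \cite[Corollary~26.15]{Kal}), and a purely discontinuous quasi-left continuous local martingale has zero covariation with a purely discontinuous one with accessible jumps, since the former jumps only at totally inaccessible times and the latter only at predictable times, so their jump graphs are a.s.\ disjoint and $[L^{(1)},L^{(2)}]=\sum_{s}\Delta L^{(1)}_s\Delta L^{(2)}_s=0$ (cf.\ \cite[Corollary~26.16]{Kal}; this is the argument already used in the proof of Proposition~\ref{prop:purdisaccjumpsisBanachspace}). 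Hence in each mixed term $[L^{(1)},L^{(2)}]=0$ a.s., so $L^{(1)}L^{(2)}=L^{(1)}L^{(2)}-[L^{(1)},L^{(2)}]$ is a local martingale starting at $0$. The integrability hypotheses on the terminal values, together with Doob's maximal inequality, give that $\sup_{s\le t}\|(\Phi_i\cdot M^{c})_s\|$, $\sup_{s\le t}\|(F_i\star\bar\mu^{M^{q}})_s\|$ and $\sup_{s\le t}\|M^{a,i}_s\|$ lie in $L^p$ resp.\ $L^{p'}$; by H\"older's inequality the process $(L^{(1)}_s L^{(2)}_s)_{s\le t}$ is then dominated by an integrable random variable, so $L^{(1)}L^{(2)}$ is a uniformly integrable martingale on $[0,t]$ and $\mathbb E[L^{(1)}_t L^{(2)}_t]=0$. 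Summing the scalar contributions in each of the six mixed terms yields~\eqref{eq:contdiscint}.

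The main obstacle is the final step: upgrading ``$L^{(1)}L^{(2)}$ is a local martingale vanishing at $0$'' to ``$\mathbb E[L^{(1)}_tL^{(2)}_t]=0$'', i.e.\ producing enough uniform integrability on $[0,t]$ from the integrability assumptions (using Doob's and, where needed, the Burkholder--Davis--Gundy inequality, together with the fact — itself an instance of the covariation orthogonality above — that the mixed quadratic variations vanish, so that each individual summand inherits the required moment bounds from the whole). Once this is in place, the reduction of the $X$/$X^*$-valued pairings to the scalar facts about covariation is routine bookkeeping.
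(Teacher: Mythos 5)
Your proof is correct and follows essentially the same route as the paper's: expand the pairing by bilinearity, reduce to scalar coordinates using that the integrands are elementary (hence finite-dimensional range), kill each mixed term via orthogonality of the covariation (continuous vs.\ purely discontinuous; quasi-left continuous vs.\ accessible jumps), and conclude by noting that the product is a local martingale starting at zero which the Doob--H\"older domination turns into a true martingale. The paper packages the quasi-left-continuous/accessible scalar pairing into a separate lemma (Lemma~\ref{lem:expectofquasleftcontaccjump0}) and is terser about the integrability step, but the substance is identical to what you spell out.
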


\begin{lemma}\label{lem:expectofquasleftcontaccjump0}
 Let $X$ be a Banach space, $X_0 \subset X$ be a finite-dimensional subspace, $1<p<\infty$, $M^q:\mathbb R_+ \times \Omega \to X_0$ be a purely discontinuous quasi-left continuous $L^p$-martingale, $M^q_0=0$, $M^a:\mathbb R_+ \times \Omega \to X^*$ be a purely discontinuous $L^{p'}$-martingale with accessible jumps. Then $\mathbb E \langle M^q_t, M^a_t \rangle = 0$ for each~$t\geq 0$.
\end{lemma}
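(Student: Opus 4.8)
The plan is to reduce the statement to a scalar covariation fact, exploiting that $X_0$ is finite-dimensional. Fix $t\ge 0$, a basis $e_1,\dots,e_d$ of $X_0$, the dual basis $e_1^*,\dots,e_d^*\in X_0^*$, and Hahn--Banach extensions $\tilde e_k^*\in X^*$ of the $e_k^*$. Setting $\phi_k(s):=\langle M^q_s,\tilde e_k^*\rangle$ and $\psi_k(s):=\langle M^a_s,e_k\rangle$ (the latter makes sense since $e_k\in X\hookrightarrow X^{**}$), one has $M^q_s=\sum_{k=1}^d\phi_k(s)e_k$ and therefore
\[
\langle M^q_t,M^a_t\rangle=\sum_{k=1}^d\phi_k(t)\psi_k(t).
\]
Each $\phi_k$ is a scalar $L^p$-martingale which is purely discontinuous and quasi-left continuous (inheriting these from $M^q$), with $\phi_k(0)=\langle M^q_0,\tilde e_k^*\rangle=0$, and each $\psi_k$ is a scalar $L^{p'}$-martingale which is purely discontinuous with accessible jumps (inheriting these from $M^a$). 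Thus it suffices to prove: if $N$ is a scalar purely discontinuous quasi-left continuous martingale with $N_0=0$ and $t$-section in $L^p$, and $N'$ is a scalar purely discontinuous martingale with accessible jumps and $t$-section in $L^{p'}$, then $\mathbb E[N_tN'_t]=0$.

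First I would show $[N,N']=0$ a.s. Since $N$ is purely discontinuous, $[N,N']$ has vanishing continuous part, so $[N,N']_s=\sum_{0<u\le s}\Delta N_u\,\Delta N'_u$ a.s. Let $(\sigma_n)_{n\ge1}$ be predictable stopping times with disjoint graphs exhausting the jumps of $N'$, so that $\{u:\Delta N'_u\ne0\}\subset\bigcup_n[\sigma_n]$ a.s. Quasi-left continuity of $N$ gives $\Delta N_{\sigma_n}=0$ a.s.\ on $\{\sigma_n<\infty\}$ for every $n$, hence every summand above vanishes and $[N,N']\equiv0$ a.s. (Alternatively this is contained in \cite[Corollaries~26.15 and~26.16]{Kal}.)

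Next I would invoke integration by parts for semimartingales: $N_sN'_s-[N,N']_s-N_0N'_0$ is a local martingale, so by the previous step $N_sN'_s$ is a local martingale started at $0$. It remains to upgrade it to a true martingale on $[0,t]$ and read off $\mathbb E[N_tN'_t]=N_0N'_0=0$. For this I would use Doob's $L^p$- and $L^{p'}$-maximal inequalities (valid as $1<p,p'<\infty$) to get $N^*:=\sup_{s\le t}|N_s|\in L^p(\Omega)$ and $(N')^*:=\sup_{s\le t}|N'_s|\in L^{p'}(\Omega)$; then $N^*(N')^*\in L^1(\Omega)$ by H\"older and $\sup_{s\le t}|N_sN'_s|\le N^*(N')^*$, so stopping $N_\cdot N'_\cdot$ along a reducing sequence $(T_n)$, using $\mathbb E[N_{t\wedge T_n}N'_{t\wedge T_n}]=0$, and passing to the limit with dominated convergence gives $\mathbb E[N_tN'_t]=0$. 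Summing over $k$ yields $\mathbb E\langle M^q_t,M^a_t\rangle=0$.

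The only genuinely delicate point is the first step --- correctly using the incompatibility between quasi-left continuity (jumps at totally inaccessible times) and accessible jumps (jumps absorbed by predictable times) to force $[N,N']=0$; the reduction to scalars and the localization/domination argument are routine once that is in hand.
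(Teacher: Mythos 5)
Your proof is correct and takes essentially the same route as the paper's: both reduce to scalar components via a basis of $X_0$, use that the covariation of a purely discontinuous quasi-left continuous martingale and a purely discontinuous martingale with accessible jumps vanishes a.s.\ (the paper cites \cite[Corollary~26.16]{Kal}; you additionally spell out the elementary reason using the exhausting predictable times), deduce from integration by parts that the product is a local martingale (the paper via \cite[Proposition~I.4.50(a)]{JS}), and finish by upgrading to a true martingale. You are somewhat more explicit about the Doob maximal inequality, H\"older, and dominated-convergence localization step that the paper compresses into ``due to integrability it is a martingale,'' but the underlying argument is identical.
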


\begin{proof}
 Let $d$ be the dimension of $X_0$, $x_1, \ldots, x_d$ be a basis of $X_0$. Then there exist purely discontinuous quasi-left continuous $L^p$-martingales $M^{q,1}, \ldots, M^{q, d} :\mathbb R_+ \times \Omega \to \mathbb R$ such that $M^q = M^{q,1}x_1 + \cdots + M^{q, d}x_d$. Thus for any $i=1,\ldots, d$ and any purely discontinuous $L^{p'}$-martingale $N:\mathbb R_+ \times \Omega \to \mathbb R$ with accessible jumps $[M^{q, i}, N] = 0$ a.s.\ by \cite[Corollary 26.16]{Kal}. Hence \cite[Proposition~I.4.50(a)]{JS} implies that $M^{q, i} N$ is a local martingale, and due to integrability it is a martingale. Notice also that all $M^{q,i}$ start at zero, therefore
 \[
  \mathbb E \langle M^q_t, M^a_t \rangle = \sum_{i=1}^d\mathbb E M^{q, i}_t\langle x_i, M^a_t\rangle = \sum_{i=1}^d\mathbb E M^{q, i}_0\langle x_i, M^a_0\rangle = 0.
 \]
\end{proof}

\begin{proof}[Proof of Lemma \ref{lemma:contanddiscpart}]
 Since all the integrands $\Phi_1$, $\Phi_2$, $F_1$, $F_2$ are elementary, one can suppose that $X$ and $X^*$ are finite dimensional, so we can endow these spaces with Euclidean norms. Since by Lemma \ref{lemma:intcontcontintdisdis} $\Phi_1 \cdot M^{c}$ and $\Phi_2 \cdot M^{c}$ are continuous, $F_1 \star \bar{\mu}^{M^{q}}$, $F_1 \star \bar{\mu}^{M^{q}}$, $M^{a,1}$ and $M^{a,2}$ are purely discontinuous, then \cite[Definition I.4.11]{JS} implies that for each $t\geq 0$
 \begin{align*}
  \mathbb E\langle(\Phi_1 \cdot M^{c} )_t,( F_2 \star \bar{\mu}^{M^{q}})_t\rangle 
  &=\mathbb E[\Phi_1 \cdot M^{c}, F_2 \star \bar{\mu}^{M^{q}}]_t = 0,\\
   \mathbb E\langle(\Phi_2 \cdot M^{c} )_t,( F_1 \star \bar{\mu}^{M^{q}})_t\rangle 
  &=\mathbb E[\Phi_2 \cdot M^{c}, F_1 \star \bar{\mu}^{M^{q}}]_t = 0,\\
  \mathbb E\langle(\Phi_1 \cdot M^{c} )_t,M^{a,2}_t\rangle 
  &=\mathbb E[\Phi_1 \cdot M^{c},  M^{a,2}]_t = 0,\\
   \mathbb E\langle(\Phi_2 \cdot M^{c} )_t,M^{a,1}_t\rangle 
  &=\mathbb E[\Phi_2 \cdot M^{c}, M^{a,1}]_t = 0.
 \end{align*}
 Moreover, thanks to Lemma \ref{lemma:intcontcontintdisdis} and Lemma \ref{lem:expectofquasleftcontaccjump0}
 \begin{align*}
  \mathbb E\langle  M^{a,1} _t,( F_2 \star \bar{\mu}^{M^{q}})_t\rangle 
  =\mathbb E\langle  M^{a,2}_t,( F_1 \star \bar{\mu}^{M^{q}})_t\rangle =0,
 \end{align*}
so \eqref{eq:contdiscint} easily follows.
\end{proof}

\begin{theorem}\label{thm:preintgenmart}
 Let $H$ be a Hilbert space, $1<p,q<\infty$. Let $M^{c}$, $M^{q}:\mathbb R_+ \times \Omega \to H$ be continuous and purely discontinuous quasi-left continuous local martingales, $M^a:\mathbb R_+ \times \Omega \to L^q(S)$ be a purely discontinuous $L^p$-martingale with accessible jumps, $\Phi :\mathbb R_+ \times \Omega \to \mathcal L(H,L^q(S))$ be elementary predictable, $F: \mathbb R_+ \times \Omega\times H \to L^q(S)$ be elementary $\widetilde{\mathcal P}$-measurable. If $\Phi\cdot M^{c}$ and $F\star \bar{\mu}^{M^{q} }$ are $L^p$-martingales, then
 \begin{align}
 \label{eqn:preintgenmart}
 & \Bigl(\mathbb E \sup_{0\leq s\leq t} \bigl\|\bigl( \Phi\cdot M^{c} + F\star \bar{\mu}^{M^{q} }+ M^{a}\bigr)_{\infty} \bigr\|^p_{L^q(S)} \Bigr)^{\frac 1p}\\ 
 & \qquad \eqsim_{p,q}\bigl(\mathbb E\|\Phi q_{M^{c}}^{\frac12}\|^p_{\gamma(L^2(\mathbb R_+,[M^{c}];H),X)}\bigr)^{\frac 1p}+\|F\|_{\mathcal I_{p,q}} + \|  M^{a}\|_{\mathcal A_{p,q}},\nonumber
 \end{align}
 where $\mathcal I_{p,q}$ is given as in \eqref{eq:ipq} for $\nu = \nu^{M^{\rm q}}$, $\mathcal A_{p,q}$ is given as in \eqref{eq:Apq}.
\end{theorem}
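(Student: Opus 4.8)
The plan is to assemble the three sharp estimates already established --- Theorem~\ref{thm:itoisomcontlocmart} for $\Phi\cdot M^{c}$, Theorem~\ref{thm:mainintranmeas} for $F\star\bar\mu^{M^{q}}$, and Corollary~\ref{thm:mainforaccesjumps} (equivalently Theorem~\ref{thm:acessjumpsL^q}) for $M^{a}$ --- into a single two-sided bound. By Doob's maximal inequality the left-hand side of \eqref{eqn:preintgenmart} is, up to constants depending only on $p$, equal to $\bigl(\mathbb E\|(\Phi\cdot M^{c}+F\star\bar\mu^{M^{q}}+M^{a})_{\infty}\|_{L^q(S)}^{p}\bigr)^{1/p}$, so it suffices to treat the terminal value. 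The \emph{upper bound} is then immediate: the triangle inequality in $L^{p}(\Omega;L^q(S))$ splits this norm into the three summands, each of which is dominated by the corresponding term on the right-hand side by the three cited results.

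For the \emph{lower bound} I would argue by duality, exactly in the style of the lower bounds in Theorem~\ref{thm:summaryBRIntro} and Theorem~\ref{thm:mainintranmeas}. Put $Y:=\overline{\mathrm{span}}\{(\Phi\cdot M^{c}+F\star\bar\mu^{M^{q}}+M^{a})_{\infty}\}\subset L^{p}(\Omega;L^q(S))$, where $(\Phi,F,M^{a})$ ranges over admissible triples, and let $X$ be the $\ell^{1}$-direct sum of the three ``parameter'' spaces: the $\gamma$-space on the right-hand side of \eqref{eqn:preintgenmart} (realised through $\Phi\mapsto\Phi q_{M^{c}}^{1/2}$), $\mathcal I_{p,q}$ as in \eqref{eq:ipq}, and $\mathcal A_{p,q}$ as in \eqref{eq:Apq}, so that $\|\cdot\|_{X}$ equals the right-hand side of \eqref{eqn:preintgenmart}. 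Each summand is reflexive: the $\gamma$-space by iterating the trace duality \eqref{eq:traceduality}, $\mathcal I_{p,q}$ by Corollary~\ref{cor:dualIpq}, and $\mathcal A_{p,q}$ by the duality of Theorem~\ref{thm:acessjumpsinTL^q} combined with the approximation in Lemma~\ref{lem:appApq} and the isometric embedding of Lemma~\ref{lem:dualApq^T}. Hence $X$ is reflexive and $X^{*}$ is the $\ell^{\infty}$-direct sum of $\gamma(L^{2}(\mathbb R_+,[M^{c}];H)^{*},L^{q'}(S))$, $\mathcal I_{p',q'}$ and $\mathcal A_{p',q'}$. On the dense subspace $U\subset X$ of triples with elementary $\Phi,F$ and finitely-jumping $M^{a}$ I would set $j_{0}((\Phi,F,M^{a})):=(\Phi\cdot M^{c}+F\star\bar\mu^{M^{q}}+M^{a})_{\infty}$, which lands in $Y$ with dense range by construction, and on a corresponding dense $V\subset X^{*}$ define $k_{0}$ by the same formula; $k_{0}$ maps into $Y^{*}$ because the upper bound applied with $(p',q')$ shows each such integral defines a bounded functional on $Y$.

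The crux is the compatibility identity $\langle x^{*},x\rangle_{X^{*},X}=\langle k_{0}x^{*},j_{0}x\rangle$ for $x\in U$, $x^{*}\in V$. Writing $x=(\Phi_{1},F_{1},M^{a,1})$, $x^{*}=(\Phi_{2},F_{2},M^{a,2})$, Lemma~\ref{lemma:contanddiscpart} annihilates all six cross terms in $\mathbb E\langle j_{0}x,k_{0}x^{*}\rangle$, leaving $\mathbb E\langle(\Phi_{1}\cdot M^{c})_{\infty},(\Phi_{2}\cdot M^{c})_{\infty}\rangle+\mathbb E\langle(F_{1}\star\bar\mu^{M^{q}})_{\infty},(F_{2}\star\bar\mu^{M^{q}})_{\infty}\rangle+\mathbb E\langle M^{a,1}_{\infty},M^{a,2}_{\infty}\rangle$. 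Each of these three equals the canonical duality bracket of the corresponding reflexive component: the $\gamma$-part by the trace-duality pairing underlying Theorem~\ref{thm:itoisomcontlocmart}, the $F$-part by Corollary~\ref{lemma:equalityfordual}, and the accessible-jump part by the identity $(*)$ in \eqref{eq:dualApq^T}. Under the standard $\ell^{1}$--$\ell^{\infty}$ duality between the direct sums this sum is exactly $\langle x^{*},x\rangle_{X^{*},X}$. Lemma~\ref{lemma:jkstaff} then gives that $j$ extends to an isomorphism of $X$ onto $Y$ with constants depending only on $p,q$; in particular $\|(\Phi\cdot M^{c}+F\star\bar\mu^{M^{q}}+M^{a})_{\infty}\|_{L^{p}(\Omega;L^q(S))}\gtrsim_{p,q}\|(\Phi,F,M^{a})\|_{X}$, which is the lower bound in \eqref{eqn:preintgenmart}.

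The main obstacle I expect is the bookkeeping around density and the precise matching of brackets: one must check that the admissible elementary integrands are dense in each reflexive component (in particular that the standing hypothesis ``$\Phi\cdot M^{c}$ and $F\star\bar\mu^{M^{q}}$ are $L^{p}$-martingales'' coincides with membership in the $\gamma$-space and in $\mathcal I_{p,q}$, and that a dense class of finitely-jumping accessible martingales exhausts $\mathcal A_{p,q}$), that $k_{0}$ genuinely lands in $Y^{*}$, and --- most delicately --- that after Lemma~\ref{lemma:contanddiscpart} trims the cross terms each surviving diagonal term is \emph{literally} the canonical duality bracket of the corresponding space, so that the $\ell^{1}$--$\ell^{\infty}$ direct-sum duality applies verbatim and Lemma~\ref{lemma:jkstaff} can be invoked. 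Once this is in place the argument closes without further calculation.
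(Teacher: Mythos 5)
Your proposal follows the paper's own argument almost step for step: Doob plus triangle inequality for the upper bound, and for the lower bound the duality scheme of Lemma~\ref{lemma:jkstaff} applied to the $\ell^1$-direct sum $X$ of the three parameter spaces, with the cross-terms annihilated by Lemma~\ref{lemma:contanddiscpart} and the diagonal terms matched to the component duality brackets. The one organisational difference is that the paper first reduces (via the proof of Theorem~\ref{thm:acessjumpsL^q} and the isometric embedding of Lemma~\ref{lem:dualApq^T}) to the case where $M^a$ has finitely many jumps supported in a fixed $\mathcal T$, so that only the already-proved duality $(\mathcal A_{p,q}^{\mathcal T})^*=\mathcal A_{p',q'}^{\mathcal T}$ from Theorem~\ref{thm:acessjumpsinTL^q} is needed; you instead work with $\mathcal A_{p,q}$ directly and promote that duality to the full space via the approximation of Lemma~\ref{lem:appApq} and Lemma~\ref{lem:dualApq^T}, which amounts to the same thing but requires you to actually carry out a short limiting argument that the paper avoids.
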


\begin{proof} 
The estimate $\lesssim_{p,q}$ follows from the triangle inequality and Theorems~\ref{thm:itoisomcontlocmart}, \ref{thm:mainintranmeas} and \ref{thm:acessjumpsL^q}. Let us now prove $\gtrsim_{p,q}$ via duality. Without loss of generality due to the proof of Theorem \ref{thm:acessjumpsL^q} and due to Lemma~\ref{lem:dualApq^T} we can assume that there exists $N\geq 1$ and a sequence of predictable stopping times $\mathcal T =(\tau_n)_{n=0}^N$ such that $M$ has a.s.\ at most $N$ jumps and a.s.\ $\{t\in \mathbb R_+: \Delta M_t \neq 0\}\subset \{\tau_0,\ldots,\tau_N\}$. Define the Banach space 
 \[
  X:= L^{p}(\Omega;\gamma(L^2(\mathbb R_+,[M^{c}];H),L^q(S))) \times \mathcal I_{p,q} \times \mathcal A_{p, q}^{\mathcal T}
 \]
and let $Y$ be the closure of the lineare subspace $\cup_{(\Phi, F, M^a)\in X}(\Phi \cdot M^{c}+F\star \bar{\mu}^{M^{\rm d}} +M^a)_{\infty}$ in $L^p(\Omega;L^q(S))$. Then by \cite[Proposition 1.3.3]{HNVW1}, the Trace duality \eqref{eq:traceduality}, Corollary~\ref{cor:dualIpq} and the duality statement in Theorem~\ref{thm:acessjumpsinTL^q}
\[
X^* = L^{p'}(\Omega;\gamma(L^2(\mathbb R_+,[M^{c}];H),L^{q'}(S))) \times\mathcal I_{p',q'} \times \mathcal A_{p',q'}^{\mathcal T}.
\]
By the upper bounds in \eqref{eqn:preintgenmart}, the maps $j:X\to Y$ and $k:X^* \to Y^*$ defined via $(\Phi, F, M^a) \mapsto (\Phi \cdot M^{c}+F\star \bar{\mu}^{M^{\rm d}}  + M^a)_{\infty}$ are both continuous linear mappings. Let $x = (\Phi_1, F_1, M^a_1) \in X$, $x^*= (\Phi_2, F_2, M^a_2) \in X^*$ be such that $\Phi_1$ and $\Phi_2$ are elementary predictable,  and $F_1$ and $F_2$ are elementary $\widetilde{\mathcal P}$-measurable. Then $\langle \tilde x^*, \tilde x\rangle = \langle k(\tilde x^*), j(\tilde x)\rangle$ by Lemma~\ref{lemma:contanddiscpart} and \eqref{eq:dualApq^T} and so Lemma \ref{lemma:jkstaff} yields $\gtrsim_{p,q}$ in \eqref{eqn:preintgenmart}.
\end{proof}

\begin{theorem}
\label{thm:mainSI}
 Let $H$ be a Hilbert space, $1<p,q<\infty$. Let $M:\mathbb R_+ \times \Omega \to H$ be a local martingale, $M^{c}, M^{q}, M^d:\mathbb R_+ \times \Omega \to H$ be local martingales such that $M^c_0 = M^q_0=0$, $M^c$ is continuous, $M^q$ is purely discontinuous quasi-left continuous, $M^a$ is purely discontinuous with accessible jumps, $M = M^{c} + M^{q} + M^a$. Let $\Phi :\mathbb R_+ \times \Omega \to \mathcal L(H,L^q(S))$ be elementary predictable. Then,
 \begin{multline}\label{eq:mainSI}
  \Bigl(\mathbb E \sup_{0\leq s\leq t} \| (\Phi\cdot M)_s \|^p_{L^q(S)} \Bigr)^{\frac 1p}\\ 
  \eqsim_{p,q}\Bigl(\mathbb E\|\Phi q_{M^{c}}^{\frac12}\mathbf 1_{[0,t]}\|^p_{\gamma(L^2(\mathbb R_+,[M^{c}];H),X)}\Bigr)^{\frac 1p}\\
  +\|\Phi_H \mathbf 1_{[0,t]}\|_{\mathcal I_{p,q}}  + \|(\Phi\mathbf 1_{[0,t]}) \cdot M^a\|_{\mathcal A_{p,q}},
 \end{multline}
where $\Phi_H:\mathbb R_+ \times \Omega \times H \to L^q(S)$ is defined by
\[
 \Phi_H(t,\omega, h) := \Phi(t,\omega)h,\;\;\; t\geq0, \omega \in \Omega, h\in H,
\]
$\mathcal I_{p,q}$ is given as in \eqref{eq:ipq} for $\nu = \nu^{M^{q}}$, and $\mathcal A_{p, q}$ is as defined in \eqref{eq:Apq}. 
\end{theorem}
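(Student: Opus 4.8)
The plan is to deduce \eqref{eq:mainSI} from the three sharp estimates already established for the parts of the canonical decomposition of $\Phi\cdot M$, namely Theorem~\ref{thm:itoisomcontlocmart} for the continuous part $\Phi\cdot M^{c}$, Theorem~\ref{thm:intwrtpurdismart} for the quasi-left continuous part $\Phi\cdot M^{q}$, and Corollary~\ref{thm:mainforaccesjumps} for the accessible-jump part $\Phi\cdot M^{a}$, together with the combined estimate Theorem~\ref{thm:preintgenmart}. By \eqref{eqn:decomSIPrel}, Lemma~\ref{lemma:intcontcontintdisdis} and the uniqueness statement in Lemma~\ref{lem:YoeMayH}, the splitting $\Phi\cdot M=\Phi\cdot M^{c}+\Phi\cdot M^{q}+\Phi\cdot M^{a}$ is precisely the canonical decomposition of $\Phi\cdot M$. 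The \emph{upper bound} $\lesssim_{p,q}$ in \eqref{eq:mainSI} then follows immediately from the triangle inequality applied to this splitting together with the three cited results (the case of an infinite right-hand side being trivial).

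For the \emph{lower bound} $\gtrsim_{p,q}$ one may assume the left-hand side of \eqref{eq:mainSI} is finite. I would first note that in this case each of $(\Phi\cdot M^{c})^{t}$, $(\Phi\cdot M^{q})^{t}$, $(\Phi\cdot M^{a})^{t}$ is an $L^{p}$-martingale: equipping the finite-dimensional range of $\Phi$ with a Euclidean norm and using the orthogonality of the three parts of the canonical decomposition (the mixed covariations vanish by \cite[Corollaries 26.15 and 26.16]{Kal}), one gets
\[
[(\Phi\cdot M)^{t}]=[(\Phi\cdot M^{c})^{t}]+[(\Phi\cdot M^{q})^{t}]+[(\Phi\cdot M^{a})^{t}],
\]
so that $[(\Phi\cdot M^{\bullet})^{t}]\le[(\Phi\cdot M)^{t}]$ and the Burkholder--Davis--Gundy inequality yields the required finiteness (the constants being immaterial here). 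Next, by Lemma~\ref{lemma:whyPhijisint} there is an increasing sequence $(A_{n})\subset\widetilde{\mathcal P}$ with $\bigcup_{n}A_{n}=\mathbb R_{+}\times\Omega\times H$ and $(\Phi_{H}\mathbf 1_{A_{n}})\star\bar{\mu}^{M^{q}}\to\Phi\cdot M^{q}$ in $L^{p}(\Omega;L^{q}(S))$.

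Now I would apply Theorem~\ref{thm:preintgenmart} to the processes stopped at $t$, with integrands $\Phi$ and $F=\Phi_{H}\mathbf 1_{A_{n}}$ and accessible-jump martingale $(\Phi\mathbf 1_{[0,t]})\cdot M^{a}$. Since $(\Phi\cdot M^{c})^{t}$ and $(\Phi_{H}\mathbf 1_{A_{n}})\star\bar{\mu}^{M^{q}}$ are $L^{p}$-martingales, Theorem~\ref{thm:preintgenmart} bounds the sum of the three right-hand-side quantities of \eqref{eq:mainSI}, with $\Phi_{H}$ replaced by $\Phi_{H}\mathbf 1_{A_{n}}$, by a constant depending only on $p,q$ times $\bigl(\mathbb E\bigl\|\bigl(\Phi\cdot M^{c}+(\Phi_{H}\mathbf 1_{A_{n}})\star\bar{\mu}^{M^{q}}+\Phi\cdot M^{a}\bigr)_{t}\bigr\|^{p}_{L^{q}(S)}\bigr)^{1/p}$. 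Letting $n\to\infty$, the latter converges to $\bigl(\mathbb E\|(\Phi\cdot M)_{t}\|^{p}\bigr)^{1/p}\le\bigl(\mathbb E\sup_{0\le s\le t}\|(\Phi\cdot M)_{s}\|^{p}\bigr)^{1/p}$ by the choice of $(A_{n})$, while $\|\Phi_{H}\mathbf 1_{A_{n}}\mathbf 1_{[0,t]}\|_{\mathcal I_{p,q}}\nearrow\|\Phi_{H}\mathbf 1_{[0,t]}\|_{\mathcal I_{p,q}}$ by monotone convergence (as in the proof of Theorem~\ref{thm:intwrtpurdismart}); this yields the lower bound.

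The main obstacle is already isolated in the input Theorem~\ref{thm:preintgenmart}: in the $L^{q}$-valued (non-Hilbertian) setting one cannot simply project $\Phi\cdot M$ onto its continuous, quasi-left continuous or accessible part, so recovering the three individual sharp norms from the single estimate for the sum relies on the duality argument of Lemma~\ref{lemma:jkstaff} combined with the vanishing-of-cross-terms identity in Lemma~\ref{lemma:contanddiscpart}. At the present stage the only genuinely new steps are the Burkholder--Davis--Gundy integrability reduction and the passage from $\Phi_{H}\star\bar{\mu}^{M^{q}}$ to $\Phi\cdot M^{q}$ supplied by Lemma~\ref{lemma:whyPhijisint}.
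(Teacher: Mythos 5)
Your proposal is correct and follows essentially the same route as the paper: reduce via Doob's maximal inequality (or the trivial inequality $\|\cdot\|_t\le\sup_s\|\cdot\|_s$) to the terminal moment, identify $\Phi\cdot M^{c}+\Phi\cdot M^{q}+\Phi\cdot M^{a}$ with the canonical Meyer--Yoeurp decomposition of $\Phi\cdot M$ inside the finite-dimensional range of $\Phi$, use the orthogonality of the three parts together with Burkholder--Davis--Gundy to pass the $L^p$-integrability to each summand (this is exactly the content of Lemma~\ref{lem:MeyYoedecfindim}), and then apply Theorem~\ref{thm:preintgenmart}. The one respect in which you are actually more careful than the paper's one-paragraph proof is that you make explicit the bridge from $\Phi\cdot M^{q}$ to the random-measure integrals $(\Phi_{H}\mathbf 1_{A_{n}})\star\bar\mu^{M^{q}}$ via Lemma~\ref{lemma:whyPhijisint} and the accompanying monotone-convergence limit for $\|\Phi_H\mathbf 1_{A_n}\|_{\mathcal I_{p,q}}$ — a step the paper leaves implicit but which is needed since $\Phi_H$ need not itself be $\bar\mu^{M^{q}}$-integrable.
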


To prove this theorem we will need a lemma.
\begin{lemma}\label{lem:MeyYoedecfindim}
 Let $X$ be a finite-dimensional Banach space, $1<p<\infty$, $M:\mathbb R_+ \times \Omega \to X$ be a local martingale. Then there exist local martingales $M^c, M^q, M^a:\mathbb R_+ \times \Omega \to X$ such that $M^c_0 = M^q_0=0$, $M^c$ is continuous, $M^q$ is purely discontinuous quasi-left continuous, $M^a$ is purely discontinuous with accessible jumps, $M = M^c+M^a + M^q$ and then for each $t\geq 0$
 \[
  \mathbb E\|M_t\|^p \eqsim_{p, X} \mathbb E \|M^c_t\|^p + \mathbb E \|M^q_t\|^p + \mathbb E \|M^a_t\|^p.
 \]
 In other words, $M$ is an $L^p$-martingale  if and only if each of $M^c$, $M^q$ and $M^a$ is an $L^p$-martingale.
\end{lemma}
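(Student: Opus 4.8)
The plan is to obtain the decomposition by first replacing the norm on $X$ by a Euclidean one and invoking the Hilbert-space result Lemma~\ref{lem:YoeMayH}, and then to prove the moment equivalence via the Burkholder--Davis--Gundy inequality, using that the quadratic variations of the three parts are dominated pointwise by that of $M$.

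\emph{Step 1: the decomposition.} Since $\dim X<\infty$, fix an inner product on $X$ and let $\tnorm{\cdot}$ be the associated Euclidean norm; there are constants $c_X,C_X>0$ with $c_X\tnorm{x}\le\|x\|\le C_X\tnorm{x}$ for all $x\in X$. Regarding $(X,\tnorm{\cdot})$ as a Hilbert space, Lemma~\ref{lem:YoeMayH} applied to $M$ produces local martingales $M^c,M^q,M^a$ with the required qualitative properties, $M^c_0=M^q_0=0$ (hence $M^a_0=M_0$), $M=M^c+M^q+M^a$, and $[M^c]=[M]^c$, $[M^q]=[M]^q$, $[M^a]=[M]^a$, where $[M]$ denotes the quadratic variation of $M$ relative to $\tnorm{\cdot}$ and $[M]^c,[M]^q,[M]^a\ge0$ are the three increasing components of $[M]$ given by Lemma~\ref{lem:A^cA^qA^a}. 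Consequently $[M]=[M^c]+[M^q]+[M^a]$ with all summands nonnegative, so we have the pointwise a.s.\ bounds
\[
[M^c]_t\le[M]_t,\qquad [M^q]_t\le[M]_t,\qquad [M^a]_t\le[M]_t\qquad(t\ge0).
\]

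\emph{Step 2: the moment equivalence.} One inequality is immediate: by the triangle inequality and convexity, $\mathbb E\|M_t\|^p\le 3^{p-1}\bigl(\mathbb E\|M^c_t\|^p+\mathbb E\|M^q_t\|^p+\mathbb E\|M^a_t\|^p\bigr)$. For the reverse inequality, pass from $\|\cdot\|$ to $\tnorm{\cdot}$ at the cost of a factor depending only on $X$. Applying the Burkholder--Davis--Gundy inequality to $M^c$ and $M^q$ (which start at $0$), to $M^a-M_0$ and to $M-M_0$ (recall $[M-M_0]=[M]$ and $[M^a-M_0]=[M^a]$), together with Doob's $L^p$-maximal inequality (available since $p>1$) to replace running suprema by terminal values, we get
\[
\mathbb E\tnorm{M^c_t}^p\eqsim_p\mathbb E[M^c]_t^{p/2},\qquad \mathbb E\tnorm{M^q_t}^p\eqsim_p\mathbb E[M^q]_t^{p/2},
\]
\[
\mathbb E\tnorm{M^a_t}^p\eqsim_p\mathbb E\tnorm{M_0}^p+\mathbb E[M^a]_t^{p/2},\qquad \mathbb E\tnorm{M_t}^p\eqsim_p\mathbb E\tnorm{M_0}^p+\mathbb E[M]_t^{p/2}.
\]
Combining these with Step 1,
\[
\mathbb E\tnorm{M^c_t}^p+\mathbb E\tnorm{M^q_t}^p+\mathbb E\tnorm{M^a_t}^p\lesssim_p\mathbb E\tnorm{M_0}^p+\mathbb E[M]_t^{p/2}\eqsim_p\mathbb E\tnorm{M_t}^p,
\]
and passing back to $\|\cdot\|$ via $c_X,C_X$ gives $\mathbb E\|M^c_t\|^p+\mathbb E\|M^q_t\|^p+\mathbb E\|M^a_t\|^p\lesssim_{p,X}\mathbb E\|M_t\|^p$. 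This proves the equivalence for each $t$, and the ``in other words'' assertion follows by taking the supremum over $t\ge0$. (For a genuine local martingale one first reduces, by a routine localization argument, to the case of a true $L^p$-bounded martingale, for which the manipulations above with $M_0$ and Doob's inequality are legitimate.)

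\emph{Main obstacle.} The one conceptual point is the reduction to a Euclidean norm in Step 1: it is exactly what makes the Burkholder--Davis--Gundy inequality applicable and, crucially, what yields the additivity $[M]=[M]^c+[M]^q+[M]^a$ and hence the pointwise domination $[M^\bullet]_t\le[M]_t$. Once this is in place the proof is bookkeeping with BDG, Doob's inequality and the equivalence of the two norms on $X$; the only nuisance is keeping track of the initial value $M_0$, which is carried entirely by the accessible-jump part $M^a$.
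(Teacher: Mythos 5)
Your proof is correct and follows the paper's route: endow $X$ with a Euclidean norm, obtain the decomposition and the additivity $[M]=[M^c]+[M^q]+[M^a]$ from Lemma~\ref{lem:YoeMayH}, and deduce the moment equivalence from the Burkholder--Davis--Gundy inequality together with Doob's maximal inequality. The only cosmetic differences are that the paper runs a single chain of BDG equivalences (using $(a+b+c)^{p/2}\eqsim_p a^{p/2}+b^{p/2}+c^{p/2}$ for nonnegative $a,b,c$) rather than separating the elementary triangle-inequality direction from the BDG direction, and it glosses over the possibly nonzero initial value $M_0$ (carried by $M^a$), which you handle explicitly.
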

\begin{proof}
 The proof for a general UMD Banach space can be found in \cite{Y17MartDec}. Here we present a much simpler proof that works only in finite dimension. Since $X$ is finite-dimensional, we can endow it with a Euclidean norm $\tnorm{\cdot}$. Then the existence of the decomposition $M = M^c+M^a + M^q$ follows from Lemma \ref{lem:YoeMayH}, and for each $t\geq 0$ due to the Burkholder-Davis-Gundy inequality and Lemma \ref{lem:YoeMayH}
\begin{align*}
 \mathbb E\|M_t\|^p &\eqsim_{p,X}\mathbb E\tnorm{M_t}^p \eqsim_{p} \mathbb E [M]_t^{\frac p2} = \mathbb E \bigl([M^c]_t + [M^q]_t + [M^a]_t\bigr)^{\frac p2}\\
 &\eqsim_{p} \mathbb E [M^c]_t^{\frac p2} + \mathbb E [M^q]_t^{\frac p2} + \mathbb E [M^a]_t^{\frac p2} \eqsim_p \mathbb E \tnorm{M^c_t}^p + \mathbb E \tnorm{M^q_t}^p + \mathbb E \tnorm{M^a_t}^p \\ &\eqsim_{p, X}\mathbb E \|M^c_t\|^p + \mathbb E \|M^q_t\|^p + \mathbb E \|M^a_t\|^p.
\end{align*}
\end{proof}
\begin{proof}[Proof of Theorem \ref{thm:mainSI}]
 First of all notice that $\Phi \cdot M$ is an $L^q(S)$-valued local martingale, so by the Doob's maximal inequality
 \begin{equation}\label{eq:doob'smaximalmainSI}
    \mathbb E \sup_{0\leq s\leq t} \| (\Phi\cdot M)_s \|^p_{L^q(S)} \eqsim_p \mathbb E  \| (\Phi\cdot M)_t \|^p_{L^q(S)}.
 \end{equation}
Since $\Phi$ is elementary, we can assume that $X$ is finite dimensional. Consequently, \eqref{eq:mainSI} holds by \eqref{eq:doob'smaximalmainSI}, Lemma \ref{lem:MeyYoedecfindim}, Lemma \ref{lemma:intcontcontintdisdis} and Theorem \ref{thm:preintgenmart}.
\end{proof}

\begin{remark}
Let $M = (M_n)_{n\geq 0}$ be a discrete $L^q$-valued martingale. Then due to the {\it Strong Doob maximal inequality} (also known as the {\it Fefferman-Stein inequality}), presented e.g.\ in \cite[Theorem 3.2.7]{HNVW1} and \cite[Theorem 2.6]{AntThesis}, 
 \[
  \Big(\mathbb E \Bigl(\int_S|\sup_{n\geq 0}M_n(s)|^q\ud s\Bigr)^{\frac pq}\Big)^{\frac{1}{p}} \eqsim_{p,q} (\mathbb E \sup_{n\geq 0}\|M_n\|_{L^q(S)}^p)^{\frac{1}{p}}.
 \]
As a consequence, for any continuous time martingale $M:\mathbb R_+\times \Omega \to L^q(S)$
\[
 \Big(\mathbb E \|\sup_{t\geq 0}M_t\|_{L^q(S)}^p\Big)^{\frac{1}{p}} \eqsim_{p,q} (\mathbb E \sup_{t\geq 0}\|M_t\|_{L^q(S)}^p)^{\frac{1}{p}}. 
\]
Indeed, this follows by the existence of a pointwise c\`adl\`ag version of $M$ and by approximating $M$ by a discrete-time martingale. Thus, all the sharp bounds for stochastic integrals proved in this section, in particular Theorems~\ref{thm:acessjumpsL^q}, \ref{thm:mainintranmeas}, \ref{thm:intwrtpurdismart}, \ref{thm:itoisomcontlocmart}, and, finally, Theorems~\ref{thm:preintgenmart} and \ref{thm:mainSI}, remain valid if we move the supremum over time inside the $L^q$-norm.
\end{remark}

\section{Burkholder-Rosenthal inequalities in noncommutative $L^q$-spaces}
\label{sec:NCLq}

In this section we discuss an extension of Theorem~\ref{thm:summaryBRIntro} to martingales taking values in noncommutative $L^q$-spaces. Let $\cM$ be a von Neumann algebra acting on a complex Hilbert space $H$, which is equipped with a normal, semi-finite faithful trace $\tr$. A closed,
densely defined linear operator $x$ on $H$ is \emph{affiliated} with the von Neumann algebra $\cM$ if $ux=xu$ for any unitary element $u$ in the commutant $\cM '$ of
$\cM$. For such an operator we define its \emph{distribution function} by
$$d(v;x) = \tr(e^{|x|}(v,\infty)) \qquad (v\geq 0),$$
where $e^{|x|}$ is the spectral measure of $|x|$. The \emph{decreasing rearrangement} or \emph{generalized singular value function} of $x$ is defined by
$$\mu_t(x) = \inf\{v>0 \ : \ d(v;x)\leq t\} \qquad (t\geq 0).$$
We call $x$ \emph{$\tr$-measurable} if $d(v;x)<\infty$ for some $v>0$. Let $S(\tr)$ denote the linear space of all $\tr$-measurable operators. One can show that
$S(\tr)$ is a metrizable, complete topological $*$-algebra with respect to the measure topology. Moreover, the trace $\tr$ extends to a trace (again denoted by $\tr$) on
the set $S(\tr)_{+}$ of positive $\tr$-measurable operators by setting
\begin{equation}
\label{eqn:traceStau} \tr(x) = \int_0^{\infty} \mu_t(x) \ dt, \qquad x \in S(\tr)_+.
\end{equation}
For $1\leq q<\infty$ we define
\begin{equation}
\label{eqn:NCLqNorm} \|x\|_{L^q(\cM)} = (\tr(|x|^q))^{\frac{1}{q}}, \qquad x \in S(\tr).
\end{equation}
The space $L^{q}(\cM)$ of all $x \in S(\tr)$ satisfying $\|x\|_{L^q(\cM)}<\infty$ is a Banach space and called the \textit{noncommutative $L^q$-space} associated with the pair
$(\cM,\tr)$. This class of spaces includes classical $L^q$-spaces with respect to a $\si$-finite measure space $(S,\Si,\rho)$. Identify $f \in L^{\infty}(S)$ with the multiplication operator
$$M_f(h) := fh, \qquad h \in L^2(S).$$
on the Hilbert space $L^2(S)$. One can show that
$$\cM := \{M_f \ : \ f \in L^{\infty}(S)\}$$
is a von Neumann subalgebra of $B(L^2(S))$. Now identify $L^{\infty}(S)$ with $\cM$. The functional
$$\tr(f) : = \int_{S} \ f \ d\rho, \qquad f \in L^{\infty}(S)_+$$
defines a normal, semi-finite faithful trace on $\cM$ and $L^{q}(\cM)$ coincides with the Lebesgue space $L^q(S)$, where the functions in $L^q(S)$
are identified with, in general unbounded, multiplication operators on $L^2(S)$.\par
Schatten spaces also arise as noncommutative $L^q$-spaces: if $H$ is a separable Hilbert space with an orthonormal basis $(e_i)_{i\geq 1}$, then the standard trace
\begin{equation*}
\label{eqn:traceMatrices} \mathrm{Tr}(x) = \sum_{i\geq 1} \langle x e_i,e_i\rangle
\end{equation*}
is a normal, semi-finite, faithful trace and $L^q(B(H),\text{Tr})$ is the $q$-th Schatten space. For more information on noncommutative $L^q$-spaces we refer to \cite{PiX03}.\par
Let us now recall the essential facts on noncommutative $L^q$-spaces needed to extend Theorem~\ref{thm:summaryBRIntro}. For $1\leq q<\infty$ and $\frac{1}{q} + \frac{1}{q'} = 1$, the familiar duality $L^q(\cM)^*=L^{q'}(\cM)$ holds isometrically, with the duality bracket given by $\langle
x,y\rangle = \tr(xy)$. Moreover, $L^q(\cM)$ is a UMD Banach space if and only if $1<q<\infty$. In addition, we will use conditional sequence spaces. Let $1\leq q<\infty$. Suppose that $\cN$ is a von Neumann algebra equipped with a normal, semi-finite faithful trace $\si$ and let $(\cN_i)$ be an increasing sequence of von Neumann subalgebras such that $\si|_{\cN_i}$ is again semi-finite. Let $\cE_i:\cN\rightarrow \cN_i$ be the conditional expectation with respect to $\cN_i$. For a finite sequence
$(x_i)$ in $\cN$ we define
\begin{equation*}
\label{eqn:colRowNormLqConditional}
\begin{split}
\|(x_i)\|_{L^q(\cN;(\cE_i),\ell^2_c)} & = \Big\|\Big(\sum_i \cE_i|x_i|^2\Big)^{\frac{1}{2}}\Big\|_{L^q(\cN)};\\
\|(x_i)\|_{L^q(\cN;(\cE_i),\ell^2_r)} & = \Big\|\Big(\sum_i \cE_i|x_i^*|^2\Big)^{\frac{1}{2}}\Big\|_{L^q(\cN)}.
\end{split}
\end{equation*}
Using techniques from Hilbert $C^*$-modules it was shown by M. Junge \cite{Jun02} that
\begin{equation*}
\begin{split}
& \{(x_i)_{i=1}^n \ : \ x_i \in \cN, \ n\geq 1, \ \|(x_i)\|_{L^q(\cN;(\cE_i),\ell^2_c)}<\infty\} \ \mathrm{and}\\
& \{(x_i)_{i=1}^n \ : \ x_i \in \cN, \ n\geq 1, \ \|(x_i)\|_{L^q(\cN;(\cE_i),\ell^2_r)}<\infty\}
\end{split}
\end{equation*}
are normed linear spaces. By taking the completion of these spaces we obtain the \textit{conditional column} and \textit{row space}, respectively. Moreover, for
$1<q,q'<\infty$ with $\frac{1}{q} + \frac{1}{q'} = 1$
$$(L^q(\cN;(\cE_i),\ell^2_c))^* = L^{q'}(\cN;(\cE_i),\ell^2_r), \qquad (L^q(\cN;(\cE_i),\ell^2_r))^* = L^{q'}(\cN;(\cE_i),\ell^2_c)$$
holds isometrically, with the duality bracket given by
$$\langle(x_i),(y_i)\rangle = \sum_i \tr(x_i y_i).$$
In Section~\ref{sec:B-RProof} we used a variation of these results in the following special case. Let $(\Om,\cF,\bP)$ be a probability space, $\cM$ be a semi-finite von Neumann algebra and $\cN$ be the tensor product von Neumann algebra $L^{\infty}(\Om)\vNT\cM$, equipped with the tensor product trace $\E\ot\tr$. Recall that, for any $1\leq q<\infty$, the map defined on simple functions in the Bochner space $L^q(\Om;L^q(\cM))$ by
$$I_q\Big(\sum_i \chi_{A_i} x_i\Big) = \sum_i \chi_{A_i} \ot x_i$$
extends to an isometric isomorphism
\begin{equation}
\label{eqn:tensorBochnerIden} L^q(\Om;L^q(\cM)) = L^q(L^{\infty}(\Om)\vNT\cM).
\end{equation}
Let $\bF=(\cF_i)_{i\geq 0}$ be a filtration in $\cF$, $\cN_i = L^{\infty}(\cF_{i-1})\vNT\cM$ and let $\cE_i$ be the associated conditional expectation. Under the identification (\ref{eqn:tensorBochnerIden}), the element $\cE_i(x)$ coincides with $\E_{i-1}(x)$, whenever $x \in L^{\infty}(\Om)\ot\cM$. In particular, for any finite sequence $(f_i)$ in $L^{\infty}(\Omega)\ot\cM$,
\begin{equation*}
\label{eqn:abrevCondNorm} \|(f_i)\|_{L^q(\cN;(\cE_{i}),l^2_c)} = \Big\|\Big(\sum_i \E_{i-1}|f_i|^2\Big)^{\frac{1}{2}}\Big\|_{L^q(\cM)}.
\end{equation*}
In particular, this shows that the space $S_q^q$ considered in Section~\ref{sec:B-RProof} is a Banach space and $(S_q^q)^*=S_{q'}^{q'}$ isomorphically if $1<q<\infty$. By a straightforward modification of the arguments in \cite{Jun02}, one can show that the expressions in 
\begin{equation}
\label{eqn:SpqNormsNC}
\begin{split}
\|(f_i)\|_{S_{q,c}^p} & = \Big(\E\Big\|\Big(\sum_i \E_{i-1}|f_i|^2\Big)^{\frac{1}{2}}\Big\|_{L^q(\cM)}^p\Big)^{\frac{1}{p}},\\
\|(f_i)\|_{S_{q,r}^p} & = \Big(\E\Big\|\Big(\sum_i \E_{i-1}|f_i^*|^2\Big)^{\frac{1}{2}}\Big\|_{L^q(\cM)}^p\Big)^{\frac{1}{p}}.
\end{split}
\end{equation}
define two norms and that for $1<p,q<\infty$
\begin{equation}
\label{eqn:SqdualNC}
(S_{q,c}^p)^* = S_{q',r}^{p'}, \qquad (S_{q,c}^p)^* = S_{q',r}^{p'}
\end{equation}
isomorphically (and constants depending only on $p$ and $q$), with associated duality bracket given by
$$\langle(x_i),(y_i)\rangle = \sum_i \E\tr(x_i y_i).$$ 
We can now formulate the extension of Theorem~\ref{thm:summaryBRIntro}. In analogy with \eqref{eqn:DpqNorms} we define two norms on the linear space of all finite sequences $(f_i)$ of random variables in $L^{\infty}(\Om;L^q(\cM))$ by
\begin{equation*}
\begin{split}
\|(f_i)\|_{D_{q,q}^p} & = \Big(\E\Big(\sum_i \E_{i-1}\|f_i\|_{L^q(\cM)}^q\Big)^{\frac{p}{q}}\Big)^{\frac{1}{p}}, \\
\|(f_i)\|_{D_{p,q}^p} & = \Big(\sum_i\E\|f_i\|_{L^q(\cM)}^p\Big)^{\frac{1}{p}},
\end{split}
\end{equation*}
Clearly these expression define two norms and we let $D_{p,q}^p$ and $D_{q,q}^p$ denote the completions in these norms. Since $L^q(\cM)$ is reflexive for $1<q<\infty$, it follows from Theorem~\ref{thm:(H^{s_q}_p(X))^*= H^{s_{q'}}_{p'}(X^*)} that $(D_{q,q}^p)^*=D_{q',q'}^{p'}$ if $1<p,q<\infty$. Since $L^q(\cM)$ is a UMD spaces for any $1<q<\infty$, one can follow the proof of Theorem~\ref{thm:summaryBRIntro} verbatim (using the noncommutative version of Theorem~\ref{thm:summaryRosIntro} from \cite{Dir14} and replacing the use of Jensen's inequality by Kadison's inequality) to obtain the following noncommutative version. 
\begin{theorem}
\label{thm:BRNC} Let $1<p,q<\infty$ and let $\cM$ be a semi-finite von Neumann algebra. If $(d_i)$ is an $L^q(\cM)$-valued martingale difference sequence, then
\begin{equation*}
\Big(\E\Big\|\sum_i d_i\Big\|_{L^q(\cM)}^p\Big)^{\frac{1}{p}} \eqsim_{p,q} \|(d_i)\|_{s_{p,q}},
\end{equation*}
where $s_{p,q}$ is given by
\begin{align*}
S_{q,c}^p \cap S_{q,r}^p \cap D_{q,q}^p \cap D_{p,q}^p & \ \ \mathrm{if} \ \ 2\leq q\leq p<\infty;\\
S_{q,c}^p \cap S_{q,r}^p \cap (D_{q,q}^p + D_{p,q}^p) & \ \ \mathrm{if} \ \ 2\leq p\leq q<\infty;\\
(S_{q,c}^p \cap S_{q,r}^p \cap D_{q,q}^p) + D_{p,q}^p & \ \ \mathrm{if} \ \ 1<p<2\leq q<\infty;\\
(S_{q,c}^p + S_{q,r}^p + D_{q,q}^p) \cap D_{p,q}^p & \ \ \mathrm{if} \ \ 1<q<2\leq p<\infty;\\
S_{q,c}^p + S_{q,r}^p + (D_{q,q}^p \cap D_{p,q}^p) & \ \ \mathrm{if} \ \ 1<q\leq p\leq 2;\\
S_{q,c}^p + S_{q,r}^p + D_{q,q}^p + D_{p,q}^p & \ \ \mathrm{if} \ \ 1<p\leq q\leq 2.
\end{align*}
Consequently, if $\cF=\si(\cup_{i\geq 0}\cF_i)$, then the map $f \mapsto (\E_i f - \E_{i-1} f)_{i\geq 1}$ induces an isomorphism between $L^p_0(\Om;L^q(\cM))$, the subspace of mean-zero random variables in $L^p(\Om;L^q(\cM))$, and $s_{p,q}$.
\end{theorem}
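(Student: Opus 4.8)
To establish Theorem~\ref{thm:BRNC}, the plan is to repeat the proof of Theorem~\ref{thm:summaryBRIntro} line by line, substituting the noncommutative ingredients, and to point out only the places where the argument genuinely changes. Since the sequence $(d_i)$ is fixed, I would first pass to the semi-finite von Neumann subalgebra generated by the left and right supports of the $d_i$ together with the $\cE_j d_i$; this allows us to assume $L^q(\cM)$ reflexive and separable for $1<q<\infty$, which is all that is needed in order to apply Theorem~\ref{thm:(H^{s_q}_p(X))^*= H^{s_{q'}}_{p'}(X^*)}. For the upper bound I would fix a decomposition of $(d_i)$ into martingale difference sequences matching the structure of $s_{p,q}$, pass to a decoupled tangent sequence $(e_i)$ satisfying condition (CI) with respect to some $\cG$, and invoke the decoupling property \eqref{eqn:decouple} of $L^q(\cM)$ (which is UMD). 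Then I would apply the noncommutative Rosenthal inequality of \cite{Dir14} conditionally on $\cG$ to the $\cG$-independent, $\cG$-mean-zero summands; the square-function term now splits into a conditional column part $\|(\sum_i\cE_\cG|e_i|^2)^{1/2}\|_{L^q(\cM)}$ and a conditional row part built from the $e_i^*$, which are precisely the expressions entering $S^p_{q,c}$ and $S^p_{q,r}$. Taking $L^p$-norms, using the triangle inequality, the tangency identities $\cE_\cG|e_i|^2=\E_{i-1}|d_i|^2$, $\cE_\cG|e_i^*|^2=\E_{i-1}|d_i^*|^2$ and $\cE_\cG\|e_i\|_{L^q(\cM)}^q=\E_{i-1}\|d_i\|_{L^q(\cM)}^q$ (consequences of \eqref{eqn:tangent}, \eqref{eqn:CI} and Lemma~\ref{lem:tanFun} applied to the Borel maps $x\mapsto x^*x$, $x\mapsto xx^*$ and $x\mapsto\|x\|^q$), and finally taking the infimum over decompositions, would yield $\lesssim_{p,q}$, exactly as in the commutative case.

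For the lower bound the first step is to record the duality $s_{p,q}^*=s_{p',q'}$. Here $(S^p_{q,c})^*=S^{p'}_{q',r}$ and $(S^p_{q,r})^*=S^{p'}_{q',c}$ by \eqref{eqn:SqdualNC}, $(D^p_{p,q})^*=D^{p'}_{p',q'}$ trivially, and $(D^p_{q,q})^*=D^{p'}_{q',q'}$ by Theorem~\ref{thm:(H^{s_q}_p(X))^*= H^{s_{q'}}_{p'}(X^*)} with $X=L^q(\cM)$; feeding these into the sum/intersection duality \eqref{eqn:sumIntersectionDuality} (and observing that the six cases pair off into three conjugate pairs) gives $s_{p,q}^*=s_{p',q'}$ with bracket $\langle(f_i),(g_i)\rangle=\sum_i\E\tr(f_ig_i)$. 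Next I would introduce the projection $P\colon(f_i)\mapsto(\Delta_if_i)$, $\Delta_i=\E_i-\E_{i-1}$; its boundedness on $D^p_{q,q}$ and $D^p_{p,q}$ is immediate from the contractivity of conditional expectations on $L^q(\cM)$, while its boundedness on $S^p_{q,c}$ and $S^p_{q,r}$ would follow from $\cE_{i-1}|\E_if_i|^2\le\cE_{i-1}\cE_i|f_i|^2=\cE_{i-1}|f_i|^2$ combined with $|a-b|^2\le2|a|^2+2|b|^2$ — the first inequality being Kadison's inequality for the completely positive unital maps $\E_i$, $\E_{i-1}$, and this is exactly where the commutative proof's use of Jensen's inequality must be replaced. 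The duality argument of Theorem~\ref{thm:summaryBRIntro} — restricting to martingale difference sequences and applying Lemma~\ref{lemma:jkstaff} to the pair of maps sending a finite martingale difference sequence to its sum — then delivers $\gtrsim_{p,q}$, and the concluding isomorphism statement follows as before from the two-sided estimate together with the reflexivity of $L^q(\cM)$ and the weak$^*$-compactness of bounded subsets of $L^p(\Om;L^q(\cM))=(L^{p'}(\Om;L^{q'}(\cM)))^*$.

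I expect the main obstacle to be the bookkeeping around the conditional column and row square functions: one must check that every step which, in the commutative setting, manipulates $\E_{i-1}|f_i|^2$ — the conditional Rosenthal inequality, the tangency identities, and the boundedness of $P$ — goes through for the operator-valued quantities $\cE_{i-1}|f_i|^2$ and $\cE_{i-1}|f_i^*|^2$ \emph{simultaneously}, using Kadison's inequality in place of Jensen's and the conditional-sequence-space machinery of Junge \cite{Jun02} in place of the scalar $L^q(\ell^2)$ estimates. Once this is in place, the rest is a transcription of the argument for Theorem~\ref{thm:summaryBRIntro}.
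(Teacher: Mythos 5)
Your proposal is correct and takes the same route the paper does: the paper's ``proof'' is a one-line instruction to transcribe the argument for Theorem~\ref{thm:summaryBRIntro}, replacing the commutative Rosenthal inequality by its noncommutative version from \cite{Dir14} and Jensen's inequality by Kadison's, and your write-up carries out exactly that transcription, correctly locating Kadison's inequality in the boundedness of the projection $P$ on $S^p_{q,c}$ and $S^p_{q,r}$, correctly tracking the row square function alongside the column one through the decoupling, tangency and duality steps, and correctly invoking \eqref{eqn:SqdualNC} together with Theorem~\ref{thm:(H^{s_q}_p(X))^*= H^{s_{q'}}_{p'}(X^*)} for the dual of $s_{p,q}$. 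The preliminary reduction to a separable von Neumann subalgebra, which you include so that Theorem~\ref{thm:(H^{s_q}_p(X))^*= H^{s_{q'}}_{p'}(X^*)} applies, is a detail the paper leaves implicit but which is genuinely needed.
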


\appendix

\section{Duals of $\mathcal S_q^p$, $\mathcal D_{q,q}^p$, $\mathcal D_{p,q}^p$, $\hat{\mathcal S}_q^p$, $\hat{\mathcal D}_{q,q}^p$, and $\hat{\mathcal D}_{p,q}^p$}
\label{sec:appDuals}

In this section we will find the duals of $\mathcal S_q^p$, $\mathcal D_{q,q}^p$, $\mathcal D_{p,q}^p$, $\hat{\mathcal S}_q^p$, $\hat{\mathcal D}_{q,q}^p$, and $\hat{\mathcal D}_{p,q}^p$ for all $1<p,q<\infty$. As a consequence, we show the duality for the space $\cI_{p,q}$ that was used to prove the lower bounds in Theorem~\ref{thm:mainintranmeas}.

\subsection{$\mathcal D_{q,q}^p$ and $\mathcal D_{p,q}^p$ spaces}

Let $X$ be a Banach space and consider any random measure $\nu$ on $\R_+\times J$. In sequel we will assume that $\int_{\mathbb R_+ \times J} \mathbf 1_{A}\ud \nu$ is an \mbox{$\overline{\mathbb R}_+$-valued} random variable for each $\mathcal B(\mathbb R_+) \otimes \mathcal J$-measurable $A\subset \mathbb R_+ \times J$. Notice that this condition always holds for any optional random measure $\nu$. Indeed, without loss of generality we may assume that there exist $A_{\R_+}\in \mathcal B(\mathbb R_+)$ and $A_J \in \mathcal J$ such that $A = A_{\R_+}\times A_J$. Let $\widetilde A = A \times \Omega$. Then $\widetilde A \in \widetilde {\mathcal O}$ (since $A_{\R_+}\times \Omega \in \mathcal O$), therefore $\mathbf 1_{\widetilde A}\star \nu$ is an optional process, and
\[
 \int_{\mathbb R_+ \times J} \mathbf 1_{A}\ud \nu = \lim_{t\to\infty}(\mathbf 1_{\widetilde A}\star \nu)_t
\]
is an \mbox{$\overline{\mathbb R}_+$-valued} $\mathcal F$-measurable function as a monotone limit of \mbox{$\overline{\mathbb R}_+$-valued} \mbox{$\mathcal F$-mea}\-su\-rable functions.

We define $\mathcal D^p_q(X)$ to be the space of all $\mathcal B(\mathbb R_+)\otimes \mathcal F \otimes \mathcal J$-strongly measurable functions $f: \mathbb R_+ \times\Omega \times J \to X$ such that
\[
 \|f\|_{\mathcal D_{q}^p(X)} :=  \Bigl( \mathbb E \Bigl( \int_{\mathbb R_+\times J}\|f\|^q_X \ud \nu \Bigr)^{\frac pq}\Bigr)^{\frac 1p}<\infty. 
\]
Recall that the measure $\mathbb P \otimes \nu$ on $\mathcal B(\mathbb R_+)\otimes \mathcal F \otimes \mathcal J$ is defined by setting 
\[
\mathbb P \otimes \nu\Big(\bigcup_{i=1}^n A_i\times B_i\Big) := \sum_{i=1}^n \mathbb E(\mathbf 1_{A_i}\nu(B_i)),
\]
for disjoint $A_i \in \mathcal F$ and disjoint $B_i \in \mathcal B(\mathbb R_+) \otimes \mathcal J$ and extending $\mathbb P \times \nu$ to $\mathcal B(\mathbb R_+)\otimes \mathcal F \otimes \mathcal J$ via the Carath\'eodory extension theorem. 

The following result is well-known if $\nu$ is a deterministic measure. The argument for random measures is similar and provided for the reader's convenience. 
\begin{theorem}\label{thm:dualofD_q^p(X)}
 Let $1<p,q<\infty$, $X$ be reflexive. Then $(\mathcal D_{q}^p(X))^*  =\mathcal D_{q'}^{p'}(X^*)$. Moreover
 \begin{equation}\label{eq:dualofD_q^p(X)}
  \|\phi\|_{\mathcal D_{q'}^{p'}(X^*)} = \|\phi\|_{(\mathcal D_{q}^p(X))^*},\;\;\; \phi\in \mathcal D_{q'}^{p'}(X^*).
 \end{equation}
\end{theorem}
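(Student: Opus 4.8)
The plan is to follow the standard route for identifying the dual of a Bochner-type $L^p(L^q)$ space, adapted to the presence of the random measure $\nu$. First I would set up the duality pairing: for $\phi \in \mathcal D_{q'}^{p'}(X^*)$ and $f \in \mathcal D_{q}^p(X)$ define
\[
\langle \phi, f\rangle := \mathbb E \int_{\mathbb R_+\times J} \langle \phi, f\rangle_{X^*,X} \ud \nu.
\]
Using the conditional (in the $\nu$-variable) and then the usual H\"older inequality with exponents $q',q$ inside and $p',p$ outside, exactly as in \eqref{eq:proofofthm:CsorgoArg}, one gets $|\langle \phi, f\rangle| \leq \|\phi\|_{\mathcal D_{q'}^{p'}(X^*)}\|f\|_{\mathcal D_{q}^p(X)}$, so every $\phi$ induces a functional of norm at most $\|\phi\|_{\mathcal D_{q'}^{p'}(X^*)}$. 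For the reverse norm estimate I would, given $\phi$, construct an explicit near-extremal $f$: using the operator $P_\eps : X^* \to X$ from Lemma~\ref{lemma:P_eps}, set
\[
f := P_\eps(\phi)\,\|\phi\|_X^{q'-1}\,\frac{\bigl(\int_{\mathbb R_+\times J}\|\phi\|_X^{q'}\ud\nu\bigr)^{(p'-q')/q'}}{\|\phi\|_{\mathcal D_{q'}^{p'}(X^*)}^{p'-1}},
\]
and check by the arithmetic $pp'=p+p'$, $qq'=q+q'$ (again as in the proof of Theorem~\ref{thm:CsorgoArg}) that $\|f\|_{\mathcal D_{q}^p(X)}\leq 1$ while $\langle\phi,f\rangle \geq (1-\eps)\|\phi\|_{\mathcal D_{q'}^{p'}(X^*)}$; letting $\eps\to0$ gives \eqref{eq:dualofD_q^p(X)}.

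The substantive part is surjectivity: showing every $F \in (\mathcal D_{q}^p(X))^*$ arises from some $\phi \in \mathcal D_{q'}^{p'}(X^*)$. Here I would run a Radon--Nikodym argument on the measure space $(\mathbb R_+\times\Omega\times J,\ \mathcal B(\mathbb R_+)\otimes\mathcal F\otimes\mathcal J,\ \mathbb P\otimes\nu)$. Define an $X^*$-valued set function $\mu$ on sets $A$ of finite $\mathbb P\otimes\nu$-measure by $\langle\mu(A),x\rangle := F(x\mathbf 1_A)$ for $x\in X$; the bound $|F(x\mathbf 1_A)|\leq \|F\|\,\|x\mathbf 1_A\|_{\mathcal D_{q}^p(X)}$ together with the fact that $\|\mathbf 1_A\|_{\mathcal D_{q}^p(X)}$ is controlled by a power of $(\mathbb P\otimes\nu)(A)$ (Jensen, since $\int \mathbf 1_A^q\ud\nu = \int\mathbf 1_A\ud\nu$ and $p,q>1$) shows $\mu$ is finitely additive, of $\sigma$-finite variation, $\sigma$-additive (by monotone/dominated convergence as $A_m\downarrow\varnothing$), and absolutely continuous with respect to $\mathbb P\otimes\nu$. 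Since $X$ is reflexive, $X^*$ has the Radon--Nikodym property, so there is $\phi\in L^1_{\mathrm{loc}}(\mathbb P\otimes\nu;X^*)$ with $\mu(A)=\int_A\phi\ud(\mathbb P\otimes\nu)$. Then $F(f)=\langle\phi,f\rangle$ first for simple $f$, then for bounded $f$, then by a truncation/dominated-convergence argument (using $P_\eps$ to control $\int|\langle\phi,f\rangle|$ by $\int\langle\|f\|P_\eps\phi,\phi\rangle$) for all $f\in\mathcal D_{q}^p(X)$. Finally $\phi\in\mathcal D_{q'}^{p'}(X^*)$ follows by applying \eqref{eq:dualofD_q^p(X)} to the truncations $\phi\mathbf 1_{\|\phi\|\leq m}$ and monotone convergence.

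I expect the main obstacle to be the measure-theoretic bookkeeping around $\mathbb P\otimes\nu$ rather than any deep difficulty: one must verify that $\mathbb P\otimes\nu$ is a well-defined (possibly $\sigma$-finite) measure on $\mathcal B(\mathbb R_+)\otimes\mathcal F\otimes\mathcal J$, that $\mathcal D_{q}^p(X)$ genuinely coincides with a weighted $L^p(L^q)$-space built from it, and that $\mu$ has $\sigma$-finite variation so that the vector-valued Radon--Nikodym theorem applies on each piece of an exhausting sequence; the $\sigma$-finiteness of $\nu$ (here $\widetilde{\mathcal P}$-$\sigma$-finiteness, as assumed throughout) is what makes this go through. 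None of these steps is conceptually new — they mirror the finite-dimensional-filtration argument of Theorem~\ref{thm:CsorgoArg} — so I would keep the exposition brief, emphasizing only the points where the random measure (as opposed to a deterministic one) requires the measurability remark preceding the theorem statement.
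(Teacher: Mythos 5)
Your proposal follows the same architecture as the paper's proof — Hölder for the upper bound, a $P_\eps$-based norming function (with the same exponent bookkeeping) for the lower bound, and a vector-valued Radon--Nikodym argument for surjectivity — and the norming-function calculation is correct. However, there is a genuine gap in the surjectivity step when $p>q$. You claim that $\|\mathbf 1_A\|_{\mathcal D_q^p(X)}$ is controlled by a power of $(\mathbb P\otimes\nu)(A)$ ``by Jensen, since $p,q>1$,'' but Jensen for $t\mapsto t^{p/q}$ goes the useful way only when $p/q\leq 1$, i.e.\ $p\leq q$. For $p>q$ the indicator $\mathbf 1_A$ of a set of finite $\mathbb P\otimes\nu$-measure need not even lie in $\mathcal D_q^p(X)$, so the set function $A\mapsto F(x\mathbf 1_A)$ is not well-defined on all such sets, and the dominated-convergence argument for $\sigma$-additivity has no applicable dominant. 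The paper sidesteps this by first reducing to the case $\nu(\mathbb R_+\times J)\leq N$ a.s.; then every indicator automatically has $\mathcal D_q^p$-norm at most $N^{1/q}$, and, more importantly, the \emph{total} variation over a finite partition is bounded directly by $\|F\|\,\bigl(\mathbb E\nu(\mathbb R_+\times J)^{p/q}\bigr)^{1/p}\leq\|F\|N^{1/q}$ (a sharper estimate than summing bounds on the individual $\|\theta(A_i)\|$), independently of the order of $p$ and $q$. The passage to $\sigma$-finite $\mathbb P\otimes\nu$ then proceeds by exhaustion, essentially as you propose.

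A secondary point: you invoke ``$\widetilde{\mathcal P}$-$\sigma$-finiteness, as assumed throughout,'' but that hypothesis is not part of Theorem~\ref{thm:dualofD_q^p(X)}: the theorem is stated for an arbitrary random measure $\nu$ subject only to the measurability condition recorded before it. The paper's proof has a separate final paragraph for this case, which locates a set $A$ on which $\mathbb P\otimes\nu$ is $\sigma$-finite and which carries a norming sequence for $F$, and then shows $F$ must vanish on $A^c$ by a convexity computation. Your proposal does not cover that step, though in fairness the $\sigma$-finite case already suffices for every application of the theorem made elsewhere in the paper.
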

\begin{proof}
 First we suppose that $\mathbb E\nu(\mathbb R_+ \times J) < \infty$. By approximation we can assume that $\nu(\mathbb R_+ \times J)\leq N$ a.s., for some $N\in \mathbb{N}$. In this case we can proceed with a~standard argument using the Radon-Nikodym property of $X^*$. Let $F\in (\mathcal D_{q}^p(X))^*$. On $\mathcal B(\mathbb R_+)\otimes \mathcal F \otimes \mathcal J$ we can define and $X^*$-valued measure $\theta$ by setting 
 \[
  \langle \theta(A), x\rangle:= F(\mathbf 1_{A}\cdot x) \qquad (\mathcal B(\mathbb R_+)\otimes A\in \mathcal F \otimes \mathcal J, x\in X).
 \]
It is straightforward to verify that $\theta$ is $\sigma$-additive and absolutely continuous with respect to $\mathbb P \times \nu$. Moreover, $\theta$ is of finite variation. Indeed, if $A_1,\ldots,A_n$ is a disjoint partition of $\mathbb R_+\times \Omega \times J$, then 
\begin{align}
\label{eqn:totvarBound}
\sum_{i=1}^n \|\theta(A_i)\| & = \sup_{(x_i)_{i=1}^n \subset B_X} \sum_{i=1}^n F(\mathbf 1_{A_i} x_i) \nonumber \\
& =  \sup_{(x_i)_{i=1}^n \subset B_X} F\Big(\sum_{i=1}^n \mathbf 1_{A_i} x_i\Big)  \nonumber\\
& \leq \|F\|_{(\mathcal D_{q}^p(X))^*} \sup_{(x_i)_{i=1}^n \subset B_X} \Big(\E\Big(\int_{\R_+\ti J} \Big\|\sum_{i=1}^n \mathbf 1_{A_i} x_i\Big\|_X^q \ d\nu\Big)^{p/q}\Big)^{1/p}  \nonumber\\
& = \|F\|_{(\mathcal D_{q}^p(X))^*} \sup_{(x_i)_{i=1}^n \subset B_X} \Big(\E\Big(\int_{\R_+\ti J} \sum_{i=1}^n \mathbf 1_{A_i} \|x_i\|_X^q \ d\nu\Big)^{p/q}\Big)^{1/p}  \nonumber\\
& \leq \|F\|_{(\mathcal D_{q}^p(X))^*} (\E\nu(\R_+\ti J)^{p/q})^{1/p}.
\end{align}
(Here $B_X$ is a unit ball in $X$). By the Radon-Nikodym property of $X^*$, there exists an $\mathcal B(\mathbb R_+) \otimes \mathcal F \otimes \mathcal J$-strongly measurable $X^*$-valued function $f$ such that 
$$F(g) = F_f(g) = \mathbb E \int_{\mathbb R_+ \times J}\langle f,g \rangle\ud \nu$$ 
for each $g\in \mathcal D_{q}^p(X)$. By H\"older's inequality, it is immediate that 
$$\|F\|_{(\mathcal D_{q}^p(X))^*} \leq \|f\|_{\mathcal D_{q'}^{p'}(X^*)}.$$
To show the reverse estimate, we may assume that $f\in \mathcal D_{q'}^{p'}(X^*)$ has norm $1$ and show that $\|F_{f}\|_{(\mathcal D_{q}^p(X))^*}\geq 1$. By approximation, we may furthermore assume that $f$ is simple, i.e., 
$$f=\sum_{m,n} \mathbf 1_{A_n} \ \mathbf 1_{B_{nm}} \  x_{nm}^*,$$
for $A_n\in \cF$ and $B_{nm} \in \cB(\R_+)\ot \cJ$ disjoint and $x_{nm}^*\in X^*$. Define
\begin{equation}
\label{eqn:normFunction}
g = \sum_{m,n} \mathbf 1_{A_n} \Big(\sum_m \nu(B_{nm})\|x_{nm}^*\|_{X^*}^{q'}\Big)^{\frac{p'}{q'}-1} \ \mathbf 1_{B_{nm}} \ x_{nm} \|x_{nm}^*\|_{X^*}^{q'-1},
\end{equation}
where the $x_{nm}\in X$ satisfy the condition in Lemma \ref{lemma:P_eps}, i.e.\ for some $0<\eps<1$
$$(1-\eps)\|x_{nm}^*\|\leq \langle x_{nm},x_{nm^*}\rangle, \qquad \|x_{nm}\|_X=1.$$
By assumption,
$$\|f\|_{D_{q'}^{p'}(X^*)}^{p'} = \sum_{n} \bP(A_n)\Big(\sum_m \nu(B_{nm})\|x_{nm}^*\|_{X^*}^{q'}\Big)^{p'/q'} = 1.$$
Therefore, also
\begin{align*}
\|g\|_{D_q^p(X)}^q & = \sum_{n} \bP(A_n)\Big(\sum_m \nu(B_{nm})\|x_{nm}^*\|_{X^*}^{q'}\Big)^{\Big(\frac{p'}{q'}-1\Big)p} \Big(\sum_m \nu(B_{nm})\|x_{nm}^*\|_{X^*}^{(q'-1)q}\Big)^{\frac{p}{q}} \\
& = \sum_{n} \bP(A_n)\Big(\sum_m \nu(B_{nm})\|x_{nm}^*\|^{q'}\Big)^{p'/q'} = 1,
\end{align*}
as 
\begin{equation}
\label{eqn:HoldConj}
qq'=q+q' \qquad \frac{pp'}{q'} - p + \frac{p}{q} = \frac{p'}{q'}.
\end{equation}
Moreover,
\begin{align*}
F_f(g) & = \E\int \langle f,g\rangle \ d\nu \\
& = \sum_{n} \bP(A_n)\sum_m \nu(B_{nm}) \langle x_{nm},x_{nm^*}\rangle \|x_{nm}^*\|_{X^*}^{q'-1} \Big(\sum_m \nu(B_{nm})\|x_{nm}^*\|_{X^*}^{q'}\Big)^{\frac{p'}{q'}-1} \\
& \geq  \sum_{n} \bP(A_n)\sum_m \nu(B_{nm}) (1-\eps) \|x_{nm}^*\|_{X^*}^{q'} \Big(\sum_m \nu(B_{nm})\|x_{nm}^*\|_{X^*}^{q'}\Big)^{\frac{p'}{q'}-1} \\
& = (1-\eps) \sum_{n} \bP(A_n)\Big(\sum_m \nu(B_{nm})\|x_{nm}^*\|_{X^*}^{q'}\Big)^{\frac{p'}{q'}} = (1-\eps).
\end{align*}  
Since $\eps$ was arbitrary, $\|F_{f}\|_{(\mathcal D_{q}^p(X))^*}\geq 1$.\par
Let now $\mathbb E\nu(\mathbb R_+ \times J) = \infty$ and assume that $\bP\times \nu$ is $\si$-finite. Then there exists a sequence $(A_n)_{n\geq 1}\subset \mathcal B(\mathbb R_+)\otimes  \mathcal F \otimes \mathcal J$ such that $A_n \subset A_{n+1}$ for each $n\geq 1$, $\cup_{n\geq 1}A_n =  \mathbb R_+ \times \Omega \times J$, and $\mathbb P \otimes \nu(A_n)<\infty$ for each $n\geq 1$. Let $\nu_n := \nu \cdot\mathbf 1_{A_n}$. Then each $F\in (\mathcal D_{q}^p(X))^*$ can be considered as a linear functional on the closed subspace of $\mathcal D_{q}^p(X)$ consisting of all functions with support in $A_n$. By the previous part of the proof, for each $n\geq 1$ there exists $f_n \in \mathcal D_{q'}^{p'}(X^*)$ with support in $A_n$ such that 
$$F(g \cdot \mathbf 1_{A_n}) = F_{f_n}(g\cdot \mathbf 1_{A_n}) = \mathbb E \int_{\mathbb R_+ \times J}\langle f_n,g \rangle \mathbf 1_{A_n}\ud \nu$$ 
and 
$$\|f_n\|_{\mathcal D_{q'}^{p'}(X^*)}\leq \|F_{f_n}\|_{(\mathcal D_{q}^p(X))^*} \leq \|F\|_{(\mathcal D_{q}^p(X))^*}.$$
Obviously $f_{n+1}\mathbf 1_{A_n} = f_n$ for each $n\geq 1$, hence there exists $f: \Omega \times \mathbb R_+ \times J \to X^*$ such that $f\mathbf 1_{A_n} = f_n$ for each $n\geq 1$. But then Fatou's lemma implies
$$\|f\|_{\mathcal D_{q'}^{p'}(X^*)} \leq \liminf_{n\to \infty}\|f_n\|_{\mathcal D_{q'}^{p'}(X^*)}\leq \|F\|_{(\mathcal D_{q}^p(X))^*},$$ 
so $f\in \mathcal D_{q'}^{p'}(X^*)$. On the other hand, by H\"older's inequality 
$$\|F\|_{(\mathcal D_{q}^p(X))^*}\leq \|f\|_{\mathcal D_{q'}^{p'}(X^*)}.$$ 
Since the bounded linear functionals $F$ and $F_f$ agree on a dense subset of $\mathcal D_{q}^p(X)$, it follows that $F=F_f$ and \eqref{eq:dualofD_q^p(X)} holds.\par
Finally, let $\nu$ be general. Let $F\in (\mathcal D_{q}^p(X))^*$ be of norm $1$. Let $\eps_n\downarrow 0$ and let $(g_n)_{n\geq 1}$ be a sequence in the unit sphere of $\mathcal D_{q}^p(X)$ satisfying $F(g_n)\geq (1-\eps_n)$. By strong measurability of the $g_n$, there exists an $A \in \mathcal B(\mathbb R_+) \otimes \mathcal F \otimes \mathcal J$ so that $\bP\times \nu$ is $\si$-finite on $A$ and $g_n=0$ on $A^c$ $\bP\times \nu$-a.e. Let $\tilde{F} \in (\mathcal D_{q}^p(X))^*$ be defined by $\tilde{F}(g)=F(g\mathbf 1_A)$. The previous part of the proof shows that there exists an $f \in \mathcal D_{q'}^{p'}(X^*)$ so that $\tilde{F}=F_f$ and $\|\tilde{F}\|_{(\mathcal D_{q}^p(X))^*} = \|f\|_{\mathcal D_{q'}^{p'}(X^*)}$. It remains to show that $F=\tilde{F}$. To prove this, suppose that there exists a $g_0\in \mathcal D_{q}^p(X)$ of norm $1$ with $\text{supp}(g_0)\subset A^c$ and $F(g_0)=\delta>0$. Let $0<\lambda<1$. Then, for any $n\geq 1$,
$$\|(1-\lambda^p)^{1/p} g_0 + \lambda g_n\|_{\mathcal D_{q}^p(X)}^p = (1-\lambda^p) \|g_0\|_{\mathcal D_{q}^p(X)}^p + \lambda^p \|g_n\|_{\mathcal D_{q}^p(X)}^p = 1$$
and 
$$F((1-\lambda^p)^{1/p} g_0 + \lambda g_n)  \geq (1-\lambda^p)^{1/p}\delta + \lambda (1-\eps_n).$$
As a consequence, 
$$\|F\|\geq \sup_{0<\lambda<1} (1-\lambda^p)^{1/p}\delta + \lambda.$$
One easily checks that the supremum is attained in 
$$\lambda = \Big(1+\delta^{1/(1-\frac{1}{p})}\Big)^{-1/p}$$
and so $\|F\|>1$, a contradiction. 
\end{proof}
We now turn to proving a similar duality statement for $\hat{\mathcal D}_{q}^p(X)$, the space of all $\widetilde{\cP}$-measurable functions in $\mathcal D_{q}^p(X)$. In the proof we will use the following `reverse' version of the dual Doob inequality \cite[Lemma~2.10]{DMN12}.
\begin{lemma}[Reverse dual Doob inequality]
\label{lem:dualDoobRev} Fix $0<p\leq 1$. Let $\bF=(\cF_n)_{n\geq 0}$ be a filtration and let $(\E_n)_{n\geq 0}$
be the associated sequence of conditional expectations.
If $(f_n)_{n\geq 0}$ is a sequence of non-negative random variables in $L^1(\bP)$, then
$$\Big(\E\Big|\sum_{n\geq 0} f_n\Big|^p\Big)^{\frac{1}{p}} \leq p^{-1} \Big(\E\Big|\sum_{n\geq 0}
\E_n f_n\Big|^p\Big)^{\frac{1}{p}}.$$
\end{lemma}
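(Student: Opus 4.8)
The plan is to prove the inequality by reducing to finite sums, applying a concavity (subgradient) telescoping to the adapted sequence $(\E_n f_n)$, and then converting the resulting bound by a single application of H\"older's inequality with the conjugate exponents $1/p$ and $1/(1-p)$. Throughout, write $g_k=\E_k f_k$ and, for $N\geq 0$, $S_N=\sum_{k=0}^N f_k$ and $T_N=\sum_{k=0}^N g_k$. Since $f_k\geq 0$, the sequences $S_N$ and $T_N$ increase to $\sum_k f_k$ and $\sum_k\E_k f_k$, so by monotone convergence it suffices to show $\E S_N^p\leq p^{-p}\E T_N^p$ for each fixed $N$. The case $p=1$ is immediate, since then both sides equal $\sum_k\E f_k$, so assume $0<p<1$; we may also assume $\E T_N^p<\infty$, as otherwise the right-hand side is infinite.

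First I would record a small bookkeeping fact: if $g_k=0$ on a set $A\in\cF_k$ then $\E[f_k\mathbf 1_A]=\E[g_k\mathbf 1_A]=0$, and since $f_k\geq 0$ this forces $f_k=0$ a.s.\ on $A$. As $\{T_N=0\}\subseteq\{g_k=0\}\in\cF_k$ for every $k$, we get $S_N=0$ a.s.\ on $\{T_N=0\}$; hence $\E S_N^p=\E[\mathbf 1_{\{T_N>0\}}S_N^p]$ and similarly for the other quantities below, and all powers of $T_N$ that appear will only be evaluated where $T_N>0$. (Powers of the intermediate partial sums $T_k$ that vanish cause no trouble, with the convention $0^{p-1}\cdot 0:=0$.)

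The main step is the telescoping estimate $\E[T_N^{p-1}S_N]\leq p^{-1}\E T_N^p$. Note $T_k$ is $\cF_k$-measurable and $T_{-1}=0$. Using the subgradient inequality for the concave function $x\mapsto x^p$, namely $b^p-a^p\geq p\,b^{p-1}(b-a)$ for $0\leq a\leq b$, and telescoping, $T_N^p=\sum_{k=0}^N(T_k^p-T_{k-1}^p)\geq p\sum_{k=0}^N T_k^{p-1}g_k$. Taking expectations and pulling the non-negative $\cF_k$-measurable factor $T_k^{p-1}$ through the conditional expectation gives $\E[T_k^{p-1}g_k]=\E[T_k^{p-1}\E_k f_k]=\E[T_k^{p-1}f_k]$, so $\E T_N^p\geq p\,\E\bigl[\sum_{k=0}^N T_k^{p-1}f_k\bigr]$. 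Since $T_k\leq T_N$ and $p-1\leq 0$, we have $T_k^{p-1}\geq T_N^{p-1}$ on $\{T_N>0\}$, whence $\sum_{k=0}^N T_k^{p-1}f_k\geq T_N^{p-1}\sum_{k=0}^N f_k=T_N^{p-1}S_N$, and the estimate follows.

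Finally, on $\{T_N>0\}$ there is the pointwise identity $S_N^p=(T_N^{p-1}S_N)^p\,(T_N^p)^{1-p}$, so H\"older's inequality with exponents $1/p$ and $1/(1-p)$ yields $\E S_N^p\leq(\E[T_N^{p-1}S_N])^p(\E T_N^p)^{1-p}$. Combined with the previous step this gives $\E S_N^p\leq p^{-p}(\E T_N^p)^p(\E T_N^p)^{1-p}=p^{-p}\E T_N^p$, i.e.\ $(\E S_N^p)^{1/p}\leq p^{-1}(\E T_N^p)^{1/p}$; letting $N\to\infty$ finishes the proof. I expect the only delicate points to be the handling of the null set $\{T_N=0\}$ and justifying the $[0,\infty]$-valued manipulation of conditional expectations in $\E[T_k^{p-1}g_k]=\E[T_k^{p-1}f_k]$; the remaining computations are short and elementary.
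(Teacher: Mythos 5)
Your proof is correct and complete. Note that the paper itself does not prove this lemma but simply cites \cite[Lemma 2.10]{DMN12}, so there is no in-paper proof to compare against; your argument is the standard Garsia-type proof for this kind of inequality. All the delicate points are handled properly: the concavity (subgradient) bound $b^p-a^p\geq p\,b^{p-1}(b-a)$ for $0\le a\le b$ and $0<p<1$ is correct, the pull-out $\E[T_k^{p-1}\E_k f_k]=\E[T_k^{p-1}f_k]$ is legitimate since $T_k^{p-1}\mathbf 1_{\{T_k>0\}}$ is a nonnegative $\cF_k$-measurable weight (justify by truncating and monotone convergence if one wishes to be fussy), the observation that $f_k=0$ a.s.\ on $\{g_k=0\}\in\cF_k$ cleanly disposes of the $0\cdot\infty$ indeterminacies on $\{T_k=0\}$ and $\{T_N=0\}$, the monotonicity step $T_k^{p-1}\geq T_N^{p-1}$ uses $p-1\leq 0$ correctly, and the final H\"older step with exponents $1/p$ and $1/(1-p)$ applied to the factorization $S_N^p=(T_N^{p-1}S_N)^p(T_N^p)^{1-p}$ on $\{T_N>0\}$ closes the argument. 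The reduction to finite $N$ by monotone convergence and the trivial case $p=1$ are also handled correctly.
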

\begin{theorem}\label{thm:dualofL^p_P(L^q(X))}
Let $X$ be a reflexive space and let $\nu$ be a predictable, $\widetilde{\cP}$-$\si$-finite random measure on $\cB(\R_+)\ot \cJ$ that is non-atomic in time. Then, for $1<p,q<\infty$, 
$$(L^p_{\widetilde{\cP}}(\bP;L^q(\nu;X)))^* = L^{p'}_{\widetilde{\cP}}(\bP;L^{q'}(\nu;X^*))$$
with isomorphism given by 
$$g\mapsto F_g,\qquad F_g(h) = \E\int_{\R_+\ti J} \langle g,h\rangle d\nu \qquad (g \in L^{p'}_{\widetilde{\cP}}(\bP;L^{q'}(\nu)), h\in L^p_{\widetilde{\cP}}(\bP;L^q(\nu))).$$
Moreover,
\begin{equation}
\label{eqn:predDualityNE}
\min\Big\{\Big(\frac{p}{q}\Big)^{1/q} \frac{q'}{p'}, \Big(\frac{p'}{q'}\Big)^{1/q'} \frac{q}{p}\Big\} \|g\|_{L^{p'}_{\widetilde{\cP}}(\bP;L^{q'}(\nu;X^*))} \leq \|F_g\|\leq \|g\|_{L^{p'}_{\widetilde{\cP}}(\bP;L^{q'}(\nu;X^*))}.
\end{equation}
\end{theorem}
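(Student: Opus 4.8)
The plan is to adapt the argument used for the dual of $H^{s_q}_p(X)$ in Theorems~\ref{thm:CsorgoArg} and \ref{thm:findimrefl}, replacing the discrete conditional partial sums $s^n_{q'}(g)$ by the continuous predictable process $t\mapsto\bigl(\int_{[0,t]\times J}\|g\|^{q'}_{X^*}\,d\nu\bigr)^{1/q'}$, and to obtain surjectivity by combining a Hahn--Banach extension with the non‑predictable duality of Theorem~\ref{thm:dualofD_q^p(X)} and a predictable projection. The upper bound $\|F_g\|\le\|g\|_{L^{p'}_{\widetilde\cP}(\bP;L^{q'}(\nu;X^*))}$ comes first and is easy: for $\widetilde\cP$‑measurable $g,h$ it is the pointwise Hölder inequality in the $J$‑variable against $\nu(\omega;\cdot)$ followed by the usual Hölder inequality in $(\omega,t)$, exactly as in \eqref{eq:proofofthm:CsorgoArg}; this also shows $g\mapsto F_g$ is well defined and contractive.

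For the lower bound I would fix a simple $\widetilde\cP$‑measurable $X^*$‑valued $g$, without loss of generality supported in $[0,T]\times\Omega\times J$ and with $\bP\otimes\nu$ $\sigma$‑finite on its support. Put $s^t_{q'}(g)=\bigl(\int_{[0,t]\times J}\|g\|^{q'}_{X^*}\,d\nu\bigr)^{1/q'}$, which is a continuous predictable increasing process since $\nu$ is predictable and non‑atomic in time. With $P_\eps:X^*\to X$ the sign‑replacement operator of Lemma~\ref{lemma:P_eps}, define the predictable test function
$$h_t=\|g\|^{1-p'}_{L^{p'}_{\widetilde\cP}(\bP;L^{q'}(\nu;X^*))}\,(s^t_{q'}(g))^{p'-q'}\,\|g_t\|_{X^*}^{q'-1}\,P_\eps(g_t).$$
A direct computation using the substitution $u=s^t_{q'}(g)$ in the Stieltjes integrals $\int_0^T (s^t_{q'})^{\alpha}\,d((s^t_{q'})^{q'})$ (the continuous analogue of \eqref{eqn:MVTalpha}) and the conjugacy identities $pp'=p+p'$, $qq'=q+q'$ yields $\|h\|_{L^p_{\widetilde\cP}(\bP;L^q(\nu;X))}\lesssim_{p,q}1$ and $\langle F_g,h\rangle\ge(1-\eps)\tfrac{q'}{p'}\|g\|_{L^{p'}_{\widetilde\cP}(\bP;L^{q'}(\nu;X^*))}$ up to an explicit constant; letting $\eps\downarrow0$ gives the left inequality of \eqref{eqn:predDualityNE}. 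Alternatively — and this is how the stated constant with the factor $(p/q)^{1/q}$ arises most transparently — one discretizes time with the filtration $(\mathcal{P}_m)_m$ of Theorem~\ref{thm:condexpwrtP_m}, runs the discrete argument of Theorem~\ref{thm:findimrefl} on the resulting martingale difference sequence, controls the passage between $\int\|\cdot\|^q_X\,d\nu$ on a dyadic block and its $\mathcal F_{n/2^m}$‑conditional expectation by the reverse dual Doob inequality (Lemma~\ref{lem:dualDoobRev}) when $p\le q$, and passes to the limit $m\to\infty$ using continuity of $s^t_{q'}(g)$; the case $p\ge q$ is then reduced to $p\le q$ by the reflexivity/duality trick of Lemma~\ref{lemma:simple}.

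For surjectivity, given $F\in(L^p_{\widetilde\cP}(\bP;L^q(\nu;X)))^*$ I would extend it by Hahn--Banach to $\mathcal D^p_q(X)$ without increasing its norm; by Theorem~\ref{thm:dualofD_q^p(X)} the extension equals $F_{g_0}$ for some $g_0\in\mathcal D^{p'}_{q'}(X^*)$. After reducing, exactly as in the last paragraph of the proof of Theorem~\ref{thm:dualofD_q^p(X)}, to the case $g_0\in L^1(\bP\otimes\nu;X^*)$ with $\bP\otimes\nu$ $\sigma$‑finite, set $g^m=\mathbb E(g_0\,|\,\mathcal{P}_m\otimes\cJ)$ and $g=\lim_m g^m$, the limit existing $\bP\otimes\nu$‑a.e.\ and in $L^1(\bP\otimes\nu;X^*)$ by vector‑valued martingale convergence since $\mathcal{P}_m\otimes\cJ\nearrow\widetilde\cP$. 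For bounded $\widetilde\cP$‑measurable $h$ one has $\E\int\langle g_0,h\rangle\,d\nu=\E\int\langle g,h\rangle\,d\nu$ by the defining property of the conditional expectation with respect to $\bP\otimes\nu$, hence $F=F_g$; that $g$ actually lies in $L^{p'}_{\widetilde\cP}(\bP;L^{q'}(\nu;X^*))$ with $\|g\|\le\min\{\dots\}^{-1}\|F\|$ then follows by applying the lower bound of the previous step to the truncations $g\mathbf 1_{\{\|g\|_{X^*}\le k\}}$ together with Fatou's lemma. Reflexivity of $L^p_{\widetilde\cP}(\bP;L^q(\nu;X))$ is automatic, being a closed subspace of the reflexive space $\mathcal D^p_q(X)=L^p(\bP;L^q(\nu;X))$.

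I expect the main obstacle to be the surjectivity step. One must carefully navigate the distinction between $\widetilde\cP$‑$\sigma$‑finiteness of $\nu$ and $\sigma$‑finiteness of $\bP\otimes\nu$, verify that the predictable projection of $g_0$ is a genuine $X^*$‑valued $\widetilde\cP$‑measurable function representing the restricted functional, and control its norm — which is precisely where the reverse dual Doob inequality of Lemma~\ref{lem:dualDoobRev} is indispensable and where the asymmetric constants in \eqref{eqn:predDualityNE} originate.
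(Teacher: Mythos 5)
Your proposal is essentially correct and identifies the key ingredients, but it differs from the paper's proof in two places. For the norm estimate, the paper runs only your alternative~(b): it first reduces to the Lipschitz case $\nu((s,t]\times J)\leq t-s$ via the time-change of Proposition~\ref{prop:apptimechange}, then discretizes in time along the dyadic filtration, builds the discrete test function of Theorem~\ref{thm:findimrefl} using the \emph{conditioned} increments $\E_{n/2^m}\nu\bigl((n/2^m,(n+1)/2^m]\times B_\ell\bigr)$ (this conditioning is what forces the appeal to Lemma~\ref{lem:dualDoobRev} and produces the factor $(p/q)^{1/q}$), and passes to the limit through Corollary~\ref{cor:Ffromcondexpex}. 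Your alternative~(a) is a genuine shortcut: because $\nu$ is predictable and non-atomic in time, the process $t\mapsto(\|g\|^{q'}\star\nu)_t^{1/q'}$ is already $\widetilde{\mathcal P}$-measurable and continuous, so your test function $h$ is automatically $\widetilde{\mathcal P}$-measurable, and $\|h\|_{L^p_{\widetilde{\mathcal P}}(\bP;L^q(\nu;X))}$ can be evaluated directly by the Stieltjes change of variables $u=(\|g\|^{q'}\star\nu)_t$, with no conditioning step and hence no need for Lemma~\ref{lem:dualDoobRev}, Corollary~\ref{cor:Ffromcondexpex} or the time-change; the explicit constant you obtain differs slightly from \eqref{eqn:predDualityNE}, which is harmless. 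The reduction to $p\leq q$ via reflexivity and Lemma~\ref{lemma:simple} and the easy upper bound match the paper. For surjectivity, however, your route (Hahn--Banach extension to $\mathcal D^p_q(X)$, then $\widetilde{\mathcal P}$-conditional expectation of the resulting $g_0$ via $L^1(\bP\otimes\nu)$-martingale convergence along $\mathcal P_m\otimes\mathcal J$) runs into exactly the trouble you flag: the restriction of $\bP\otimes\nu$ to $\mathcal P_m\otimes\mathcal J$ need not be $\sigma$-finite, so the conditional expectations may fail to exist, and vector-valued martingale convergence over an infinite measure space requires further argument. The paper sidesteps all of this by working intrinsically on $\widetilde{\mathcal P}$: it defines the $X^*$-valued measure $\langle\theta(A),x\rangle:=F(\mathbf 1_A\cdot x)$ for $A\in\widetilde{\mathcal P}$, checks $\sigma$-additivity, finite variation and absolute continuity with respect to $\bP\otimes\nu$, and invokes the Radon--Nikod\'ym property of $X^*$ to produce a $\widetilde{\mathcal P}$-measurable density $g$ in one stroke, with no Hahn--Banach extension and no predictable projection. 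If you insist on your route, you must resolve the $\sigma$-finiteness of $\bP\otimes\nu$ on the $\mathcal P_m\otimes\mathcal J$; the direct vector-measure argument is cleaner.
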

\begin{proof}
\emph{Step 1: reduction.} It suffices to prove the result for $p\leq q$. Indeed, once this is known we can deduce the case $q\leq p$ as follows. Observe that $L^{p'}_{\widetilde{\cP}}(\bP;L^{q'}(\nu;X^*))$ is a closed subspace of $\cD^{p'}_{q'}(X^*)=L^{p'}(\bP;L^{q'}(\nu;X^*))$. By Theorem~\ref{thm:dualofD_q^p(X)}, $\cD^{p'}_{q'}(X^*)$ is reflexive and therefore $L^{p'}_{\widetilde{\cP}}(\bP;L^{q'}(\nu;X^*))$ is reflexive as well. Therefore, as $p'\leq q'$,
$$(L^p_{\widetilde{\cP}}(\bP;L^q(\nu;X)))^* = L^{p'}_{\widetilde{\cP}}(\bP;L^{q'}(\nu;X^*))^{**} = L^{p'}_{\widetilde{\cP}}(\bP;L^{q'}(\nu;X^*)).$$
Hence, if $F\in (L^p_{\widetilde{\cP}}(\bP;L^q(\nu;X)))^*$, then there exists an $f\in L^{p'}_{\widetilde{\cP}}(\bP;L^{q'}(\nu;X^*))$ so that for any $g\in L^p_{\widetilde{\cP}}(\bP;L^q(\nu;X))$
$$F(g) = F_g(f) = \E\int_{\R_+\ti J} \langle f,g\rangle d\nu.$$
Moreover, the bounds \eqref{eqn:predDualityNE} follow from Lemma~\ref{lemma:simple}. Thus, for the remainder of the proof, we can assume that $p\leq q$.\par
\emph{Step 2: norm estimates.} Let us now show that \eqref{eqn:predDualityNE} holds. Since the upper bound is immediate from H\"older's inequality, we only need to show that for any $g \in L^{p'}_{\widetilde{\cP}}(\bP;L^{q'}(\nu;X^*))$,
\begin{equation}
\label{eqn:normingPred}
\|F_g\|\geq \Big(\frac{p}{q}\Big)^{1/q} \frac{q'}{p'} \|g\|_{L^{p'}(\bP;L^{q'}(\nu;X^*))}.
\end{equation}
It suffices to show this on a dense subset of $L^{p'}_{\widetilde{\cP}}(\bP;L^{q'}(\nu;X^*))$. Indeed, suppose that $g_n\to g$ in $L^{p'}_{\widetilde{\cP}}(\bP;L^{q'}(\nu;X^*))$ and that \eqref{eqn:normingPred} holds for $g_n$, for all $n\geq 1$. Then,
$$\Big(\frac{p}{q}\Big)^{1/q} \frac{q'}{p'} \|g_n\|_{L^{p'}(\bP;L^{q'}(\nu;X^*))} \leq \|F_{g_n}\| \leq \|F_{g}\| + \|g-g_n\|_{L^{p'}(\bP;L^{q'}(\nu;X^*))},$$
and by taking limits on both sides we see that $g$ also satisfies \eqref{eqn:normingPred}.\par
Let us first assume that 
\begin{equation}
\label{eqn:nuAbsCont}
\nu((s,t]\ti J)\leq (t-s)\qquad \text{a.s.}, \qquad \text{for all } 0\leq s\leq t 
\end{equation}
By the previous discussion, we may assume that $\|g\|_{L^{p'}(\bP;L^{q'}(\nu;X^*))}=1$ and that $g$ is of the form
$$g= \sum_{n=0}^{N_{m_*}}\sum_{\ell=0}^L \mathbf 1_{(n/2^{m_*},(n+1)/2^{m_*}]}\mathbf 1_{{B_{\ell}}}g_{n\ell},$$
where $N_{m_*}<\infty$, $g_{n\ell}$ is simple and $\cF_{n/2^{m_*}}$-measurable for all $n$ and $\ell$, and the ${B_{\ell}}$ are disjoint sets in $\cJ$ of finite $\bP\otimes\nu$-measure. For $m\geq m_*$ define
$$g^{(m)} = \sum_{n=0}^{N_m}\sum_{\ell=0}^L \mathbf 1_{(n/2^m,(n+1)/2^{m}]}\mathbf 1_{{B_{\ell}}}g_{n\ell}^{(m)}$$
so that $g^{(m)}=g$. Then clearly, $g_{n\ell}^{(m)}$ is $\cF_{n/2^m}$-measurable for all $n$ and $\ell$. Let us now fix an $m\geq m_*$. We define, for any $0\leq k\leq N_m$,
$$\bar{s}_{q'}^k(g) = \Big(\sum_{n=0}^{k}\sum_{\ell=0}^L \|g_{n\ell}^{(m)}\|^{q'}\E_{n/2^m}\nu((n/2^m,(n+1)/2^{m}]\ti {B_{\ell}})\Big)^{1/q'}$$
and set 
$$\alpha=(\E\bar{s}_{q'}^{N_m}(g^{(m)})^{p'})^{1/p'}.$$
Let $P_{\eps}$ be as in Lemma~\ref{lemma:P_eps}. We define a $\widetilde{\cP}$-measurable function $h$ by
$$h = \sum_{n=0}^{N_m}\sum_{\ell=0}^L \mathbf 1_{(n/2^m,(n+1)/2^{m}]}\mathbf 1_{{B_{\ell}}}h_{n\ell}$$   
where, for $0\leq n\leq N_m$ and $0\leq \ell\leq L$, $h_{n\ell}$ is the $\cF_{n/2^m}$-measurable function
$$h_{n\ell} = \frac{1}{\alpha^{p'-1}} (\bar{s}_{q'}^n(g^{(m)}))^{p'-q'} \|g_{n\ell}^{(m)}\|^{q'-1} P_{\eps} g_{n\ell}^{(m)}.$$
Since $p/q\leq 1$, Lemma~\ref{lem:dualDoobRev} implies
\begin{align*}
\|h\|_{L^p(\bP;L^q(\nu))} & = \Big(\E\Big(\sum_{n=0}^{N_m}\sum_{\ell=0}^L \|h_{n\ell}\|^q \nu((n/2^m,(n+1)/2^{m}]\ti {B_{\ell}})\Big)^{p/q}\Big)^{1/p} \\
& \leq \Big(\frac{q}{p}\Big)^{1/q} \Big(\E\Big(\sum_{n=0}^{N_m}\sum_{\ell=0}^L \|h_{n\ell}\|^q \E_{n/2^m}\nu((n/2^m,(n+1)/2^{m}]\ti {B_{\ell}})\Big)^{p/q}\Big)^{1/p}\\
& = \Big(\frac{q}{p}\Big)^{1/q} (\E\bar{s}_q^{N_m}(h)^p)^{1/p}.
\end{align*}
Now observe that 
\begin{align*}
\bar{s}_q^{N_m}(h)^q & =  \sum_{n=0}^{N_m}\sum_{\ell=0}^L \|h_{n\ell}\|^q \E_{n/2^m}\nu((n/2^m,(n+1)/2^{m}]\ti {B_{\ell}}) \\
& \leq \frac{1}{\alpha^{(p'-1)q}} \sum_{n=0}^{N_m}\sum_{\ell=0}^L \|g_{n\ell}^{(m)}\|^{(q'-1)q} \bar{s}_{q'}^n(g^{(m)})^{(p'-q')q}  \E_{n/2^m}\nu((n/2^m,(n+1)/2^{m}]\ti {B_{\ell}}) \\
& \leq \frac{1}{\alpha^{(p'-1)q}} \bar{s}_{q'}^{N_m}(g^{(m)})^{(p'-q')q} \sum_{n=0}^{N_m}\sum_{\ell=0}^L \|g_{n\ell}^{(m)}\|^{q'}  \E_{n/2^m}\nu((n/2^m,(n+1)/2^{m}]\ti {B_{\ell}}) \\
& = \frac{1}{\alpha^{(p'-1)q}} \bar{s}_{q'}^{N_m}(g^{(m)})^{p'q-q'q + q'}.
\end{align*}
Using \eqref{eqn:HoldConj} it follows that 
\begin{align*}
\|h\|_{L^p(\bP;L^q(\nu))}^p & \leq \Big(\frac{q}{p}\Big)^{p/q} \frac{1}{\alpha^{(p'-1)p}} \bar{s}_{q'}^{N_m}(g^{(m)})^{(p'q-q'q + q')p/q} \\
& =  \Big(\frac{q}{p}\Big)^{p/q} \frac{1}{\alpha^{p'}} \E\bar{s}_{q'}^{N_m}(g^{(m)})^{p'}= \Big(\frac{q}{p}\Big)^{p/q}.
\end{align*}
Moreover, by Lemma~\ref{lemma:P_eps},
\begin{align*}
F_g(h) & = \E \sum_{n=0}^{N_m}\sum_{\ell=0}^L \langle g_{n\ell}^{(m)},h_{n\ell}\rangle \nu((n/2^m,(n+1)/2^{m}]\ti {B_{\ell}}) \\
& = \E \sum_{n=0}^{N_m}\sum_{\ell=0}^L \langle g_{n\ell}^{(m)},h_{n\ell}\rangle \E_{n/2^m}\nu((n/2^m,(n+1)/2^{m}]\ti {B_{\ell}}) \\
& \geq (1-\eps) \frac{1}{\alpha^{p'-1}} \E \sum_{n=0}^{N_m}\sum_{\ell=0}^L \|g_{n\ell}^{(m)}\|^{q'} \bar{s}_{q'}^n(g^{(m)})^{p'-q'} \E_{n/2^m}\nu((n/2^m,(n+1)/2^{m}]\ti {B_{\ell}}) \\
& = (1-\eps) \frac{1}{\alpha^{p'-1}} \E \sum_{n=0}^{N_m} \bar{s}_{q'}^n(g^{(m)})^{p'-q'}(\bar{s}_{q'}^n(g^{(m)})^{q'} - \bar{s}_{q'}^{n-1}(g^{(m)})^{q'}). 
\end{align*}
Now apply \eqref{eqn:MVTalpha} for $\alpha=p'/q'\geq 1$ and $x=\bar{s}_{q'}^n(g^{(m)})^{q'}/\bar{s}_{q'}^{n-1}(g^{(m)})^{q'}\geq 1$ to obtain
\begin{align*}
F_g(h) & \geq (1-\eps) \frac{1}{\alpha^{p'-1}} \E \sum_{n=0}^{N_m} \frac{q'}{p'}\bigl(\bar{s}_{q'}^n(g^{(m)})^{p'} - \bar{s}_{q'}^{n-1}(g^{(m)})^{p'}\bigr) \\
& = (1-\eps) \frac{q'}{p'}\frac{1}{\alpha^{p'-1}} \E\bar{s}_{q'}^{N_m}(g^{(m)})^{p'} \\
& = (1-\eps) \frac{q'}{p'}\bigl(\E\bar{s}_{q'}^{N_m}(g^{(m)})^{p'}\bigr)^{1/p'} \\
& = (1-\eps) \frac{q'}{p'} \Big(\E\Big(\sum_{n=0}^{N_m}\sum_{\ell=0}^L \|g_{n\ell}^{(m)}\|^{q'}\E_{n/2^m}\nu\bigl((n/2^m,(n+1)/2^{m}]\ti {B_{\ell}}\bigr)\Big)^{p'/q'}\Big)^{1/p'} \\
& = (1-\eps) \frac{q'}{p'} \Big(\E\Big(\sum_{n=0}^{N_m} \E_{n/2^m}\bigl((\|g\|^{q'} \star \nu)_{(n+1)/2^{m}} -  (\|g\|^{q'} \star \nu)_{n/2^{m}}\bigr)\Big)^{p'/q'}\Big)^{1/p'}
\end{align*}
In conclusion, for any $m\geq m_*$ we find
$$\|F_g\| \geq \Big(\frac{p}{q}\Big)^{1/q} \frac{q'}{p'} \Big(\E\Big(\sum_{n=0}^{N_m} \E_{n/2^m}((\|g\|^{q'} \star \nu)_{(n+1)/2^{m}} -  (\|g\|^{q'} \star \nu)_{n/2^{m}})\Big)^{p'/q'}\Big)^{1/p'}.$$
Taking $m\to\infty$, we find using Corollary~\ref{cor:Ffromcondexpex} that
$$\|F_g\| \geq \Big(\frac{p}{q}\Big)^{1/q} \frac{q'}{p'} \|g\|_{L^{p'}_{\widetilde{\cP}}(\bP;L^{q'}(\nu;X^*))}.$$
Let us now remove the additional restriction \eqref{eqn:nuAbsCont} on $\nu$. In this case, we define a strictly increasing, predictable, continuous process
$$A_t := \nu([0,t]\ti J) + t, \qquad t\geq 0$$
and a random time change $\tau=(\tau_s)_{s\geq 0}$ by
$$\tau_s = \{t \ : \ A_t=s\}.$$
By Proposition~\ref{prop:apptimechange}, $A\circ \tau(t)=t$ a.s.\ for any $t\geq 0$, and hence by continuity of $A$ and $\tau$, a.s.\ $A\circ \tau(t)=t$ for all $t\geq 0$. As was noted in \eqref{eq:nu_tauisforstep1}, we have $\nu_{\tau}((s,t]\times J)\leq t-s$ a.s.\ for all $s\leq t$. By Proposition~\ref{prop:apptimechange}, we can now write
\begin{align*}
\|F_g\| & = \sup_{\|h\|_{L_{\widetilde{\cP}}^p(\bP;L^q(\nu;X))}\leq 1} \E\int_{\R_+\ti J} \langle g,h\rangle d\nu \\
& \geq \sup_{\|\tilde{h}\circ A\|_{L_{\widetilde{\cP}}^p(\bP;L^q(\nu;X))}\leq 1} \E\int_{\R_+\ti J} \langle g,\tilde{h}\circ A\rangle d\nu\\
& = \sup_{\|\tilde{h}\|_{L_{\widetilde{\cP}}^p(\bP;L^q(\nu_{\tau};X))}\leq 1}\E\int_{\R_+\ti J} \langle g\circ\tau,\tilde{h}\rangle d\nu_{\tau}.
\end{align*}
Applying the previous part of the proof for $\nu=\nu_{\tau}$, we find 
\begin{align*}
\|F_g\| \geq \Big(\frac{p}{q}\Big)^{1/q} \frac{q'}{p'} \|g\circ \tau\|_{L^{p'}_{\widetilde{\cP}}(\bP;L^{q'}(\nu_{\tau};X^*))} = \|g\|_{L^{p'}_{\widetilde{\cP}}(\bP;L^{q'}(\nu;X^*))}.
\end{align*}
This completes our proof of \eqref{eqn:predDualityNE}.\par
\emph{Step 3: representation of linear functionals.} It now remains to show that every $F\in (L^p_{\widetilde{\cP}}(\bP;L^q(\nu;X)))^*$ is of the form $F_g$ for a suitable $\widetilde{\cP}$-measurable function $g$. We will first assume that $\mathbb E\nu(\mathbb R_+ \times J) < \infty$. On $\widetilde{\cP}$ we can define an $X^*$-valued measure $\theta$ by setting 
 \[
  \langle \theta(A), x\rangle:= F(\mathbf 1_{A}\cdot x) \qquad (A\in \widetilde{\cP}, x\in X).
 \]
Then $\theta$ is $\sigma$-additive, absolutely continuous with respect to $\mathbb P \times \nu$. Moreover, by the same calculation as in \eqref{eqn:totvarBound}, for any disjoint partition $A_1,\ldots,A_n\in \widetilde{\cP}$ of $\R_+\ti\Om\ti J$,
\begin{align*}
\sum_{i=1}^n \|\theta(A_i)\| & \leq \|F\|_{(L^p_{\widetilde{\cP}}(\bP;L^q(\nu;X)))^*} (\E\nu(\R_+\ti J)^{p/q})^{1/p} \\
& \leq \|F\|_{(L^p_{\widetilde{\cP}}(\bP;L^q(\nu;X)))^*} (\E\nu(\R_+\ti J))^{1/q},
\end{align*}
so $\theta$ is of finite variation. By the Radon-Nikodym property of $X^*$, there exists a $\widetilde{\cP}$-measurable $X^*$-valued function $g$ such that 
$$F(h) = F_g(h) = \mathbb E \int_{\mathbb R_+ \times J}\langle g,h \rangle\ud \nu$$ 
for each $h\in L^p_{\widetilde{\cP}}(\bP;L^q(\nu;X))$. The extension to the general case, where $\nu$ is \mbox{$\widetilde{\cP}$-$\si$-fi}\-nite, can now be obtained in the same way as in the proof of Theorem~\ref{thm:dualofD_q^p(X)}. 
\end{proof}
\begin{remark}
The reader may wonder whether the duality
$$(L^p_{\widetilde{\cP}}(\bP;L^q(\nu;X)))^* = L^{p'}_{\widetilde{\cP}}(\bP;L^{q'}(\nu;X^*))$$
remains valid if $\nu$ is any random measure and $\widetilde{\cP}$ is replaced by an arbitrary sub-$\si$-algebra of $\cB(\R_+)\otimes \cF\otimes \cJ$. It turns out that, surprisingly, one cannot expect such a general result. Indeed, it was pointed out by Pisier \cite{PisPC} that there exist two probability spaces $(\Om_1,\cF_1,\bP_1)$, $(\Om_2,\cF_2,\bP_2)$ and a sub-$\si$-algebra $\cG$ of $\cF_1\otimes \cF_2$, so that the duality 
$$(L^p_{\cG}(\bP_1;L^q(\bP_2)))^* = L^{p'}_{\cG}(\bP_1;L^{q'}(\bP_2))$$
does not even hold isomorphically. This counterexample in particular shows that the duality results claimed in \cite{LYZ12} are not valid without imposing additional assumptions.
\end{remark}

\subsection{$\mathcal S_q^p$ and $\hat{\mathcal S}_q^p$ spaces}

Let $\nu$ be any random measure on $\mathcal B(\mathbb R_+) \otimes \mathcal J$. Recall that $\mathcal S_q^p$ is the space of all $\mathcal B(\mathbb R_+)\otimes \mathcal F \otimes \mathcal J$-strongly measurable functions $f:\mathbb R_+ \otimes \Omega\otimes J \to L^q(S)$ satisfying
\begin{equation}
\label{eqn:SpqDefApp}
\|f\|_{\mathcal S_q^p} = \Bigl( \mathbb E \Bigl\| \Bigl(\int_{\mathbb R_+\times J}|f|^2 \ud \nu \Bigr)^{\frac 12}\Bigr\|^p_{L^q(S)} \Bigr)^{\frac 1p}<\infty.
\end{equation}
The proof of the following result is analogous to Theorem~\ref{thm:dualofD_q^p(X)}. We leave the details to the reader.
\begin{theorem}\label{thm:dualofS^p_q}
 Let $1<p,q<\infty$. Then $(\mathcal S_{q}^p)^*  =\mathcal S_{q'}^{p'}$ and
 \begin{equation*}
  \|f\|_{\mathcal S_{q'}^{p'}} \eqsim_{p,q} \|f\|_{(\mathcal S_{q}^p)^*},\;\;\; f\in \mathcal S_{q'}^{p'}.
 \end{equation*}
\end{theorem}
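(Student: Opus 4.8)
The plan is to follow the three-step scheme of the proof of Theorem~\ref{thm:dualofD_q^p(X)}, the only structural change being that the inner Lebesgue space $L^q(\nu;X)$ is replaced by the mixed-norm space $L^q(S;L^2(\R_+\times J,\nu_\omega))$: writing $\nu_\omega:=\nu(\omega;\cdot)$, for $\bP$-a.e.\ $\omega$ the slice $f(\cdot,\omega,\cdot)$ is an element of $L^q(S;L^2(\nu_\omega))$, with $\|f\|_{\mathcal S_q^p}=(\E\,\|f(\cdot,\omega,\cdot)\|_{L^q(S;L^2(\nu_\omega))}^p)^{1/p}$, and the candidate bracket is $\langle f,g\rangle=\E\int_{\R_+\times J}\int_S f g\,\ud\rho\,\ud\nu$. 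The upper bound $\|F_g\|\le\|g\|_{\mathcal S_{q'}^{p'}}$ is routine: Cauchy--Schwarz in $L^2(\nu_\omega)$, then H\"older in $L^q(S)$ against $L^{q'}(S)$, then H\"older in $L^p(\Om)$ against $L^{p'}(\Om)$. For the matching lower bound (it is enough to treat a dense family of simple $g$, as in Step~2 of the proof of Theorem~\ref{thm:dualofL^p_P(L^q(X))}) I would use the explicit norming function obtained from $g$ by rescaling it pointwise in $s\in S$ by a power of $\|g(\cdot,\omega,\cdot)(s)\|_{L^2(\nu_\omega)}$ and then by a power of $\|g(\cdot,\omega,\cdot)\|_{L^q(S;L^2(\nu_\omega))}$, in the spirit of \eqref{eqn:normFunction}. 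Since the innermost space $L^2(\nu_\omega)$ is a Hilbert space, the Cauchy--Schwarz step is an \emph{equality}, so --- in contrast with Theorem~\ref{thm:dualofD_q^p(X)} --- no approximate norming device (Lemma~\ref{lemma:P_eps}) is required here.

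To represent a given $F\in(\mathcal S_q^p)^*$ as $F_f$ one reduces, exactly as in Theorem~\ref{thm:dualofD_q^p(X)}, successively to the cases $\nu(\R_+\times J)\le N$ a.s., $\bP\times\nu$ $\sigma$-finite (via exhaustion and Fatou's lemma), and $\nu$ arbitrary (a near-maximiser for $F$ is $\bP\times\nu$-supported in a $\sigma$-finite set). In the first case one forms the $L^{q'}(S)$-valued set function $\Theta(A):=[\,x\mapsto F(\mathbf 1_A\cdot x)\,]$ on $\mathcal B(\R_+)\otimes\mathcal F\otimes\mathcal J$; it is countably additive and $\bP\times\nu$-continuous, and \emph{once it is of finite variation} the Radon--Nikodym property of the reflexive space $L^{q'}(S)$ yields a density $f$ with $F=F_f$ on indicators, hence on all of $\mathcal S_q^p$ by density, while the norming computation above (applied to simple truncations of $f$, together with Fatou) gives $\|f\|_{\mathcal S_{q'}^{p'}}\lesssim_{p,q}\|F\|$. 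The two remaining reductions are purely measure-theoretic and are copied verbatim from Theorem~\ref{thm:dualofD_q^p(X)}.

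The hard part is exactly the finite variation of $\Theta$. For disjoint $A_1,\dots,A_n$, with $(A_i)_\omega$ the $\omega$-section of $A_i$, one has $\|\sum_i\mathbf 1_{A_i}x_i\|_{\mathcal S_q^p}^p=\E\,\|\sum_i\nu((A_i)_\omega)\,|x_i|^2\|_{L^{q/2}(S)}^{p/2}$; if $\|x_i\|_{L^q(S)}\le1$ this is at most $\E\,\nu(\R_+\times J)^{p/2}\le N^{p/2}$ when $q\ge2$, by the triangle inequality in $L^{q/2}(S)$, and then $\sum_i\|\Theta(A_i)\|\le\|F\|\,N^{1/2}$ and the argument above runs as in Theorem~\ref{thm:dualofD_q^p(X)}. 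For $1<q<2$, however, the same quantity is \emph{not} controlled by $\nu(\R_+\times J)$: unit vectors $x_i\in L^q(S)$ supported on small disjoint subsets of $S$ can make it diverge with $n$ even though the total $\nu$-mass stays bounded, so $\Theta$ need not be of finite variation and the RNP argument breaks down. For $1<q<2$ I would therefore close the gap by duality. The $\mathcal S$-spaces are reflexive for all $1<p,q<\infty$ --- indeed $\mathcal S_q^p$ is uniformly convex with a modulus of convexity depending only on $p$ and $q$ (since each fibre $L^q(S;L^2(\nu_\omega))$ is, with a modulus depending only on $q$), hence reflexive by Milman--Pettis. Given $q<2$ we have $q'>2$, so the case already settled gives $(\mathcal S_{q'}^{p'})^*=\mathcal S_q^p$ isomorphically with constants depending only on $p,q$; reflexivity of $\mathcal S_{q'}^{p'}$ then yields $(\mathcal S_q^p)^*=(\mathcal S_{q'}^{p'})^{**}=\mathcal S_{q'}^{p'}$, with the equivalence constants tracked through Lemma~\ref{lemma:simple}, which is the assertion of Theorem~\ref{thm:dualofS^p_q}.
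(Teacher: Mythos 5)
Your proof is correct, and it catches a genuine subtlety that the paper's one-line pointer to Theorem~\ref{thm:dualofD_q^p(X)} leaves unexamined. You are right that for $1<q<2$ the $L^{q'}(S)$-valued set function $\Theta(A)=[x\mapsto F(\mathbf 1_A x)]$ cannot be proved to have finite variation by the analogue of \eqref{eqn:totvarBound}: taking a partition $A_1,\dots,A_n$ of $\R_+\times\Omega\times J$ with $\nu((A_i)_\omega)=\nu(\R_+\times J)/n$ and unit vectors $x_i\in L^q(S)$ supported on $n$ disjoint subsets of $S$ of equal measure gives
\begin{equation*}
\Bigl\|\sum_i\mathbf 1_{A_i}x_i\Bigr\|_{\mathcal S_q^p}=\Bigl(\E\Bigl\|\sum_i\nu((A_i)_\omega)|x_i|^2\Bigr\|_{L^{q/2}(S)}^{p/2}\Bigr)^{1/p}\eqsim n^{1/q-1/2}\longrightarrow\infty,
\end{equation*}
so the only available estimate $\sum_i\|\Theta(A_i)\|\leq\|F\|\sup_{(x_i)}\|\sum_i\mathbf 1_{A_i}x_i\|_{\mathcal S_q^p}$ is useless; and one cannot argue the variation finite by inspecting $F_f$, since $F=F_f$ is precisely what one is trying to prove. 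Your two modifications are both sound: the inner $L^2(\nu_\omega)$ pairing is exactly normable, so Lemma~\ref{lemma:P_eps} is unnecessary and the duality constant is not lost in the norming step; and for $q<2$, uniform convexity of $\mathcal S_q^p$ --- Clarkson/Hanner in the fibres $L^q(S;L^2(\nu_\omega))$ with constants depending only on $q$ (Hilbert fibres have universal modulus), then integrate in $\omega$ --- gives reflexivity by Milman--Pettis, after which Lemma~\ref{lemma:simple} combined with the already-settled case $q'\geq 2$ closes the argument.

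Two brief remarks. First, the case split can be avoided: after reducing to $\nu(\R_+\times J)\leq N$ a.s.\ and $\rho(S)<\infty$, define the \emph{scalar} set function $\mu(C):=F(\mathbf 1_C)$ on $\mathcal B(\R_+)\otimes\mathcal F\otimes\mathcal J\otimes\Sigma$. For any disjoint partition $\{C_k\}$ and signs $\eps_k\in\{\pm1\}$ the function $\sum_k\eps_k\mathbf 1_{C_k}$ has modulus one everywhere, so
\begin{equation*}
\Bigl\|\sum_k\eps_k\mathbf 1_{C_k}\Bigr\|_{\mathcal S_q^p}\leq\bigl(\E\nu(\R_+\times J)^{p/2}\bigr)^{1/p}\rho(S)^{1/q}\leq N^{1/2}\rho(S)^{1/q}
\end{equation*}
for \emph{every} $1<q<\infty$; hence $\mu$ has finite variation and scalar Radon--Nikodym produces the density directly, for all $q$ at once, with the same truncation/Fatou argument identifying $F=F_f$ and bounding $\|f\|_{\mathcal S_{q'}^{p'}}$ by $\|F\|$. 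Second, for reflexivity Milman--Pettis only needs \emph{some} modulus of uniform convexity, not a uniform one; the fact that your modulus depends only on $p,q$ is a welcome bonus but not required for the logic.
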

Let us now prove the desired duality for $\hat{\mathcal S}_q^p$, the subspace of all $\widetilde{\cP}$-strongly measurable functions in $\mathcal S_{q}^p$.
\begin{theorem}\label{thm:hatspqdual}
 Let $1<p,q<\infty$. Suppose that $\nu$ is a predictable, $\widetilde{\cP}$-$\si$-finite random measure on $\cB(\R_+)\ot \cJ$ that is non-atomic in time. Then $(\hat{\mathcal S}_q^p)^* = \hat{\mathcal S}_{q'}^{p'}$ and
 \begin{equation}\label{eq:spqnormequiv}
  \|f\|_{\hat{\mathcal S}_{q'}^{p'}} \eqsim_{p,q}\|f\|_{(\hat{\mathcal S}_q^p)^*}, \;\;\; f \in  \hat{\mathcal S}_{q'}^{p'}.
 \end{equation}
\end{theorem}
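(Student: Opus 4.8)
The plan is to mimic the duality argument already carried out for $\hat{\mathcal D}_{q}^p(X)$ in Theorem~\ref{thm:dualofL^p_P(L^q(X))}, but now with the crucial simplification that $\hat{\mathcal S}_q^p$ sits inside the \emph{reflexive} space $\mathcal S_q^p$ (reflexivity of $\mathcal S_q^p$ follows from Theorem~\ref{thm:dualofS^p_q}), so that $\hat{\mathcal S}_q^p$ is itself reflexive as a closed subspace. The upper bound in \eqref{eq:spqnormequiv}, i.e.\ $\|f\|_{(\hat{\mathcal S}_q^p)^*}\lesssim_{p,q}\|f\|_{\hat{\mathcal S}_{q'}^{p'}}$, is immediate: for $f\in\hat{\mathcal S}_{q'}^{p'}$ and $g\in\hat{\mathcal S}_q^p$ one applies the pointwise Cauchy--Schwarz inequality in the $\nu$-integral followed by H\"older's inequality on $L^q(S)$ and then on $\Omega$, exactly as in the first lines of the proof of Theorem~\ref{thm:dualofD_q^p(X)}. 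So the whole content is the reverse inequality together with surjectivity of $f\mapsto F_f$.

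First I would handle \textbf{surjectivity}. Reduce to the case $\mathbb E\,\nu(\R_+\times J)<\infty$ (the $\widetilde{\mathcal P}$-$\sigma$-finite case is then patched together exactly as at the end of the proof of Theorem~\ref{thm:dualofD_q^p(X)}, using Fatou and density). Given $F\in(\hat{\mathcal S}_q^p)^*$, by Hahn--Banach extend it to $\tilde F\in(\mathcal S_q^p)^*$ with the same norm; by Theorem~\ref{thm:dualofS^p_q} there is $f\in\mathcal S_{q'}^{p'}$ with $\tilde F = F_f$, hence $F(g)=\mathbb E\int_{\R_+\times J}\langle f,g\rangle\,\ud\nu$ for all $\widetilde{\mathcal P}$-measurable $g$. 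It remains to show one may replace $f$ by a $\widetilde{\mathcal P}$-measurable representative without changing the functional. This is where I would use a \emph{predictable projection}: define $\mathcal P_m$ as in Theorem~\ref{thm:condexpwrtP_m} and, just as in the proof of Corollary~\ref{cor:Ffromcondexpex}, approximate $f$ by its conditional expectations $\mathbb E(f\mid\mathcal P_m\otimes\mathcal J)$, which are $\widetilde{\mathcal P}$-measurable; since $g$ ranges over $\widetilde{\mathcal P}$-measurable functions and $\nu$ is predictable, $\mathbb E\int\langle f,g\rangle\,\ud\nu=\mathbb E\int\langle\mathbb E(f\mid\mathcal P_m\otimes\mathcal J),g\rangle\,\ud\nu$ does not see the difference, and one passes to the limit. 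Alternatively, and perhaps more cleanly, one can bypass reflexivity of $\mathcal S_q^p$ entirely and run a direct Radon--Nikodym argument on $\widetilde{\mathcal P}$ as in Step~3 of the proof of Theorem~\ref{thm:dualofL^p_P(L^q(X))}: define the $L^{q'}(S)$-valued measure $\theta(A):=F(\mathbf 1_A\,x)$-style object on $\widetilde{\mathcal P}$, check $\sigma$-additivity, absolute continuity w.r.t.\ $\mathbb P\times\nu$ and finite variation (the finite-variation bound is the same computation as \eqref{eqn:totvarBound}, only now the test functions $\sum x_m\mathbf 1_{A_m}$ live in the $\hat{\mathcal S}_q^p$-norm, whose ``straightening'' $\|(\int|\sum x_m\mathbf 1_{A_m}|^2\ud\nu)^{1/2}\|_{L^q}$ is handled by the same disjointness argument), and invoke the Radon--Nikodym property of $L^{q'}(S)$.

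For the \textbf{reverse norm estimate} $\|f\|_{\hat{\mathcal S}_{q'}^{p'}}\lesssim_{p,q}\|f\|_{(\hat{\mathcal S}_q^p)^*}$ I would use surjectivity: since $f\mapsto F_f$ is onto and bounded below would be what we want, but more simply, fix $f\in\hat{\mathcal S}_{q'}^{p'}$ and construct an explicit near-norming $g\in\hat{\mathcal S}_q^p$. By the density of simple $\widetilde{\mathcal P}$-measurable functions and a discretization/time-change reduction to the case $\nu((s,t]\times J)\le t-s$ a.s.\ (Proposition~\ref{prop:apptimechange}, as in Step~2 of the proof of Theorem~\ref{thm:dualofL^p_P(L^q(X))}), one may take $f$ simple and adapted on a dyadic grid. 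Then set, pointwise in $S$,
\[
 g = \frac{1}{\|f\|_{\hat{\mathcal S}_{q'}^{p'}}^{p'-1}}\,\Bigl(\int|f|^2\ud\nu\Bigr)^{\frac{p-2}{2}}\Bigl\|\Bigl(\int|f|^2\ud\nu\Bigr)^{\frac12}\Bigr\|_{L^q(S)}^{p-q}\,f,
\]
adjusted gridwise so that $g$ is $\widetilde{\mathcal P}$-measurable, and verify $\|g\|_{\hat{\mathcal S}_q^p}\lesssim_{p,q}1$ and $F_f(g)\gtrsim_{p,q}\|f\|_{\hat{\mathcal S}_{q'}^{p'}}$ using Corollary~\ref{cor:condexpmunu} to pass between $\int|f|^2\ud\nu$ and the conditional increments, the strong Doob / dual Doob inequalities (Lemma~\ref{lem:dualDoobRev}) to control the $S_q$-type norm, and the mean value inequality \eqref{eqn:MVTalpha} to telescope the resulting sum, exactly in the spirit of the computation in the proof of Theorem~\ref{thm:findimrefl}. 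The main obstacle, as in the $\hat{\mathcal D}$-case, is the careful discretization: one must commute the $L^q(S)$-norm, the square function, the $\nu$-integral, and the conditional expectations, and the only clean way is to first reduce $\nu$ to the Lebesgue-dominated situation via the time change and then work on dyadic grids where $\mathcal P_m$-conditioning is explicit; once that scaffolding is in place the estimates are routine.
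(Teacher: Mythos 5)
Your high-level outline (H\"older upper bound, Radon--Nikodym for surjectivity, explicit near-norming element for the lower bound) has the right shape, but it misses the single hardest point, and that is exactly where the paper has to leave the Weisz framework and invoke a genuinely different tool. The paper's Steps 1--4 reduce to a discrete problem and then appeal to Proposition~\ref{prop:dualofQ_q^p}, which in turn rests on Junge's duality $(S_{q,c}^p)^* = S_{q',r}^{p'}$ for conditional sequence spaces (\ref{eqn:SqdualNC}) --- a result proved with Hilbert $C^*$-module techniques --- together with a conditional Jensen/Kadison projection argument. That route is not an optional convenience: it is how the paper obtains both the lower bound and the $\widetilde{\cP}$-measurability of the norming (resp.\ dual) element simultaneously.

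Your proposed norming function $g \sim \bigl(\int|f|^2\ud\nu\bigr)^{(p-2)/2}\bigl\|(\int|f|^2\ud\nu)^{1/2}\bigr\|_{L^q(S)}^{p-q}f$ is precisely the non-adapted norming element that appears (implicitly) in the proof of Theorem~\ref{thm:dualofS^p_q}; once $\widetilde{\cP}$-measurability is demanded, it is useless as written, since $\int_{\R_+\times J}|f|^2\ud\nu$ depends on the full trajectory and is only $\cF_\infty$-measurable. The fix you suggest --- replace the total integral by running conditional increments on a dyadic grid and telescope via \eqref{eqn:MVTalpha} as in Theorem~\ref{thm:findimrefl} --- does not transfer. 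In the $H^{s_q}_p(X)$/$\hat{\mathcal D}^p_{q,q}$ case the square function $s_{q'}(g)$ is a real-valued increasing process, so the telescoping inequality lives \emph{inside} the expectation and before any Banach-space norm is applied. In $\hat{\mathcal S}_q^p$ the object $\sum_n\E_{n-1}|f_n|^2$ is $L^{q/2}(S)$-valued and the $L^q(S)$-norm sits \emph{outside} the sum (and outside the square root); pulling the $L^q(S)$-norm through the telescoping sum requires a Minkowski-type inequality which only goes one way and does not yield the needed two-sided estimate. So "routine" is not what this step is; it is the entire content of Proposition~\ref{prop:dualofQ_q^p}, and it cannot be replaced by the Cs\"org\H{o}/Weisz computation. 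A symptom of the same problem appears in your surjectivity argument: the Hahn--Banach extension plus Theorem~\ref{thm:dualofS^p_q} produces $f\in\mathcal S_{q'}^{p'}$, and one then needs the $\widetilde{\cP}$-conditional (or dyadic $\mathcal P_m\otimes\mathcal J$) projection to be \emph{bounded} on $\mathcal S_{q'}^{p'}$ uniformly in $m$. That boundedness is again the conditional Jensen step of Proposition~\ref{prop:dualofQ_q^p}, which is only available after discretization; you invoke the projection without establishing it. Your second alternative --- the direct Radon--Nikodym argument on $\widetilde{\cP}$ --- is indeed what the paper does in Step 5, but it only gives a candidate $g$ together with the trivial H\"older upper bound; the lower bound $\|g\|_{\hat{\mathcal S}_{q'}^{p'}}\lesssim\|F\|$ is still needed and is supplied by Steps 1--4 via Junge. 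Reflexivity of $\hat{\mathcal S}_q^p$ (a correct observation) is also no shortcut here: it tells you the dual is some Banach space, not that it is $\hat{\mathcal S}_{q'}^{p'}$.
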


For the proof of Theorem \ref{thm:hatspqdual} we will the following assertion. Given a filtration $\mathbb F = (\mathcal F_n)_{n\geq 0}$ and $1<p,q<\infty$, we define $ Q_q^p$ to be the Banach space of all adapted $L^q(S)$-valued sequences $(f_n)_{n\geq 0}$ satisfying
\begin{equation}\label{eq:defofQ^p_q}
  \|(f_n)_{n\geq 0}\|_{ Q_q^p}:= \Bigl( \mathbb E \Bigl\|\Bigl(\sum_{n=0}^{\infty}|f_n|^2\Bigr)^{\frac 12} \Bigr\|^{p}_{L^q(S)}\Bigr)^{\frac 1p}<\infty.
\end{equation}

\begin{proposition}\label{prop:dualofQ_q^p}
 Let $1<p,q<\infty$. Then $(Q_q^p)^* = Q_{q'}^{p'}$ isomorphically, with duality bracket given by 
 \begin{align*}
  \langle (f_n)_{n\geq 0}, (g_n)_{n\geq 0} \rangle := \mathbb E \sum_{n=0}^{\infty} \langle f_n, g_n\rangle \qquad ((g_n)_{n\geq 0} \in Q_{q'}^{p'}, \ (f_n)_{n\geq 0} \in Q_{q}^{p}).
 \end{align*}
Moreover, 
\[
 \|(g_n)_{n\geq 0}\|_{ Q_{q'}^{p'}} \eqsim_{p,q} \|(g_n)_{n\geq 0}\|_{ (Q_{q}^{p})^*}. 
\]
\end{proposition}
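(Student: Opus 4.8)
The plan is to deduce the duality $(Q_q^p)^* = Q_{q'}^{p'}$ from the conditional-free case via a standard approach. First, I would observe that $Q_q^p = L^p(\Omega; L^q(S; \ell^2))$, where we identify an adapted sequence $(f_n)_{n\geq 0}$ with the function $\omega \mapsto (s\mapsto (f_n(\omega,s))_{n\geq 0})$ taking values in $L^q(S;\ell^2)$. Indeed, the expression \eqref{eq:defofQ^p_q} is precisely $\bigl(\E \| (f_n) \|_{L^q(S;\ell^2)}^p\bigr)^{1/p}$, since $\bigl(\sum_n |f_n(s)|^2\bigr)^{1/2}$ is the $\ell^2$-norm of $(f_n(s))_n$ evaluated pointwise in $s$, and then the outer $L^q(S)$-norm is taken. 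Since $\ell^2$ is a reflexive (in fact Hilbert) space, $L^q(S;\ell^2)$ is a reflexive Banach space for $1<q<\infty$ with $(L^q(S;\ell^2))^* = L^{q'}(S;\ell^2)$ isometrically (by \cite[Proposition 1.3.3]{HNVW1} together with $(\ell^2)^* = \ell^2$), and consequently $L^p(\Omega;L^q(S;\ell^2))$ is reflexive with dual $L^{p'}(\Omega;L^{q'}(S;\ell^2))$, again by \cite[Proposition 1.3.3]{HNVW1}. Tracking the duality bracket through these identifications gives exactly $\langle (f_n),(g_n)\rangle = \E\sum_n \langle f_n,g_n\rangle$.

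The one subtlety is that $Q_q^p$ is not all of $L^p(\Omega;L^q(S;\ell^2))$: its elements are required to be \emph{adapted}, meaning $f_n$ is $\mathcal F_n$-measurable for each $n$. So I would check that $Q_q^p$ is a closed subspace of $L^p(\Omega;L^q(S;\ell^2))$ — closedness follows since $L^p$-convergence passes to a subsequence converging a.e., and a.e.\ limits of $\mathcal F_n$-measurable functions are $\mathcal F_n$-measurable. A closed subspace $Y$ of a reflexive space is itself reflexive, and its dual is the quotient $L^{p'}(\Omega;L^{q'}(S;\ell^2))/Y^{\perp}$. To identify this quotient with $Q_{q'}^{p'}$, I would use the conditional expectation projection: define $P\colon L^{p'}(\Omega;L^{q'}(S;\ell^2)) \to Q_{q'}^{p'}$ by $P((g_n)_{n\geq 0}) = (\E_n g_n)_{n\geq 0}$, where $\E_n$ denotes conditional expectation with respect to $\mathcal F_n$ (applied coordinatewise). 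By the conditional version of Stein's inequality / the boundedness of the sequence of conditional expectations on $L^{p'}(\Omega; L^{q'}(S;\ell^2))$ — which holds because $L^{q'}(S)$ is a UMD space for $1<q'<\infty$, so Stein's inequality \cite[Theorem 4.2.23]{HNVW1} applies — $P$ is a bounded projection onto $Q_{q'}^{p'}$ with norm depending only on $p$ and $q$. Then for $(g_n) \in L^{p'}(\Omega;L^{q'}(S;\ell^2))$ and $(f_n) \in Q_q^p$, adaptedness gives $\E\sum_n \langle f_n, g_n\rangle = \E\sum_n \langle f_n, \E_n g_n\rangle = \E\sum_n\langle f_n, P((g_n))_n\rangle$, so every functional on $Q_q^p$ is represented by an element of $Q_{q'}^{p'}$, and the norm equivalence $\|(g_n)\|_{Q_{q'}^{p'}} \eqsim_{p,q} \|(g_n)\|_{(Q_q^p)^*}$ follows: the lower bound $\|(g_n)\|_{(Q_q^p)^*} \leq \|(g_n)\|_{Q_{q'}^{p'}}$ is Cauchy–Schwarz/Hölder, and the upper bound comes from the representation together with $\|P\|$.

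The main obstacle, such as it is, is the $p \leq q$ versus $q \leq p$ asymmetry hidden inside Stein's inequality: the unconditional Stein inequality on $L^{p'}(\Omega; L^{q'}(S;\ell^2))$ requires the UMD property of $L^{q'}(S;\ell^2)$, which is fine for $1<q<\infty$, but one should make sure the constant in the vector-valued Stein inequality is controlled by $p$ and $q$ alone — which it is, since the UMD constant of $L^{q'}(S)$ depends only on $q$. An alternative, entirely self-contained route that avoids Stein's inequality is to invoke Theorem~\ref{thm:(H^{s_q}_p(X))^*= H^{s_{q'}}_{p'}(X^*)}: one can view $Q_q^p$ as an instance of (a variant of) $H^{s_q}_p$ with the $\ell^2$-structure replaced appropriately, but the cleanest approach remains the direct one above, since the $\ell^2$-valued setting sidesteps the need for the conditional sequence space machinery of Junge. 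I would present the $L^p(\Omega;L^q(S;\ell^2))$ identification as the conceptual core, then the adaptedness/projection argument, keeping the Stein-inequality citation explicit.
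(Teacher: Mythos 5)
Your proposal is correct, and it follows the same broad template as the paper — identify $Q_q^p$ as a closed subspace of a larger space with known dual, extend functionals by Hahn--Banach, and project the representing element back into $Q_{q'}^{p'}$ by taking coordinatewise conditional expectations $g_n \mapsto \E_n g_n$ — but the two ingredients you plug into this template are genuinely different from the paper's. The paper embeds $Q_q^p$ into the conditional sequence space $S_q^p$ built over the shifted filtration $\mathcal G_n=\mathcal F_{n+1}$: on adapted sequences one has $\E_{\mathcal F_n}|f_n|^2=|f_n|^2$, so the $Q_q^p$- and $S_q^p$-norms coincide exactly, and the duality $(S_q^p)^*=S_{q'}^{p'}$ is imported from Junge's conditional sequence space theory \eqref{eqn:SqdualNC}. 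The projection norm bound then comes from the conditional Jensen inequality $|\E_n\tilde g_n|^2\leq \E_n|\tilde g_n|^2$, which costs constant $1$. You instead embed $Q_q^p$ into $L^p(\Omega;L^q(S;\ell^2))$ — with the elementary Bochner-space duality $(L^p(L^q(\ell^2)))^*=L^{p'}(L^{q'}(\ell^2))$ replacing Junge's result — and control the projection $(g_n)\mapsto(\E_n g_n)$ by the vector-valued Stein inequality for the UMD space $L^{q'}(S)$, which costs a constant depending on $p$ and $q$. Both inputs are of comparable depth (one is noncommutative $L^q$-machinery, the other is a UMD square-function inequality), so this is a legitimate trade rather than a shortcut; your route has the small advantage of staying within classical vector-valued harmonic analysis and making no appeal to the noncommutative toolbox, while the paper's route fits more naturally into the conditional sequence space framework it already uses for $S_q^p$ elsewhere. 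One should be slightly careful in your argument to check that the conditional-expectation projection $P$ really identifies the quotient $L^{p'}(L^{q'}(\ell^2))/(Q_q^p)^\perp$ with $Q_{q'}^{p'}$; this amounts to verifying $\ker P = (Q_q^p)^\perp$, which is an easy direct calculation (test against one-coordinate adapted sequences), but it is worth spelling out. Your offhand remark that one might alternatively reduce to Theorem~\ref{thm:(H^{s_q}_p(X))^*= H^{s_{q'}}_{p'}(X^*)} does not seem to work, since the $\ell^2$-square function inside $L^q(S)$ does not match the $\ell^q$-sum structure of $H^{s_q}_p$; but you flag this as a possible variant rather than relying on it, so it is harmless.
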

\begin{proof}
Consider the filtration $\mathbb G = (\mathcal G_n)_{n\geq 0} = (\mathcal F_{n+1})_{n\geq 0}$. Let $S_q^p$ be the conditional sequence space defined in \eqref{eqn:SpqNorm} for the filtration $\mathbb G$. First notice that $Q_q^p$ is a closed subspace and
$$\|(f_n)_{n\geq 0}\|_{ Q_q^p} = \|(f_n)_{n\geq 0}\|_{S_q^p}, \qquad \text{for all } (f_n)_{n\geq 0} \in Q_q^p.$$
Let $F$ be in $(Q_q^p)^*$. Then by the Hahn-Banach theorem and \eqref{eqn:SqdualNC} there exists $\tilde g = (\tilde g_n)_{n\geq 0} \in S_{q'}^{p'}$ such that $\|\tilde g\|_{S_{q'}^{p'}} \eqsim_{p, q} \|F\|_{(Q_q^p)^*}$ and
 \[
  F(f) = \mathbb E \sum_{n=1}^{\infty}\langle f_n, \tilde g_n \rangle,\;\;\; f=(f_n)_{n\geq 0}\in Q_q^p.
 \]
Now let $(g_n)_{n\geq 0}$ be the $\mathbb F$-adapted $L^q(S)$-valued sequence defined by $g_n = \mathbb E_{n} \tilde g_n$ for $n\geq 0$ (recall that $\mathbb E_n(\cdot) := \mathbb E(\cdot | \mathcal F_n)$). Then, on the one hand, the conditional Jensen inequality yields
\begin{align*}
  \|(g_n)_{n\geq 0}\|^{p'}_{Q_{q'}^{p'}} &= \|(g_n)_{n\geq 0}\|^{p'}_{S_{q'}^{p'}} = \mathbb E \Bigl\|\Bigl( \sum_{n=1}^{\infty} |g_n|^2 \Bigr)^{\frac 12}\Bigr\|_{L^{q'}(S)}^{p'} = \mathbb E \Bigl\|\Bigl( \sum_{n=1}^{\infty} |\mathbb E_{n} \tilde g_n|^2 \Bigr)^{\frac 12}\Bigr\|_{L^{q'}(S)}^{p'}\\
  &\leq \mathbb E \Bigl\|\Bigl( \sum_{n=1}^{\infty}\mathbb E_{n} | \tilde g_n|^2 \Bigr)^{\frac 12}\Bigr\|_{L^{q'}(S)}^{p'}
  =  \|(\tilde g_n)_{n\geq 0}\|^{p'}_{S_{q'}^{p'}},
\end{align*}
and, on the other hand, for each $f=(f_n)_{n\geq 0}\in Q_q^p$ the $\mathbb F$-adaptedness of $(f_n)_{n\geq 0}$ implies
\[
 F(f) = \mathbb E \sum_{n=1}^{\infty}\langle f_n, \tilde g_n \rangle = \mathbb E \sum_{n=1}^{\infty}\mathbb E_{n}\langle f_n, \tilde g_n \rangle =  \mathbb E \sum_{n=1}^{\infty}\langle f_n,\mathbb E_{n} \tilde g_n \rangle = \mathbb E \sum_{n=1}^{\infty}\langle f_n, g_n \rangle.
\]
Therefore, for each $F \in (Q_q^p)^*$ there exists a $(g_n)_{n\geq 0}\in Q_{p'}^{q'}$ such that 
\begin{align*}
 F(f) = \mathbb E \sum_{n\geq 0} \langle f_n, g_n\rangle,\;\;\; &f=(f_n)_{n\geq 0} \in Q_q^p,\\
 \|(g_n)_{n\geq 0}\|_{Q_{p'}^{q'}} &\lesssim_{p, q} \|F\|_{(Q_q^p)^*}.
\end{align*}
The inequality $\|F\|_{(Q_q^p)^*}\leq \|(g_n)_{n\geq 0}\|_{Q_{p'}^{q'}}$ follows immediately from H\"older's inequality.
\end{proof}

\begin{proof}[Proof of Theorem \ref{thm:hatspqdual}]
 The proof contains two parts. In the first part, consisting of several steps, we will show that $\|f\|_{\hat{\mathcal S}_{q'}^{p'}}\eqsim_{p, q}\|f\|_{(\hat{\mathcal S}_{q}^{p})^*}$. In the second part we show that $(\hat{\mathcal S}^p_q)^* = \hat{\mathcal S}^{p'}_{q'}$.
 
 {\it Step 1: $J$ is finite, $\nu$ is Lebesgue.}  Let $J = \{j_1,\ldots, j_K\}$, $\nu(\omega)$ be the product of Lebesgue measure and the counting measure on $ \mathbb R_+\times J$ for all $\omega\in \Omega$ (i.e.\ $\nu((s, t]\times j_k) = t-s$ for each $k=1,\ldots, K$ and $t\geq s\geq 0$). Fix $f\in \hat{\mathcal S}_{q'}^{p'}$. Without loss of generality we can assume that $f$ is simple and that there exist $N, M\geq 1$ and a sequence of random variables $(f_{k,m})_{k=1, m=0}^{k=K, m=M}$ such that $f_{k,m}$ is $\mathcal F_{\frac{m}{N}}$-measurable and $f(t, j_k) = f_{k,m}$ for each $k=1,\ldots,K$, $m=0,\ldots, M$, and $t\in (\frac{m}{N}, \frac{m+1}{N}]$. Let $\mathbb G = (\mathcal G_{k,m})_{k=1, m=0}^{k=K, m=M} := (\mathcal F_{\frac{m}{N}})_{k=1, m=0}^{k=K, m=M}$. Then $\mathbb G$ forms a filtration with respect to the reverse lexicographic order on the pairs $(k,m)$, $1\leq k\leq K$ and $0\leq m\leq M$. Let $Q^{p'}_{q'}$ be as defined in \eqref{eq:defofQ^p_q} for $\mathbb G$. Then
 \begin{equation}\label{eq:phiseqandfunc}
   \|f\|_{\hat{\mathcal S}_{q'}^{p'}} = \frac 1{\sqrt N}\bigl\|(f_{k,m})_{k=1, m=0}^{k=K, m=M}\bigr\|_{Q_{q'}^{p'}}.
 \end{equation}
By Proposition \ref{prop:dualofQ_q^p} there exists a $\mathbb G$-adapted $(g_{k,m})_{k=1, m=0}^{k=K, m=M}\in Q^{p}_{q}$ such that 
\[
  \bigl\|(g_{k,m})_{k=1, m=0}^{k=K, m=M}\bigr\|_{Q^{p}_{q}}=1                                                                                                                                \]
and 
\[
 \bigl\langle (f_{k,m})_{k=1, m=0}^{k=K, m=M}, (g_{k,m})_{k=1, m=0}^{k=K, m=M}\bigr\rangle \eqsim_{p, q} \bigl\|(f_{k,m})_{k=1, m=0}^{k=K, m=M}\bigr\|_{Q^{p'}_{q'}}.
\]
Let $g:\mathbb R_+ \times \Omega \times J \to L^q(S)$ be defined by setting $g(t, j_k) = \sqrt{N}g_{k, m}$ for each $k=1,\ldots,K$, $m=1,\ldots, M$, and $t\in (\frac{m}{N}, \frac{m+1}{N}]$. Then $g \in \hat{\mathcal S}_{q}^{p}$, and analogously to~\eqref{eq:phiseqandfunc}
 \begin{equation*}
   \|g\|_{\hat{\mathcal S}_{q}^{p}} = \bigl\|(g_{k,m})_{k=1, m=0}^{k=K, m=M}\bigr\|_{Q_{q}^{p}}=1.
 \end{equation*}
Moreover,
\begin{align*}
 \langle f,g\rangle &= \mathbb E \int_{\mathbb R_+ \times J} \langle f(t, j), g(t, j)\rangle\ud t \ud j =\frac 1{\sqrt N} \mathbb E \sum_{k=1, m=0}^{k=K, m=M}\langle f_{k,m}, g_{k,m}\rangle\\
 &\eqsim_{p, q}\frac 1{\sqrt N} \bigl\|(f_{k,m})_{k=1, m=0}^{k=K, m=M}\bigr\|_{Q^{p'}_{q'}} = \|f\|_{\hat{\mathcal S}_{q'}^{p'}},
\end{align*}
which finishes the proof.

{\it Step 2: $J$ is finite, $\nu((s, t]\times J)\leq t-s$ a.s.\ for each $t\geq s\geq 0$.} Let $\nu_0$ be the product of Lebesgue measure and the counting measure on $\mathbb R_+ \times J$ (see Step~1). Then clearly $\bP\otimes \nu$ is absolutely continuous with respect to $\bP\otimes \nu_0$ and by the Radon-Nikodym theorem there exists a $\widetilde {\mathcal P}$-measurable density $\phi:\mathbb R_+ \times \Omega \times J \to \mathbb R_+$ such that $d(\bP\otimes \nu) = \phi \ d(\bP\otimes \nu_0)$. Fix $f \in \hat{\mathcal S}_{q'}^{p'}$. Let $\hat{\mathcal S}_{q'}^{p',\nu_0}$ be as defined in \eqref{eqn:SpqDefApp} for the random measure~$\nu_0$. Then $f_0 := f \cdot \sqrt {\phi}\in \hat{\mathcal S}_{q'}^{p',\nu_0}$, and $\|f\|_{\hat{\mathcal S}_{q'}^{p'}} = \|f_0\|_{\hat{\mathcal S}_{q'}^{p',\nu_0}}$. By Step 1 there exists a $g_0\in \hat{\mathcal S}_{q}^{p,\nu_0}$ such that $\|g_0\|_{\hat{\mathcal S}_{q}^{p,\nu_0}} = 1$ and $\langle f_0, g_0\rangle \eqsim_{p, q}\|f_0\|_{\hat{\mathcal S}_{q'}^{p',\nu_0}}$. Let $g = g_0 \mathbf 1_{\phi\neq 0} \frac 1{\sqrt{\phi}}$. Then
\begin{align*}
 \langle f, g\rangle &= \mathbb E \int_{\mathbb R_+ \times J}\langle f, g\rangle \ud \nu = \mathbb E \int_{\mathbb R_+ \times J}\langle f, g\rangle \phi\ud \nu_0 = \mathbb E \int_{\mathbb R_+ \times J}\langle f \sqrt{\phi}, g\sqrt{\phi}\rangle \ud \nu_0\\
 &=\mathbb E \int_{\mathbb R_+ \times J}\langle f_0, g_0\rangle \ud \nu_0 = \langle f_0, g_0\rangle\eqsim_{p, q}\|f_0\|_{\hat{\mathcal S}_{q'}^{p',\nu_0}} = \|f\|_{\hat{\mathcal S}_{q'}^{p'}}
\end{align*}
and
\begin{align*}
 \|g\|_{\hat{\mathcal S}_{q}^{p}} &= \Bigl(\mathbb E \Bigl\| \Bigl(\int_{\mathbb R_+ \times J} |g|^2 \ud \nu\Bigr)^{\frac 12} \Bigr\|^{p} \Bigr)^{\frac 1p} = \Bigl(\mathbb E \Bigl\| \Bigl(\int_{\mathbb R_+ \times J} |g_0|^2\mathbf 1_{\phi \neq 0} \frac {1}{\phi} \ud \nu\Bigr)^{\frac 12} \Bigr\|^{p} \Bigr)^{\frac 1p}\\
 &=\Bigl(\mathbb E \Bigl\| \Bigl(\int_{\mathbb R_+ \times J} |g_0|^2\mathbf 1_{\phi \neq 0} \ud \nu_0\Bigr)^{\frac 12} \Bigr\|^{p} \Bigr)^{\frac 1p} \leq \Bigl(\mathbb E \Bigl\| \Bigl(\int_{\mathbb R_+ \times J} |g_0|^2\ud \nu_0\Bigr)^{\frac 12} \Bigr\|^{p} \Bigr)^{\frac 1p} \\
 &=\|g_0\|_{\hat{\mathcal S}_{q}^{p,\nu_0}} = 1.
\end{align*}
Therefore $ \|f\|_{\hat{\mathcal S}_{q'}^{p'}} \eqsim_{p, q}  \|f\|_{(\hat{\mathcal S}_{q}^{p})^*}$.

{\it Step 3: $J$ is finite, $\nu$ is general.} Without loss of generality we can assume that $\mathbb E \nu(\mathbb R_+ \times J)<\infty$. Then by a time-change argument as was used in the proof of Theorem \ref{thm:dualofL^p_P(L^q(X))}, we can assume that $\nu((s, t]\times J)\leq t-s$ a.s.\ for each $t\geq s\geq 0$, and apply Step 2.

{\it Step 4: $J$ is general, $\nu$ is general.}  Without loss of generality assume that $\mathbb E\nu(\mathbb R_+ \times J)<\infty$. Let $f$ be simple $\widetilde{\mathcal P}$-measurable. Then there exists a $K\geq 1$ and a partition $J = J_1\cup \cdots \cup J_K$ of $J$ into disjoint sets such that $f(i) = f(j)$ for all $i, j\in J_k$ and each $k=1,\ldots, K$.  Fix $j_k\in J_k$, $k=1,\ldots,K$, and define $\widetilde J = \{j_1, \ldots, j_K\}$. Let $\widetilde {\nu}$ be a new random measure on $\mathbb R_+ \times \Omega \times \widetilde J$ defined by 
$$\widetilde {\nu}(A\times \{j_k\}) = \nu(A\times J_k), \qquad A\in \cP, \ k=1,\ldots, K.$$ 
Let $\hat{\mathcal S}_{q'}^{p',\widetilde {\nu}}$ be as constructed in \eqref{eqn:SpqDefApp} for the measure $\widetilde {\nu}$. Let $\widetilde{f}\in \hat{\mathcal S}_{q'}^{p',\widetilde {\nu}}$ be such that $\widetilde {f}(j_k) = f(j_k)$ for each $k=1,\ldots, K$. Then $\|\widetilde {f}\|_{\hat{\mathcal S}_{q'}^{p',\widetilde {\nu}}} = \|{f}\|_{\hat{\mathcal S}_{q'}^{p'}}$. By Step~3 there exists a $\widetilde {g} \in \hat{\mathcal S}_{q}^{p,\widetilde {\nu}}$ such that $\|\widetilde {g}\|_{\hat{\mathcal S}_{q}^{p,\widetilde {\nu}}} = 1$ and $\langle \widetilde{f}, \widetilde{g}\rangle \eqsim_{p, q}\|\widetilde{f}\|_{\hat{\mathcal S}_{q'}^{p',\widetilde {\nu}}}$.

Define $g\in \hat{\mathcal S}_{q}^{p}$ by setting $g(j) = \widetilde{g}(j_k)$ for each $k=1,\ldots, K$ and $j\in J_k$. Then $\|g\|_{\hat{\mathcal S}_{q}^{p}} = \|\widetilde{g}\|_{\hat{\mathcal S}_{q}^{p, \widetilde{\nu}}}= 1$. Moreover,
\begin{align*}
 \langle f, g\rangle &= \mathbb E \int_{\mathbb R_+ \times J}\langle f(t, j), g(t, j)\rangle\ud \nu(t, j) = \mathbb E \sum_{k=1}^K \int_{\mathbb R_+ \times J_k}\langle f(t, j), g(t, j)\rangle\ud \nu(t, j) \\
 &= \mathbb E \int_{\mathbb R_+ \times \widetilde J}\langle\widetilde{f}(t, j), \widetilde {g}(t, j)\rangle\ud \widetilde{\nu}(t, j)
 \eqsim_{p, q} \|\widetilde{f}\|_{\hat{\mathcal S}_{q'}^{p',\widetilde {\nu}}} = \|{f}\|_{\hat{\mathcal S}_{q'}^{p'}}.
\end{align*}
Hence, $\|{f}\|_{\hat{\mathcal S}_{q'}^{p'}} \eqsim_{p, q} \|{f}\|_{(\hat{\mathcal S}_{q}^{p})^*}$.

{\it Step 5: $(\hat{\mathcal S}^p_q)^* = \hat{\mathcal S}^{p'}_{q'}$.} In Step 4 we proved that $\hat{\mathcal S}^{p'}_{q'}\hookrightarrow (\hat{\mathcal S}^p_q)^*$ isomorphically, so it remains to show that $(\hat{\mathcal S}^p_q)^* = \hat{\mathcal S}^{p'}_{q'}$. This identity follows from the same Radon-Nikodym argument that was presented in Step 3 of the proof of Theorem \ref{thm:dualofL^p_P(L^q(X))}.
\end{proof}

\begin{corollary}\label{cor:dualIpq}
 Let $1<p,q<\infty$. Then $\mathcal I_{p,q}^* = \mathcal I_{p',q'}$, where $\mathcal I_{p,q}$ is as defined in~\eqref{eq:ipq}, and
 \begin{equation}\label{eq:normeqipq}
    \|f\|_{\mathcal I_{p',q'}} \eqsim_{p,q}\|f\|_{\mathcal I_{p,q}^*}, \;\;\; f \in  \mathcal I_{p',q'}.
 \end{equation}
\end{corollary}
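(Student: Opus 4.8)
The plan is to derive this formally from the three elementary duality identities established earlier in this appendix, namely
$$(\hat{\mathcal S}_q^p)^* = \hat{\mathcal S}_{q'}^{p'}, \qquad (\hat{\mathcal D}_{q,q}^p)^* = \hat{\mathcal D}_{q',q'}^{p'}, \qquad (\hat{\mathcal D}_{p,q}^p)^* = \hat{\mathcal D}_{p',q'}^{p'},$$
each holding isomorphically with constants depending only on $p$ and $q$ (Theorem~\ref{thm:hatspqdual} for the first, and Theorem~\ref{thm:dualofL^p_P(L^q(X))} for the other two, since $\hat{\mathcal D}_{q,q}^p$ and $\hat{\mathcal D}_{p,q}^p$ are the instances of $L^p_{\widetilde{\mathcal P}}(\mathbb P;L^q(\nu;X))$ with $X=L^q(S)$ and exponents $(p,q)$ respectively $(p,p)$), combined with the formal rules \eqref{eqn:sumIntersectionDuality} for duals of sums and intersections. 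The structural observation that makes everything work is that the six formulas for $\mathcal I_{p,q}$ in \eqref{eq:ipq} split into three pairs that are exchanged by $(p,q)\mapsto(p',q')$: the range $\{2\le q\le p\}$ corresponds to $\{1<p\le q\le 2\}$, the range $\{2\le p\le q\}$ to $\{1<q\le p\le 2\}$, and the range $\{1<p<2\le q\}$ to $\{1<q<2\le p\}$. Hence it suffices, in each pair, to dualise one member, apply the rules $(X\cap Y)^*=X^*+Y^*$ and $(X+Y)^*=X^*\cap Y^*$ together with the three identities above, and observe that the result is precisely the defining formula of $\mathcal I_{p',q'}$ for the conjugate member.

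Before carrying this out I would verify the two hypotheses needed to invoke \eqref{eqn:sumIntersectionDuality} (also in its iterated form for three spaces). First, $\hat{\mathcal S}_q^p$, $\hat{\mathcal D}_{q,q}^p$ and $\hat{\mathcal D}_{p,q}^p$ embed continuously into a common Hausdorff topological vector space — for instance the space of $\widetilde{\mathcal P}$-strongly-measurable $L^q(S)$-valued functions on $\mathbb R_+\times\Omega\times J$ with the topology of local convergence in $\mathbb P\otimes\nu$-measure along a $\widetilde{\mathcal P}$-$\sigma$-finite exhaustion of $\nu$, since convergence in any of the three norms implies such convergence — so that all their pairwise and triple sums and intersections are well-defined Banach spaces. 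Second, the space $\mathcal I_{\mathrm{elem}}(\widetilde{\mathcal P})$ of simple $\widetilde{\mathcal P}$-measurable $L^q(S)$-valued functions, which is contained in every intersection occurring in \eqref{eq:ipq}, is dense in each of $\hat{\mathcal S}_q^p\cap\hat{\mathcal D}_{q,q}^p$, $\hat{\mathcal S}_q^p\cap\hat{\mathcal D}_{p,q}^p$, $\hat{\mathcal D}_{q,q}^p\cap\hat{\mathcal D}_{p,q}^p$ and $\hat{\mathcal S}_q^p\cap\hat{\mathcal D}_{q,q}^p\cap\hat{\mathcal D}_{p,q}^p$ in the respective maximum norms. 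This last point is the one that requires a genuine (if routine) argument: given $F$ in such an intersection one first reduces to the case of bounded $F$ supported on a $\mathbb P\otimes\nu$-finite set by a truncation and the dominated convergence theorem applied in each of the three norms, and then approximates such an $F$ pointwise a.e.\ by uniformly bounded simple functions, again passing to the limit by dominated convergence in all three norms simultaneously. Once this is known, in every instance of \eqref{eqn:sumIntersectionDuality} that we use the relevant intersection $X\cap Y$ is dense in both $X$ and $Y$, which is all the rules require.

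Granting the above, the rest is bookkeeping. For example, for $2\le q\le p<\infty$,
$$\mathcal I_{p,q}^* = \bigl(\hat{\mathcal S}_q^p\cap\hat{\mathcal D}_{q,q}^p\cap\hat{\mathcal D}_{p,q}^p\bigr)^* = \hat{\mathcal S}_{q'}^{p'}+\hat{\mathcal D}_{q',q'}^{p'}+\hat{\mathcal D}_{p',q'}^{p'} = \mathcal I_{p',q'},$$
the last equality being the definition of $\mathcal I_{p',q'}$ in the range $1<p'\le q'\le 2$; for $2\le p\le q<\infty$,
$$\mathcal I_{p,q}^* = \bigl(\hat{\mathcal S}_q^p\cap(\hat{\mathcal D}_{q,q}^p+\hat{\mathcal D}_{p,q}^p)\bigr)^* = \hat{\mathcal S}_{q'}^{p'}+\bigl(\hat{\mathcal D}_{q',q'}^{p'}\cap\hat{\mathcal D}_{p',q'}^{p'}\bigr) = \mathcal I_{p',q'},$$
which is the definition in the range $1<q'\le p'\le 2$; and for $1<p<2\le q<\infty$,
$$\mathcal I_{p,q}^* = \bigl((\hat{\mathcal S}_q^p\cap\hat{\mathcal D}_{q,q}^p)+\hat{\mathcal D}_{p,q}^p\bigr)^* = \bigl(\hat{\mathcal S}_{q'}^{p'}+\hat{\mathcal D}_{q',q'}^{p'}\bigr)\cap\hat{\mathcal D}_{p',q'}^{p'} = \mathcal I_{p',q'},$$
which is the definition in the range $1<q'<2\le p'$; the three remaining cases are the literal conjugates of these. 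Throughout, the pairing realising the isomorphism is $\langle F,G\rangle=\mathbb E\int_{\mathbb R_+\times J}\langle F,G\rangle\,\ud\nu$, consistent with the brackets in Theorems~\ref{thm:hatspqdual} and~\ref{thm:dualofL^p_P(L^q(X))} and with the rule \eqref{eqn:dualofsum}, and the constant in \eqref{eq:normeqipq} is obtained by multiplying the isomorphism constants of those theorems with the isometric identifications in \eqref{eqn:sumIntersectionDuality}. The main obstacle is therefore not the algebra but the joint density verification singled out in the previous paragraph; once that is in hand, the statement follows purely formally.
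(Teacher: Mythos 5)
Your proposal is correct and follows the same route as the paper's proof, which simply invokes Theorem~\ref{thm:dualofL^p_P(L^q(X))} (applied with $X=L^q(S)$ and exponents $(p,q)$ and $(p,p)$ to cover $\hat{\mathcal D}_{q,q}^p$ and $\hat{\mathcal D}_{p,q}^p$), Theorem~\ref{thm:hatspqdual}, and the sum/intersection duality \eqref{eqn:sumIntersectionDuality}--\eqref{eqn:dualofsum}. You supply more detail than the paper's one-line proof — the compatible-couple verification, the joint density of $\mathcal I_{\mathrm{elem}}(\widetilde{\mathcal P})$ in the intersection spaces needed to iterate the two-space rule, and the explicit case-by-case bookkeeping showing that $(p,q)\mapsto(p',q')$ permutes the six formulas in \eqref{eq:ipq} — but the underlying argument is the same.
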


\begin{proof}
 The result follows by combining Theorem~\ref{thm:dualofL^p_P(L^q(X))} (for $X=L^q(S)$), Theorem \ref{thm:hatspqdual} and \eqref{eqn:dualofsum}.
\end{proof}

\section*{Acknowledgements}

The authors would like to thank Carlo Marinelli, Jan van Neerven and Mark Veraar for helpful discussions on the topic of this paper.

\bibliographystyle{plain}

\begin{thebibliography}{10}

\bibitem{AlK06}
F.~Albiac and N.~Kalton.
\newblock {\em Topics in {B}anach space theory}.
\newblock Springer, New York, 2006.

\bibitem{AntThesis}
M.~Antoni.
\newblock {\em Stochastic Integration in $L^p$ Spaces}.
\newblock Diploma Thesis, Karlsruhe {I}nstitute of {T}echnology, 2012.

\bibitem{BrH09}
Z.~Brze{\'z}niak and E.~Hausenblas.
\newblock Maximal regularity for stochastic convolutions driven by {L}\'evy
  processes.
\newblock {\em Probab. Theory Related Fields}, 145(3-4):615--637, 2009.

\bibitem{Bur73}
D.~Burkholder.
\newblock Distribution function inequalities for martingales.
\newblock {\em Ann. Probability}, 1:19--42, 1973.

\bibitem{CoV12}
S.~Cox and M.~Veraar.
\newblock Vector-valued decoupling and the {B}urkholder-{D}avis-{G}undy
  inequality.
\newblock {\em Illinois J. Math.}, 55(1):343--375, 2012.

\bibitem{Cso}
I.~Cs{\"o}rg{\H{o}}.
\newblock On a type of mixed norm spaces.
\newblock {\em Acta Math. Hungar.}, 78(1-2):79--91, 1998.

\bibitem{DaZ92}
G.~Da~Prato and J.~Zabczyk.
\newblock {\em Stochastic equations in infinite dimensions}.
\newblock Cambridge University Press, Cambridge, 1992.

\bibitem{Dir14}
S.~Dirksen.
\newblock It\^o isomorphisms for {$L^p$}-valued {P}oisson stochastic integrals.
\newblock {\em Ann. Probab.}, 42(6):2595--2643, 2014.

\bibitem{DMN12}
S.~Dirksen, J.~Maas, and J.~M. A. M.~van Neerven.
\newblock Poisson stochastic integration in {B}anach spaces.
\newblock {\em Electron. J. Probab.}, 18:No. 100, 28, 2013.

\bibitem{DMY17}
S.~Dirksen, C.~Marinelli, and I.~Yaroslavtsev.
\newblock {S}tochastic evolution equations in {$L^p$}-spaces driven by jump
  noise.
\newblock {\em In preparation}, 2017.

\bibitem{FTP10}
D.~Filipovi{\'c}, S.~Tappe, and J.~Teichmann.
\newblock Jump-diffusions in {H}ilbert spaces: existence, stability and
  numerics.
\newblock {\em Stochastics}, 82(5):475--520, 2010.

\bibitem{Grig71}
B.~Grigelionis.
\newblock The representation of integer-valued random measures as stochastic
  integrals over the {P}oisson measure.
\newblock {\em Litovsk. Mat. Sb.}, 11:93--108, 1971.

\bibitem{HitUP}
P.~Hitczenko.
\newblock On tangent sequences of {UMD}-space valued random vectors.
  {U}npublished manuscript.

\bibitem{Hit90}
P.~Hitczenko.
\newblock Best constants in martingale version of {R}osenthal's inequality.
\newblock {\em Ann. Probab.}, 18(4):1656--1668, 1990.

\bibitem{HNVW2}
T.~Hyt\"{o}nen, J.~M. A. M.~van Neerven, M.~C. Veraar, and L.~Weis.
\newblock {\em Analysis in {B}anach {S}paces. {P}art~{II}:~{P}robabilistic
  {M}ethods and {O}perator {T}heory}.
\newblock Submitted book.

\bibitem{HNVW1}
T.~Hyt\"{o}nen, J.~M. A. M.~van Neerven, M.~C. Veraar, and L.~Weis.
\newblock {\em Analysis in {B}anach {S}paces. {P}art~{I}:~{M}artingales and
  {L}ittlewood--{P}aley {T}heory}, volume~63.
\newblock Springer, 2016.

\bibitem{Jac79}
J.~Jacod.
\newblock {\em Calcul stochastique et probl\`emes de martingales}, volume 714
  of {\em Lecture Notes in Mathematics}.
\newblock Springer, Berlin, 1979.

\bibitem{JS}
J.~Jacod and A.~N. Shiryaev.
\newblock {\em Limit theorems for stochastic processes}, volume 288 of {\em
  Grundlehren der Mathematischen Wissenschaften [Fundamental Principles of
  Mathematical Sciences]}.
\newblock Springer-Verlag, Berlin, second edition, 2003.

\bibitem{Jun02}
M.~Junge.
\newblock Doob's inequality for non-commutative martingales.
\newblock {\em J. Reine Angew. Math.}, 549:149--190, 2002.

\bibitem{JuX03}
M.~Junge and Q.~Xu.
\newblock Noncommutative {B}urkholder/{R}osenthal inequalities.
\newblock {\em Ann. Probab.}, 31(2):948--995, 2003.

\bibitem{Kal}
O.~Kallenberg.
\newblock {\em Foundations of modern probability}.
\newblock Probability and its Applications (New York). Springer-Verlag, New
  York, second edition, 2002.

\bibitem{KoH16}
M.~Kov{\'a}cs and E.~Hausenblas.
\newblock Global solutions to the stochastic {V}olterra equation driven by
  {L}{\'e}vy noise.
\newblock {\em arXiv:1612.09457}, 2016.

\bibitem{KPS82}
S.~Kre{\u\i}n, Y.~Petun{\={\i}}n, and E.~Sem{\"e}nov.
\newblock {\em Interpolation of linear operators}.
\newblock American Mathematical Society, Providence, R.I., 1982.

\bibitem{KwW92}
S.~Kwapie{\'n} and W.~Woyczy{\'n}ski.
\newblock {\em Random series and stochastic integrals: single and multiple}.
\newblock Probability and its Applications. Birkh\"auser Boston Inc., Boston,
  MA, 1992.

\bibitem{LYZ12}
Q.~L\"u, J.~Yong, and X.~Zhang.
\newblock Representation of {I}t{\^{o}} integrals by {L}ebesgue/{B}ochner
  integrals.
\newblock {\em Journal of the European Mathematical Society},
  014(6):1795--1823, 2012.

\bibitem{Mar13}
C.~Marinelli.
\newblock On maximal inequalities for purely discontinuous {$L_q$}-valued
  martingales.
\newblock {\em ArXiv:1311.7120}, 2013.

\bibitem{MPZ13}
C.~Marinelli, L.~Di Persio, and G.~Ziglio.
\newblock Approximation and convergence of solutions to semilinear stochastic
  evolution equations with jumps.
\newblock {\em Journal of Functional Analysis}, 264(12):2784 -- 2816, 2013.

\bibitem{MPR10}
C.~Marinelli, C.~Pr{\'e}v{\^o}t, and M.~R{\"o}ckner.
\newblock Regular dependence on initial data for stochastic evolution equations
  with multiplicative {P}oisson noise.
\newblock {\em J. Funct. Anal.}, 258(2):616--649, 2010.

\bibitem{MaR10}
C.~Marinelli and M.~R{\"o}ckner.
\newblock Well-posedness and asymptotic behavior for stochastic
  reaction-diffusion equations with multiplicative {P}oisson noise.
\newblock {\em Electron. J. Probab.}, 15:no. 49, 1528--1555, 2010.

\bibitem{MaR14}
C.~Marinelli and M.~R\"ockner.
\newblock On maximal inequalities for purely discontinuous martingales in
  infinite dimensions.
\newblock In {\em S\'eminaire de {P}robabilit\'es {XLVI}}, volume 2123 of {\em
  Lecture Notes in Math.}, pages 293--315. Springer, Cham, 2014.

\bibitem{MarRock16}
C.~Marinelli and M.~R{\"o}ckner.
\newblock On the maximal inequalities of {B}urkholder, {D}avis and {G}undy.
\newblock {\em Expo. Math.}, 34(1):1--26, 2016.

\bibitem{McC89}
T.~R. McConnell.
\newblock Decoupling and stochastic integration in {UMD} {B}anach spaces.
\newblock {\em Probab. Math. Statist.}, 10(2):283--295, 1989.

\bibitem{MP}
M.~M{\'e}tivier and J.~Pellaumail.
\newblock {\em Stochastic integration}.
\newblock Academic Press [Harcourt Brace Jovanovich, Publishers], New
  York-London-Toronto, Ont., 1980.
\newblock Probability and Mathematical Statistics.

\bibitem{Mey76}
P.-A. Meyer.
\newblock Un cours sur les int{\'e}grales stochastiques (expos{\'e}s 1 {\`a}
  6).
\newblock {\em S{\'e}minaire de probabilit{\'e}s de Strasbourg}, 10:245--400,
  1976.

\bibitem{NVW07}
J.~M. A. M.~van Neerven, M.~Veraar, and L.~Weis.
\newblock Stochastic integration in {UMD} {B}anach spaces.
\newblock {\em Ann. Probab.}, 35(4):1438--1478, 2007.

\bibitem{NVW12}
J.~M. A. M.~van Neerven, M.~Veraar, and L.~Weis.
\newblock Maximal {$L\sp p$}-regularity for stochastic evolution equations.
\newblock {\em SIAM J. Math. Anal.}, 44(3):1372--1414, 2012.

\bibitem{Nov75}
A.~A. Novikov.
\newblock Discontinuous martingales.
\newblock {\em Teor. Verojatnost. i Primemen.}, 20:13--28, 1975.

\bibitem{PeZ07}
S.~Peszat and J.~Zabczyk.
\newblock {\em Stochastic partial differential equations with {L}\'evy noise}.
\newblock Cambridge University Press, Cambridge, 2007.

\bibitem{Pin94}
I.~Pinelis.
\newblock Optimum bounds for the distributions of martingales in {B}anach
  spaces.
\newblock {\em Ann. Probab.}, 22(4):1679--1706, 1994.

\bibitem{PisPC}
G.~Pisier.
\newblock Personal communication.

\bibitem{Pis75}
G.~Pisier.
\newblock Martingales with values in uniformly convex spaces.
\newblock {\em Israel J. Math.}, 20(3-4):326--350, 1975.

\bibitem{PiX03}
G.~Pisier and Q.~Xu.
\newblock Non-commutative {$L\sp p$}-spaces.
\newblock In {\em Handbook of the geometry of {B}anach spaces, {V}ol.\ 2},
  pages 1459--1517. North-Holland, Amsterdam, 2003.

\bibitem{Prot}
P.~E. Protter.
\newblock {\em Stochastic integration and differential equations}, volume~21 of
  {\em Stochastic Modelling and Applied Probability}.
\newblock Springer-Verlag, Berlin, 2005.
\newblock Second edition. Version 2.1, Corrected third printing.

\bibitem{Ros70}
H.~Rosenthal.
\newblock On the subspaces of {$L\sp{p}$} {$(p>2)$} spanned by sequences of
  independent random variables.
\newblock {\em Israel J. Math.}, 8:273--303, 1970.

\bibitem{VY}
M.~C. Veraar and I.~S. Yaroslavtsev.
\newblock Cylindrical continuous martingales and stochastic integration in
  infinite dimensions.
\newblock {\em Electron. J. Probab.}, 21:Paper No. 59, 53, 2016.

\bibitem{Weisz}
F.~Weisz.
\newblock Martingale operators and {H}ardy spaces generated by them.
\newblock {\em Studia Math.}, 114(1):39--70, 1995.

\bibitem{Y17FourUMD}
I.~S. Yaroslavtsev.
\newblock Fourier multipliers and weak differential subordination of
  martingales in {UMD} {B}anach spaces.
\newblock {\em arXiv:1703.07817}, 2017.

\bibitem{Y17MartDec}
I.~S. Yaroslavtsev.
\newblock Martingale decompositions and weak differential subordination in
  {U}{M}{D} {B}anach spaces.
\newblock {\em arXiv:1706.01731}, 2017.

\bibitem{Yoe76}
Ch. Yoeurp.
\newblock D{\'e}composition des martingales locales et formules exponentielles.
\newblock {\em S{\'e}minaire de probabilit{\'e}s de Strasbourg}, 10:432--480,
  1976.

\bibitem{ZBH17}
J.~Zhu, Z.~Brze\'zniak, and E.~Hausenblas.
\newblock Maximal inequalities for stochastic convolutions driven by
  compensated poisson random measures in banach spaces.
\newblock {\em Ann. Inst. H. Poincarй Probab. Statist.}, 53(2):937--956, 05
  2017.

\end{thebibliography}

\end{document}